\newtheorem{theorem}{Theorem}[section]
\newtheorem{corollary}[theorem]{Corollary}
\newtheorem{definition}[theorem]{Definition}
\newtheorem{lemma}[theorem]{Lemma}
\newtheorem{proposition}[theorem]{Proposition}
\newtheorem{conjecture}[theorem]{Conjecture}
\newtheorem{remark}{Remark}
\newtheorem{question}[theorem]{Question}
\newtheorem{exercise}[theorem]{Exercise}
\newtheorem{problem}[theorem]{Problem}
\newcommand{\Real}{\mathbb R}
\newcommand{\eps}{\varepsilon}
\newcommand{\lam}{\lambda}
\newcommand{\To}{\longrightarrow}
\newcommand{\norm}[1]{\left\Vert#1\right\Vert}
\newcommand{\Ws}[1]{\ensuremath{W^s(#1)}}
\newcommand{\Wu}[1]{\ensuremath{W^u(#1)}}
\newcommand{\Wc}[1]{\ensuremath{W^c(#1)}}
\newcommand{\diff}{\operatorname{Diff}}
\newcommand{\tto}{\Rightarrow}
\newcommand{\Wsl}[2][\epsilon]{\ensuremath{W_{#1}^s(#2)}}
\newcommand{\Wul}[2][\epsilon]{\ensuremath{W_{#1}^u(#2)}}
\newcommand{\Fc}{\ensuremath{\mathcal{W}^c}}
\newcommand{\Fs}{\ensuremath{\mathcal{W}^s}}
\newcommand{\Fu}{\ensuremath{\mathcal{W}^u}}
\newcommand{\Fcs}{\ensuremath{\mathcal{W}^{cs}}}
\newcommand{\Fcu}{\ensuremath{\mathcal{W}^{cu}}}
\newcommand{\F}{\ensuremath{\mathcal{F}}}
\newcommand{\oo}{\infty}
\newcommand{\T}{\mathbb T}
\newcommand{\Z}{\mathbb Z}
\newcommand{\R}{\mathbb R}
\newcommand{\A}{\mathcal A}
\newcommand{\espc}{\vspace{0.2cm}}
\begin{document}

\title{Partially hyperbolic dynamics in dimension 3}

\author[P. Carrasco]{Pablo D. Carrasco}
\address[P. Carrasco]{ICMC-USP, Avenida Trabalhador S\~{a}o-carlense 400, S\~{a}o Carlos, SP 13566-590, Brazil}
\email{pdcarrasco@gmail.com}
\thanks{The first author is supported by FAPESP Project\# 2013/16226-8.}
\author[F. Rodriguez-Hertz]{Federico Rodriguez-Hertz}
\address[F. Rodriguez-Hertz]{PSU Math. department, University Park, State College, PA 16802, US}
\email{hertz@math.psu.edu}
\author[J. Rodriguez-Hertz]{Jana Rodriguez-Hertz}
\address[J.Rodriguez-Hertz and R. Ures]{IMERL-FING, Julio Herrera y Reissig 565, Montevideo 11300, Uruguay}
\email{jana@fing.edu.uy}
\author[R. Ures]{Ra\'{u}l Ures}
\email{ures@fing.edu.uy}


\subjclass{37D30,57R30}
\keywords{Partial Hyperbolicity, Accessibility, Dynamical Coherence, Ergodicity}%



\begin{abstract}
Partial hyperbolicity appeared in the sixties as a natural generalization of hyperbolicity. In the last 20 years in this area there has been great activity. Here we survey the state of the art in some topics, focusing especially in partial hyperbolicity in dimension 3. The reason for this is not only that it is the smallest dimension in which non-degenerate partial hyperbolicity can occur, but also that the topology of 3-manifolds influences this dynamics in revealing ways. 
\end{abstract}
\maketitle
\section{Introduction}
Partial hyperbolicity was introduced in the late sixties as a generalization of the classical notion of hyperbolicity. In hyperbolic systems, the tangent bundle splits into two directions that are invariant under the derivative, one, the stable direction, contracted, and the other, the unstable direction, expanded. More precisely, a diffeomorphism of a compact manifold $f:M\to M$ is {\em hyperbolic} if the tangent bundle splits as $TM=E^{s}\oplus E^{u}$, where $Df(x)E^{s}_{x}=E^{s}_{f(x)}$ and $Df(x)E_x^{u}=E^{u}_{f(x)}$, and for each pair of unit vectors $v^{s}\in E^{s}_{x}$ and $v^{u}\in E^{u}_{x}$ we have:

$$\|Df(x)v^{s}\|<1<\|Df(x)v^{u}\|$$ 
The simplest examples of this behavior are the hyperbolic (also known as Anosov) automorphisms on tori.

Partially hyperbolic diffeomorphisms, in turn, allow one extra, center direction, which is neither as expanded as the unstable one nor as contracted as the stable one. Again the simplest examples are certain automorphisms of the torus, for instance: $$A=\left( \begin{array}{ccc} 2 & 1 & 0 \\
                         1 & 1 &  0 \\ 0 & 0 & 1  \end{array}\right).$$
This matrix has three eigenvalues: $\lambda>1$, $\lambda^{-1}$ and $1$. Thus, the associated automorphism of $\T^3$ has three invariant bundles paralell to its 3 eigendirections.  Two of these bundles, the ones associated to $\lambda$ and $\lambda^{-1}$, have a  hyperbolic behavior and the one associated to $1$ corresponds to the center direction. 

Another classical (and more interesting)  example in dimension 3 comes from the action of the diagonal subgroup on quotients of $G=PSL(2,\R)$.  Consider a left invariant riemannian metric on $G$.  \label{diagonal.subgroup}

{\it \bf Statement:}\label{action.subgroup} The right multiplication by $d_1=\left( \begin{array}{cc} e^\frac 12 & 0 \\ 0 & e^{-\frac 12} \end{array}\right)$ induces a partially hyperbolic diffeomorphism of $G$, $\psi:=R_{d_1}:G\to G$, $\psi(g):=R_{d_1}(g)=gd_1$.

Let $$u^-_t=\left( \begin{array}{cc} 1 & t \\ 0 & 1 \end{array}\right), u^+_t=\left( \begin{array}{cc} 1 & 0\\ t& 1 \end{array}\right) ; \,\, d_t=\left( \begin{array}{cc} e^{\frac t2} & 0 \\ 0 & e^{-\frac t2} \end{array}\right) ;  $$ be respectively, the stable horcocycle, the unstable horocycle and the diagonal 1-parameter groups. Consider the foliations generated by these 1-parameter groups $$\mathcal{F}^s(g)=\left\{ u^-_tg  ; \,\, t\in \R\right\}; \,\, \mathcal{F}^u(g)=\left\{ u^+_tg  ; \,\, t\in \R\right\}; \,\, \mathcal{F}^c(g)=\left\{ d_tg  ; \,\, t\in \R\right\}.$$ $\psi$ respectively contracts, expands and is an isometry on the leaves of these foliations. Notice that \begin{eqnarray}\label{commutativity}u^\pm_sd_t=d_tu^\pm_{e^{\pm t}s}d_t.\end{eqnarray} This equation shows that $\psi$ intertwines leaves of the foliation $\mathcal F^s$ and similarly with $\mathcal F^u$, moreover $\psi$ preserves the leaves of $\mathcal F^c$. We also get from equation \ref{commutativity} that $$\psi(gu^\pm_s)=gu^\pm_sd_1=gd_1u^\pm_{e^{\pm 1}s}=\psi(g)u^\pm_{e^{{\pm 1}}s}.$$ Remember we choose a left invariant metric on $G$, so, using its distance function we obtain that \begin{eqnarray}\label{hyp}d(\psi(gu^\pm_s),\psi(g))=d(gd_1u^\pm_{e^{\pm 1}s},gd_1)=d(u^\pm_{e^{\pm 1}s},e)=d(gu^\pm_{e^{\pm 1}s},g)\end{eqnarray}
 and 
\begin{eqnarray}\label{isom}d(\psi(gd_t),\psi(g))=d(gd_td_1,gd_1)=d(gd_1d_t,gd_1)=d(d_t,e)=d(gd_t,g).\end{eqnarray}
From equation \ref{hyp} we obtain that the leaves of $\mathcal F^s$ are exponentially contracted in the future and the leaves of $\mathcal F^u$ are exponentially contracted in the past by $\psi$. Equation \ref{isom} shows that $\psi$ acts isometrically on the leaves of $\mathcal F^c$. Define $E^\sigma_g=T_g\mathcal F^\sigma$, $\sigma=s,u,c$ and observe that we obtain a splitting $$E^s_g\oplus E^u_g\oplus E^c=T_gG$$ where the first direction is exponentially contracted, the second is exponentially expanded and the third one is isometric. 

Let us give a different description of the invariant bundles. Let us describe first what would be the partially hyperbolic splitting for the tangent space to the identity element $e=\left( \begin{array}{cc} 1 & 0\\ 0 & 1 \end{array}\right)$. The tangent space to the  identity element $T_e G$ is naturally identified with the Lie algebra of $G$, $\mathfrak{g}=\mathfrak{sl_2}$. On $\mathfrak{g}$ we have three distinguished elements $$U^-=\left( \begin{array}{cc} 0 & 1 \\ 0 & 0 \end{array}\right);\;\;\; U^+\left( \begin{array}{cc} 0 & 0 \\ 1 & 0 \end{array}\right);\;\;\; D=\left( \begin{array}{cc} \frac12 & 0 \\ 0 & -\frac12 \end{array}\right).$$ Observe that \begin{eqnarray}[D,U^+]=U^+;\;\;\; [D,U^-]=-U^-;\;\;\;[U^+,U^-]=2D.\end{eqnarray} Let $E^u_e=\hbox{span}\{U^+\}$,  $E^s_e=\hbox{span}\{U^-\}$ and $E^c_e=\hbox{span}\{D\}$. Clearly $U^+,U^-,D$ are linearly independent and hence $$E^s_e\oplus E^u_e\oplus E^c_e=\mathfrak g= T_e G.$$ Given $g\in G$ let us define $R_g:G\to G$ by $R_g(h)=hg$. Let $D_eR_g:T_eG\to T_g G$ be the derivative of $R_g$ over the identity and define $E^\sigma_g= D_eR_g(E^{\sigma}_e), \sigma=s,c,u$. Let us show that these bundles are accordingly the unstable, stable and center bundle. To this end we shall use $L_g:G\to G$, $L_g(h)=gh$, the left translation. Since the riemannian metric we choose was a left invariant metric, we get that $L_g$ is an isometry for every $g$. Observe that $(L_g)^{-1}=L_{g^{-1}}$. 

Both $\psi$ and the right group actions remain well-defined if we take a quotient of $PSL(2,\R)$ under the left action of a lattice $\Gamma$ and since the riemannian metric we choose is left invariant, we obtain a riemannian metric on the quotient $\Gamma\backslash PSL(2,\R)$. The contraction and expansion properties are hence preserved on this quotient also. Recall that the 3-manifold $\Gamma\backslash PSL(2,\R)$ can be naturally identified with the unit tangent bundle of a closed surface (compact in case $\Gamma$ is co-compact) and $\psi$ with the time-one map of the geodesic flow of a metric of constant negative curvature on this surface (again the extra direction is given by the direction of the diagonal flow). See \cite{Katok-Hasselblatt}.

Precisely, we say that a diffeomorphism $f$ of a closed manifold $M$ is partially hyperbolic if the tangent bundle splits as $TM=E^{s}\oplus E^{c}\oplus E^{u}$, where $Df(x)E^{u}_x=E^{u}_{f(x)}$, $Df(x)E^{c}_x=E^{c}_{f(x)}$ and $Df(x)E^{s}_x=E^{s}_{f(x)}$, and for each unit vector $v^{s}\in E^{s}_{x}$, $v^{c}\in E^{c}_x$ and $v^{u}\in E^{u}_{x}$ we have:
\begin{eqnarray}
\|Df(x)v^{s}\|<&1&<\|Df(x)v^{u}\|,\quad\text{and}\label{partial.hyperbolicity1}\\
\|Df(x)v^{s}\|<&\|Df(x)v^{c}\|&<\|Df(x)v^{u}\|. \label{partial.hyperbolicity2}
\end{eqnarray}

 The set of partially hyperbolic diffeomorphisms is $\mathcal{C}^1$-open in $\diff^1(M)$. See, for instance, Theorem 2.15 in \cite{HPS}. In other words, a $C^{1}$-perturbation of a partially hyperbolic diffeomorphism is partially hyperbolic. 

\espc

\subsection{More examples}\label{subsection.examples}
The examples given in the Introduction fall into more general classes:
\subsubsection{ Time-one maps of Anosov flows:}\label{example1} Consider an Anosov flow in a 3-manifold $\phi_{t}:M\to M$ such that the tangent bundle of $M$ splits into 3 sub-bundles invariant under $D\phi_{t}$: $TM=E^{s}\oplus X\oplus E^{u}$, where $X$ is the direction tangent to the flow, and such that for each unit vector $v^{s}\in E^{s}$ and $v^{u}\in E^{u}$
$$\|D\phi_{t}v^{s}\|<1<\|D\phi_{t}v^{u}\|.$$
Then the time-one map of the flow $\phi_{t}$ is a partially hyperbolic diffeomorphism (exercise).\par
Examples of this kind of partially hyperbolic diffeomorphism are the ones induced by the diagonal action on $PSL(2, \R)$ mentioned above.  \par
As a different type of example consider the suspension of an Anosov map $A:\mathbb{T}^2\rightarrow\mathbb{T}^2$\ by the
constant roof function one: Let $A$ be a hyperbolic automorphism on $\mathbb{T}^{2}$, and consider in $\mathbb{T}^{2}\times \R$ the following equivalence relation: $(x,t+1)\sim (Ax,t)$. Then $M=\mathbb{T}^{2}\times \R|_{\sim}$ is a smooth manifold, and $f([x,t])=[x,t+1]$ is a partially hyperbolic diffeomorphism. \par

We remark that these examples are truly different: for the second one the distribution $E^u\oplus E^s$ is integrable whereas for the first one it is not.\par
Note that in both examples the corresponding Anosov flow is transitive, but there also exist non transitive Anosov flows \cite{FW80}. There is in fact a huge zoo of Anosov flows, see for instance \cite{barbot98}, \cite{BL}, \cite{fenley94}, \cite{fried83}, \cite{HT}; so classification of time-one maps of these flows is naturally a difficult task.

\subsubsection{ Skew-products:} Another example of partially hyperbolic diffeomorphism is a certain kind of skew-product that is a circle extension over the two torus of the form $f(x, \theta)=(Bx, h(x, \theta))$ where $B$ is a hyperbolic automorphism of the 2-torus and $h(x,.)$ are circle rotations. The resulting ambient manifold is a 3-nilmanifold. In case the product is direct, the ambient manifold is the 3-torus. To this class belongs $A$, the 3-toral automorphism defined at the beginning of this introduction. 
  
\subsubsection{DA-diffeomorphisms:} A DA- partially hyperbolic diffeomorphism is one that is isotopic to an Anosov one. By a result of Franks, DA-diffeomorphisms are semi-conjugate to Anosov diffeomorphisms. 
 The prototypical example in this class is Ma{\~{n}}{\'{e}}'s example \cite{Contributions}. It is obtained by taking
a linear Anosov map in $\mathbb{T}^3$\ with eigenvalues $\lambda^{ss}<1<\lambda^u<\lambda^{uu}$, and making a bifurcation of the origin
into three points along the weak unstable direction (See fig. \ref{DAa}). The resulting map is a dynamically coherent
(robustly) transitive partially hyperbolic diffeomorphism.

\begin{figure}[ht]\label{DAa}
\centering
  \includegraphics[width=10cm]{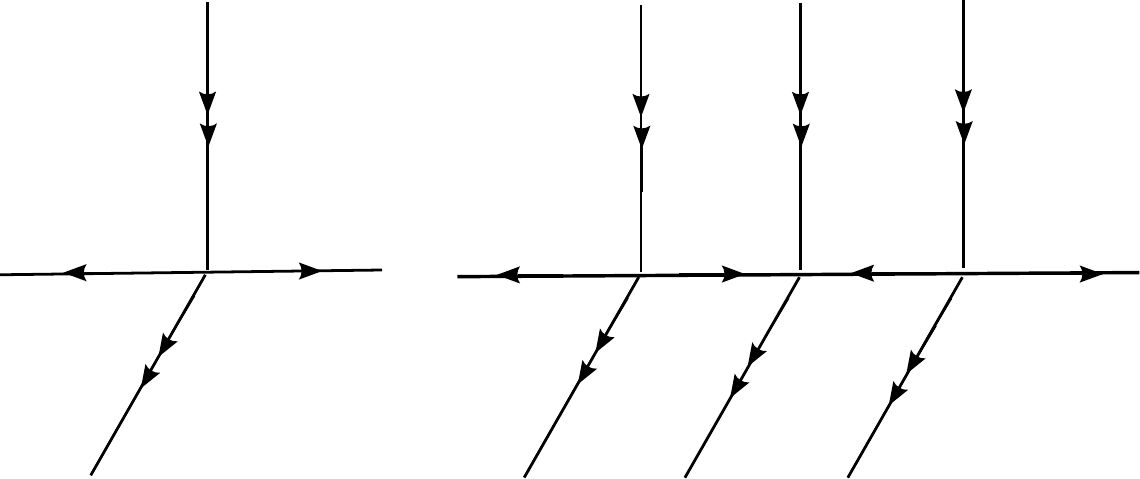}\\
  \caption{Ma{\~{n}}{\'{e}}'s DA-partially hyperbolic diffeomorphism}\label{DA}
\end{figure}
\subsection*{Acknowledgements}{The authors want to thank Amie Wilkinson for her careful review of this manuscript and for many valuable suggestions she made that helped to improve its readability.}


\section{Some open problems}
Partial hyperbolicity appeared as a natural generalization of hyperbolicity. 
One main motivation to study partially hyperbolic systems was ergodicity. A system $f:M\to M$ preserving a measure $m$ is ergodic when, in average, any two events tend to be independient, that is, for any two measurable sets $A$ and $B$ we have
$$\lim_{n\to\infty}\frac1N\sum_{n=0}^{N-1} m(f^{n}(A)\cap B) =m(A)m(B)$$
Ergodicity is therefore an interesting property from the physical point of view. 
Since conservative hyperbolic systems are ergodic \cite{AS67}, it was asked whether adding some hyperbolicty to a system and then perturbing would generate ergodicity in the whole system. Concretely, Pugh and Shub \cite{stableerg} asked if taking any system, taking the product of it by a sufficiently strong hyperbolic system (hence obtaining a partially hyperbolic system) and then making a small perturbation would yield ergodicity in a robust way. They went further to state Conjecture \ref{conjecture.ps} below. 
The area of partial hyperbolicity has become very active since then, and other aspects have attracted interest as well. These are also stated below.

\subsection{Ergodicity} As stated above, one problem in partially hyperbolic dynamics is studying the ergodicity of conservative systems, to be more precise, of $C^{r}$- diffeomorphisms preserving a smooth volume. We shall denote conservative systems by $\diff^{r}_{m}(M)$, where $m$ denotes the probability measure arising from this smooth volume. An equivalent definition of ergodic system $f$ is that any measurable set $A$ satisfying $f(A)=A$ must also satisfy either $m(A)=1$ or $m(A)=0$. In other words, an ergodic system is one not admitting an invariant measurable set with intermediate measure.  \par
An example of a non-ergodic partially hyperbolic diffeomorphism is the toral automorphism $A$ defined at the beginning of the introduction. This automorphism can be seen in the following way:  $A=B\times id$, where 
$B$ is the automorphism on the 2-torus generated by the matrix $\left(
\begin{array}{cc}
2&1 \\1&1
\end{array}
\right)$, and $id$ is the identity on the circle. Indeed, any interval of the circle times the 2-torus is an invariant set with intermediate measure. \par
One can easily perturb this system to obtain an {\em ergodic} partially hyperbolic one. For instance $g=B\times$(irrational rotation). However, this new system $g$ can be easily perturbed to obtain again non-ergodic diffeomorphisms. It is, in fact approximated by diffeomorphisms $g'=B\times$(rational rotation), which are non-ergodic. \par
Pugh and Shub were the first to conjecture that ergodicity is in fact very abundant in the partially hyperbolic world, a conjecture that remains open today (Conjecture \ref{conjecture.ps}). As a matter of fact, this conjecture was made public for the first time in Montevideo, in 1995 \cite{pughshub95}. We thank Keith Burns for recalling this fact, and Mike Shub for confirming it. Pugh and Shub considered not only ergodicity, but a stronger concept, namely:
\begin{definition}\label{def.stable.ergodicity}
A conservative $\mathcal{C}^2$ diffeomorphism $f:M\rightarrow M$ is stably ergodic (in $\diff^1_m(M)$) if there exists $U\subset \diff^1_m(M)$, a  neighborhood of $f$, such that every $g\in U$ of class $\mathcal{C}^2$ is ergodic.
\end{definition}
Until 1994, the only known examples of stably ergodic diffeomorphisms were Anosov diffeomorphisms, that is, hyperbolic ones. In 1994, Grayson, Pugh and Shub found the first non-hyperbolic examples.\par
A year later, Pugh and Shub conjectured the following:

\begin{conjecture}[Pugh-Shub (1995)\cite{pughshub95}] \label{conjecture.ps}\label{pugh.shub.conjecture} Stable ergodicity is $C^r$-dense among volume preserving partially hyperbolic diffeomorphisms, for all $r>1$.
\end{conjecture}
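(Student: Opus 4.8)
The plan is to follow the two-step program Pugh and Shub proposed for exactly this purpose, having observed that the conjecture would follow from two largely independent assertions. First, that a volume-preserving partially hyperbolic diffeomorphism which is \emph{accessible} --- meaning any two points can be joined by a path that is a concatenation of arcs lying inside leaves of $\Fs$ and $\Fu$ --- is ergodic, at least under a mild \emph{center bunching} hypothesis. Second, that the set of diffeomorphisms whose accessibility is robust (\emph{stable accessibility}) is $C^r$-dense. Granting both, the conjecture follows: given any $f \in \diff^r_m(M)$, approximate it by a stably accessible $g$; by the first assertion $g$ and every nearby $C^2$ diffeomorphism is ergodic, so $g$ is stably ergodic, and such $g$ are dense.

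For the first step I would run a Hopf-type argument adapted to the non-uniform geometry of the center direction. The strong foliations $\Fs$ and $\Fu$ are absolutely continuous, so Birkhoff averages of a continuous observable are constant along their leaves almost everywhere; accessibility then propagates this constancy across accessibility classes, forcing the average to be globally essentially constant and yielding ergodicity. The delicate point is that the holonomies along $\Fc$ need not be absolutely continuous, so one cannot naively close up the $su$-chains. The center bunching condition --- requiring the center distortion to be dominated by the strong expansion and contraction --- is precisely what is needed to control the geometry, via auxiliary (``fake'') center-stable and center-unstable foliations and julienne-density arguments, and to push the measurable constancy through.

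For the second step I would show that stable accessibility is both open and dense. Openness is the easier half and rests on the transversality built into the $su$-path construction: accessibility classes are $C^1$-open, so if a single point class is everything, this persists under $C^1$-small perturbation. For density, the strategy is to perturb $f$ inside $\diff^r_m(M)$ so as to break every proper accessibility class. A non-open class is a lamination of small dimension, and one engineers a local, volume-preserving perturbation supported near a non-accessibility point that creates a new $su$-path escaping the class; a Baire-category argument over a dense family of potential obstructions then yields density.

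The hardest part is this second step in high regularity. In the $C^1$ topology perturbations are cheap and density of accessibility is comparatively tractable, but for $r>1$ the size and shape of admissible perturbations are severely constrained, and controlling the global effect of a $C^r$-small, volume-preserving perturbation on the accessibility class is the crux. In dimension $3$ the low codimension of the invariant bundles makes the obstruction set especially rigid, which is simultaneously the source of the difficulty and the reason one expects dimension $3$ to be the natural arena for attacking the problem; reconciling the $C^r$ perturbation scheme with the center-bunching hypothesis demanded by the first step is exactly where I expect the argument to stall.
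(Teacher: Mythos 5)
This statement is a conjecture, not a theorem: the paper offers no proof of it, and it remains open in the generality stated. What you have written is a summary of the Pugh--Shub program, not a proof, and both of its pillars are themselves open conjectures at the level of generality you need. Your first step (accessibility implies ergodicity for $C^r$, $r>1$, conservative partially hyperbolic diffeomorphisms) is Conjecture \ref{conjecture.acc.implies.erg}; it is only known under the additional center bunching hypothesis (Burns--Wilkinson), and center bunching is not automatic except when the center bundle is one-dimensional. Your second step (stable accessibility is $C^r$-dense for $r>1$) is Conjecture \ref{conjecture.stable.acc.dense}, and it is precisely the step that nobody knows how to do: the only general density result is in the $C^1$ topology (Dolgopyat--Wilkinson), and the $C^1$-density of stable \emph{ergodicity} among $C^r$ diffeomorphisms (Avila--Crovisier--Wilkinson) does not upgrade to $C^r$-density. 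Your own sketch of the density argument --- ``engineer a local volume-preserving perturbation that creates a new $su$-path escaping the class, then run Baire category'' --- is exactly the part with no known implementation in the $C^r$ topology, and you acknowledge as much when you say you expect the argument to stall there. An honest account would also note that you cannot simultaneously assume center bunching in step one and work with arbitrary partially hyperbolic systems in step two without addressing whether bunching can be arranged densely.

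What \emph{is} known, and what the paper actually records, is that the conjecture holds when the center bundle is one-dimensional (Hertz--Hertz--Ures), hence in particular on $3$-manifolds, where stable ergodicity is even $C^\infty$-dense. In that setting step one becomes a theorem because one-dimensional center bundles are automatically bunched, and step two is carried out by a genuinely different mechanism (analysis of the lamination of non-open accessibility classes, as in Section \ref{section.ergodicity}) rather than by the generic perturbation scheme you describe. If you intend to prove the statement as given --- for all center dimensions --- you must either supply the missing $C^r$ perturbation technique or find a route that bypasses accessibility altogether; neither is present in your proposal.
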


Pugh and Shub suggested a program in order to prove their conjecture. It involves accessibility, defined below. 
\begin{definition}
Two points $x$ and $y$ are in the same {\em accessibility class} if there is a path piece-wise tangent to $E^{s}$ or $E^{u}$ joining them. The  partially hyperbolic diffeomorphism $f$ has the {\em accessibility property} if there is only one accessibility class. The diffeomorphism has the {\em essential accessibility property} if any measurable set that is a union of accessibility classes has either full or null measure. 
\end{definition}
In the picture below, the points $x,y$ and $z$ are in the same accessibility class. 
\begin{center}
\begin{figure}[h]
 \includegraphics[width=.5\textwidth]{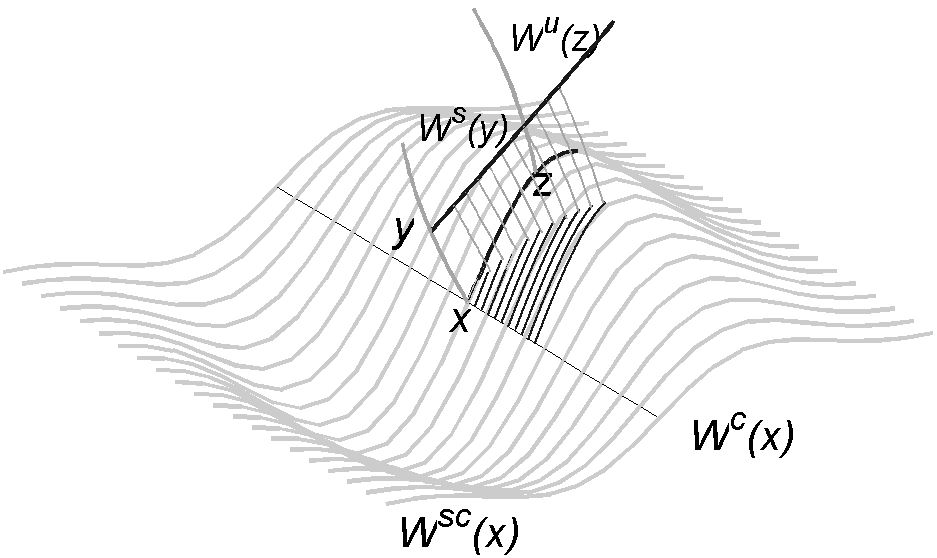}
\end{figure}
\end{center}
\begin{exercise}
Prove that the example of the action of the diagonal subgroup defined in Page \pageref{action.subgroup} has the accessibility property. 
\end{exercise}
Obviously, accessibility implies essential accessibility, but the converse is not true.
\begin{exercise}
Show that the automorphism on $\mathbb{T}^{3}$ defined by $$A=\left(
\begin{array}{rrr}
0&0&1\\ 0&1&-1\\-1&-1&5 
\end{array}
\right)$$
 has the essential accessibility property, but it does not have the accessibility property.
\end{exercise}

\begin{conjecture}[Pugh-Shub] \label{conjecture.acc.implies.erg} If $f$ is a $C^{r}$ conservative diffeomorphism, with $r>1$, (essential) accessibility implies ergodicity.
\end{conjecture}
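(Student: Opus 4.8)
The natural route is the \emph{Hopf argument}, refined along the lines of the Pugh--Shub program. To prove ergodicity it suffices to show that every bounded measurable $f$-invariant function is constant $m$-almost everywhere, and by a standard approximation it is enough to treat the time averages of a continuous observable. So first I would fix a continuous $\phi:M\to\R$ and, invoking the Birkhoff ergodic theorem together with the fact that $f$ preserves $m$, form the forward and backward averages
\begin{equation*}
\phi^{+}(x)=\lim_{N\to\infty}\frac1N\sum_{k=0}^{N-1}\phi(f^{k}x),\qquad
\phi^{-}(x)=\lim_{N\to\infty}\frac1N\sum_{k=0}^{N-1}\phi(f^{-k}x),
\end{equation*}
which exist on a full-measure set and satisfy $\phi^{+}=\phi^{-}$ almost everywhere. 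Ergodicity follows once I know that this common value is $m$-a.e.\ constant for every such $\phi$.

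The two structural facts driving the argument are that $\phi^{+}$ is constant along each stable leaf $\Ws{x}$ (because $d(f^{k}x,f^{k}y)\to 0$ whenever $y\in\Ws{x}$, so the forward averages agree) and, symmetrically, that $\phi^{-}$ is constant along each unstable leaf $\Wu{x}$. Combining this with $\phi^{+}=\phi^{-}$ a.e.\ produces a single measurable function that is simultaneously \emph{essentially} $\Fs$-saturated and \emph{essentially} $\Fu$-saturated, where ``essentially'' records that saturation holds only after discarding a null set. The heart of the matter is then a purely measure-theoretic implication:
\begin{quote}
a function that is both essentially $\Fs$-saturated and essentially $\Fu$-saturated is essentially saturated by whole accessibility classes.
\end{quote}
Granting this, essential accessibility says any such function is a.e.\ constant, which closes the argument.

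Establishing the displayed implication is where the real work, and the main obstacle, lie. The obstruction is that individual leaves of $\Fs$ and $\Fu$ have zero measure, so one cannot feed saturation into Fubini directly; instead one must control how the \emph{set of density points} of a saturated set transforms under the stable and unstable holonomies. The tool I would use is the absolute continuity of the $\Fs$ and $\Fu$ foliations, upgraded to quantitative distortion estimates for the holonomy maps between transversals. Following Burns--Wilkinson I would replace round balls by dynamically defined \emph{julienne} neighborhoods --- long and thin, rescaled by the center dynamics --- and prove that the stable and unstable holonomies are julienne quasi-isometries, so that Lebesgue density with respect to juliennes is preserved under $su$-paths. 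Concatenating holonomies along an accessibility path then transports density points, yielding essential $su$-saturation.

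The step that keeps the full conjecture open is precisely the distortion control in the previous paragraph: making the julienne densities comparable under holonomy currently requires a \emph{center bunching} hypothesis, which quantifies the domination in \eqref{partial.hyperbolicity2} by demanding that the center expansion and contraction be weak relative to the gap between the center and the strong bundles. Under center bunching the program above goes through and delivers not merely ergodicity but the Kolmogorov property. Removing center bunching, so as to obtain the statement in full generality, is the genuine difficulty; one would need a substantially different mechanism to dominate the holonomy distortion when the center behavior is large, and no such mechanism is presently known.
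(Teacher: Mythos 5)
This statement is a conjecture; the paper contains no proof of it, and it remains open in general. Your writeup correctly recognizes this and reproduces essentially the same account the paper itself gives in Section \ref{section.ergodicity}: the Hopf argument upgraded through julienne density points and absolute continuity of the strong holonomies, which Burns--Wilkinson carry out only under the additional center bunching hypothesis \eqref{center.bunching} (Theorem \ref{thm.bw}). The one point worth adding is that when $\dim E^{c}=1$ --- the setting of most of this survey --- center bunching holds automatically, so the conjecture is in fact a theorem on $3$-manifolds (Theorem \ref{teo.acc.implies.ergodicity}); the genuinely open case is higher-dimensional center without bunching, exactly the obstruction you identify.
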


\begin{conjecture}[Pugh-Shub] \label{conjecture.stable.acc.dense} Stable accessibility is $C^{r}$-dense among volume preserving partially hyperbolic diffeomorphisms, for $r>1$.
\end{conjecture}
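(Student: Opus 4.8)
\emph{Proof strategy.} Since Conjecture~\ref{conjecture.stable.acc.dense} is open, what follows is a plan of attack rather than a complete argument; I indicate at the end where the essential difficulty for arbitrary center dimension lies. The plan is to prove the formally stronger statement that the set of $C^1$-stably accessible diffeomorphisms is $C^r$-dense (and $C^1$-open) in the partially hyperbolic $C^r$ diffeomorphisms, for every $r>1$. Openness is the easy half: I would formulate a \emph{uniform accessibility criterion} guaranteeing that a full-rank configuration of $su$-loops (closed paths alternating $W^s$- and $W^u$-legs) at a fixed hyperbolic periodic point forces accessibility, and observe that this full-rank condition is a finite, open condition, hence survives all $C^1$-small perturbations. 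This reduces everything to producing, by a $C^r$-\emph{small} perturbation, one diffeomorphism satisfying the criterion.

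The starting point is the structural dichotomy for accessibility classes: each class is either open or is contained in an invariant set with empty interior. If every class is open, then by connectedness of $M$ there is a single class and $f$ is accessible. So it suffices to make a single accessibility class open by a $C^r$-small perturbation, and then propagate openness globally. After a preliminary $C^r$-small perturbation I would fix a hyperbolic periodic point $p$ whose strong stable and strong unstable manifolds are dense in $M$; near $p$ the obstruction to local accessibility is measured by the image of the \emph{center displacement map} sending the space of $su$-loops based at $p$ into $E^c_p$. Local accessibility at $p$ is equivalent to this image having nonempty interior in $E^c_p$, i.e.\ to the $su$-loops generating a full-dimensional set of center jumps.

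Thus the core construction is to engineer $C^r$-small perturbations, localized in disjoint flow boxes along chosen $W^s$- and $W^u$-legs, that alter the center component of a prescribed $su$-loop by a controlled amount, and to iterate this so that the resulting jumps span $E^c_p$. Once a single accessibility class acquires nonempty interior, I would propagate openness to all of $M$ using the fact that, for such $p$, the strong foliations are minimal on the accessibility class and its complement is closed and invariant, hence empty; this yields accessibility, and the full-rank condition at $p$ makes it $C^1$-stable, giving a stably accessible diffeomorphism $C^r$-close to the original.

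The hard part will be the center displacement analysis when $d=\dim E^c>1$. Here one must produce $d$ \emph{independent} center jumps, and the displacement along an $su$-loop is governed by holonomy maps of the strong foliations inside the $W^{cs}$- and $W^{cu}$-leaves, whose regularity is in general only H\"older and whose derivatives along $E^c$ are precisely what a $C^r$-small (as opposed to $C^1$-small) perturbation has little room to adjust. The $1$-dimensional arguments, which exploit an essentially scalar, monotone center parameter, do not transfer: one needs a genuinely higher-dimensional, quantitative non-integrability criterion, expressed as a rank statement for a $d$-parameter family of center jumps and stable under $C^r$ perturbations. Establishing such a criterion via Hirsch--Pugh--Shub/Katok-type normal forms along the center, robustly enough that the spanning configuration can be created with only a $C^r$-small perturbation, is the step I expect to be the principal obstacle.
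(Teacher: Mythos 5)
You should first be clear about the status of this statement: it is a \emph{conjecture} (Pugh--Shub, 1995), and the paper contains no proof of it --- nor does one exist anywhere. The strongest known results, all cited in the paper, are $C^1$-density of stable accessibility \cite{DW03}, $C^1$-density of stable ergodicity \cite{acw}, and the resolution of the conjecture for one-dimensional center bundle \cite{HHU2008inv}. Your proposal honestly acknowledges this and offers a plan of attack, so there is no paper proof to compare against; the question is whether your plan is internally sound as a roadmap. Its good features: the stability mechanism you describe (a spanning, full-rank family of $su$-loops at a point, which is a finite open condition and hence $C^1$-robust) is genuinely the criterion used in \cite{DW03} and \cite{HHU2008inv}, and your final paragraph correctly isolates where the problem actually lives --- producing $d = \dim E^c > 1$ independent center displacements with only $C^r$-small room to perturb, where the relevant holonomies are merely H\"older.

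However, several intermediate steps of your plan have genuine gaps even as a roadmap. First, the structural dichotomy you invoke (each accessibility class open or a "thin" set, with the non-open classes forming a nice invariant structure) is Theorem \ref{teo.ac.open.or.manifold}, which is only established for one-dimensional center; for general $d$ this structure is unknown, so using it is circular with the "hard part" you defer. Second, your preliminary reduction --- fix, after a $C^r$-small perturbation, a hyperbolic periodic point $p$ with dense strong stable and unstable manifolds --- is unjustified: there is no closing lemma in the $C^r$ topology for $r>1$, and a conservative partially hyperbolic diffeomorphism need not have any hyperbolic periodic point at all; this is one of the reasons the $C^1$ arguments do not upgrade. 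Third, your propagation step is a non sequitur: the complement of the (invariant, open) union of open classes is closed and invariant, but closed invariant sets need not be empty, and density of $W^s(p)\cup W^u(p)$ does not force a one-codimensional obstruction set to meet the open class $AC(p)$. The actual argument in \cite{HHU2008inv} proceeds differently: it works directly with the compact laminated set $\Gamma(f)$ of non-open classes (cf.\ Theorem \ref{teo.laminacion.accesible}) and perturbs to destroy it, exploiting the essentially scalar, ordered center parameter --- exactly the feature you correctly note is unavailable when $d>1$. So your proposal should be read as an accurate description of why the conjecture is open, not as a proof strategy whose remaining step is merely technical.
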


In \cite{burnswilkinson}, Burns and Wilkinson proved Conjecture \ref{conjecture.acc.implies.erg} under the additional condition of {\em center bunching} (which means that the hyperbolicity of the $E^{s}\oplus E^{u}$-bundle is stronger than the non-conformality of $E^{c}$). Previously, Dolgopyat-Wilkinson \cite{DW03} had proven that stable accessibility is $C^{1}$-dense. 
Recently, Avila, Crovisier and Wilkinson have shown that stable ergodicity is $C^{1}$-dense among $C^{r}$ partially hyperbolic diffeomorphisms, with $r>1$. 

\begin{theorem}[Avila-Crovisier-Wilkinson \cite{acw}] Stable ergodicity is $C^{1}$-dense among $C^{r}$ volume preserving partially hyperbolic diffeomorphisms, $r>1$.
\end{theorem}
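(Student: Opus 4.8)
The plan is to combine the two results already quoted with a genuinely new ergodicity mechanism whose point is to dispose of the center bunching hypothesis. Fix $r>1$ and a volume-preserving partially hyperbolic $f\in\diff^r_m(M)$; the goal is to produce, arbitrarily $C^1$-close to $f$, a diffeomorphism that is ergodic together with all of its nearby $C^2$ perturbations. The first move is to reduce to the accessible setting: by the theorem of Dolgopyat--Wilkinson \cite{DW03} answering Conjecture \ref{conjecture.stable.acc.dense} in the $C^1$ topology, stable accessibility is $C^1$-dense, so after an arbitrarily small $C^1$ perturbation I may assume that $f$ is \emph{stably} accessible, i.e. accessible together with every $g$ in some $C^1$-neighborhood $U$ of $f$. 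It then remains to show that such an $f$ can be further perturbed so that accessibility forces ergodicity in a robust way.

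The easy regime is the center bunched one. When the stable and unstable rates dominate the non-conformality of $Df|_{E^c}$, Burns--Wilkinson \cite{burnswilkinson} (their proof of Conjecture \ref{conjecture.acc.implies.erg} under center bunching) already derives ergodicity from accessibility; since accessibility and center bunching are both $C^1$-open, every $C^2$ diffeomorphism $C^1$-close to $f$ is then accessible, center bunched, and hence ergodic, so $f$ is already stably ergodic. I would stress that this covers the setting of the present survey, $\dim E^c=1$: there $\norm{Df|_{E^c}}\cdot\norm{(Df|_{E^c})^{-1}}\equiv 1$, so center bunching is automatic, and Dolgopyat--Wilkinson together with Burns--Wilkinson already give the theorem on $3$-manifolds.

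The real content lies in the non--center-bunched regime, possible only when $\dim E^c\ge 2$, where the julienne density-point argument of \cite{burnswilkinson} degenerates because the stable and unstable holonomies need no longer be the quasi-conformal maps that absolute continuity of the density bases requires. Here the plan is to prove a new ergodicity criterion for accessible volume-preserving systems in which center bunching is replaced by a condition on the center Lyapunov data. The mechanism: accessibility already transports Birkhoff averages along $E^s$- and $E^u$-holonomies in a Hopf-type argument, so what is missing is enough absolute continuity of these holonomies; in place of the bunching-based estimate I would control the center cocycle through Pesin theory, working on Pesin blocks where the non-uniform stable and unstable manifolds are absolutely continuous, and use accessibility to glue the resulting local ergodic components into a single one.

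The hardest step, where essentially all the difficulty concentrates, is to arrange the hypotheses of this criterion by a \emph{volume-preserving} $C^1$ perturbation while preserving stable accessibility. The leverage is that the $C^1$ topology permits drastic modification of the derivative cocycle over $E^c$, in the spirit of the cocycle perturbation techniques of Baraviera--Bonatti and Bochi--Viana and the invariance principle of Avila--Viana: one perturbs either to separate the center exponents, producing a finer dominated splitting of $E^c$ that restores bunching block by block, or to expose the obstruction to ergodicity as a measurable invariant conformal structure along $E^c$, which accessibility together with a final perturbation destroys. The delicate point is to make this perturbation simultaneously strong enough to regularize the center, gentle enough to remain volume-preserving and robustly accessible, and stable enough that the resulting ergodicity is $C^1$-open; carrying this out uniformly over a $C^1$-dense family of $f$ is what yields the theorem.
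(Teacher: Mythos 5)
Your first two reductions are fine: Dolgopyat--Wilkinson gives $C^1$-dense stable accessibility, and in the center bunched case (in particular whenever $\dim E^c=1$, hence on $3$-manifolds) Burns--Wilkinson then yields stable ergodicity. But the step you yourself flag as carrying ``the real content'' is a genuine gap, not a hard-but-routine step. An ergodicity criterion of the form ``accessibility plus control of the center Lyapunov data, no center bunching'' is essentially the open part of Conjecture \ref{conjecture.acc.implies.erg}; the sketch you give (Pesin blocks, absolute continuity of non-uniform invariant manifolds, gluing local ergodic components along $su$-paths) is exactly where every known attempt stalls, because the $E^s$- and $E^u$-holonomies between Pesin blocks are taken between sets of positive but not full measure and there is no julienne-type density basis that they preserve without bunching. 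Your fallback, perturbing to split $E^c$ into dominated one-dimensional subbundles so as to ``restore bunching block by block,'' does not work either: center bunching in (\ref{center.bunching}) compares the non-conformality of the \emph{whole} of $Df|_{E^c}$ with the uniform rates on $E^s$ and $E^u$, and a dominated splitting inside $E^c$ does not shrink the ratio $\norm{Df|_{E^c}}/m(Df|_{E^c})$; reassigning the extremal center subbundles to $E^s$ or $E^u$ is also not an option, since domination is not uniform hyperbolicity. So the criterion your argument needs is not available, and the theorem does not follow.

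For comparison, the route described in this survey avoids accessibility altogether. Avila--Crovisier--Wilkinson build on \cite{hhtu}: one perturbs in the $C^1$ topology to make the center exponents nonzero, so that the volume becomes non-uniformly hyperbolic, and then uses the \emph{Pesin homoclinic classes} of \cite{hhtu}, which are shown to be ergodic components of $m$; the remaining work is to merge these classes into a single one \emph{robustly}, which is done with blenders --- for two-dimensional center as in \cite{hhtu}, and with the superblenders of Moreira--Silva \cite{guguzulu} for higher-dimensional center. The robustness of the blender is what makes the resulting ergodicity stable, replacing the role that stable accessibility plays in your outline. If you want to pursue an accessibility-based proof you are, in effect, attacking the Pugh--Shub conjecture itself rather than the Avila--Crovisier--Wilkinson theorem.
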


\par
In their work, they build on an approach by \cite{hhtu} where this result was proven for the case of two-dimensional center bundle. In \cite{hhtu} there were introduced the {\em Pesin homoclinic classes}, which were shown to be hyperbolic ergodic components of $m$, these classes were made into one by the use of blenders. The use of this technique for center bundles of higher dimensions is a non-trivial fact, which was overcome by \cite{acw} by using a different kind of blenders, introduced by Moreira and Silva \cite{guguzulu}.\par
Other important advance is  Burns-Dolgopyat-Pesin \cite{BDP02}, who proved some version of \ref{conjecture.acc.implies.erg}: essential accessibility and positive center Lyapunov exponents imply stable ergodicity. \par
In \cite{HHU2008inv}, Conjecture \ref{pugh.shub.conjecture} was proven for one-dimensional center bundle. In particular, it holds: 
\begin{theorem}[Hertz-Hertz-Ures \cite{HHU2008inv}] Stable ergodicity is $C^{\infty}$-dense among volume preserving partially hyperbolic diffeomorphisms on 3-manifolds. 
\end{theorem}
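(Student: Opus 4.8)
\section*{Proof proposal}

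The plan is to reduce the theorem to a geometric statement about accessibility and then to establish that statement by analyzing the structure of accessibility classes in the one-dimensional center setting. First I would observe that for a partially hyperbolic diffeomorphism of a $3$-manifold the center bundle $E^c$ is one-dimensional, hence conformal, so the \emph{center bunching} hypothesis of Burns--Wilkinson holds automatically. Their theorem therefore already gives that every accessible conservative $\mathcal{C}^2$ diffeomorphism in this class is ergodic. Thus it suffices to prove two things: that accessibility is $C^1$-\emph{open}, and that it is $C^\infty$-\emph{dense}, among volume-preserving partially hyperbolic diffeomorphisms of $M$. Openness, together with the robustness of the Burns--Wilkinson conclusion, upgrades the open dense set of accessible maps to an open dense set of stably ergodic ones, which is exactly the statement. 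Note that the earlier Dolgopyat--Wilkinson result yields only $C^1$-density of stable accessibility, which is why the finer three-dimensional structure is needed to reach $C^\infty$.

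The core is therefore $C^\infty$-density of accessibility, which I would attack through the structure of accessibility classes. The key step is to show that each accessibility class is a $C^1$ injectively immersed submanifold which is \emph{either} open (three-dimensional) \emph{or} a complete surface tangent to $E^s\oplus E^u$; the mechanism is that a class fails to be open exactly when every $su$-path returning near a point carries trivial holonomy in the center direction, and this rigidity forces $E^s\oplus E^u$ to integrate along the class. Let $\Gamma=\Gamma(f)$ denote the union of the non-open classes; a separate argument shows $\Gamma$ is compact, $f$-invariant, saturated by $su$-paths, and laminated by these surfaces. If $\Gamma=\emptyset$ then $f$ is accessible and we are done. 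Otherwise I would classify the compact leaves: since each carries the two transverse one-dimensional foliations $\mathcal{W}^s$ and $\mathcal{W}^u$, the only closed surfaces that can occur are the torus and the Klein bottle, and a recurrence argument for the action of $f$ on the lamination produces a \emph{periodic} compact $su$-torus (or Klein bottle).

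Finally I would destroy this obstruction by perturbation. The idea is to pick a non-periodic point of a periodic $su$-torus $T$ and perform a small, $C^\infty$, volume-preserving, partial-hyperbolicity-preserving perturbation supported in a tiny box around it that tilts the strong-unstable leaf transversally off $T$, thereby creating an $su$-loop with nontrivial center holonomy. This opens the corresponding accessibility class and strictly shrinks $\Gamma$; iterating, or arguing that one well-placed perturbation breaks the lamination everywhere, produces an accessible diffeomorphism $C^\infty$-close to $f$.

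I expect this last perturbative step to be the main obstacle. The difficulty is twofold: the perturbation must simultaneously preserve volume and partial hyperbolicity while genuinely producing nonzero center holonomy, and one must rule out that the deformation merely pushes the $su$-torus to a nearby $su$-torus without ever opening a class --- precisely the situation realized robustly by suspensions of Anosov maps, where $E^s\oplus E^u$ is integrable. Controlling how the strong-unstable manifold of a point of $T$ responds to the perturbation, and certifying that the induced center holonomy is nonzero, is the technical heart of the argument; establishing the rigid laminated-by-tori structure of $\Gamma$ in the second step is the other substantial input.
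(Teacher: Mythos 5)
Your high-level reduction is the right one and matches the strategy of \cite{HHU2008inv} as described in Section~\ref{section.ergodicity}: center bunching is automatic for one-dimensional center, so it suffices to show that accessibility is $C^1$-open and $C^\infty$-dense among conservative partially hyperbolic diffeomorphisms, and then invoke ``accessibility implies ergodicity'' (Theorem~\ref{acc.imp.erg}). Your description of the dichotomy for accessibility classes --- open, or a $C^1$ codimension-one immersed surface tangent to $E^s\oplus E^u$ characterized by joint integrability/trivial center holonomy of $su$-loops --- and of $\Gamma(f)$ as a compact $f$-invariant lamination is also correct (Theorem~\ref{teo.ac.open.or.manifold} and Propositions~\ref{proposition.open.ac}--\ref{proposition.joint.integrability}).

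The genuine gap is the step where you pass from $\Gamma(f)\ne\emptyset$ to a \emph{periodic compact} $su$-torus via ``a recurrence argument for the action of $f$ on the lamination.'' Compactness of the laminated set $\Gamma(f)$ does not give compactness of any leaf: the leaves are the accessibility classes, and these can all be non-compact and even dense. The paper's own exercise --- the linear automorphism of $\T^3$ with essential but not full accessibility --- is a counterexample to your reduction: there $\Gamma(f)=\T^3$ is a minimal foliation by dense immersed planes with no compact leaf at all, yet the map is not accessible, so your perturbation scheme never gets off the ground. Indeed, Theorem~\ref{teo.laminacion.accesible} lists three mutually non-exclusive possibilities for a non-accessible map, and only the first produces a compact class; if non-accessibility always forced a compact $su$-torus, Conjecture~\ref{conjecture.strong.non.ergodic} would essentially follow, and it is open. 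The actual density argument in \cite{HHU2008inv} must therefore (and does) destroy $\Gamma(f)$ without assuming compact leaves: one works at periodic points and along minimal sets of the lamination, ``unweaving'' the stable and unstable holonomies along $su$-loops by small conservative perturbations so as to create nontrivial center holonomy and open the class, together with a global argument ensuring finitely many such perturbations empty out $\Gamma$. Your final paragraph correctly identifies the danger that a perturbation merely slides an $su$-torus to a nearby one (this really happens for suspensions), but the prior and larger obstacle is that in general there is no torus to perturb.
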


In sum, the vast majority of 3-dimensional partially hyperbolic diffeomorphisms are ergodic and moreover stably ergodic. Hence one could ask the following: can we classify the {\em non-ergodic} partially hyperbolic diffeomorphisms? And also: are there manifolds where {\em all} partially hyperbolic diffeomorphisms are ergodic? Can we classify all the 3-manifolds admiting non-ergodic partially hyperbolic diffeomorphisms?\par

A first evidence in this direction was obtained in \cite{ParHypDim3}:

\begin{theorem}[Hertz-Hertz-Ures] \label{theorem.hhu.nil} If $N$ is a 3-nilmanifold other than $\T^{3}$, then all conservative partially hyperbolic diffeomorphisms are ergodic.
\end{theorem}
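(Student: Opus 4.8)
The plan is to reduce the statement to an accessibility question and then feed it into the Burns--Wilkinson criterion. Recall that on a $3$-manifold the center bundle $E^c$ is one-dimensional, so $Df|_{E^c}$ is automatically conformal and the center bunching hypothesis of \cite{burnswilkinson} is satisfied for free. Consequently, by the theorem of Burns and Wilkinson, any conservative $f$ enjoying the essential accessibility property is ergodic. Since accessibility implies essential accessibility, it suffices to prove that \emph{every} conservative partially hyperbolic $f$ on a non-toral $3$-nilmanifold $N$ is accessible.

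To control the failure of accessibility I would invoke the structural dichotomy for partially hyperbolic diffeomorphisms with one-dimensional center from \cite{HHU2008inv}: either $f$ is accessible, or the union of the non-open accessibility classes is an $f$-invariant lamination whose leaves are periodic surfaces tangent to $E^s\oplus E^u$. On such an $su$-surface $T$ the iterate $f^p$ fixing $T$ contracts $E^s$ and expands $E^u$, so $f^p|_T$ is an Anosov diffeomorphism; since a Klein bottle supports no Anosov map, $T\cong\T^2$. In particular $T$ is incompressible, so $\pi_1(T)\cong\Z^2$ injects into $\pi_1(N)$, and $f^p_*$ acts on $H_1(T)\cong\Z^2$ as a hyperbolic matrix $M\in GL(2,\Z)$ with $|\operatorname{tr}M|>2$.

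The contradiction now comes from the algebra of the lattice. Writing $\Gamma=\pi_1(N)=\langle a,b,c\mid [a,b]=c^{n},\ c\ \text{central}\rangle$ with $n\ge 1$ — this nonabelian presentation is precisely what distinguishes $N$ from $\T^3$ — the commutator induces a nondegenerate pairing $\Gamma/Z(\Gamma)\times\Gamma/Z(\Gamma)\to Z(\Gamma)$. Hence two commuting elements have proportional images in $\Gamma/Z(\Gamma)\cong\Z^2$, which forces the abelian subgroup $\pi_1(T)\cong\Z^2$ to contain a power $c^{m}$ of the central (fiber) generator; thus $\bar c$ spans a \emph{rational} line in $H_1(T)\otimes\R$. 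Because $Z(\Gamma)$ is characteristic, $f_*$ preserves $\langle c\rangle$, so $f^p_*=M$ preserves this rational line. But a hyperbolic integer matrix with $|\operatorname{tr}M|>2$ has only irrational eigendirections and therefore fixes no rational line — a contradiction. Hence no $su$-torus exists, $f$ is accessible, and ergodicity follows. (On $\T^3$ the argument collapses exactly because $\pi_1(\T^3)=\Z^3$ is abelian and imposes no such constraint, consistent with the non-ergodic examples living there.)

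I expect the genuine difficulty to lie not in this final topological-algebraic contradiction, which is clean, but in the structural input: establishing that the failure of accessibility produces an \emph{incompressible}, $f$-invariant $su$-torus tangent to $E^s\oplus E^u$. That step rests on the full accessibility machinery of \cite{HHU2008inv}, and this is where the substance of the proof resides.
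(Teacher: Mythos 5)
Your first reduction (one-dimensional center gives center bunching for free, so by Burns--Wilkinson accessibility, indeed essential accessibility, implies ergodicity) is exactly the paper's stated strategy, and your algebraic endgame is a correct and clean direct proof that a non-toral $3$-nilmanifold admits no Anosov torus: a rank-two abelian subgroup of $\Gamma=\pi_1(N)$ must meet the center $\langle c\rangle$, which pins a rational invariant line in $H_1(T)$ and contradicts hyperbolicity of $f^p_*$. That is a legitimate shortcut, for this special case, to the relevant part of Theorem \ref{thm.anosov.tori} (which the paper proves by JSJ decomposition).

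The gap is the ``structural dichotomy'' you invoke in the middle. It is not a theorem of \cite{HHU2008inv} or \cite{ParHypDim3} that failure of accessibility produces a \emph{compact} periodic $su$-surface. What is actually known is Theorem \ref{teo.laminacion.accesible}: if $f$ is not accessible, the non-open accessibility classes form a compact codimension-one lamination $\Gamma(f)$ whose leaves are immersed (in general non-compact) surfaces tangent to $E^s\oplus E^u$, and the alternatives are (1) a compact leaf, i.e.\ an $su$-torus, (2) an invariant sublamination extending to a foliation without compact leaves, whose boundary leaves are periodic non-compact surfaces carrying Anosov-like dynamics with dense periodic points, or (3) a minimal foliation. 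The assertion that only case (1) can occur is precisely the content of the (still open) Ergodic Conjecture \ref{conjecture.strong.non.ergodic}; assuming it here is circular. Your argument disposes only of case (1). The actual substance of the proof of Theorem \ref{theorem.hhu.nil} in \cite{ParHypDim3} is the elimination of cases (2) and (3) on a non-toral nilmanifold, using the theory of codimension-one foliations without compact leaves on circle bundles over $\T^2$ with non-zero Euler class together with the dynamics on the (non-compact) boundary leaves; none of that is supplied, or replaceable, by the $\pi_1$ computation you give. So the proposal proves a weaker statement (no $su$-torus, hence case (1) is impossible) but not the theorem.
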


The proof of this theorem involves the study of accessibility classes defined above. It follows from \cite{burnswilkinson} and \cite{HHU2008inv} that for $C^{2}$-partially hyperbolic diffeomorphisms in 3-dimensional manifolds, accessibility implies ergodicity. This fact is very interesting, since it allows to convert an ergodic problem into a geometric problem: the study of the set of accessibility classes; in other words, if the system has only one accessibility class, then it is ergodic. In Section \ref{section.ergodicity} a better description of these sets in 3-manifolds is given. \newline \par
While proving Theorem \ref{theorem.hhu.nil}, we got the impression that the only obstruction to ergodicity is the existence of a proper {\em compact} accessibility class:

\begin{conjecture}[{\bf Ergodic conjecture:} Hertz-Hertz-Ures (2008)] \label{conjecture.strong.non.ergodic} If a conservative partially hyperbolic diffeomorphism of a 3-manifold is non-ergodic, then there is a 2-torus tangent to $E^{s}\oplus E^{u}$.
\end{conjecture}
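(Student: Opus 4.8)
The plan is to convert non-ergodicity into the existence of a rigid invariant surface, exploiting the dictionary between ergodicity and accessibility established above. First I would record the reduction: since $f$ is a conservative $C^{2}$ partially hyperbolic diffeomorphism of a $3$-manifold, accessibility implies ergodicity (this follows from \cite{burnswilkinson} together with \cite{HHU2008inv}, the center being one-dimensional so that center bunching is automatic). Hence if $f$ is non-ergodic it cannot be accessible, so it has at least two accessibility classes. Each accessibility class is either open or has empty interior, and the union $O$ of the open classes is open; were $O=M$, the connected manifold $M$ would be partitioned into disjoint nonempty open sets, a contradiction. Therefore $M\setminus O\neq\emptyset$ and $f$ possesses at least one non-open accessibility class.

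The next step is geometric: to understand the non-open classes. Since $E^{s}$ and $E^{u}$ are one-dimensional, an accessibility class is swept out by concatenating arcs of $\Ws{x}$ and $\Wu{x}$; following the three-dimensional theory developed in \cite{ParHypDim3}, I would show that a class failing to contain an open set must be locally confined to a $C^{1}$ surface everywhere tangent to $E^{s}\oplus E^{u}$ (morally, the failure to reach an open set is precisely the local integrability of the a priori non-integrable distribution $E^{s}\oplus E^{u}$ along that class). Collecting these, the union $\Gamma$ of all non-open classes is a nonempty compact $f$-invariant set, laminated by complete surfaces tangent to $E^{s}\oplus E^{u}$, with $f$ permuting the leaves of this lamination.

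Granting that $\Gamma$ contains a compact leaf, the conclusion is forced. A recurrence argument on the lamination (using that the hyperbolic-on-leaves dynamics makes compact leaves isolated and hence $f$ permutes a finite collection of them) produces a \emph{periodic} compact leaf $T$, say $f^{k}(T)=T$. Being tangent to $E^{s}\oplus E^{u}$, its tangent bundle splits as $TT=E^{s}|_{T}\oplus E^{u}|_{T}$ into two transverse one-dimensional subbundles, so $T$ carries a nonsingular line field and $\chi(T)=0$: thus $T$ is a torus or a Klein bottle. Moreover $Df^{k}$ contracts the summand $E^{s}|_{T}$ and expands $E^{u}|_{T}$, so $f^{k}|_{T}$ is an Anosov diffeomorphism of $T$; since the Klein bottle admits no Anosov diffeomorphism, $T\cong\T^{2}$, giving the sought $2$-torus tangent to $E^{s}\oplus E^{u}$.

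The crux — and the reason the statement is only a conjecture — is precisely the production of a compact leaf inside $\Gamma$. A priori the leaves of $\Gamma$ may all be non-compact, injectively immersed surfaces accumulating on themselves, so one cannot simply extract a compact one. I would attack this by passing to a minimal sublamination $\Lambda\subseteq\Gamma$ and trying to exploit the conservative hypothesis — volume preservation sharply constrains the transverse holonomy and the area growth of leaves — to rule out non-compact minimal laminations, while arguing that the intermediate-measure invariant set furnished by non-ergodicity must be carried by $\Gamma$ and forces $\Lambda$ to contain a compact leaf. This final implication is the main obstacle, and is where genuinely new ideas seem to be required.
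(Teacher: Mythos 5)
The statement you are addressing is Conjecture \ref{conjecture.strong.non.ergodic}, which the paper records as an \emph{open} conjecture; there is no proof in the paper to compare against, only the partial result Theorem \ref{teo.laminacion.accesible}. Your reductions are correct and coincide exactly with what is actually known: non-ergodicity implies non-accessibility by Theorem \ref{teo.acc.implies.ergodicity} (center bunching being automatic for one-dimensional center); connectedness of $M$ then forces the existence of a non-open accessibility class; and by Theorem \ref{teo.ac.open.or.manifold}, Proposition \ref{proposition.joint.integrability} and Lemma \ref{lema.jointly.integrable}, the set $\Gamma(f)$ of non-open classes is a nonempty compact invariant set laminated by $C^{1}$ surfaces tangent to $E^{s}\oplus E^{u}$. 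Your endgame is also sound: a periodic compact leaf splits its tangent bundle into two line fields, so $\chi=0$, and since it carries Anosov dynamics it must be $\T^{2}$ rather than a Klein bottle; this is essentially Proposition \ref{exAnTor} and Lemma \ref{su.torus}.

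The gap is exactly where you locate it, but the attack you sketch in your last paragraph does not close it. Theorem \ref{teo.laminacion.accesible} already carries out the minimal-sublamination analysis you propose and terminates in a genuine trichotomy: either $\Gamma(f)$ contains a compact leaf (the desired conclusion), or it contains an invariant sublamination extending to a foliation without compact leaves whose boundary leaves are periodic with Anosov dynamics and dense periodic points, or $\Gamma(f)$ is a minimal foliation. Volume preservation is already exploited in \cite{ParHypDim3} to obtain the periodicity and Anosov structure of boundary leaves; it does not, by itself, rule out cases (2) and (3), which are precisely the open questions the paper records (whether a complete immersed leaf carrying Anosov dynamics with dense periodic points must be a $2$-torus, and whether minimality of $\Gamma(f)$ forces essential accessibility and hence ergodicity). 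Your assertion that the intermediate-measure invariant set ``must be carried by $\Gamma$'' and must force a compact leaf is unsubstantiated: in case (3) one has $\Gamma(f)=M$, so this carries no leverage, and in case (2) the boundary leaves need not be compact. What you have written is therefore a correct synthesis of the known reductions together with an honest acknowledgment that the essential step is missing; it is not a proof, and none currently exists.
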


The importance of this kind of hyperbolic sub-dynamics will become apparent later on, see Section \ref{section.anosov.tori}. As we will see, these tori seem to be ``behind'' a lot of interesting behavior in partially hyperbolic dynamics. And moreover, not every orientable manifold can support any such sub-dynamics. In order to describe the 3-manifolds admitting these 2-tori, let us recall the concept of {\em mapping torus}: If $N$ is a closed manifold, and $g:N\to N$ is a  diffeomorphism, we define the mapping torus of $g$ as the manifold obtained by identifying in $N\times [0,1]$ the points $(x,1)$ with $(g(x),0)$.\par
As examples of these we can mention the 3-nilmanifolds, which can be seen as the mapping tori of automorphisms of the form $\left(
\begin{array}{rr}
1&k\\0&1 
\end{array}
\right),$ with $k\in\mathbb{Z}$. The particular case $k=0$ gives the 3-torus. Examples of solvmanifolds are the mapping tori of hyperbolic automorphisms $A:\mathbb{T}^{2}\To\mathbb{T}^{2}$.

\begin{theorem}[\cite{AnosovTori}]\label{thm.anosov.tori} If a partially hyperbolic diffeomorphism $f:M^{3}\to M^{3}$ has a 2-dimensional embedded torus $T$ which is tangent either to $E^{s}\oplus E^{u}$, $E^{c}\oplus E^{u}$ or $E^{c}\oplus E^{s}$, then the ambient manifold $M^{3}$ can only be one of the following possibilities:
\begin{enumerate}
\item the 3-torus $\T^{3}$
\item the mapping torus of $-id:\T^{2}\to\T^{2}$
\item the mapping tori of hyperbolic automorphisms on 2-tori 
\end{enumerate}
\end{theorem}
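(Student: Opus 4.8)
The plan is to classify the ambient $M^3$ by analysing the $2$-torus $T$ together with the invariant foliation to which it is tangent. The three cases ($E^s\oplus E^u$, $E^c\oplus E^u$, $E^c\oplus E^s$) are essentially symmetric after replacing $f$ by $f^{-1}$ and relabelling bundles, so I would first reduce to a single case, say $T$ tangent to $E^s\oplus E^u$. The starting observation is that such a torus is a \emph{normally hyperbolic} invariant (or periodic) submanifold: one of its two tangent directions is contracted and the other expanded, while the complementary bundle $E^c$ is transverse. Thus I would first argue that $T$ must be $f$-periodic. This is the sort of fact that requires care: an embedded $2$-torus tangent to $E^s\oplus E^u$ need not a priori be invariant, but its image is again such a torus, and one uses the normal hyperbolicity plus the geometry of the foliations to show the forward and backward iterates accumulate on a periodic torus; passing to a power of $f$ I may then assume $f(T)=T$.

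Once $T$ is invariant, the restriction $f|_T$ is an Anosov diffeomorphism of $\T^2$ (the stable and unstable directions of $f$ restrict to those of $f|_T$), so up to conjugacy $f|_T$ is a linear toral automorphism $A\in GL(2,\Z)$ that is either hyperbolic or equal to $\pm\mathrm{id}$ (the latter arising when the restricted dynamics is isometric, e.g.\ coming from cases where $E^c$ sits inside $T$ and the hyperbolic directions are $E^s,E^u$ transverse to $T$ — one must keep track of which two bundles are actually tangent to $T$). The next step is to understand the complement $M\setminus T$ and how the $E^c$ direction (transverse to $T$) threads the manifold. The key structural idea is that the center foliation $\mathcal F^c$, being one-dimensional and transverse to $T$, provides a flow-box neighbourhood of $T$; following $\mathcal F^c$ one produces a closed transversal and shows that $M$ fibers over the circle with fiber $T$, i.e.\ $M$ is a mapping torus of some $g:\T^2\to\T^2$.

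The final step is to identify the gluing map $g$ up to the allowed list. Here I would use that $M$, being the mapping torus of a homeomorphism of $\T^2$, has its homeomorphism type determined by the conjugacy class (in $GL(2,\Z)$, up to sign and inversion) of the action $g_*$ on $H_1(\T^2)=\Z^2$; and this class is constrained by the requirement that $M$ carry a partially hyperbolic diffeomorphism with the given torus. Combining the fact that $f|_T$ is (linearly) either hyperbolic or $\pm\mathrm{id}$ with consistency conditions coming from how $f$ permutes the center leaves, one sees $g_*$ must be conjugate to a hyperbolic matrix, to $-\mathrm{id}$, or to a unipotent $\begin{pmatrix}1&k\\0&1\end{pmatrix}$ (the last giving nilmanifolds, with $k=0$ the $3$-torus). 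These are exactly the three listed possibilities.

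I expect the main obstacle to be the first structural step: proving that $T$ is (pre-)periodic and that its center-foliation neighbourhood globalises to a genuine fibration of $M$ over $S^1$. Establishing periodicity rigorously needs the normal hyperbolicity of $T$ together with uniqueness/compactness properties of the strong foliations, and promoting the local product structure near $T$ to a global mapping-torus decomposition requires knowing that \emph{every} center leaf meets $T$ (a completeness/transitivity argument for $\mathcal F^c$), which is the delicate point; the subsequent algebraic classification of the monodromy in $GL(2,\Z)$ is comparatively routine once the fibration is in hand.
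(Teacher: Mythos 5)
Your proposal correctly identifies two of the genuine preliminary steps (that $T$ can be assumed periodic, and that the induced action on $H_1(T)\cong\Z^2$ is hyperbolic), and these are indeed established in the paper, though not quite by symmetry: the $su$-case and the $cs$/$cu$-cases are treated separately (recurrence of a leaf in the compact lamination of $su$-tori versus an attracting/repelling tubular neighbourhood built from the transverse strong foliation), and in the $cu$-case the restriction $f|_T$ is not Anosov but only partially hyperbolic, so hyperbolicity of $f_*$ on $H_1(T)$ has to be extracted from the preserved expanding line foliation (ruling out $f_*=\mathrm{id}$ by a linear-diameter versus exponential-length argument plus Poincar\'e--Bendixson). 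However, the structural heart of your argument --- that the one-dimensional invariant direction transverse to $T$ ``threads'' the manifold, yields a closed transversal, and exhibits $M$ as a fibration over $S^1$ with fibre $T$ --- is a genuine gap. First, when $T$ is an $su$-torus the transverse bundle is $E^c$, which need not be integrable at all (the paper's own non--dynamically-coherent examples are built around exactly such tori); when $T$ is a $cu$-torus the transverse bundle $E^s$ does integrate, but its leaves are lines accumulating on the attracting torus $T$, not closed transversals, so neither case produces the fibration you want. There is no dynamical argument in the paper that every transverse leaf returns to $T$, and in general it is false.

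The paper circumvents this entirely by stripping away the dynamics: it isolates the purely topological notion of an \emph{Anosov torus} (an embedded torus admitting some global diffeomorphism of $M$ restricting to a hyperbolic automorphism), shows the ambient manifold is irreducible (via foliation theory: Burago--Ivanov plus Rosenberg), shows Anosov tori are incompressible, cuts along $T$, and then classifies the resulting bounded manifold using the JSJ decomposition --- proving that every JSJ piece must be Seifert fibered, that a Seifert piece carrying two non-isotopic fibrations on its boundary is a solid torus, a twisted $I$-bundle over the Klein bottle, or $\T^2\times[0,1]$, and eliminating the first two by a half-lives-half-dies rank computation on $H_1(\partial N)\to H_1(N)$. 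This 3-manifold-topology input is what your sketch is missing, and it cannot be replaced by the foliation-transversal argument. Relatedly, your final monodromy list is too generous: you allow unipotent gluings $\bigl(\begin{smallmatrix}1&k\\0&1\end{smallmatrix}\bigr)$ with $k\neq 0$, i.e.\ non-toral nilmanifolds, which are explicitly excluded by the theorem (and must be: the centralizer of a nontrivial unipotent in $GL(2,\Z)$ contains no hyperbolic element, so the fibre torus of such a nilmanifold can never be an Anosov torus). Making that exclusion precise is exactly the kind of consistency condition your sketch defers but never supplies.
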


\begin{figure}[ht]
\centering
\subfigure{
\includegraphics[scale=0.15]{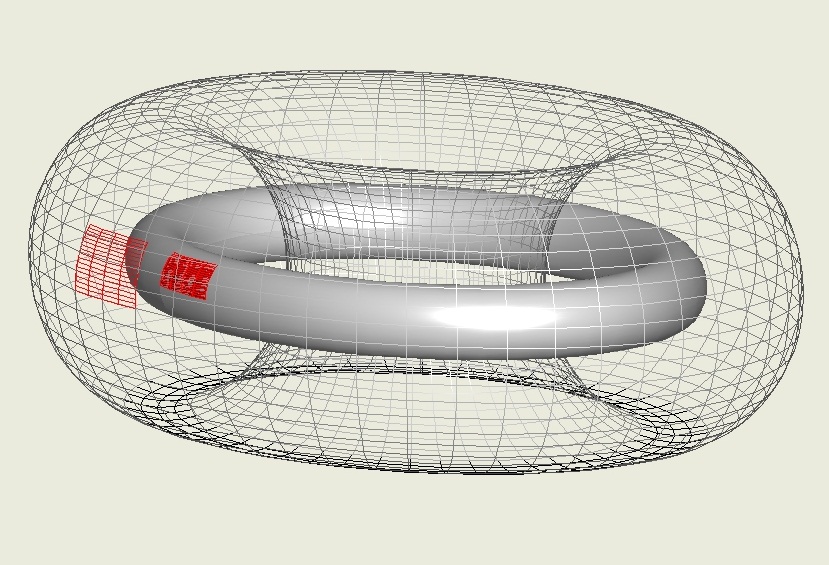}
}
\subfigure{
\includegraphics[scale=0.15]{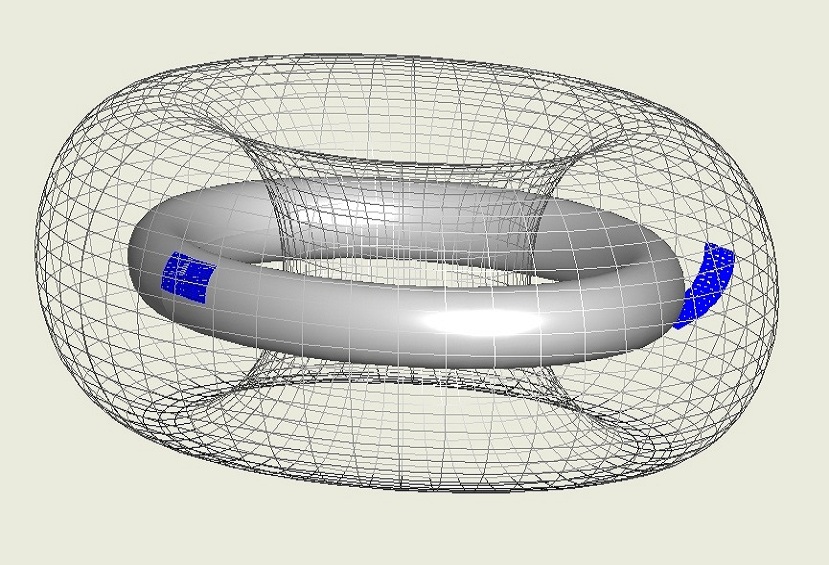}
}
\subfigure{
\includegraphics[scale=0.15]{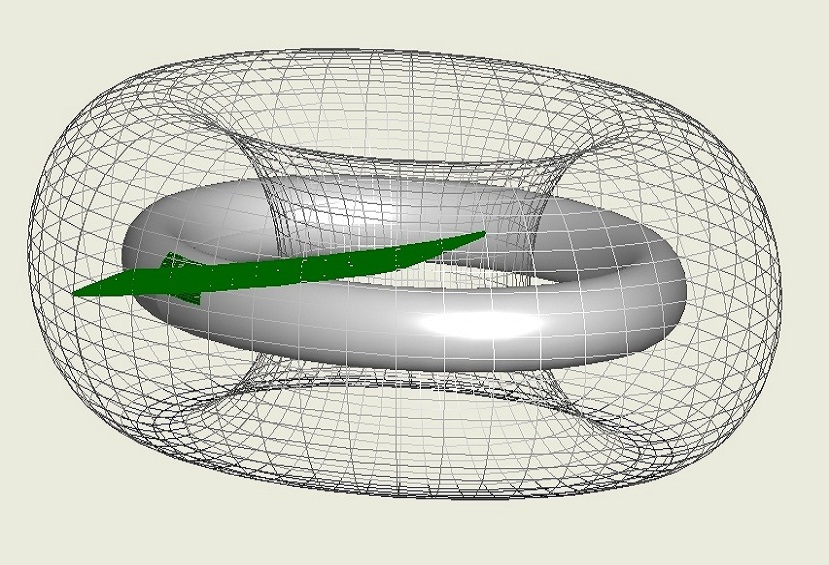}
}

\caption{(1) The $3$-torus (2) The mapping torus of $-Id$ (3) The mapping torus of an hyperbolic automorphism.}
\end{figure}

In the three cases it is possible to find partially hyperbolic dynamics with embedded tori tangent to $E^{s}\oplus E^{u}$ or $E^{c}\oplus E^{u}$ ($E^{c}\oplus E^{s}$ is analogous). We refer the reader to Section \ref{section.dc} to see how a torus tangent to $E^{c}\oplus E^{u}$ can be built in any of these examples. Finding a partially hyperbolic diffeomorphism with a torus tangent to $E^{s}\oplus E^{u}$ in cases (1) and (3) is trivial, in case (2), it is enough to consider a hyperbolic automorphism on $\mathbb{T}^{2}$, $B$, and take the diffeomorphism $f([x,t])=[Bx,t]$. This is the desired partially hyperbolic diffeomorphism.

If Conjecture \ref{conjecture.strong.non.ergodic} is true, there would be very few 3-manifolds supporting non-ergodic partially hyperbolic dynamics. We state this explicitly as a weaker conjecture:

\begin{conjecture}[{\bf Weak ergodic conjecture:} Hertz-Hertz-Ures (2008)] \label{conjecture.non.ergodic}The only orientable 3-manifolds that admit a non-ergodic conservative partially hyperbolic diffeomorphism are:
\begin{enumerate}
\item the 3-torus $\T^{3}$
\item the mapping torus of $-id:\T^{2}\to\T^{2}$
\item the mapping tori of hyperbolic automorphisms on 2-tori 
\end{enumerate} 
\end{conjecture}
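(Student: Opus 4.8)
The plan is to reduce Conjecture \ref{conjecture.non.ergodic} to a structural statement and then invoke Theorem \ref{thm.anosov.tori}. Concretely, it suffices to prove that every non-ergodic conservative partially hyperbolic diffeomorphism $f\colon M^3\to M^3$, with $M$ orientable, possesses an embedded $2$-torus tangent to $E^s\oplus E^u$ --- this is precisely the conclusion of the Ergodic Conjecture \ref{conjecture.strong.non.ergodic}. Granting this, Theorem \ref{thm.anosov.tori} applied to that torus forces $M$ to be one of the three manifolds (1)--(3); and since each of these does carry a non-ergodic example (by the constructions indicated right after Theorem \ref{thm.anosov.tori}), the list is both exhaustive and sharp. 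So the entire problem collapses to producing the $su$-torus, and it is there that all the work lies.

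To produce it, I would start from the measure-theoretic input available in the $C^2$, one-dimensional-center setting: by \cite{burnswilkinson} (which proves Conjecture \ref{conjecture.acc.implies.erg} under center bunching, automatic when $\dim E^c=1$) together with \cite{HHU2008inv}, (essential) accessibility implies ergodicity on $3$-manifolds. Taking contrapositives, a non-ergodic $f$ fails essential accessibility, and a fortiori accessibility: the accessibility classes do not reduce to a single open class. The next step is to pass from this failure to geometry, using the now-standard description of accessibility classes in dimension three: each class is either open or a $C^1$ injectively immersed surface tangent to $E^s\oplus E^u$, and the set $\Gamma$ of points lying in non-open classes is a nonempty, compact, $f$-invariant set laminated by such surfaces.

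The geometric heart is then to locate inside $\Gamma$ a \emph{compact} leaf $T$. Such a $T$ is a closed surface everywhere tangent to $E^s\oplus E^u$, so it carries two transverse continuous line fields, namely $E^s|_T$ and $E^u|_T$; by Poincar\'e--Hopf this forces $\chi(T)=0$, whence $T$ is a torus or a Klein bottle, and orientability of $M$ excludes the latter. Thus $T$ is an embedded $2$-torus tangent to $E^s\oplus E^u$, which is exactly the hypothesis of Theorem \ref{thm.anosov.tori}; note that no invariance of $T$ is needed at this stage, only that it be embedded and tangent to $E^s\oplus E^u$.

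The main obstacle is the compactness in the previous paragraph: extracting a compact accessibility class from the mere non-openness of some class. The lamination $\Gamma$ could a priori consist entirely of non-compact leaves or accumulate pathologically, and this is precisely why Conjecture \ref{conjecture.strong.non.ergodic} is still open. I expect the decisive step to be a Poincar\'e--Bendixson / Haefliger-type analysis of the transverse one-dimensional center holonomy along $\Gamma$, combined with the $f$-invariance of $\Gamma$, to rule out non-compact minimal behavior and force a closed leaf. Once a compact $su$-surface is secured, everything downstream --- its identification as a torus and the resulting restriction on $M$ --- is routine, being subsumed by Theorem \ref{thm.anosov.tori}.
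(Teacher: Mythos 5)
The statement you are asked to prove is an open conjecture: the paper offers no proof of it, only the observation that it follows from the (also open) Ergodic Conjecture~\ref{conjecture.strong.non.ergodic} together with Theorem~\ref{thm.anosov.tori}. Your proposal reproduces exactly that reduction, and that part is correct and matches the paper: if a non-ergodic conservative $f$ admits an embedded $2$-torus tangent to $E^s\oplus E^u$, then Theorem~\ref{thm.anosov.tori} pins $M$ down to the three listed manifolds, and your identification of a compact leaf of $\Gamma(f)$ as a torus (Euler characteristic zero via the two transverse line fields $E^s|_T$, $E^u|_T$, plus orientability of $M$ to exclude the Klein bottle) is sound. Likewise the first step of the contrapositive is fine: non-ergodicity implies failure of essential accessibility by \cite{burnswilkinson}, \cite{HHU2008inv}, so $\Gamma(f)\neq\emptyset$ and Theorem~\ref{teo.ac.open.or.manifold} gives the compact codimension-one lamination by non-open accessibility classes.

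The genuine gap is the step you yourself flag: extracting a \emph{compact} leaf of $\Gamma(f)$ from mere non-accessibility. This is not a technical detail to be handled by ``a Poincar\'e--Bendixson / Haefliger-type analysis of the transverse center holonomy''; it is the entire content of Conjecture~\ref{conjecture.strong.non.ergodic}, and the paper's Theorem~\ref{teo.laminacion.accesible} shows precisely where such an argument currently stops. The known trichotomy is: (1) a compact class exists; (2) $\Gamma(f)$ contains an invariant sublamination extending to a foliation \emph{without} compact leaves, whose boundary leaves are periodic with dense Anosov dynamics; or (3) $\Gamma(f)$ is a minimal foliation. Your sketch must rule out (2) and (3), and neither is amenable to a Poincar\'e--Bendixson-type closing argument: in case (2) one would need to show that a complete immersed surface carrying an Anosov dynamics with dense periodic points is necessarily a torus (the open Question following Theorem~\ref{teo.laminacion.accesible}), and in case (3) one would need to upgrade minimality of $\Gamma(f)$ to essential accessibility (also open, and note that essential accessibility already implies ergodicity, so a counterexample there would be of independent interest). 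Moreover, even granting a compact class, the conjecture as you use it produces a single $su$-torus, whereas the authors expect at least two; that discrepancy is harmless for the weak conjecture but signals that the compactness mechanism is not yet understood. In short: the reduction is right, the endgame is right, but the middle --- the only part where, as you say, ``all the work lies'' --- is missing, and it coincides with the open problem rather than with anything proved in the paper.
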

The following is also open:
\begin{problem}
 Let $f$ be a conservative non-ergodic partially hyperbolic diffeomorphism in any of the manifolds (1), (2), (3) stated in Theorem \ref{thm.anosov.tori}. Prove that there exists a torus tangent to $E^{s}\oplus E^{u}$. 
\end{problem}
In Section \ref{section.ergodicity} we collect the advances in this conjecture to the best of our knowledge, and a description of the set of accessibility classes. 

\subsection{Dynamical coherence} In partially hyperbolic dynamics, the strong bundles $E^{s}$ and $E^{u}$ are always integrable; that is, there are invariant foliations $W^{s}$ and $W^{u}$, the {\em stable} and {\em unstable} foliations, that are tangent to each of the strong bundles (see, for instance \cite{brinpesin1974,HPS}). However, the center bundle, $E^{c}$, can be integrable or not. In fact, as Wilkinson noticed in \cite{wilkinson1998}, the Anosov example of A. Borel, mentioned by Smale in \cite{smale1967}, when viewed as a partially hyperbolic diffeomorphism, has a non-integrable center bundle. The Borel-Smale example is explained in detail in \cite{burnswilkinson2008}. For the sake of completeness, let us briefly mention what it is about. \par
\subsubsection{A non-dynamically coherent example} The aforementioned example is, in fact, a non-toral Anosov automorphism on a six-dimensional nilmanifold. Let $G_{1}$ and $G_{2}$ be copies of the three dimensional simply connected nonabelian nilpotent Lie group. And consider bases $X_{i}$, $Y_{i}$,  $Z_{i}$ of the corresponding Lie algebras ${\mathcal G}_{i}$, $i=1,2$, with the bracket condition
\begin{equation}\label{equation.bracket} [X_{1},Y_{1}]=Z_{1}\qquad[X_{2},Y_{2}]=Z_{2}\end{equation}
Consider now a hyperbolic automorphism $A\in SL(2,\Z)$, and let  $\lambda>1$ be one of its eigenvalues. Consider $f$ such that the derivative acts over the Lie algebra as follows:

\begin{eqnarray*}
 X_{1}\mapsto \lambda X_{1}&& X_{2}\mapsto \lambda^{-1} X_{2}\\
 Y_{1}\mapsto \lambda^{2}Y_{1}&& Y_{2}\mapsto \lambda^{-2} Y_{2}\\
  Z_{1}\mapsto \lambda^{3}Z_{1}&& Z_{2}\mapsto \lambda^{-3} Z_{2}
\end{eqnarray*}

The next step (which we will not explain) is to find a lattice $\Gamma$ of $G=G_{1}\times G_{2}$, such that it is invariant, so that the whole construction yields a diffeomorphism over the six-dimensional nilmanifold $G/\Gamma$ (the coset space). 

This example is, as a matter of fact, an Anosov diffeomorphism. But we may also look at it as a partially hyperbolic diffeomorphism, such that $E^{s}$ is the space generated by $Z_{2}$, $E^{u}$, the space generated by $Z_{1}$, and the center bundle, $E^{c}$, is the space generated by $X_{1}, X_{2}, Y_{1}$ and $Y_{2}$. \par
Note that $X_{1},X_{2},Y_{1}$ and $Y_{2}$ do not satisfy the Frobenius condition, due to (\ref{equation.bracket}), therefore $E^{c}$ is not integrable, that is, there is no invariant foliation tangent to $E^{c}$. \newline \par
After examining this example, we may ask: is the lack of Frobenius condition the only reason for non-integrability of the center bundle? What about the case of one-dimensional $E^{c}$, where the Frobenius condition is always trivially satisfied?  Notice that $E^{c}$ is a priori only H\"older, so Picard's Theorem does not necessarily hold. The question of whether a partially hyperbolic diffeomorphism existed with a one-dimensional non-integrable center bundle remained open since the 70's. \par
Let us define a stronger concept of integrability of the center bundle.

\begin{definition}\label{definition.dc}
A partially hyperbolic diffeomorphism is {\em dynamically coherent} if the following two conditions are satisfied:
\begin{enumerate}
 \item There is an invariant foliation tangent to the distribution $E^{c}\oplus E^{u}$
 \item There is an invariant foliation tangent to the distribution $E^{s}\oplus E^{c}$.
\end{enumerate}
\end{definition}

In 2009, Hertz-Hertz-Ures \cite{HHUnonDCexample}  provided the first example of a partially hyperbolic diffeomorphism with a non-integrable one-dimensional center-bundle. It is a non-dynamically coherent example in a 3-torus.  Let us mention that this example strongly contrasts with a result by Brin, Burago and Ivanov \cite{brinburagoivanov2009}, which proves that all {\em absolutely} partially hyperbolic diffeomorphisms of the 3-torus are dynamically coherent. Absolute partial hyperbolicity is more restrictive than partial hyperbolicity and requires the bound in (\ref{partial.hyperbolicity2}) to be uniform, namely: that there exist $\lambda<1<\mu$ such that for all $x\in M$ and unit vectors $v^{\sigma}\in E^{\sigma}_{x}$, $\sigma=s,c,u$ we have 
\begin{eqnarray}\label{eq.abs.ph}
\|Df(x)v^{s}\|\leq \lambda \leq \|Df(x)v^{c}\|\leq \mu\leq \|Df(x)v^{u}\|.
\end{eqnarray}

In \cite{HHUcenterunstable} it was shown that if there is an invariant foliation tangent to the distribution $E^{c}\oplus E^{u}$, then this foliation cannot contain a compact leaf, that is, a 2-torus tangent to $E^{c}\oplus E^{u}$ (see Theorem \ref{teo.toro.cu}). Note that, in principle, a $cu$-torus could coexist with a $cu$-foliation, but cannot be {\em part} of it. \par In this example mentioned below, in fact, the distribution $E^{c}\oplus E^{u}$ is uniquely integrable at every point of $\T^{3}$ minus a 2-torus, which is tangent to $E^{c}\oplus E^{u}$. This shows that no invariant foliation exists which is tangent to $E^{c}\oplus E^{u}$, whence the example is non-dynamically coherent. We refer the reader to Figure \ref{nondinco} and the caption below.

We prove the following:

\begin{theorem}\cite{HHUnonDCexample} \label{teo.non.dc} There exists a partially hyperbolic diffeomorphism $f:\T^{3}\to\T^{3}$ such that 
\begin{enumerate}
\item $E^{c}$ is not integrable
\item There is no invariant foliation tangent to the distribution $E^{c}\oplus E^{u}$
\item There is an invariant 2-dimensional torus $T$  tangent to the distribution $E^{c}\oplus E^{u}$
\end{enumerate}
 Moreover, there is a $C^{1}$-open neighborhood ${\mathcal U}$ of $f$ such that all $g$ in ${\mathcal U}$ satisfy conditions (1) and (2).
\end{theorem}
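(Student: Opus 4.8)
The plan is to start from an algebraic model and then perform a perturbation, localized near an invariant torus, that makes the center direction twist as one crosses the torus; non-integrability will then follow from the impossibility of completing the resulting spiralling integral surfaces into a foliation, together with Theorem \ref{teo.toro.cu}. First I would fix a linear partially hyperbolic automorphism $A:\T^3\to\T^3$ with one-dimensional bundles $E^{s}\oplus E^{c}\oplus E^{u}$ and eigenvalues $\lambda^{s}<\lambda^{c}<1<\lambda^{u}$, chosen so that the plane $E^{c}\oplus E^{u}$ is rational and descends to an invariant $2$-torus $T\subset\T^3$ tangent to $E^{c}\oplus E^{u}$ (the existence of such a $cu$-torus on $\T^3$ is exactly case (1) of Theorem \ref{thm.anosov.tori}). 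Since $\lambda^{c}<1<\lambda^{u}$, the restriction $A|_{T}$ is a hyperbolic automorphism of $T$ whose stable direction is $E^{c}|_{T}$ and whose unstable direction is $E^{u}$; and since $E^{s}$ is more strongly contracted than $E^{c}$, the normal bundle of $T$ (which is $E^{s}$) makes $T$ a normally hyperbolic, normally attracting invariant torus.

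Next I would perturb $A$ inside a tubular neighborhood $T\times(-\eps,\eps)$ of $T$, the last coordinate being the $E^{s}$-direction, to produce $f$. The perturbation is designed to (i) keep $T$ invariant and tangent to $E^{c}\oplus E^{u}$, (ii) preserve the invariant cone fields, so that $f$ remains partially hyperbolic, and (iii) twist the center direction across $T$: the projection of $E^{c}$ onto $TT$ is forced to rotate as a function of the normal coordinate, agreeing with the original center direction on $T$ but genuinely rotated off $T$. Because $E^{s}$ is contracted, forward orbits off $T$ approach $T$, and the accumulated twist forces every integral curve of $E^{c}$, and every integral surface of $E^{c}\oplus E^{u}$, to spiral onto $T$ with infinitely many turns. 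This already gives item (3), and a direct check of the cone conditions gives partial hyperbolicity of $f$.

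To obtain (1) and (2) I would argue as follows. Away from $T$ the distribution $E^{c}\oplus E^{u}$ is smooth by the explicit form of the perturbation, and a graph-transform argument (carrying the unstable foliation $W^{u}$ by center holonomy) shows it is uniquely integrable there, with leaves the spiralling surfaces just described. If a global invariant foliation $\F^{cu}$ tangent to $E^{c}\oplus E^{u}$ existed, then near $T$ its leaves would have to coincide with these uniquely integrable spiralling surfaces; by their infinite accumulation on $T$ and the local product structure of $\F^{cu}$, the leaf through any point of $T$ would be forced to be $T$ itself, so $T$ would be a compact leaf of $\F^{cu}$, contradicting Theorem \ref{teo.toro.cu}. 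This proves (2); the same twist prevents $E^{c}$ from being uniquely integrable near $T$, i.e. rules out a foliation tangent to $E^{c}$, giving (1). For the $C^{1}$-open neighborhood $\mathcal U$, normal hyperbolicity of $T$ and the persistence theorem (Theorem 2.15 in \cite{HPS}) provide, for every nearby $g$, an invariant normally hyperbolic torus $T_{g}$ close to $T$ in position and tangent direction, on which $g|_{T_{g}}$ is still Anosov with $E^{c}|_{T_{g}}$ stable; partial hyperbolicity is $C^{1}$-open, and the spiralling is an open condition since it is driven by the persistent contraction toward $T_{g}$ together with the persistent nonzero twist. Hence the obstruction argument runs verbatim for $g$, establishing (1) and (2) throughout $\mathcal U$.

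The hardest part will be turning ``the integral surfaces spiral into $T$'' into ``no $cu$-foliation exists,'' and doing so robustly. One must (a) prove unique integrability of $E^{c}\oplus E^{u}$ off $T$ even though $E^{c}$ is only H\"older, which requires a careful graph-transform/normal-form analysis near $T$ rather than Frobenius, and (b) show that the accumulation of leaves on $T$ is genuine and forces $T$ to be a leaf of any hypothetical $\F^{cu}$, rather than merely coexisting with it --- precisely the subtlety flagged before the statement. Controlling this accumulation \emph{uniformly} for all $g\in\mathcal U$, using only the persistent normally hyperbolic torus $T_{g}$ and the persistent twist, is the crux of the argument.
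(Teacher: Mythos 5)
Your overall logical skeleton for items (2) and (3) and for robustness --- a normally attracting $cu$-torus $T$, unique integrability of $E^{c}\oplus E^{u}$ away from $T$, accumulation of the unique leaves on $T$ forcing $T$ to be a compact leaf of any hypothetical $\F^{cu}$, and then Theorem \ref{teo.toro.cu} to rule this out, with \cite{HPS} persistence of $T$ handling the open neighborhood --- is exactly the argument the paper uses. But your construction collapses at the very first step: there is no linear hyperbolic automorphism of $\T^{3}$ with eigenvalues $\lambda^{s}<\lambda^{c}<1<\lambda^{u}$ for which the plane $E^{c}\oplus E^{u}$ is rational. If that plane $P$ were rational, $\Z^{3}\cap P$ would be an invariant rank-two sublattice, the induced map on $\R^{3}/P\cong\R$ would be multiplication by $\lambda^{s}$ and would preserve a nontrivial discrete subgroup, forcing $\lambda^{s}=\pm 1$; this contradicts $|\lambda^{s}|<1$. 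This is precisely why the paper does \emph{not} start from a partially hyperbolic linear model: it starts from the Axiom A product $A\times f$ on $\T^{2}\times\T$, where $A$ is a two-dimensional hyperbolic automorphism and $f$ is a north--south circle map with $f'(1/2)=\sigma<\lambda<1<\nu=f'(0)<1/\lambda$ (the ``normal eigenvalue'' is $1$ on average, consistent with the obstruction above). That skeleton is \emph{not} partially hyperbolic, and the entire technical content of the proof is to create the splitting: one perturbs by $(\phi(\theta)e_{s},0)$, solves the cohomological equation $b\circ f=\lambda b+\phi$ to get two candidate slopes $\eta'$, $\zeta'$ for $E^{c}$ and $E^{s}$, and chooses $\phi$ so that $\eta'\to\infty$ at $\theta=1/2$ and $\zeta'\to\infty$ at $\theta=0$ (so the bundles close up continuously by becoming horizontal) while $\eta'-\zeta'$ keeps a constant nonzero sign on each of $(0,1/2)$ and $(1/2,1)$ (so the splitting is genuinely a splitting). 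None of this has an analogue in your proposal, and it cannot be replaced by ``preserve the cone fields of the linear model'' because the linear model does not exist.

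A second, smaller discrepancy: the mechanism that kills integrability of $E^{c}$ in the actual example is not spiralling. The center curves $l_{x}(\theta)=(x,0)+(\eta(\theta)e_{s},\theta)$ converge to a single point of the torus tangentially, and because $\eta'$ has \emph{opposite signs} on the two sides of $\theta=1/2$, the two branches form a cusp (``peak'') at the torus; it is this peak, not an infinite winding, that prevents the family of center curves from assembling into a foliation and gives item (1). Your spiralling picture is not backed by any construction, and even granting it, you would still owe the proof that the twisted plane field is continuous at $T$ and transverse to a well-defined $E^{s}$ there --- which is exactly the $\eta'\to\infty$, $\zeta'\to\infty$, $\eta'\neq\zeta'$ analysis you have omitted.
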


Again, a 2-dimensional periodic torus appears. How relevant is this object? We conjecture that these tori might be the only obstruction to dynamical coherence. 

\begin{conjecture}[{\bf Dynamical coherence conjecture}: Hertz-Hertz-Ures (2009)] \label{non.dc.conjecture.strong}If a partially hyperbolic diffeomorphism of a 3-manifold is not dynamically coherent, then there is a 2-torus tangent to either $E^{c}\oplus E^{u}$ or $E^{s}\oplus E^{c}$.
\end{conjecture}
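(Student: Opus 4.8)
The plan is to translate the problem into the topology of branching foliations and to isolate a compact toral leaf as the sole obstruction to ``un-branching'' them. First I would pass to a finite cover and a power of $f$ so that $E^{s},E^{c},E^{u}$ are orientable and orientation-preserving; a $2$-torus tangent to $E^{c}\oplus E^{u}$ or $E^{s}\oplus E^{c}$ produced in the cover yields the conclusion in $M$ by standard covering-space arguments, which I would check separately. Since $E^{s}$ and $E^{u}$ are always uniquely integrable, the whole difficulty lives in the $2$-dimensional distributions $E^{c}\oplus E^{u}$ and $E^{s}\oplus E^{c}$. At this point I would invoke the Burago--Ivanov construction, in the form available for $3$-dimensional partially hyperbolic diffeomorphisms, to obtain $f$-invariant branching foliations $\Wtilde^{cu}$ and $\Wtilde^{cs}$ tangent to these two distributions: families of complete immersed surfaces passing through every point that never topologically cross, but which are allowed to merge and to separate.

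Next I would observe that non-dynamical coherence is caused precisely by genuine branching. If both $\Wtilde^{cu}$ and $\Wtilde^{cs}$ happen to be honest foliations (one leaf through each point), then they are exactly the invariant foliations tangent to $E^{c}\oplus E^{u}$ and $E^{s}\oplus E^{c}$ demanded by Definition \ref{definition.dc}, so $f$ is dynamically coherent. Hence non-coherence forces at least one of them, say $\Wtilde^{cu}$, to carry two distinct leaves $L_{1}\neq L_{2}$ through some point $x$. Because every center-unstable leaf is saturated by the unique unstable foliation $W^{u}$, the leaves $L_{1}$ and $L_{2}$ must coincide along the entire unstable leaf $W^{u}(x)$ and can split apart only in the center direction. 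Consequently the branching locus is $W^{u}$-saturated and $f$-invariant, and along its orbit the two branches bound an invariant open region foliated by unstable leaves.

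The heart of the matter is to extract a compact leaf from this branching. I would examine a maximal region in which two branches stay merged and then peel apart, and iterate it: the uniform expansion of $W^{u}$ forces the unstable leaves inside the region to grow without bound, so that a limit of the merging leaves -- equivalently, a minimal set of the branching lamination obtained from the nested invariant regions -- is a complete surface $T$ tangent to $E^{c}\oplus E^{u}$ on which the nonvanishing line field $E^{u}$ restricts. Were $T$ compact, Poincar\'e--Hopf would give $\chi(T)=0$, and in the orientable setting $T$ is the sought $2$-torus tangent to $E^{c}\oplus E^{u}$. A cleaner equivalent formulation of this step is purely topological: prove that a branching foliation with no compact leaf can be desingularized to a genuine foliation tangent to the very same distribution, so that the unique obstruction to promoting $\Wtilde^{cu}$ to an honest $cu$-foliation -- and thus, by the previous paragraph, to dynamical coherence -- is exactly a toral leaf. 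This dovetails with Theorem \ref{teo.toro.cu}, which forbids compact leaves inside a genuine $cu$-foliation, and with Theorem \ref{teo.non.dc}, where the entire non-uniqueness locus collapses onto a single invariant $cu$-torus.

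I expect this extraction of a compact embedded leaf to be the main obstacle. A priori the branching may be spread across $M$ and the nested invariant regions could shrink to a non-compact minimal lamination, a Reeb-type component, or some wilder object tangent to $E^{c}\oplus E^{u}$ instead of to a single torus; ruling these out, and showing that branching must concentrate on a compact invariant surface, is the crux. My line of attack would combine Novikov--Reeb stability constraints for codimension-one foliations with the dynamical input that $f$ expands $W^{u}$ inside every leaf while neither strongly contracting nor expanding the center direction, using the latter to locate a periodic point on the limiting surface and then Reeb stability to spread compactness to a full toral leaf. Controlling the global topology of the branch loci on an arbitrary $3$-manifold -- rather than on the nilmanifolds and mapping tori where Theorem \ref{thm.anosov.tori} already constrains which tori can occur -- is where the real difficulty lies.
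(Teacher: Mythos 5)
The first thing to note is that the statement you are proving is recorded in the paper as an \emph{open conjecture} (Conjecture \ref{non.dc.conjecture.strong}); the paper contains no proof of it in general, only the special cases of $\T^{3}$ (Potrie) and of $3$-manifolds with (virtually) solvable fundamental group (Hammerlindl--Potrie, Theorem \ref{HammerPot}). So your attempt has to be judged as a proposed proof of an open problem. Your framework --- pass to a finite cover and iterate to orient the bundles, invoke the Burago--Ivanov branching foliations tangent to $E^{c}\oplus E^{u}$ and $E^{s}\oplus E^{c}$, observe that non-coherence forces genuine branching in one of them, and then try to show that branching forces a compact toral leaf --- is exactly the framework used in the known cases, and the reductions in your first two paragraphs are standard (two $cu$-leaves through $x$ do share $W^{u}(x)$, and an unbranched invariant branching foliation is an honest invariant foliation, modulo the usual care needed to descend coherence from a finite cover and a finite iterate).

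The gap is the one you flag yourself, and it is not a technical loose end: it is the entire content of the conjecture. Nothing in your third and fourth paragraphs actually produces a \emph{compact} leaf; the limit/minimal-set construction yields at best a complete immersed surface tangent to $E^{c}\oplus E^{u}$, and Poincar\'e--Hopf only applies once compactness is already in hand. The ``cleaner equivalent formulation'' (a branching foliation with no compact leaf can be desingularized to a genuine invariant foliation tangent to the same distribution) is not something you can lean on --- it is an unproved statement essentially equivalent to the conjecture itself; the Burago--Ivanov approximation (Theorem \ref{truefol3}) gives a true foliation that is neither $f$-invariant nor tangent to $E^{cs}$, so it does not supply the desingularization. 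Moreover, the dynamical input you propose to use to force compactness is precisely what is only available in the solvable setting: there, the argument closes because $\widetilde{\mathcal{F}^u}$ is quasi-isometric in the universal cover and every leaf of the branched foliation stays at bounded Hausdorff distance from a fixed invariant plane, the torus appearing exactly when that plane projects to a $2$-torus. Both facts fail on general $3$-manifolds (for instance for time-one maps of Anosov flows on hyperbolic manifolds, where the conjecture remains genuinely open), so the proposal does not constitute a proof, nor does it go beyond the known partial results.
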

Again, if this conjecture were true, the only manifolds supporting non-dynamically coherent dynamics would be the ones listed in Theorem \ref{thm.anosov.tori}. We state this explicitly as a weaker conjecture:

\begin{conjecture}[{\bf Weak dynamical coherence conjecture}: Hertz-Hertz-Ures (2009)] \label{non.dc.conjecture.weak} The only orientable 3-manifolds supporting non-dynamically coherent dynamics are:
\begin{enumerate}
 \item the 3-torus $\T^{3}$
\item the mapping torus of $-id:\T^{2}\to\T^{2}$
\item the mapping tori of hyperbolic automorphisms on 2-tori 
\end{enumerate} 
\end{conjecture}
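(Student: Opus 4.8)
The plan is to obtain this weaker statement as a formal consequence of the strong Dynamical Coherence Conjecture, Conjecture~\ref{non.dc.conjecture.strong}, together with the topological classification in Theorem~\ref{thm.anosov.tori}. Concretely, let $M^3$ be an orientable 3-manifold admitting a partially hyperbolic diffeomorphism $f$ that is not dynamically coherent. Conjecture~\ref{non.dc.conjecture.strong} produces an embedded 2-torus tangent either to $E^c\oplus E^u$ or to $E^s\oplus E^c$, and Theorem~\ref{thm.anosov.tori} then forces $M^3$ to be $\T^3$, the mapping torus of $-\mathrm{id}$, or the mapping torus of a hyperbolic automorphism of $\T^2$. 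This is exactly the asserted list, so the entire content of the statement lies in promoting this reduction to an unconditional argument, i.e.\ in proving Conjecture~\ref{non.dc.conjecture.strong}; the passage from the strong to the weak form is purely formal. I will therefore describe how I would attack the strong version.

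After replacing $f$ by a lift to a finite cover on which $E^s$, $E^c$, $E^u$ and their orientations are globally defined (non-coherence is inherited by such covers, since an $f$-invariant foliation tangent to $E^s\oplus E^c$ or $E^c\oplus E^u$ is unique and hence descends, and a torus produced upstairs descends to $M$, an embeddedness point I postpone), I would invoke the branching-foliation theory of Burago and Ivanov: when $\dim E^c=1$ and the bundles are oriented there exist $f$-invariant branching foliations $\Fcs$ and $\Fcu$ tangent to $E^s\oplus E^c$ and $E^c\oplus E^u$, each approximable in the $C^0$ topology by a genuine foliation. The key reformulation is that $f$ is dynamically coherent precisely when $\Fcs$ and $\Fcu$ are honest (non-branching) foliations. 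It then suffices to prove the contrapositive of the strong conjecture: if \emph{neither} branching foliation possesses a compact leaf, then both are genuine, whence $f$ is coherent. The converse half is immediate---if $f$ is coherent then $\Fcu$ is a genuine $cu$-foliation, which by Theorem~\ref{teo.toro.cu} contains no compact leaf, and symmetrically for $\Fcs$---so a compact leaf of $\Fcu$ or $\Fcs$ is exactly a $cu$- or $cs$-torus, matching the desired conclusion.

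The heart of the matter is thus to show that branching can occur only in the presence of a compact leaf. I would study the branching locus of $\Fcu$: the set where two distinct surface leaves become tangent along a center curve and then separate. This set is closed, saturated by center curves, and $f$-invariant. Using the exponential contraction along $E^s$ and expansion along $E^u$ transverse to $\Fcu$, one expects the branching locus to carry a nonempty $f$-invariant compact sublamination, and the goal is to prove that a leaf of this sublamination is a 2-torus tangent to $E^c\oplus E^u$. Equivalently, and perhaps more tractably, one argues that in the absence of any compact leaf the $C^0$ approximating foliation can be taken genuinely $f$-invariant, so that branching is resolvable and $f$ is coherent. Global geometric control of the center leaves in the universal cover---showing, in the spirit of the Hammerlindl--Potrie quasi-isometry estimates, that center curves are uniformly quasigeodesic---would be the main quantitative input, ensuring that the branching cannot escape to infinity but must concentrate on a closed periodic surface.

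The main obstacle, and the reason the conjecture remains open, is precisely this implication ``branching $\Rightarrow$ compact leaf.'' Branching could a priori be diffusely spread through the manifold, and the invariant sublamination carried by the branching locus could consist entirely of noncompact leaves---planes, cylinders, or higher-genus surfaces---with no torus among them; excluding these requires delicate large-scale geometry of $\Fcu$ coupled with the transverse hyperbolicity. The known non-coherent example of Theorem~\ref{teo.non.dc}, in which a genuinely branching center-unstable foliation coexists with an invariant $cu$-torus on $\T^3$, is consistent with this picture and strongly suggests that the torus is where the branching localizes. Converting this heuristic into a proof away from the toral and solvmanifold models---where one cannot fall back on an ambient algebraic structure to locate the torus---is the crux of the difficulty.
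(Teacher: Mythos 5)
This statement is an open conjecture, and the paper contains no proof of it: the authors only remark that it would follow from the strong Dynamical Coherence Conjecture~\ref{non.dc.conjecture.strong} combined with Theorem~\ref{thm.anosov.tori}, which is precisely the formal reduction you carry out in your first paragraph. That reduction is correct as far as it goes (a $cu$- or $cs$-torus yields a periodic such torus by Lemma~\ref{sc.cu.torus}, hence an Anosov torus by Proposition~\ref{exAnTor}, and Theorem~\ref{clasifica3} then pins down $M$), but it transfers the entire burden onto Conjecture~\ref{non.dc.conjecture.strong}, which is itself open. Everything after that in your proposal is a research program, not a proof, and you concede as much: the implication ``branching of the Burago--Ivanov foliations forces a compact leaf'' is exactly the unproven step, and nothing in your sketch rules out branching loci carrying only noncompact leaves. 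So the genuine gap is not a flaw in a step you wrote down; it is that the mathematically substantive step was never supplied.

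One concrete improvement you could have extracted from the material at hand: since Hammerlindl--Potrie (Theorem~\ref{HammerPot}) already establish Conjecture~\ref{non.dc.conjecture.strong} for 3-manifolds with (virtually) solvable fundamental group, and all three target manifolds in the statement have solvable fundamental group, the weak conjecture is \emph{unconditionally equivalent} to the assertion that no orientable 3-manifold with non-(virtually-)solvable fundamental group supports a non-dynamically-coherent partially hyperbolic diffeomorphism. This is a sharper reduction than passing through the strong conjecture, and it makes explicit that the only remaining difficulty is the non-solvable case --- precisely the setting where, as you note at the end, one has no ambient algebraic model in which to locate the putative torus. Two smaller cautions about your sketch: uniqueness of an invariant foliation tangent to $E^s\oplus E^c$ (which you invoke to descend coherence along finite covers) is itself a delicate and in general unresolved point, and the equivalence ``dynamically coherent $\Leftrightarrow$ the Burago--Ivanov branching foliations are genuine'' requires an argument in the forward direction, since a priori the branching foliations need not coincide with a given invariant foliation.
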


Conjecture \ref{non.dc.conjecture.strong} was proven true in the 3-torus by Potrie in his PhD. Thesis \cite{potrie}. This result was extended to manifolds with solvable fundamental group by Hammerlindl and Potrie in \cite{hammerlindlpotrie}:
\begin{theorem}[Hammerlindl-Potrie  \cite{hammerlindlpotrie}] Let $f$ be a partially hyperbolic diffeomorphism of a 3-manifold with solvable fundamental group. If $f$ is not dynamically coherent, then there exists a 2-torus tangent to 
 either $E^{c}\oplus E^{u}$ or $E^{s}\oplus E^{c}$.
\end{theorem}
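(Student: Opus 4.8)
The plan is to prove the contrapositive: assuming $f$ admits \emph{no} embedded $2$-torus tangent to $E^{c}\oplus E^{u}$ and \emph{none} tangent to $E^{s}\oplus E^{c}$, I would show that $f$ is dynamically coherent. The first step is structural. A closed orientable $3$-manifold with solvable fundamental group carrying a partially hyperbolic diffeomorphism is, up to a finite cover, a torus bundle over the circle; concretely one lands among the nilmanifolds and the three families of Theorem~\ref{thm.anosov.tori}. In every case the universal cover is $\tilde M=\R^{3}$ and $\pi_{1}(M)$ acts by a virtually nilpotent or Sol-type group of deck transformations, which is exactly the regime in which one has uniform control on the geometry of the lifts.

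The second step replaces the a priori non-integrable distributions $E^{cs}=E^{s}\oplus E^{c}$ and $E^{cu}=E^{c}\oplus E^{u}$ by \emph{branching foliations}. By the theorem of Brin--Burago--Ivanov \cite{brinburagoivanov2009} there exist $f$-invariant branching foliations $\Fcs$ and $\Fcu$ tangent to $E^{cs}$ and $E^{cu}$, each a limit of genuine surface foliations whose leaves never topologically cross. The useful reformulation, already exploited by Potrie \cite{potrie} for $\T^{3}$, is that dynamical coherence is \emph{equivalent} to these branching foliations being genuine, i.e.\ to there being a unique leaf through each point; the appearance of two distinct leaves sharing a point (``branching'') is precisely the obstruction. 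Hence it suffices to rule out branching under the no-torus hypothesis.

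The third step works in $\tilde M=\R^{3}$ and uses the \emph{linearization} available for solvable fundamental group: the lift $\tilde f$ and its invariant bundles remain at bounded distance from an algebraic (linear Anosov, nil, or Sol) model, so the lifted strong foliations $\widetilde{\mathcal W}^{s},\widetilde{\mathcal W}^{u}$ and the lifted branching foliations $\widetilde{\mathcal W}^{cs},\widetilde{\mathcal W}^{cu}$ are quasi-isometric to their linear counterparts. Two consequences follow. First, a compact leaf of $\Fcu$ would carry the nonvanishing center line field $E^{c}$, hence have zero Euler characteristic, and being orientable would be an embedded $2$-torus tangent to $E^{c}\oplus E^{u}$ --- excluded by hypothesis; with compact leaves ruled out, a Novikov--Reeb type argument for branching foliations forces every leaf of $\widetilde{\mathcal W}^{cu}$ (and of $\widetilde{\mathcal W}^{cs}$) to be a properly embedded plane. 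Second, quasi-isometry yields a \emph{global product structure}: each $cu$-plane meets each leaf of $\widetilde{\mathcal W}^{s}$ in exactly one point, so the leaf space of $\widetilde{\mathcal W}^{cu}$ is parametrized by a single strong stable line and is therefore Hausdorff, homeomorphic to $\R$. A branching point would make this leaf space non-Hausdorff; hence no branching occurs and $\widetilde{\mathcal W}^{cu}$ descends to a genuine $f$-invariant foliation tangent to $E^{cu}$. The symmetric argument handles $E^{cs}$, and together they give dynamical coherence, completing the contrapositive.

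The main obstacle is this third step --- establishing the global product structure and converting it into the absence of branching. The delicate point is that the leaf space of a branching foliation is a priori non-Hausdorff, so one cannot merely assert that distinct leaves are separated; the quasi-isometric estimates coming from the solvable structure must be strong enough both to guarantee that the plane leaves are \emph{properly} embedded and pairwise non-crossing at large scale, and to produce the one-point intersections with the transverse strong foliation. A second difficulty is that the required estimates are genuinely geometry-dependent: the nilpotent (polynomial-growth) case and the Sol case (mapping tori of Anosov automorphisms, with exponential growth) demand different linearization inputs, and the borderline Euclidean case must reproduce Potrie's theorem for $M=\T^{3}$ as a special instance.
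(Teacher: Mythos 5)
Your proposal follows essentially the same route as the paper's sketch (which follows Potrie and Hammerlindl--Potrie): argue the contrapositive, replace $E^{s}\oplus E^{c}$ and $E^{c}\oplus E^{u}$ by the Burago--Ivanov branching foliations, and use quasi-isometry/linearization in the universal cover --- available precisely because $\pi_1(M)$ is solvable --- to obtain a global product structure with the transverse strong foliation, the excluded $cs$- or $cu$-torus being exactly the second alternative of the dichotomy (torus leaf versus leaves shadowing a dense plane) that the paper records as ``Fact 2.'' The one soft spot is your final deduction that global product structure kills branching: parametrizing the leaf space of $\widetilde{\mathcal W}^{cu}$ by a single stable line already presupposes injectivity, i.e.\ the absence of branching, so this step should instead invoke the double-intersection argument of Proposition 8.4 of \cite{potrie} (an unstable/stable arc issuing from a branch point would have to meet one of the two non-crossing leaves twice) rather than a Hausdorffness claim.
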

Hammerlindl and Potrie hence have proven that if the Weak conjecture \ref{non.dc.conjecture.weak} is true, then the Strong conjecture \ref{non.dc.conjecture.strong} also holds. After Hammerlindl-Potrie, in order to prove the Conjecture  \ref{non.dc.conjecture.strong}, it would be enough to show that all partially hyperbolic diffeomorphisms in 3-manifolds with non-solvable fundamental group are dynamically coherent.\par
Hammerlindl-Potrie's result is the the strongest and most complete one in this direction up to this date. \par
In Section \ref{section.dc} we sketch a proof of this fact for the simplest case. See also Hammerlindl-Potrie's survey \cite{hammerlindl.potrie.survey} for a more complete explanation on this and classification topics. 
\subsection{Classification}\label{subsection.classification}
The third main topic in the study of partially hyperbolic dynamics in 3-manifolds is their classification. 
As early as 2001, Enrique Pujals proposed the following conjecture:

\begin{conjecture}[{\em Classification conjecture:} Pujals (2001), see  \cite{Tranph}] \label{conjecture.pujals}
 If a partially hyperbolic diffeomorphism of a 3-manifold is {\em transitive}, then is is (finitely covered by) one of the following:
\begin{enumerate}
 \item a perturbation of the time-one map of an Anosov flow
 \item a skew product
 \item a DA-diffeomorphism
\end{enumerate}
\end{conjecture}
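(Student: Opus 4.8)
The plan is to reduce the classification to an analysis of the center foliation, and to match the three alternatives with three mutually exclusive types of center dynamics. First I would establish dynamical coherence: a transitive partially hyperbolic $f$ should admit invariant foliations tangent to $E^{c}\oplus E^{u}$ and $E^{s}\oplus E^{c}$ in the sense of Definition~\ref{definition.dc}, and by intersecting their leaves one obtains a center foliation $\Fc$. By Theorem~\ref{thm.anosov.tori} the only way coherence can fail is through an embedded Anosov torus, and such a torus already pins the ambient manifold down to the short list $\T^{3}$, the mapping torus of $-\mathrm{id}$, or a mapping torus of a hyperbolic automorphism; on that list one argues directly, the solvable case being covered by Hammerlindl--Potrie. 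So after this reduction one may assume $f$ is dynamically coherent with a well-defined $\Fc$.

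Second, I would study the leaf structure of $\Fc$. Following Burago--Ivanov one passes to branching foliations, shows the center-stable and center-unstable foliations are Reebless, and then lifts to the universal cover $\widetilde M$ to control the leaf spaces. The basic dichotomy is whether the center leaves are \emph{compact} (circles) or \emph{noncompact} (lines). This, together with whether $f$ acts on each center leaf \emph{neutrally} (as an isometry up to bounded distortion) or \emph{hyperbolically} (uniformly expanding or contracting along $E^{c}$ after a suitable iterate), is what separates the three model classes.

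The three cases then read as follows. If the center leaves are circles, $\Fc$ is a Seifert fibration and, up to a finite cover, $f$ descends to an Anosov diffeomorphism on the base orbifold; realizing $f$ as a bundle map over this base exhibits it as a \emph{skew product}, alternative~(2). If the center leaves are lines and the center action is neutral, I would collapse $\Fc$ to recover a genuine flow, show the collapsed strong bundles remain uniformly hyperbolic so that the flow is Anosov, and then prove a leaf-conjugacy statement identifying $f$ with a \emph{perturbation of the time-one map} of that Anosov flow, alternative~(1). Finally, if the center action is hyperbolic, the dynamics is DA-like: a Franks-type argument produces a semiconjugacy to a linear Anosov model and concludes that $f$ is a \emph{DA-diffeomorphism}, alternative~(3).

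The main obstacle, and the reason the conjecture remains open, is the case of manifolds with \emph{nonsolvable} fundamental group (hyperbolic and general Seifert geometries), where neither the first reduction nor the leaf-space analysis is available off the shelf: establishing dynamical coherence there is itself unresolved, and the global theory of taut foliations no longer forces the rigid leaf-space pictures that drive the solvable case. Even granting coherence, the hardest single step is the neutral case---turning an isometric center foliation into an honest Anosov flow and proving the rigidity needed to conclude that $f$ is merely a perturbation of its time-one map requires a leaf-conjugacy theory substantially beyond what the Hirsch--Pugh--Shub machinery currently provides. It is precisely the possibility of exotic, non-flow-like neutral center behaviour on hyperbolic manifolds that prevents a uniform argument.
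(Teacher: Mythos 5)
There is a fundamental problem with the enterprise: the statement you are trying to prove is a \emph{conjecture}, the paper contains no proof of it, and in fact the paper reports that it is \textbf{false}. Bonatti, Gogolev, Parwani and Potrie constructed transitive (indeed robustly transitive and stably ergodic), absolutely partially hyperbolic diffeomorphisms on closed orientable $3$-manifolds admitting Anosov flows such that $f^{n}$ is not homotopic to the identity for any $n>0$ (see Theorem~\ref{BGP} and Subsection~\ref{anomalous}). Such an $f$ cannot be a perturbation of the time-one map of an Anosov flow (nor finitely covered by one, nor leaf conjugate to one), nor a skew product, nor a DA. These examples are built precisely in the regime you identify as the hardest one --- line center leaves with neutral center behaviour on a unit tangent bundle of a hyperbolic surface --- by composing the time-one map of the geodesic flow with the projectivized derivative of a Dehn twist supported in a long thin collar. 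So the step of your plan that ``turns an isometric center foliation into an Anosov flow and identifies $f$ with a perturbation of its time-one map'' is not merely difficult: it is impossible in general, because the homotopy class of $f$ obstructs any such identification. The conjecture survives only on $3$-manifolds with (virtually) solvable fundamental group, where Hammerlindl--Potrie proved it.

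A secondary gap in your first reduction: you invoke Theorem~\ref{thm.anosov.tori} to claim that failure of dynamical coherence forces an embedded Anosov torus. That theorem only says that \emph{if} a torus tangent to $E^{s}\oplus E^{u}$, $E^{c}\oplus E^{u}$ or $E^{s}\oplus E^{c}$ exists, \emph{then} the manifold is on the short list. The implication ``not dynamically coherent $\Rightarrow$ there is such a torus'' is Conjecture~\ref{non.dc.conjecture.strong}, itself open outside the solvable case, so you would be assuming an unproven conjecture to launch the argument. (For transitive maps the torus is actually excluded outright, since a periodic $cs$- or $cu$-torus is attracting or repelling; but that only sharpens the point that coherence for transitive maps on manifolds with non-solvable fundamental group is genuinely unresolved, not a consequence of the cited theorem.)
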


In 2009, Hammerlindl showed in his PhD Thesis \cite{hammerlindl} that every absolutely partially hyperbolic diffeomorphism of $\T^{3}$ is {\em leafwise conjugate} to its linearization. Let us define this concept:

\begin{definition}[Leaf conjugacy] \label{def.leaf.conjugacy}Two dynamically coherent partially hyperbolic diffeomorphisms $f,g:M\to M$ are {\em leafwise conjugate} if there exists a homeomorphism $h:M\to M$ carrying center leaves of $f$ to center leaves of $g$, that is, $h(W^{c}_{f}(x))=W^{c}_{g}(h(x))$, and such that 
 $$h(f(W_{f}^{c}(x)))=g(h(W_{f}^{c}(x)))$$
\end{definition}

Note that under the hypothesis of Hammerlindl's Thesis (absolute partial hyperbolicity in $\T^{3}$), there is always dynamical coherence, due to a result by Brin, Burago and Ivanov \cite{brinburagoivanov2009}. \par
Hammerlindl's Thesis suggested us that perhaps dynamical coherence was a more suitable hypothesis for a classification of 3-dimensional partially hyperbolic dynamics. This, together with our example \cite{HHUnonDCexample}, led us to propose the following:

\begin{conjecture}[{\bf Classification conjecture:} Hertz-Hertz-Ures (2009)] \label{hhu.conj.classification}Let $f$ be a partially hyperbolic diffeomorphism of a 3-manifold. \par
If $f$ is dynamically coherent, then it is (finitely covered by) one of the following:
\begin{enumerate}
\item a perturbation of a time-one map of an Anosov flow, in which case it is leafwise conjugate to an Anosov flow
\item a skew-product, in which case it is leafwise conjugate to a skew-product with linear base, or
\item a DA, in which case it is leafwise conjugate to an Anosov diffeomorphism of $\T^{3}$.
\end{enumerate}
 If $f$ is not dynamically coherent, then there are a finite number of 2-tori tangent either to $E^{c}\oplus E^{u}$ or to $E^{s}\oplus E^{c}$, and the rest of the dynamics is trivial, as in the non dynamically coherent example \cite{HHUnonDCexample} (see also Section \ref{section.dc})
\end{conjecture}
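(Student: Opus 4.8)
The plan is to treat the two regimes of the conjecture separately after a preliminary normalization, and in each regime to reduce the dynamical classification to the coarse geometry of the invariant foliations in the universal cover. First I would pass to a finite cover so that $M$ and all three bundles $E^{s},E^{c},E^{u}$ are orientable and $Df$ preserves their orientations; this is harmless for a statement up to finite covers and lets us work with genuine, rather than branching, one-dimensional center objects whenever coherence holds. Throughout, the central tool is the Burago--Ivanov branching foliations $\Fcs$ and $\Fcu$ tangent to $E^{s}\oplus E^{c}$ and $E^{c}\oplus E^{u}$ (branching versions $\mathcal{W}^{cs}_{\mathrm{bran}}$, $\mathcal{W}^{cu}_{\mathrm{bran}}$ when integrability fails): in the orientable setting these always exist, they are approximable by genuine foliations, and their leaves are complete surfaces in $\widetilde{M}$ that may merge but never topologically cross. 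Dynamical coherence is then precisely the statement that these branching foliations do not branch.

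In the dynamically coherent case I would analyze the one-dimensional center foliation $\Fc$ together with the leaf spaces of $\widetilde{\Fcs}$ and $\widetilde{\Fcu}$ in $\widetilde{M}$. The governing dichotomy is whether the center leaves are circles or lines. When they are circles, $\Fc$ is a Seifert fibration and the induced dynamics on the base orbifold is Anosov on $\T^{2}$, placing $f$ in the skew-product class and yielding the leafwise conjugacy of Definition \ref{def.leaf.conjugacy} by matching fibers. When the center leaves are lines, the quasi-isometry estimates of Burago--Ivanov and Potrie show that $\widetilde{\Fcs},\widetilde{\Fcu},\widetilde{\Fc}$ carry a global product structure comparable to that of the algebraic model determined by the action of $f_{*}$ on $\pi_{1}(M)$; transferring this product structure back to the model produces the conjugating homeomorphism $h$. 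The output is either the Anosov-flow class, when $E^{s}\oplus E^{u}$ is non-integrable and $M$ is a mapping torus or a Seifert space supporting such a flow, or the DA class, when $E^{s}\oplus E^{u}$ integrates and the linearization is Anosov on $\T^{3}$, recovering Hammerlindl's and Hammerlindl--Potrie's conclusions as the solvable-$\pi_{1}$ instance.

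The main obstacle is exactly the step the paper isolates: establishing coherence, and then the correct product structure, when $\pi_{1}(M)$ is \emph{not} solvable. By geometrization the relevant manifolds are Seifert fibered over hyperbolic base orbifolds and manifolds carrying a hyperbolic JSJ piece, and here the linear-algebra input driving the solvable case is unavailable. The natural substitute is a Thurston--Fenley style analysis of the bifoliated plane, namely the leaf space of $(\widetilde{\Fcs},\widetilde{\Fcu})$ with its $\pi_{1}(M)$-action, modeled on the classification of (pseudo-)Anosov flows by Barbot and Fenley: one would have to show that this action forces an Anosov-flow structure, simultaneously ruling out hyperbolic ambient manifolds and identifying $f$ as a perturbation of a time-one map. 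I expect this to be the crux, and the place where genuinely new ideas beyond the solvable case are required.

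Finally, in the non-dynamically coherent case I would use Theorem \ref{teo.toro.cu} together with the branching picture to locate all branching. By Theorem \ref{thm.anosov.tori} any periodic surface tangent to $E^{c}\oplus E^{u}$ or $E^{s}\oplus E^{c}$ is a torus and constrains $M$ to the three listed manifolds; the aim is to show that branching of $\mathcal{W}^{cu}_{\mathrm{bran}}$ occurs precisely along such $cu$-tori (symmetrically for $cs$). Finiteness of these tori should follow from their mutual disjointness together with a compactness and volume argument, and cutting $M$ along them ought to leave regions on which the normally hyperbolic dynamics is that of a suspension of an Anosov map of $\T^{2}$, the trivial behavior exhibited by the example of Theorem \ref{teo.non.dc}.
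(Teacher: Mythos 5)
You are attempting to prove a statement that the paper presents only as a \emph{conjecture}, offers no proof of, and in fact explicitly reports to be \textbf{false}: in Section \ref{classification} (see Subsection \ref{anomalous}) the authors explain that Bonatti, Gogolev, Parwani and Potrie \cite{BPP, BGP} have constructed both a (robustly) dynamically coherent example and a transitive, stably ergodic example of a partially hyperbolic diffeomorphism of a closed orientable 3-manifold that is not leafwise conjugate to any of the three models, ``proving both classification conjectures wrong.'' The non-transitive example of \cite{BPP} lives on the Franks--Williams manifold, is absolutely partially hyperbolic and robustly dynamically coherent, yet no iterate $f^{n}$, $n>0$, is isotopic to the identity; the transitive examples of Theorem \ref{BGP} are obtained by composing the time-one map of the geodesic flow of a hyperbolic surface (with a short closed geodesic) with the projectivized derivative of a Dehn twist supported in a long thin collar. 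Both are dynamically coherent diffeomorphisms on manifolds with non-solvable fundamental group that escape all three classes of Conjecture \ref{hhu.conj.classification}. No proof of the conjecture as stated can therefore exist.

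The location of the unfillable gap in your outline is exactly the step you yourself flag as the crux. Your dynamically coherent argument transfers a global product structure from $(\widetilde{\Fcs},\widetilde{\Fcu},\widetilde{\Fc})$ to ``the algebraic model determined by the action of $f_{*}$ on $\pi_{1}(M)$''; this is meaningful and correct only when $\pi_{1}(M)$ is (virtually) solvable, which is the content of the Hammerlindl--Potrie theorem the paper does prove. For Seifert manifolds over hyperbolic orbifolds and for graph manifolds there is no such algebraic model, and the hoped-for Barbot--Fenley style rigidity of the bifoliated plane does not force leaf conjugacy to a time-one map: the anomalous examples carry invariant center foliations whose induced ``flow'' is expansive but whose isotopy class is twisted by a Dehn twist, so the conjugating homeomorphism $h$ you posit cannot exist. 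Your treatment of the non-coherent half (locating branching along $cu$- or $cs$-tori and invoking Theorems \ref{thm.anosov.tori} and \ref{teo.non.dc}) is consistent with what is actually known, namely the Hammerlindl--Potrie result for solvable fundamental group, but as a proof of the full conjecture the proposal cannot be completed.
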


Both conjectures are true in certain manifolds, as it was proven by Hammerlindl and Potrie \cite{hammerlindlpotrie}:\newline\par
\begin{theorem}[Hammerlindl-Potrie \cite{hammerlindlpotrie}] Conjecture \ref{conjecture.pujals} and \ref{hhu.conj.classification} are true for partially hyperbolic diffeomorphisms in 3-manifolds with solvable fundamental group.
\end{theorem}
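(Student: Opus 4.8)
\section*{Proof plan}

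The plan is to reduce the statement to an algebraic model on the universal cover and then transport the dynamics back through a leaf conjugacy. First I would pin down the topology of the ambient manifold. A closed orientable $3$-manifold $M$ with solvable fundamental group carrying a partially hyperbolic diffeomorphism is, up to finite cover, a torus bundle over the circle; by the geometrization picture these carry $\R^3$, Nil, or Sol geometry and are exactly the manifolds of Theorem \ref{thm.anosov.tori}: the torus $\T^3$, nilmanifolds (mapping tori of unipotent automorphisms of $\T^2$), and solvmanifolds (mapping tori of hyperbolic automorphisms of $\T^2$). Passing to a finite cover and an iterate so that $E^s,E^c,E^u$ are orientable with $Df$ preserving their orientations, I would attach to $f$ its \emph{linearization}: the induced action on $H_1(M;\R)$ (equivalently, the induced automorphism of the nilpotent or solvable Lie group covering $M$) is hyperbolic in the appropriate sense and produces an algebraic partially hyperbolic model $A$ to which $f$ is homotopic.

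Second, I would work in the universal cover $\tilde M \cong \R^3$, where the strong foliations $\Fs$ and $\Fu$ lift to $\Wtilde^{\,s}$ and $\Wtilde^{\,u}$. The crucial analytic input is a coarse-geometric estimate in the spirit of Brin--Burago--Ivanov \cite{brinburagoivanov2009}: the lifted strong leaves are \emph{quasi-isometrically embedded} and remain within bounded Hausdorff distance of the corresponding linear foliations of $A$. Because $E^c$ may fail to be integrable, I would avoid a genuine center foliation and instead invoke the Burago--Ivanov branching center-stable and center-unstable surfaces, which exist whenever the bundles are orientable, showing that their lifts inherit the same bounded-distance control from the quasi-isometry of the strong leaves.

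Third, from these estimates I would establish a \emph{global product structure} on $\R^3$: every lifted center-unstable branching leaf meets every lifted strong stable leaf in exactly one point, and symmetrically for the complementary pairing. This is the technical heart and, I expect, the main obstacle. The difficulty is that the center-stable and center-unstable objects are only branching surfaces, so one must rule out branching using the bounded-distance control together with the coherence of the linear model, and then promote the branching surfaces to an honest pair of invariant foliations $\Fcs$ and $\Fcu$. When a $cu$- or $cs$-torus is present, this step detects it precisely as the obstruction to coherence, consistent with Theorem \ref{thm.anosov.tori} and Conjecture \ref{non.dc.conjecture.strong}; when no such torus exists, the product structure forces genuine $\Fcs$ and $\Fcu$, hence dynamical coherence.

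Finally, in the coherent case I would intersect $\Fcs$ and $\Fcu$ to recover the center foliation and build the leaf conjugacy of Definition \ref{def.leaf.conjugacy} by matching each center leaf $\Wc{x}$ of $f$ with the model center leaf determined by the global product structure; equivariance of this matching under $\pi_1(M)$ and under the lifted dynamics descends to a homeomorphism $h$ with $h(f(\Wc{x}))=A(h(\Wc{x}))$. The algebraic type of the monodromy then reads off which alternative of Conjectures \ref{conjecture.pujals} and \ref{hhu.conj.classification} occurs: hyperbolic monodromy on a solvmanifold yields a perturbation of the time-one map of a suspension Anosov flow, unipotent (nil) monodromy yields a skew product, and the toral case yields either a skew product or, when the center exponent is nonzero, a DA diffeomorphism leaf conjugate to a linear Anosov map of $\T^3$. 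The non-coherent case is exactly the torus-bearing one, where the remaining dynamics is trivial as in \cite{HHUnonDCexample}.
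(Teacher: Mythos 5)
Your outline is essentially the Hammerlindl--Potrie / Burago--Ivanov strategy, and it does match what the paper sketches for the dynamical coherence step (Section \ref{section.dc}: branched foliations, quasi-isometry of $\widetilde{\mathcal{F}^u}$, global product structure, unbranching) and for the torus and nilmanifold cases. The gap is in your very first step when the fundamental group is solvable but not virtually nilpotent. There $M$ is (finitely covered by) the mapping torus $M_A$ of a hyperbolic automorphism $A$ of $\T^2$, so $H_1(M_A;\R)\cong\R$ (the fiber classes are killed by $A-\mathrm{Id}$), and in fact a finite iterate of \emph{any} diffeomorphism of $M_A$ is homotopic to the identity. Consequently there is no hyperbolic ``linearization'' of $f$ acting on $H_1$, no semiconjugacy to an algebraic automorphism, and no DA alternative in this case: the hyperbolicity lives in the monodromy $A$ of the fibration, not in $f_\ast$. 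The correct comparison object is the time-one map of the suspension Anosov flow, which is isotopic to the identity, so one cannot ``read off the algebraic type'' from the action of $f$ on homology as you propose.

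Because of this, the paper's route for the Sol case diverges from yours at the final step. Rather than building the leaf conjugacy directly from a global product structure with a linear model, it (i) shows that the lifts of $\Fcs$ and $\Fcu$ are \emph{almost parallel} (leafwise bounded Hausdorff distance) to the model foliations $\A^{cs},\A^{cu}$ of the suspension flow, using that neither invariant foliation has compact leaves; (ii) exploits the infinite expansivity of the model flow to show that any two center leaves at finite Hausdorff distance lie in a common $cs$-leaf and a common $cu$-leaf, whence the flow generated by a unit field tangent to $E^c$ is expansive --- a double intersection of a strong leaf with a center-stable or center-unstable leaf would produce, \`a la Novikov, a compact leaf, which is excluded by \cite{HHUcenterunstable}; and (iii) invokes Brunella's theorem that an expansive flow on a torus bundle is leafwise conjugate to a transitive Anosov suspension. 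Your direct matching of center leaves could in principle replace (iii), but you would still need (i) and (ii) to define the matching and to prove it is continuous and injective on leaves, and none of that is supplied by an action on $H_1$. For the torus and nilmanifold cases your plan is the right one and gives a unified picture; for Sol you must replace the linearization by the canonical model isotopic to the identity and add the expansivity argument.
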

The classification conjectures motivated a lot of work; however, very recently, Bonatti, Gogolev, Parwani and Potrie  found both a dynamically coherent example and a transitive example that are not leaf-wise conjugate to any of the above models, proving both classification conjectures wrong \cite{BPP, BGP} (see Section \ref{classification}).
\begin{question} Is it possible to classify partially hyperbolic dynamics in 3-manifolds, modulo leaf conjugacies?
\end{question}

In the next sections we shall develop more deeply the concepts mentioned above. We tried to make each section as self contained as possible, what may imply that some definitions be repeated.

\section{Ergodicity}\label{section.ergodicity}

To establish the ergodicity of partially hyperbolic maps, the most general method available is the so called \emph{Hopf method}. To explain it, we first recall the following.

\begin{theorem}[Stable Manifold Theorem]
Let $M$ be a 3-manifold and let $f\in \diff^r(M)$\ be partially hyperbolic. Then there exist continuous foliations $\mathcal{W}^s=\{W^s(x)\}_{x\in M}$ and $\mathcal{W}^u=\{W^u(x)\}_{x \in M}$\ tangent to
$E^s$ and $E^u$, respectively, called the stable and the unstable foliations. Their leaves are $C^r$-immersed lines.
\end{theorem}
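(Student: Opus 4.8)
The plan is to construct the stable foliation by the classical graph-transform (Hadamard--Perron) method; the unstable foliation then follows verbatim by applying the argument to $f^{-1}$. Since $\dim M = 3$ and the splitting $TM = E^s \oplus E^c \oplus E^u$ is nontrivial, each summand is a one-dimensional $Df$-invariant subbundle, with $E^s$ uniformly contracted and $E^u$ uniformly expanded; this is exactly why the leaves will turn out to be immersed lines. First I would fix a finite atlas of exponential charts and, at each $x$, seek the local stable manifold $\Wsl{x}$ as the graph (in $T_x M \cong E^s_x \oplus (E^c_x \oplus E^u_x)$, read through $\exp_x$) of a map $\sigma_x : E^s_x \to E^c_x \oplus E^u_x$ with $\sigma_x(0)=0$ and $D\sigma_x(0)=0$. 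The invariance requirement $f(\Wsl{x}) \subset \Wsl{f(x)}$ then translates into a functional equation for the whole family $(\sigma_x)_x$.

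Next I would recast this equation as a fixed-point problem. On the Banach space of continuous families of uniformly Lipschitz sections with small Lipschitz constant, the induced graph-transform operator is well defined, and the uniform domination $\norm{Df|_{E^s}} < 1$ together with the lower bounds along $E^c \oplus E^u$ coming from (\ref{partial.hyperbolicity1})--(\ref{partial.hyperbolicity2}) makes it a contraction in the $C^0$ norm. The Banach fixed-point theorem then yields a unique such family, producing local stable manifolds that are Lipschitz graphs, tangent to $E^s$ at the base point, and depending continuously on $x$.

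For the promised $C^r$ regularity of the leaves I would invoke the fiber contraction theorem: the formal derivatives of the sections satisfy an induced bundle map which fibers over the already-established $C^0$ contraction and is itself contracting on the fibers, so its unique attracting fixed section is automatically $C^1$; a bootstrap on the order of differentiation upgrades this to $C^r$ since $f \in \diff^r(M)$. Tangency to $E^s$ along the entire leaf, not just at the base point, follows from the invariance equation together with the uniform contraction. Finally I would globalize by setting $\Ws{x} = \bigcup_{n\ge 0} f^{-n}\big(\Wsl{f^n(x)}\big)$: forward contraction of $E^s$ guarantees that this nested union is an injectively immersed $C^r$ line, and the continuous dependence of the local graphs on the base point assembles these lines into a continuous foliation $\Fs$ with continuous foliation charts.

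The main obstacle, and the reason the statement asserts only a \emph{continuous} foliation with $C^r$ leaves, is the regularity in the transverse (base) variable. Because the invariant bundles $E^s, E^c, E^u$ are in general merely H\"older continuous, the family $x \mapsto \sigma_x$ cannot be expected to be better than continuous, even though each individual leaf inherits the full $C^r$ smoothness of $f$. Keeping the leafwise regularity and the transverse regularity carefully separated throughout the fiber-contraction step is the delicate point; the remainder is the standard domination bookkeeping.
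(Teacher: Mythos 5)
Your graph-transform argument is correct and coincides with the proof the paper points to: the paper offers no proof of its own, only the citation to Theorem 4.1 of \cite{HPS}, whose argument is precisely the Hadamard--Perron fixed-point scheme plus the $C^r$ section (fiber contraction) theorem that you outline, including the globalization $W^s(x)=\bigcup_{n\ge 0}f^{-n}(W^s_{loc}(f^n(x)))$ and the observation that only H\"older transverse regularity can be expected. Nothing essential is missing.
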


See \cite{HPS}, Theorem $4.1$. We point out that, while their leaves are as smooth as $f$, the foliations $\mathcal{W}^s,\mathcal{W}^u$\ are seldom differentiable (\cite{AnosovThesis}, pag. 201). Their transverse 
regularity is only H\"older (\cite{HolFol}) in general. 

Using the Birkhoff ergodic theorem, it is not hard to see that a conservative diffeomorphism $f$ is ergodic if and only for every {\em continuous} observable $\varphi:M\to \Real$, the Birkhoff average
\begin{equation}\label{equation.birkhoff.global}
\tilde\varphi(x)=\lim_{|n|\to\infty}\frac{1}{n}\sum^{n-1}_{k=0}\varphi\circ f^k(x)
 \end{equation}
 is almost everywhere constant.
But $\tilde\varphi(x)$ coincides almost everywhere with
\begin{equation}\label{equation.birkhoff.+}
\varphi^{+}(x)=\lim_{n\to\infty}\frac{1}{n}\sum_{k=0}^{n-1}\varphi\circ f^{k}(x)
\end{equation}
which is constant on stable manifolds (using uniform continuity of $\varphi$). Analogously, $\tilde\varphi(x)$ coincides almost everywhere with 
\begin{equation}\label{equation.birkhoff.-}
\varphi^{-}(x)=\lim_{n\to\infty}\frac{1}{n}\sum_{k=0}^{n-1}\varphi\circ f^{-k}(x)
\end{equation}
which is constant on unstable manifolds.

Suppose $f$ is not ergodic. Then there would be a continuous observable $\varphi$ for which $\tilde \varphi$ is not almost everywhere constant, and thus neither are $\varphi^{+}$ and $\varphi^{-}$. Hence there exist two positive measure invariant sets $A$ and $B$ and $\alpha \in \Real$ such that $\varphi^{+}(x)\geq\alpha$ for all $x\in A$, and $\varphi^{-}(x)<\alpha$ for all $x\in B$. Note that $A$ is saturated by stable manifolds while $B$ is saturated by unstable manifolds. 

Assume $f$ is an Anosov diffeomorphism. Let $x$ be a point of $A$ such that almost all points $w\in\Ws{x}$  satisfy $\varphi^{-}(w)=\varphi^{+}(w)=\varphi^{+}(x)$. Such an $x$ exists because the stable foliation is {\em absolutely continuous} \cite{AnosovThesis} and $\varphi^+(w)=\varphi^-(w)$ almost everywhere. Consider $y$ a point belonging to the support  of $B$ and $V$ a product neighborhood containing $y$. On the one hand, observe that $\Ws{ x}$ intersects $V$ and let $W_V$ be a connected component of $ \Ws{x}\cap V$.    
On the other hand, $m(V\cap B)>0$ then,  the absolute continuity of the stable foliation implies that there is a local stable manifold $T\subset V$ such that $m_T(T\cap B)>0$ where $m_T$ is the Lebesgue measure of $T$. If we call $h^u$ the unstable holonomy in $V$ sending $T$ to  $W_V$ thus, $m_{W_V}(h^u(T\cap B))>0$. Since $B$ is $u$-saturated we have that $h^u(T\cap B)\subset B$. We obtain a contradiction with the fact that almost every point in $W_V$ satisfies $\varphi^{-}(w)=\varphi^{+}(w)$. This is basically the Hopf argument. 

In brief, there are two fundamental ingredients in the Hopf argument for an Anosov map:
\begin{enumerate}
  \item there is a way of joining any pairs of points through a curve that is piecewise either a stable or an unstable leaf
  \item the stable and unstable foliations are absolutely continuous, and completely transversal.
\end{enumerate}

\subsection{Accessibility, a property that implies ergodicity}

We would like to apply the previous method to a general partially hyperbolic system, that is, when there is some non-trivial center direction. To begin with, observe that in general it is not true that any two pair of points can be joined by a concatenation of stable and unstable leaves. For example, if we consider the partially hyperbolic diffeomorphism $\left(
\begin{array}{cc}
2&1\\1&1 
\end{array}
\right)\times id$ in $\T^{2}\times \mathbb{S}^{1}$, then any path consisting of a concatenation of stable and unstable leaves would be contained in a single 2-torus. 
We fix $f: M\rightarrow M$ $\mathcal{C}^2$ conservative partially hyperbolic, and call $c:[0,1]\rightarrow M$ an \emph{su-path} if it is piecewise $\mathcal{C}^1$ and for every $t$ where defined, $c'(t)\in E^s\cup E^u$.

\begin{definition}  
For a point $x \in M$, its accessibility class is the set 
$$
AC(x):=\{y:\exists c:[0,1]\rightarrow M\text{ su-path such that }c(0)=x,c(1)=y \}.
$$
The map $f$ is accessible if the partition by accessibility classes is trivial, and essentially accessible if the partition by accessibility classes is ergodic (i.e. any Borel set saturated by accessibility classes has either volume $0$ or $1$).
\end{definition}

In the example above, each invariant 2-torus is an accessibility class. When there is only one accessibility class, we will say that $f$ has the {\em accessibility property}. From now on, let us suppose this is our case. As for (2), absolute continuity of the strong foliations is also satisfied (\cite{ErgAnAc}), but complete transversality is not (due to the presence of the center direction)

This problem can be overcome if the holonomies are {\em regular} enough. For instance, Sacksteder used accessibility and Lipschitzness of the stable and unstable holonomies to prove ergodicity of linear partially hyperbolic automorphisms of nil-manifolds \cite{Sac68}. More generally, Brin and Pesin proved that accessibility and Lipschitzness of the stable and unstable foliations imply ergodicity (in fact, Kolmogorov), in the following way \cite[Theorem 5.2,p.204]{brinpesin1974}, see also \cite{GPS}: if $A$ and $B$ are defined as before, consider a density point $x$ in $A$, and a density point $y$ in $B$. Take an $su$-path joining $x$ and $y$. Call $h$ a  global holonomy map from $x$ to $y$, that is, $h$ is a local homeomorphism that takes points in a neighborhood $U$ of $x$, slides them first along a stable segment, then along an unstable, then along a stable again, etc. until reaching a neighborhood $V$ of $y$, all the $su$-paths are near the original $su$-path joining $x$ and $y$. Since $A$ is essentially $su$-saturated, we have that $h(A\cap U)=A\cap V$ modulo a zero set. Since $h$ can be chosen to be Lipschitz, there exists a constant $C>1$ such that, for each measurable set $E\subset U$, and for each sufficiently small $r> 0$, we have
\begin{eqnarray}
  \frac{1}{C}m(E)&<m(h(E))<& Cm(E)\\
  B_{\frac{r}{C}}(y)&\subset h(B_r(x))\subset & B_{Cr}(y).
\end{eqnarray}
This implies that
$$\frac{m(B_{Cr}(y)\cap A)}{m(B_{Cr}(y)\setminus A)}\geq \frac{m(h(B_r(x)\cap A))}{m(h(B_{C^2r}(x)\setminus A))}\geq \frac{1}{C^2.C'}\frac{m(B_r(x)\cap A)}{m(B_r(x)\setminus A)}\to \infty$$
since $m(B_{C^2r}(x)\setminus A)\leq C' m(B_r(x)\setminus A)$ for some positive constant $C'$. From this we get that $y$ is also a density point of $A$. This is absurd, since $y$ was a density point of $B$, complementary to $A$ modulo a zero set. \par
This is essentially how the Hopf argument would work in the partially hyperbolic setting. However, Lipschitzness of the holonomy maps is a very strong hypothesis, not satisfied for most of the partially hyperbolic diffeomorphisms.

The idea of Grayson, Pugh and Shub \cite{GPS}, later improved by \cite{wilkinson1998}, \cite{StableJulienne}, \cite{HHU2008inv}, \cite{burnswilkinson} is to show that the stable and unstable holonomies do preserve density points according to another base different from intervals called \emph{Juliennes}. These sets are dynamically defined, and constitute Vitali bases. 

Burns and Wilkinson made an improvement of this ergodicity argument in \cite{bwpreprint}. We will roughly sketch it. Consider for a point $x$ a small center segment, and saturate by local unstable leaves; to gain better control in the size of these unstable segments we pre-iterate $n$ times the local unstable manifold of $f^n(x)$ (of a convenient size). The resulting set is then saturated by locally stable manifolds.  This small prism is called {\em $s$-julienne}, and denoted by $J^{suc}_n(x)$. The subscript $n$ essentially tells the size of the Julienne, and in particular everything is chosen so that $m(J^{suc}_n(x))\xrightarrow[n\mapsto\infty]{} 0$. An $s$-julienne density point of a set $E$ is a point $x$ such that:
\begin{equation}
  \lim_{n\to\infty}\frac{m(J^{suc}_n(x)\cap E)}{m(J^{suc}_n(x))}=1
\end{equation}
The scheme is to consider the sets $A$ and $B$ we considered above, and prove:
\begin{enumerate}
  \item the $s$-julienne density points of $A$ (and of any essentially $u$-saturated set) coincide with the Lebesgue density points of $A$.
  \item the $s$-julienne density points of $A$ (and of any essentially $s$-saturated set) are preserved by stable holonomies.
\end{enumerate}
An analogous statement is proved for $A$ with respect to $u$-julienne density points, which are defined with respect to the local basis obtained by locally saturating a small center segment first in a dynamic way by stable leaves, and then by unstable leaves. As the stable and unstable holonomies preserve the Lebesgue density points of $A$ we have that if the diffeomorphism has the accessibility property then $A$ is equal to $M$ modulo a zero set. This proves the system is ergodic:

\begin{theorem}[\cite{burnswilkinson,HHU2008inv}]  \label{acc.imp.erg}\label{teo.acc.implies.ergodicity} If $f\in\diff^{2}_m(M^3)$ is partially hyperbolic and satisfies the accessibility property, then it is ergodic.
\end{theorem}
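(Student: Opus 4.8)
The plan is to run the Hopf argument through the julienne density machinery, using accessibility to replace the transversality that is no longer available once a center direction is present. First I would follow the reduction already set up above: if $f$ is not ergodic there is a continuous observable $\varphi$ whose forward and backward Birkhoff averages $\varphi^{+}$ and $\varphi^{-}$ disagree on a set of positive measure, producing invariant sets $A$ (saturated by stable manifolds) and $B$ (saturated by unstable manifolds), both of positive measure and essentially disjoint. By absolute continuity of $\Fs$ and $\Fu$ (\cite{ErgAnAc}) these saturations hold modulo zero sets, so it suffices to show $m(A)\in\{0,1\}$; the whole difficulty is thus transferred to a statement about how the strong holonomies move the measure-theoretic density of a saturated set.

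The core of the argument is to replace Lebesgue density points by \emph{julienne density points}, because Lipschitz control of the holonomies (which would let the Brin--Pesin computation reproduced above go through verbatim) is unavailable in general. Following Burns--Wilkinson I would work with the dynamically defined bases $J^{suc}_n(x)$ and their $u$-analogues: the $s$-julienne is built by taking a short center segment, saturating it by local unstable plaques whose sizes are controlled by pre-iterating $n$ times the unstable manifold of $f^{n}(x)$, and then saturating by local stable plaques, so that $m(J^{suc}_n(x))\to 0$ and the family forms a Vitali basis. I would then establish the two comparison properties indicated above: (i) a point is an $s$-julienne density point of an essentially $u$-saturated set exactly when it is a Lebesgue density point of that set, and (ii) the $s$-julienne density points of an essentially $s$-saturated set are preserved by stable holonomy, together with the symmetric statements for $u$-juliennes and unstable holonomy. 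Property (i) rests on the quasi-conformality of the center dynamics: in dimension three $E^{c}$ is one-dimensional, so it is automatically conformal and the center-bunching condition needed in higher dimensions holds trivially, which is precisely why the theorem here requires no extra hypothesis.

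The hard step is property (ii): showing that a stable holonomy $h^{s}$ sends the $s$-julienne basis at $x$ to a family comparable to the $s$-julienne basis at $h^{s}(x)$, with multiplicative distortion that remains bounded as $n\to\infty$. This requires the precise engulfing estimate that $h^{s}(J^{suc}_n(x))$ is trapped between $J^{suc}_{n+k}(h^{s}(x))$ and $J^{suc}_{n-k}(h^{s}(x))$ for a fixed $k$, which in turn relies on the H\"older continuity of the stable holonomy along center-unstable plaques and on absolute continuity to compare the transported and intrinsic measures. Combined with (i), this yields that both stable and unstable holonomies preserve the Lebesgue density points of $A$.

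Finally I would invoke accessibility. Since any two points of $M$ are joined by an $su$-path, the set of Lebesgue density points of $A$ is invariant under the full pseudogroup generated by the stable and unstable holonomies, and hence is all of $M$ modulo a zero set. This forces $m(A)=1$, contradicting $m(B)>0$, and the contradiction establishes ergodicity. The main obstacle throughout is the holonomy-invariance in (ii); everything else is either the standard Hopf reduction or the absolute-continuity input, while the one-dimensionality of $E^{c}$ is what makes the center estimates in (i) automatic.
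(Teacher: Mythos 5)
Your proposal follows essentially the same route as the paper's sketch: the Hopf reduction to the invariant sets $A$ and $B$, the Burns--Wilkinson julienne machinery with the same two key properties (julienne density points of essentially $u$-saturated sets coincide with Lebesgue density points, and $s$-julienne density points of essentially $s$-saturated sets are preserved by stable holonomy), and the final appeal to accessibility to propagate density points over all of $M$. The only quibble is one of attribution: the center bunching condition (which, as you correctly note, is automatic when $\dim E^{c}=1$ since then $m(Df|E^{c})=\|Df|E^{c}\|$) is what drives the holonomy-invariance step (ii) rather than the Lebesgue comparison (i), but this does not change the structure or validity of the argument.
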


In fact, Burns and Wilkinson prove a much more general result: 
\begin{theorem}[\cite{burnswilkinson}] \label{thm.bw} If $f$ is a partially hyperbolic diffeomorphism (with any center dimension) satisfying the accessibility property and the center bunching property, then $f$ is ergodic
\end{theorem}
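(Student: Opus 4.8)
The plan is to run the Hopf argument in exactly the form already sketched above for the three-dimensional statement (Theorem \ref{acc.imp.erg}), but to replace the role of complete transversality by the julienne density point machinery, and to use center bunching precisely where low center dimension previously let one avoid it.

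First I would suppose, for contradiction, that $f$ is not ergodic. As in the Anosov discussion above, there is then a continuous observable $\varphi$ for which the forward and backward Birkhoff averages $\varphi^{+}$ and $\varphi^{-}$ fail to be almost everywhere constant; they agree almost everywhere, and their common value is constant along stable manifolds (as a limit of forward averages) and along unstable manifolds (as a limit of backward averages). Choosing a level set appropriately, I obtain a set $A$ with $m(A)\in(0,1)$ that is simultaneously \emph{essentially $s$-saturated} and \emph{essentially $u$-saturated}: up to a null set, $A$ is a union of stable leaves and also a union of unstable leaves. The goal is then to force $m(A)\in\{0,1\}$, contradicting the choice of $\varphi$.

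Next I would introduce the dynamically defined julienne neighborhoods $J^{suc}_n(x)$ described above together with their $u$-analogues, and the associated notions of $s$- and $u$-julienne density point. The heart of the matter is a pair of measure-theoretic statements, proved using the absolute continuity of the strong foliations (available by \cite{ErgAnAc}) together with center bunching:
\begin{enumerate}
\item (Comparison) for an essentially $u$-saturated set, its $s$-julienne density points coincide with its Lebesgue density points, and symmetrically for $u$-juliennes and essentially $s$-saturated sets; this rests on showing the juliennes form a Vitali basis comparable to round balls;
\item (Invariance) the $s$-julienne density points of an essentially $s$-saturated set are preserved by stable holonomy, and the $u$-julienne density points of an essentially $u$-saturated set are preserved by unstable holonomy.
\end{enumerate}
I would then combine these. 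Applying Comparison to $A$ (which is both $s$- and $u$-essentially saturated), the Lebesgue density points of $A$ equal its $s$-julienne density points and also equal its $u$-julienne density points. By Invariance the former are stable-holonomy invariant and the latter unstable-holonomy invariant, so the set $D$ of Lebesgue density points of $A$ is invariant under both stable and unstable holonomy, hence under holonomy along any $su$-path. Accessibility then says any two points are joined by an $su$-path, so $D$ is either empty or all of $M$; since $m(A)>0$, the Lebesgue density theorem forces $D$ to be nonempty and of full measure, whence $m(A)=1$, the desired contradiction.

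The main obstacle is the Comparison/Invariance pair, and this is exactly where center bunching enters. The juliennes are built from the stable, unstable and center dynamics, so to control how a holonomy distorts them one must control how the stable and unstable holonomies distort the center direction. Center bunching --- the requirement that the hyperbolicity of $E^{s}\oplus E^{u}$ dominate the non-conformality of $E^{c}$ --- is precisely the hypothesis making the stable and unstable holonomies between center leaves regular enough (uniformly continuous in the dynamically rescaled sense) for juliennes to be carried to \emph{comparable} juliennes. This simultaneously yields that the juliennes are a Vitali basis equivalent to balls and that holonomies preserve julienne density, and it is the step where the one-dimensional-center arguments of \cite{HHU2008inv} must be genuinely upgraded to arbitrary center dimension.
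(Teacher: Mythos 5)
Your proposal runs essentially the same argument the paper sketches for Theorem \ref{thm.bw}: the Hopf decomposition producing a set $A$ that is essentially $s$- and $u$-saturated, the julienne Comparison and Invariance lemmas, and accessibility to propagate Lebesgue density points along $su$-paths, with center bunching entering exactly where you place it (uniform control of the stable and unstable holonomies between center plaques, so that juliennes are carried to comparable juliennes and form a Vitali basis equivalent to balls). The one point you elide is that for arbitrary center dimension the local center discs needed even to \emph{define} $J^{suc}_n(x)$ need not exist, since $E^c$ may be non-integrable; center bunching does not supply them, and the paper singles out the construction of ``fake foliations'' in \cite{burnswilkinson} as the separate innovation that removes the dynamical coherence hypothesis. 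With that substitute for genuine center plaques, your outline is the paper's.
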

A diffeomorphism is said to satisfy the {\em center bunching} property if 

\begin{equation}\label{center.bunching}
\|Df(x)|E^{s}\|<\frac{m(Df(x)|E^{c})}{\|Df(x)|E^{c}\|}\leq \frac{\|Df(x)|E^{c}\|}{m(Df(x)|E^{c})} <m(Df(x)|E^{u})
\end{equation}
where $m(T)=\|T^{-1}\|^{-1}$. \par

 Hence Theorem \ref{thm.bw} implies  Theorem \ref{acc.imp.erg}. Also, when the center bundle is one-dimensional, it is always locally integrable to center curves, so the fake foliations are not necessary to build local juliennes. Hertz-Hertz-Ures in \cite{HHU2008inv} show an alternative way to proving this result in this particular case.

There are two innovations in \cite{burnswilkinson}. One is the argument outlined above Theorem \ref{acc.imp.erg}, which uses a much weaker center bunching than in \cite{StableJulienne} paper. This innovation was already published in \cite{bwpreprint}, and was essential in the proof of Theorem \ref{acc.imp.erg} in \cite{HHU2008inv}. In \cite{bwpreprint}, though, dynamical coherence was still a hypothesis, and the job in Theorem \ref{acc.imp.erg} of \cite{HHU2008inv} consisted in removing this for the center dimension one case. The second innovation in \cite{burnswilkinson} was precisely to remove the dynamical coherence hypothesis for any center dimension. This was accomplished by means of ``fake foliations'', which is a very delicate technical tool.

\subsection{Properties of Accessibility Classes}
We want to precisely describe non-ergodic partially hyperbolic diffeomorphisms, and it is possible that this only occurs when there is a compact accessibility class (see Conjecture \ref{conjecture.strong.non.ergodic}); that is, when there is a torus tangent to $E^{s}\oplus E^{u}$ (in fact we conjecture that there must be at least two such tori).\par
Since accessibility implies ergodicity, in order to describe non-ergodic partially hyperbolic diffeomorphisms, it seems reasonable to look at the non-accessible ones. And, even more precisely, we will study the structure of the set of non-open accessibility classes.\par  

\begin{theorem}\cite{HHU2008inv}\label{teo.ac.open.or.manifold} For each $x\in M^3$, its accessibility class $AC(x)$  is either an open set or an immersed surface. Moreover, $\Gamma(f)$, the set of non-open accessibility classes of $f$ is a compact codimension-one laminated set whose laminae are the accessibility classes.
\end{theorem}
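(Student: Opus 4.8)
The plan is to prove a \emph{local} dichotomy at each point and then globalize it using that $\Gamma(f)$ is closed and that $E^s\oplus E^u$ is a continuous plane field. Throughout I use that $\dim M=3$, so $E^s,E^c,E^u$ are all one–dimensional and, by the Stable Manifold Theorem, the leaves of $\Fs$ and $\Fu$ are $C^r$ arcs depending continuously on the base point. First I would record that for every $y\in AC(x)$ the two local ``su-surfaces'' $S(y):=\bigcup_{z\in\Wsl{y}}\Wul{z}$ and $S'(y):=\bigcup_{z\in\Wul{y}}\Wsl{z}$ are immersed topological $2$-disks contained in $AC(x)$: each of their points is reached from $y$ by one stable arc followed by one unstable arc (or vice versa), hence is $su$-accessible. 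So the class always contains these two surfaces, and the entire issue is whether it is strictly larger.

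\textbf{The transversal trace and the dynamical dichotomy.} I would fix a $C^1$ curve $\tau_y$ through $y$ transverse to $E^s\oplus E^u$ (with $\dot\tau_y(0)\in E^c_y$) and study the trace $AC(x)\cap\tau_y$, which equals the orbit of $y$ under the holonomy pseudogroup generated by short stable and unstable holonomies that return to $\tau_y$. The generators are the \emph{su-loops} at $y$: slide along a stable arc, then an unstable arc, then back, and record the resulting center displacement along $\tau_y$. The heart of the matter is an amplification statement, which is exactly where partial hyperbolicity is indispensable: if there are su-loops at $y$ with nonzero displacements $\delta_n\to 0$, then pushing these loops by suitable iterates of $f$ (which permutes the leaves of $\Fs$ and $\Fu$ while the center cocycle rescales the displacement) renormalizes the tiny $\delta_n$ into displacements of definite size, which upon concatenation fill a whole subinterval of $\tau_y$; combined with the local $s$- and $u$-saturation this forces $AC(x)$ to contain an open set. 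Contrapositively, if $AC(x)$ is \emph{not} open, then a Cantor-type accumulation of the trace is impossible and $AC(x)\cap\tau_y$ reduces locally to $\{y\}$.

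\textbf{From a point-trace to a $C^1$ surface.} When the trace is locally $\{y\}$, a short su-path confined to a small box cannot leave the center level of $y$, so near $y$ the class coincides with $S(y)=S'(y)$, a topological disk. Since $AC(x)$ is both $s$- and $u$-saturated, at every point $p$ of it one has $E^s_p,E^u_p\subset T_pAC(x)$, whence $T_pAC(x)=E^s_p\oplus E^u_p$; a topological surface everywhere tangent to the continuous plane field $E^s\oplus E^u$ is automatically $C^1$-immersed. This yields the dichotomy: each accessibility class is open or a $C^1$-immersed surface tangent to $E^s\oplus E^u$. Globalizing, the union $U$ of the open classes is open, so $\Gamma(f)=M\setminus U$ is compact and is precisely the union of the non-open (surface) classes. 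To exhibit it as a lamination I would fix $y\in\Gamma(f)$ and the transversal $\tau_y$, set $K:=\Gamma(f)\cap\tau_y$ (closed in $\tau_y$), and note that through each $z\in K$ passes the plaque $S(z)$; these plaques are pairwise disjoint and, being assembled from the continuously varying smooth leaves of $\Fs$ and $\Fu$, vary continuously with $z$, producing a chart $V\cap\Gamma(f)\cong D^2\times K$ carrying plaques to slices $D^2\times\{z\}$. This is the desired compact codimension-one lamination with laminae the accessibility classes.

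\textbf{Main obstacle.} The delicate step is the amplification: converting an arbitrarily small center displacement of an su-loop into a macroscopic one inside a fixed box, while simultaneously controlling the contraction of stable arcs, the expansion of unstable arcs and the center cocycle. Because the transverse regularity of $\Fs,\Fu$ is only H\"older, no Frobenius or Sussmann orbit-theorem shortcut is available; this is precisely why the $C^1$-regularity of the limiting surface has to be recovered \emph{a posteriori} from tangency to the continuous field $E^s\oplus E^u$ rather than from any ODE or integrability argument, and why the proof must run through the dynamically defined holonomy pseudogroup instead.
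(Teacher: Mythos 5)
Your central step --- the ``amplification'' dichotomy --- is both unnecessary and, as stated, false. You claim that if there are $su$-loops at $y$ with nonzero center displacements $\delta_n\to 0$, then iterating by $f$ and concatenating fills an interval of $\tau_y$, so the class is open; contrapositively a non-open class has locally trivial trace $\{y\}$. But a non-open accessibility class can perfectly well meet a center transversal in a set accumulating on $y$: for an essentially accessible but non-accessible linear automorphism of $\T^3$ (such as the one in the exercise in Section 2), each accessibility class is a dense immersed plane, so its trace on any transversal is dense, yet the class is not open. The existence of $su$-paths returning to $\tau_y$ with arbitrarily small nonzero displacement therefore does not force openness; what matters is the trace of the \emph{connected component} of $AC(y)\cap W^{usc}_{loc}(x)$ containing $y$, not the trace of the whole class. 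This is exactly how the paper argues, and it makes the dynamics you invoke superfluous: the map $p_{su}$ (project along local unstables onto $W^{sc}_{loc}(x)$, then along local stables onto $W^{c}_{loc}(x)$) is continuous, so the image of the connected local piece $AC_x(y)$ is a connected subset of a one-dimensional curve --- hence either a single point (giving $AC_x(y)=p_{su}^{-1}(y)=W^u_{loc}(W^s_{loc}(y))$, a plaque) or a set with nonempty interior in $W^c_{loc}(x)$, which by Proposition \ref{proposition.open.ac} makes the class open. No renormalization of loop displacements, no control of the center cocycle, and no recurrence of $y$ are needed; your proposed argument would in any case have to confront the fact that $f^n$ sends a loop at $y$ to a loop at $f^n(y)$, not at $y$.

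A second, smaller gap: you assert that a topological surface ``everywhere tangent to the continuous plane field $E^s\oplus E^u$ is automatically $C^1$-immersed.'' Tangency is not even defined for a merely topological surface, and writing $E^s_p,E^u_p\subset T_pAC(x)$ presupposes the differentiability you are trying to establish. The paper obtains $C^1$-regularity of the plaques $W^u_{loc}(W^s_{loc}(x))$ by exhibiting them as graphs that are uniformly $C^1$ along two transverse continuous-in-$C^1$ foliations (the projected stable and unstable leaves) and applying Journ\'e's theorem (Lemma \ref{lema.jointly.integrable} and Theorem \ref{teo.journe}); some argument of this type is genuinely required and cannot be replaced by the tangency remark. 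Your globalization of the lamination structure (continuity of $x\mapsto W^u_{loc}(W^s_{loc}(x))$, compactness of $\Gamma(f)$ as the complement of the union of open classes) does match the paper once the local dichotomy and the regularity of plaques are in place.
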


\begin{remark}This theorem still holds for partially hyperbolic diffeomorphisms with center dimension one. 
\end{remark}

Let us begin by a local description of open accessibility classes.
\begin{proposition}\label{proposition.open.ac}
  For any point $x\in M$, the following statements are equivalent:
  \begin{enumerate}
    \item $AC(x)$ is open.
    \item $AC(x)$ has non-empty interior.
    \item $AC(x)\cap W^c_{loc}(x)$ has non-empty interior for any choice of $W^c_{loc}(x)$.
  \end{enumerate}
\end{proposition}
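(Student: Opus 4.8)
The plan is to establish the cycle $(1)\Rightarrow(3)\Rightarrow(2)\Rightarrow(1)$, resting on two structural facts that hold here because the center is one-dimensional. The first is \emph{saturation}: since $AC(x)$ is by definition closed under sliding along stable and unstable leaves, whenever $Y\subseteq AC(x)$ we have $\bigcup_{y\in Y}W^s_{loc}(y)\subseteq AC(x)$ and $\bigcup_{y\in Y}W^u_{loc}(y)\subseteq AC(x)$. The second is a \emph{local product (box) structure}: since $TM=E^{s}\oplus E^{c}\oplus E^{u}$ splits into pairwise transverse line bundles and the foliations $\mathcal W^{s},\mathcal W^{u}$ are continuous, any local center curve $\gamma$ is transverse to $E^{s}\oplus E^{u}$; hence for an open sub-arc $I\subseteq\gamma$ the iterated saturation $\mathcal B(I):=\bigcup_{w\in D}W^u_{loc}(w)$, where $D:=\bigcup_{q\in I}W^s_{loc}(q)$, is a homeomorphic image of a $3$-cell and is therefore an \emph{open} subset of $M$. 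I would record both facts at the outset; the box statement is the continuous-foliation analogue of a flow box and holds for \emph{any} choice of center curve, since only transversality of the seed arc to $E^{s}\oplus E^{u}$ is used.

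The two easy implications follow quickly. For $(1)\Rightarrow(3)$: if $AC(x)$ is open, then for every local center curve $W^c_{loc}(x)$ the trace $AC(x)\cap W^c_{loc}(x)$ is relatively open in the curve and contains $x$, hence has nonempty interior; as $W^c_{loc}(x)$ was arbitrary, (3) holds. For $(3)\Rightarrow(2)$ I would use the box structure once: by (3) the trace $AC(x)\cap W^c_{loc}(x)$ contains a nondegenerate open arc $I$, all of whose points lie in $AC(x)$; saturating $I$ first by local stable and then by local unstable leaves stays inside $AC(x)$ by the saturation fact, and by the box structure $\mathcal B(I)$ is a nonempty open subset of $AC(x)$. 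Thus $AC(x)$ has nonempty interior.

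The substantive implication is $(2)\Rightarrow(1)$, and this is where I expect the real work. Assuming $V\subseteq AC(x)$ is open and nonempty, I would show that the property ``$z$ is an interior point of $AC(z)$'' propagates across a single stable (or unstable) leg of an $su$-path, and then induct along a path joining an arbitrary $y\in AC(x)$ to a point of $V$ (finitely many legs, each inside one leaf, since $c'$ is continuous and $E^{s},E^{u}$ are disjoint line fields). For the propagation step, let $w$ be interior to $AC(w)$ with open $B\ni w$, $B\subseteq AC(w)$, and let $w'\in W^s(w)$; choosing a $cu$-transversal $\Sigma$ at $w$, the stable holonomy along the stable arc is a homeomorphism onto a $cu$-transversal $\Sigma'$ at $w'$, carrying the open set $B\cap\Sigma\ni w$ to an open subset of $\Sigma'$ containing $w'$ and contained in $AC(w)$ (the holonomy only moves points along stable leaves, which remain in the class). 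Re-saturating this open piece of $\Sigma'$ by local stable leaves yields, via the box structure, an open neighborhood of $w'$ inside $AC(w)=AC(w')$; the unstable case is symmetric. Since every $p\in V$ is trivially interior to $AC(p)=AC(x)$, the induction gives $y\in\operatorname{int}AC(x)$ for all $y$, so $AC(x)$ is open. The main obstacle is precisely this transport of ``three-dimensional interior'' across a leg, even though stable holonomy a priori only relates the two-dimensional $cu$-transversals; the resolution is to move the interior only transversally by holonomy and then recover the missing stable direction by re-saturation, which is legitimate exactly because the class is stable-saturated. A minor additional point is to compose local holonomies to cover long stable arcs, which is routine.
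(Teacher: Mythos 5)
Your proof is correct and follows essentially the same route as the paper: the same cycle $(1)\Rightarrow(3)\Rightarrow(2)\Rightarrow(1)$, with $(3)\Rightarrow(2)$ obtained by the stable-then-unstable saturation of a center arc and $(2)\Rightarrow(1)$ by propagating openness along the finitely many legs of an $su$-path. Your holonomy-plus-resaturation step is just a more detailed justification of the fact the paper invokes directly, namely that the $s$- (or $u$-) saturation of an open set is open because $\mathcal W^s$ and $\mathcal W^u$ are continuous foliations.
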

\begin{proof}
  (2) $\tto$ (1) Let $y$ be in the interior of $AC(x)$, and consider any point $z$ in $AC(x)$. Then there is an $su$-path from $y$ to $z$ with points $y=x_0,x_1,\dots,x_N=z$ such that $x_n$ and $x_{n+1}$ are either in the same $s$-leaf or in the same $u$-leaf. Let $U$ be a neighborhood of $y$ contained in $AC(x)$, and suppose that, for instance $y=x_0$ and $x_1$ belong to the same $s$-leaf. Then $U_1=W^s(U)$ is an open set contained in $AC(x)$, that contains $x_1$, so $x_1$ is in the interior of $AC(x)$. Indeed, $W^s$ is a $C^0$-foliation, so the $s$-saturation of an open set is open.\par
  Now, $x_1$ and $x_2$ belong to the same $u$-leaf. If we consider $U_2=W^u(U_1)$, then $U_2$ is an open set contained in $AC(x)$ and containing $x_2$ in its interior. Defining inductively $U_n$ as $W^s(U_{n-1})$ or $W^u(U_{n-1})$ according to whether $x_n$ belongs to the $s$- or the $u$-leaf of $x_{n-1}$, we obtain that all $x_n$ belong to the interior of $AC(x)$. In particular, $z$. This proves that $AC(x)$ is open.\par
  \begin{figure}[h]
  \includegraphics[width=8cm, bb=0 0 685 379]{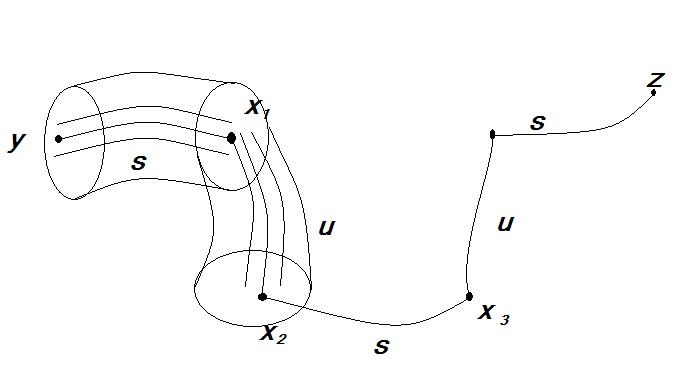}
  \caption{An $su$-path from $y$ to $z$}
  \end{figure}
  (1) $\tto$ (3) Follows directly from the definition of relative topology.\par
  (3) $\tto$ (2) Let $V$ be an open set in $AC(x)\cap W^c_{loc}(x)$, relative to the topology of $W^c_{loc}(x)$. Then $W^s(V)$ is contained in $AC(x)$, and contains a disc $D^{sc}$ of dimension $s+c$ transverse to $E^u$. This implies that $W^u(D^{sc})$ is contained in $AC(x)$ and contains an open set. Therefore, $AC(x)$ has non-empty interior.
\end{proof}

\espc

Let $O(f)$ be the set of open accessibility classes, which is, obviously, an open set. Then its complement, $\Gamma(f)$ is a compact set. Let us see that is laminated by the accessibility classes of its points. \par
For any point $x\in M$, consider a local center leaf $W^c_{loc}(x)$. Locally saturate it by stable leaves, that is, take the local stable manifolds of all points $y\in W^c_{loc}(x)$, to obtain a small $(s+c)$-disc $W^{sc}_{loc}(x)$. Now, locally saturate $W^{sc}_{loc}(x)$ by unstable leaves to obtain a small neighborhood $W^{usc}_{loc}(x)$. See Figure \ref{figure.open.ac}.
\begin{figure}[h]
  \includegraphics[width=6cm, bb=0 0 937 558]{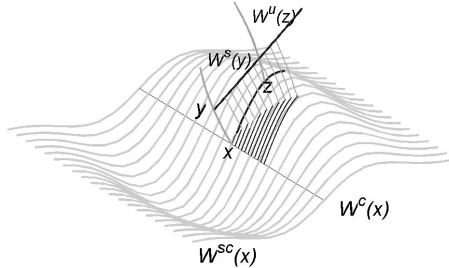}
  \caption{\label{figure.open.ac} An open accessibility class}
\end{figure}
On $W^{usc}_{loc}(x)$, consider the map
\begin{equation}
  p_{us}:W^{usc}_{loc}(x)\to W^c_{loc}(x)
\end{equation}
defined in the following way: given $y\in W^{usc}_{loc}(x)$, there exists a unique point $p_u(y)$ in the disc $W^{sc}(x)$ that belongs to the local unstable manifold of $y$. Since $W^{sc}_{loc}(x)$ is the local stable saturation of $W^c_{loc}(x)$, then $p_u(y)\in W^{sc}_{loc}(x)$ is in the local stable manifold of a unique point $p^{us}(y)$ in $W^c_{loc}(x)$. That is, $p_{us}(y)$ is the point obtained by first projecting along unstable manifolds onto $W^{sc}_{loc}(x)$, and then projecting along stable manifolds onto $W^c_{loc}(x)$. Since the local stable and unstable foliations are continuous, $p_{su}$ is continuous.\par
\begin{figure}[h]
\includegraphics[width=6cm, bb=0 0 943 543]{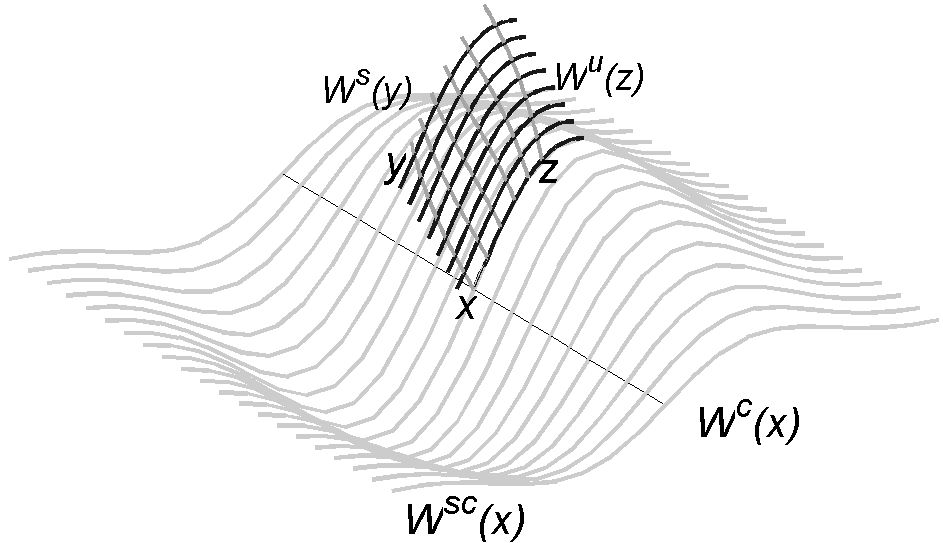}
\caption{\label{figure.closed.ac} An accessibility class in $\Gamma(f)$}
\end{figure}
Let $AC_x(y)$ be the connected component of $AC(y)\cap W^{usc}_{loc}(x)$ that contains $y$. The points of $AC_x(y)$ are the points that can be accessed by $su$-paths from $y$ without getting out from $W^{usc}_{loc}(x)$, see Figures \ref{figure.open.ac} and \ref{figure.closed.ac}. Then we have the following local description of accessibility classes of points in $\Gamma(f)$:
\begin{lemma}\label{lemma.closed.ac}
  For any $y\in W^{c}_{loc}(x)$ such that $y\in\Gamma(f)$, we have $AC_x(y)=p^{-1}_{su}(y)$
\end{lemma}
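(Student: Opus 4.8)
The plan is to prove the two inclusions $p_{su}^{-1}(y)\subseteq AC_x(y)$ and $AC_x(y)\subseteq p_{su}^{-1}(y)$ separately; the first is essentially formal, while the second is where the hypothesis $y\in\Gamma(f)$ does all the work. For the inclusion $p_{su}^{-1}(y)\subseteq AC_x(y)$, I would first unwind the definition of $p_{su}$ to identify the fiber concretely: a point $z\in W^{usc}_{loc}(x)$ satisfies $p_{su}(z)=y$ exactly when its unstable projection $p_u(z)$ lies on the local stable segment $W^s_{loc}(y)$, so that $p_{su}^{-1}(y)$ is the unstable saturation $W^u_{loc}(W^s_{loc}(y))$ of that segment inside the box. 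Each such $z$ is then joined to $y$ by the two-legged $su$-path running from $y$ along $W^s_{loc}(y)$ to $p_u(z)$ and then along the unstable leaf to $z$, a path which stays inside $W^{usc}_{loc}(x)$ by construction. Since $W^s_{loc}(y)$ is a connected arc and the unstable foliation is continuous, $p_{su}^{-1}(y)$ is connected and contains $y$, hence lies in the connected component $AC_x(y)$ of $AC(y)\cap W^{usc}_{loc}(x)$.

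The substantive inclusion is $AC_x(y)\subseteq p_{su}^{-1}(y)$, and the key observation is that $p_{su}$ always moves a point within its own accessibility class: for any $z\in W^{usc}_{loc}(x)$ the path $z\to p_u(z)\to p_{su}(z)$ (unstable then stable) witnesses $p_{su}(z)\in AC(z)$. Applying this to $z\in AC_x(y)$ yields $p_{su}(AC_x(y))\subseteq AC(y)\cap W^c_{loc}(x)$. Now I would invoke the hypothesis: since $y\in\Gamma(f)$, the class $AC(y)$ is not open, so by Proposition \ref{proposition.open.ac} the set $AC(y)\cap W^c_{loc}(x)$ has empty interior in the one-dimensional arc $W^c_{loc}(x)$ (here $W^c_{loc}(x)$ serves as a local center leaf through $y$, since $y\in W^c_{loc}(x)$). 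On the other hand $AC_x(y)$ is connected and $p_{su}$ is continuous, so $p_{su}(AC_x(y))$ is a connected subset of the interval $W^c_{loc}(x)$, hence itself an interval; being contained in a set of empty interior it must be degenerate, and as it contains $p_{su}(y)=y$ it equals $\{y\}$. This gives $AC_x(y)\subseteq p_{su}^{-1}(y)$ and closes the argument.

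The main obstacle is exactly this reverse inclusion: a priori an $su$-path leaving $y$ and staying in the box could return to $W^c_{loc}(x)$ at many distinct points, because the stable and unstable foliations need not commute and $p_{su}$ is \emph{not} constant along stable leaves. The device that overcomes this is to project the entire local class forward by $p_{su}$ and combine two inputs of different nature: the purely topological fact that the continuous image of the connected set $AC_x(y)$ is an interval, and the dynamical input of Proposition \ref{proposition.open.ac}, which converts non-openness of $AC(y)$ into emptiness of the interior of its trace on the center arc. I would also record the two routine verifications that make the scheme run: that the two-legged paths and all intermediate projections remain inside $W^{usc}_{loc}(x)$, so the local leaves used are the intended ones, and that $p_u(y)=y$ together with the stable projection fixing $y$, which is what makes $y$ a fixed point of $p_{su}$ and keeps $y$ in the image.
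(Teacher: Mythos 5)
Your proof is correct and follows essentially the same route as the paper: identify the fiber $p_{su}^{-1}(y)$ as $W^u_{loc}(W^s_{loc}(y))\subseteq AC_x(y)$, then show $p_{su}(AC_x(y))=\{y\}$ by combining continuity of $p_{su}$ and connectedness of $AC_x(y)$ with Proposition \ref{proposition.open.ac}, which forbids a nondegenerate interval in $AC(y)\cap W^c_{loc}(x)$ when $y\in\Gamma(f)$. Your write-up actually makes explicit two steps the paper leaves implicit (that $p_{su}(z)\in AC(z)$ via the two-legged path, and that the image therefore lands in $AC(y)\cap W^c_{loc}(x)$), but the argument is the same.
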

\begin{proof}
  Let $y$ be a point in $W^c_{loc}(x)$. Then $p^{-1}_{su}(y)=W^u_{loc}(W^s_{loc}(y))$, which is clearly contained in $AC_x(y)$. But also, we have $p_{su}(AC_x(y))=y$. Indeed, if $p_{su}(z)$ were different from $y$, for some $z\in AC_x(y)$, we would have a situation as described in Figure \ref{figure.open.ac}. For, since $p_{su}$ is continuous, and $AC_x(y)$ is connected, $p_{su}(AC_x(y))$ is connected. If $p_{su}(AC_x(y))$ contained another point, then it would contain a segment, which has non-empty interior in $W^c_{loc}(x)$. Proposition \ref{proposition.open.ac} then would imply that $AC(y)$ is open, which is absurd, since  $y\in\Gamma(f)$. This proves that also $AC_x(y)$ is contained in $p_{su}^{-1}(y)$.
\end{proof}
Hence, due to Lemma \ref{lemma.closed.ac} above, we have that, for each $x\in \Gamma(f)$:
$$AC_x(x)=p_{su}^{-1}(x)=W^u_{loc}(W^s_{loc}(x))\approx W^u_{loc}(x)\times W^s_{loc}(x).$$
 $W^s_{loc}(x)$ and $W^u_{loc}(x)$ are (evenly sized) embedded segments that vary continuously with respect to $x\in M$ (see Hirsch, Pugh, Shub \cite{HPS} chapters 4 and 5). This implies that $\Gamma(f)\ni x\mapsto AC_x(x)$ is a continuous map that assigns to each $x$ an evenly sized 2-disc. 
   To finish the description of accessibility classes, let us introduce the following definition:
   \begin{definition}     \label{definition.jointly.integrable} The foliations $W^s$ and $W^u$ are {\em jointly integrable} at a point $x\in M$ if there exists $\delta>0$ such that for each $z\in W^s_\delta(x)$ and $y\in W^u_\delta(x)$, we have $$W^u_{loc}(z)\cap W^s_{loc}(y)\ne\emptyset$$
   See Figure \ref{figure.closed.ac} for an illustration of a point of joint integrability of $W^s$ and $W^u$.
   \end{definition}
   Then Lemma \ref{lemma.closed.ac} and the discussion above imply the following:
   \begin{proposition}     \label{proposition.joint.integrability}
   A point $x$ belongs to $\Gamma(f)$ if and only if $W^s$ and $W^u$ are jointly integrable at all points of $AC(x)$.
   \end{proposition}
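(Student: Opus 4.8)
The plan is to prove the two implications separately, after observing that both sides of the equivalence depend only on $AC(x)$: the set $\Gamma(f)$ and its complement $O(f)$ are unions of accessibility classes, so $x\in\Gamma(f)$ is the same as $AC(x)\subset\Gamma(f)$, while joint integrability is asserted at all points of $AC(x)$. Throughout I would use the description already obtained before the statement: for every $z\in\Gamma(f)$ the local class $AC_z(z)=p_{su}^{-1}(z)=W^u_{loc}(W^s_{loc}(z))$ is a $2$-disc, homeomorphic to $W^u_{loc}(z)\times W^s_{loc}(z)$, and (being an accessibility class, an immersed surface by Theorem~\ref{teo.ac.open.or.manifold}) it is locally saturated by stable and unstable leaves. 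In particular its tangent space at each point contains both $E^s$ and $E^u$, hence it is tangent to $E^s\oplus E^u$.

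For the direct implication I would assume $x\in\Gamma(f)$ and fix any $y\in AC(x)$. Since $AC(y)=AC(x)\subset\Gamma(f)$ we have $y\in\Gamma(f)$, so by Lemma~\ref{lemma.closed.ac} the local class $D:=AC_y(y)$ is an $su$-saturated $2$-disc tangent to $E^s\oplus E^u$. On the surface $D$ the foliations $W^s$ and $W^u$ restrict to two one-dimensional foliations, transverse because $E^s_w\cap E^u_w=0$ for every $w\in D$. Two transverse one-dimensional foliations of a surface admit near each point a product chart in which one family is ``horizontal'' and the other ``vertical''. Choosing $\delta$ small enough that $W^s_\delta(y)$ and $W^u_\delta(y)$ lie in the domain of such a chart centered at $y$, any $z\in W^s_\delta(y)$ and $w\in W^u_\delta(y)$ determine a horizontal leaf $W^u_{loc}(z)\subset D$ and a vertical leaf $W^s_{loc}(w)\subset D$, which necessarily meet. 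Thus $W^u_{loc}(z)\cap W^s_{loc}(w)\neq\emptyset$, i.e. $W^s$ and $W^u$ are jointly integrable at $y$; as $y\in AC(x)$ was arbitrary, joint integrability holds at all points of $AC(x)$.

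For the converse I would argue by contraposition: suppose $x\notin\Gamma(f)$, so $x\in O(f)$ and $U:=AC(x)$ is open, and suppose toward a contradiction that $W^s$ and $W^u$ are jointly integrable at every point of $U$. Pointwise joint integrability on $U$ says precisely that the holonomy boxes built from $W^s$ and $W^u$ close up there, so $E^s\oplus E^u$ is integrable over $U$ and its local integral surfaces fit together into a foliation $W^{su}$ of $U$ by $2$-dimensional leaves, each saturated by the local stable and unstable leaves of its points. Any $su$-path contained in $U$ then lies in a single leaf of $W^{su}$: each stable or unstable segment is contained in a leaf, and consecutive segments share an endpoint, hence a leaf. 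Since $U=AC(x)$ is exactly the set of endpoints of $su$-paths issuing from $x$ inside $U$, we conclude $U\subset W^{su}(x)$. But $W^{su}(x)$ is a $2$-dimensional surface while $U$ is open in the $3$-manifold $M$, a contradiction; hence joint integrability must fail somewhere on $AC(x)$, which is the contrapositive we wanted.

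The step that requires the most care is the one invoked in the converse: that pointwise joint integrability of $W^s$ and $W^u$ on an open set genuinely integrates $E^s\oplus E^u$ into a (continuous) foliation whose leaves trap $su$-paths. This is a Frobenius-type patching of the individual closed holonomy boxes, and it must be justified using only the uniform continuity and transversality of the strong foliations, since $E^s\oplus E^u$ is a priori merely H\"older and the classical Frobenius theorem does not apply. By contrast the direct implication is comparatively soft, relying only on the local product structure of two transverse foliations of the surface $AC_y(y)$, which becomes available the moment Lemma~\ref{lemma.closed.ac} has identified that surface.
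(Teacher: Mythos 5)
Your proof is correct and follows essentially the same route as the paper: the forward implication extracts joint integrability from the local product structure $AC_y(y)=p_{su}^{-1}(y)=W^u_{loc}(W^s_{loc}(y))$ supplied by Lemma \ref{lemma.closed.ac}, and the converse uses the fact that joint integrability along the whole class assembles the local $su$-discs into a two-dimensional lamina containing $AC(x)$, which is incompatible with $AC(x)$ being open. The only cosmetic difference is that the paper closes the converse by intersecting this lamina with $W^c_{loc}(x)$ and invoking Proposition \ref{proposition.open.ac}, whereas you conclude by a direct dimension count; both arguments rest on the same coherence of the local integral discs (treated equally briefly in the paper, via ``the coherence of the charts''), which you rightly flag as the delicate point.
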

   Indeed, if $x$ belongs to $\Gamma(f)$, then for all $y\in AC(x)\subset\Gamma(f)$, we have $p_{su}(AC_y(x))=\{y\}$. In particular, if $z\in W^u_\delta(y)$ and $w\in W^s_\delta(y)$, then $W^s_{loc}(z)\cap W^u_{loc}(w)\ne\emptyset$. On the other hand, if $W^s$ and $W^u$ are jointly integrable at all points of $AC(x)$, then $AC(x)$ is a lamina, due to the explanation above (the coherence of the charts $\phi_x$ defined above depend only on the joint integrability of $W^s$ and $W^u$). Moreover, this 2-dimensional lamina is transverse to $W^c_{loc}(x)$, and so $AC(x)\cap W^c_{loc}(x)$ cannot be open. Proposition \ref{proposition.open.ac} implies $AC(x)$ is not open, so $x\in\Gamma(f)$. \par
   The following lemma shows that, in fact, the laminae of $\Gamma(f)$, that is, the accessibility classes of points in $\Gamma(f)$ are $C^1$.
   \begin{lemma}\cite[Lemma 5]{Di03}\label{lema.jointly.integrable}
     If $W^s$ and $W^u$ are jointly integrable at $x$, then the set
     $$W^{su}_{loc}(x)=\{W^u(z)\cap W^s(y): \text{with }z\in W^s_\delta(x)\text{ and } y\in W^u_\delta(x) \}$$
     where $\delta>0$ is as in the definition of joint integrability (Definition \ref{definition.jointly.integrable}), is a 2-dimensional $C^1$-disc that is everywhere tangent to $E^s\oplus E^u$.
   \end{lemma}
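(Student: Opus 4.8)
The plan is to parametrize $W^{su}_{loc}(x)$ explicitly by the two strong leaves through $x$, and then to upgrade its regularity from ``topological'' to $C^1$ by means of a theorem of Journ\'e on functions that are smooth along two transverse foliations. Fix $C^r$ arc-length parametrizations $\alpha\colon(-\delta,\delta)\to W^s_\delta(x)$ and $\beta\colon(-\delta,\delta)\to W^u_\delta(x)$ with $\alpha(0)=\beta(0)=x$, and set
$$\Phi(s,u)=W^u_{loc}(\alpha(s))\cap W^s_{loc}(\beta(u)).$$
Joint integrability at $x$ (Definition \ref{definition.jointly.integrable}) guarantees that this intersection is non-empty, and for $\delta$ small it is a single point, since two local strong leaves are nearly tangent to the transverse lines $E^s$ and $E^u$ and thus meet at most once in a small ball. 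Continuity of $W^s$ and $W^u$ makes $\Phi$ continuous, and it is injective: a coincidence $\Phi(s,u)=\Phi(s',u')$ would force $\alpha(s),\alpha(s')$ onto a common unstable leaf while both lie on $W^s_{loc}(x)$, whence $s=s'$, and symmetrically $u=u'$. So $\Phi$ is a homeomorphism onto $W^{su}_{loc}(x)$, a topological $2$-disc. By construction, for fixed $u$ the curve $s\mapsto\Phi(s,u)$ lies inside the single stable leaf $W^s_{loc}(\beta(u))$, and for fixed $s$ the curve $u\mapsto\Phi(s,u)$ lies inside $W^u_{loc}(\alpha(s))$; hence through every point of the surface there pass a stable arc and an unstable arc contained in it, with tangent lines $E^s$ and $E^u$.

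Next I would prove the surface is $C^1$. Choose $C^\infty$ coordinates near $x$ in which $E^s_x,E^u_x,E^c_x$ are the coordinate axes, so that $E^s\oplus E^u$ stays close to the horizontal plane $\{x_3=0\}$ on a small neighborhood. Because the stable and unstable arcs have tangents in $E^s\cup E^u$, transverse to the $x_3$-axis, the vertical projection carries $W^{su}_{loc}(x)$ homeomorphically onto a neighborhood of the origin in the plane, exhibiting $S:=W^{su}_{loc}(x)$ as a graph $x_3=g(x_1,x_2)$. The vertical projections of the stable and of the unstable leaves are two transverse continuous foliations of a plane neighborhood, with uniformly $C^r$ leaves (the uniform bounds coming from the Stable Manifold Theorem and compactness), and along each such leaf $g$ records the $x_3$-coordinate of a $C^r$ leaf-arc of $S$, so $g$ is uniformly $C^r$ along both foliations. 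Journ\'e's regularity lemma then yields $g\in C^1$ (indeed $C^r$), so $S$ is a $C^1$ surface. Tangency is then automatic: at each $w\in S$ the surface contains a $C^1$ stable arc and a $C^1$ unstable arc with independent tangents $E^s_w$ and $E^u_w$, and since $T_wS$ is a $2$-plane containing both it must equal $E^s_w\oplus E^u_w$.

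I expect the genuine obstacle to be precisely the passage from ``smooth along each strong leaf'' to ``jointly $C^1$.'' The naive route---checking that the partial derivatives of $\Phi$ exist and are continuous---is doomed, because the holonomies transverse to $W^s$ and $W^u$ are in general only H\"older (the foliations are merely H\"older in the transverse direction, as recalled after the Stable Manifold Theorem); consequently $\Phi$ is only H\"older in the transverse parameter and its partial derivatives need not even exist. The essential point is that $S$ is smooth along two transverse directions at once, which is exactly the hypothesis of Journ\'e's lemma. Applying it through the graph function $g$, rather than through $\Phi$ itself, is what supplies a genuine function on a plane neighborhood and converts the two one-dimensional regularities into the desired two-dimensional $C^1$ regularity.
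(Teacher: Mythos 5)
Your proposal is correct and follows essentially the same route as the paper: realize $W^{su}_{loc}(x)$ as the graph of a continuous function over a smooth $2$-disc transverse to $E^c$ via a smooth projection, observe that this function is uniformly $C^1$ along the two transverse foliations obtained by projecting the stable and unstable leaves, and conclude with Journ\'e's theorem. The only difference is cosmetic (you add an explicit parametrization $\Phi$ and use adapted coordinates instead of an abstract smooth transversal with a projecting line foliation), and you correctly identify the H\"older transverse regularity as the reason the naive differentiation of $\Phi$ fails.
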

   In order to prove Lemma \ref{lema.jointly.integrable} we shall use the following result by Journ\'e:
   \begin{theorem}\cite{Jo88}\label{teo.journe} Let $F^h$ and $F^v$ be two transverse foliations with uniformly smooth leaves on an open set $U$. If $\eta: U\to M$ is uniformly $C^1$ along $F^h$ and $F^v$, then $\eta$ is $C^1$ on $U$.
   \end{theorem}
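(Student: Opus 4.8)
The plan is to prove joint differentiability of $\eta$ at every point of $U$, together with continuity of the derivative, by building a candidate derivative out of the two leafwise derivatives and controlling the error along a short ``staircase'' path; since the target regularity is only $C^1$ this can be done directly. First I would localize: work in a chart of $M$, so that $\eta$ becomes vector valued, and on a relatively compact open set where the transversality of $F^h$ and $F^v$ is quantitative, i.e. the angle between the tangent distributions $E^h=TF^h$ and $E^v=TF^v$ is bounded below by some $\theta_0>0$. Because the leaves are uniformly smooth and transverse, the local flow boxes of the two foliations give, for $p\in U$ and any $q$ sufficiently close, a unique intermediate point $p'$ lying on the $F^h$-leaf of $p$ and on the $F^v$-leaf of $q$; the angle bound makes the two leaf-arc lengths $\ell_h=\ell_h(p,q)$ and $\ell_v=\ell_v(p,q)$ comparable to $|q-p|$. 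Write $\tau^h_x,\tau^v_x$ for the unit tangents to the leaves through $x$.

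The candidate derivative at $p$ is the linear map $L_p\colon T_pU\to T_{\eta(p)}M$ determined by prescribing $L_p(\tau^h_p)$ to be the leaf derivative $\partial_h\eta(p)$ of $\eta|_{F^h}$ and $L_p(\tau^v_p)$ to be the leaf derivative $\partial_v\eta(p)$ of $\eta|_{F^v}$; since $E^h_p\oplus E^v_p=T_pU$ this defines $L_p$ unambiguously, and the uniform $C^1$ hypotheses provide that $\partial_h\eta$ and $\partial_v\eta$ are continuous on $U$, so $p\mapsto L_p$ is continuous. To see that $L_p$ is genuinely the derivative I would split $\eta(q)-\eta(p)=[\eta(p')-\eta(p)]+[\eta(q)-\eta(p')]$ and evaluate each increment along the single leaf on which it occurs. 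The first increment equals $\int_0^{\ell_h}\partial_h\eta(\gamma_h(s))\,ds$ for the unit-speed $F^h$-arc $\gamma_h$ from $p$ to $p'$; continuity of $\partial_h\eta$ and of $\tau^h$ (leaves are $C^1$) turns this into $L_p(p'-p)+o(\ell_h)$, since $p'-p=\int_0^{\ell_h}\tau^h_{\gamma_h(s)}\,ds=\ell_h\tau^h_p+o(\ell_h)$. The second increment is handled the same way along $F^v$, the only new point being that one evaluates $\partial_v\eta$ at $p'$ rather than at $p$: here the joint continuity of $\partial_v\eta$ and of $E^v$ gives both $\partial_v\eta(p')=\partial_v\eta(p)+o(1)$ and $L_p(\tau^v_{p'})=\partial_v\eta(p)+o(1)$ as $q\to p$, so this increment is $L_p(q-p')+o(\ell_v)$. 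Adding the two and using $\ell_h,\ell_v\le C|q-p|$ yields $\eta(q)-\eta(p)=L_p(q-p)+o(|q-p|)$, i.e. $D\eta_p=L_p$; continuity of $L$ then gives $\eta\in C^1(U)$.

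The hard part will be the step hidden in the phrase ``the uniform $C^1$ hypotheses provide that $\partial_h\eta,\partial_v\eta$ are continuous on $U$.'' Each leaf derivative is continuous along its own leaves essentially by definition, but its continuity in the transverse direction is exactly the interaction between the two foliations that the lemma is about; in general $F^h$ and $F^v$ carry nontrivial holonomy, so the flow boxes of ``$F^h$ then $F^v$'' do not agree with those of ``$F^v$ then $F^h$'' and there is no joint coordinate grid that straightens both foliations at once. For the $C^1$ conclusion, the transverse control of $\partial_v\eta$ needed above is supplied by the \emph{uniformity} in the hypotheses (uniform moduli of continuity of the leaf derivatives over all leaves), which upgrades leafwise continuity to joint continuity by a uniform-convergence-of-difference-quotients (Arzel\`a--Ascoli type) argument; this is the genuine analytic content, and it is precisely the $C^1$ instance of Journ\'e's regularity lemma. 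I would therefore isolate this as the one substantial sublemma and run the differentiation argument above on top of it. Since Lemma \ref{lema.jointly.integrable} needs only this $C^1$ version, I would not pursue the higher-regularity bootstrap required by Journ\'e's general statement.
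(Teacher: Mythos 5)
Your proof is essentially correct as a proof of the $C^1$ statement quoted in the survey, but note that the paper itself gives no proof at all: Theorem \ref{teo.journe} is imported from Journ\'e's paper \cite{Jo88} as a black box, so the only meaningful comparison is with Journ\'e's published argument, and there your route is genuinely different. Journ\'e proves a $C^{n,\alpha}$ statement ($\alpha\in(0,1)$) by approximating the function at dyadic scales by polynomials along the two families of leaves and interpolating; the H\"older exponent is essential to that scheme, and the plain $C^1$ version quoted here is really a folklore adaptation. Your argument is the elementary one: it is the classical ``continuous partial derivatives imply $C^1$'' proof transplanted to a pair of complementary foliations, with the staircase decomposition $\eta(q)-\eta(p)=[\eta(p')-\eta(p)]+[\eta(q)-\eta(p')]$ and the candidate derivative $L_p$ glued from the two leafwise derivatives. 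You also correctly isolate the one nontrivial analytic point, joint continuity of $\partial_h\eta$ and $\partial_v\eta$, and your mechanism for it is right, though calling it Arzel\`a--Ascoli overstates it: no compactness extraction is needed. The difference quotients $\Delta_s(p)=\bigl(\eta(\gamma_p(s))-\eta(p)\bigr)/s$ satisfy $\abs{\Delta_s(p)-\partial_h\eta(p)}\le \frac1s\int_0^s\omega(t)\,dt$ for the common leafwise modulus $\omega$, so they converge \emph{uniformly}, and a uniform limit of continuous functions is continuous. What this buys is a short, self-contained proof of exactly the version Lemma \ref{lema.jointly.integrable} uses; what it loses is everything above $C^1$, which you rightly decline to pursue.

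Two small caveats to make the writeup airtight. First, your sublemma genuinely requires reading ``uniformly $C^1$ along the foliations'' as including a \emph{common} modulus of continuity for the leafwise derivatives across all leaves (equicontinuity); with only leafwise continuity and uniform bounds the uniform convergence fails and so can the theorem. That reading is the intended one, and it is satisfied in the application, where the leaves $v\mapsto W^s_{loc}(\eta(v))$ vary continuously in the $C^1$ topology. Second, continuity of $\Delta_s$ in $p$ uses continuity of $\eta$ itself, which you never state; it is not an extra hypothesis, since the uniform leafwise Lipschitz bound plus your staircase with $\ell_h,\ell_v\le C\abs{q-p}$ already gives $\abs{\eta(q)-\eta(p)}\le C'\abs{q-p}$, but the sentence should appear before the sublemma is invoked.
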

   \begin{proof}[Proof of Lemma \ref{lema.jointly.integrable}]
 Let $D$ be a small, smooth 2-dimensional disc containing $x$ and transverse to $E^c_x$. Consider a 1-dimensional smooth foliation of a small neighborhood $N$ of $x$, transverse to $D$. If $D$ is sufficiently small, there is a smooth map $\pi:N\to D$, which consists in projecting along this smooth 1-dimensional foliation. Note that $W^{su}_{loc}(x)$ can be seen as the graph of a continuous function $\eta:D\to N$. \par
 We produce a grid on $D$ in the following way: the horizontal lines are the projections of the stable manifolds $W^s(y)$, with $y\in W^{u}_{\delta}(x)$, that is, the horizontal lines are of the form $\pi(W^s_{loc}(\eta(v)))$, with $v\in D$. Analogously, the vertical lines are the projections of the unstable manifolds $W^u_{loc}(z)$, with $z\in W^s_\delta(x)$; that is, the vertical lines are of the form $\pi(W^u_{loc}(\eta(w)))$, with $w\in D$. \par
 Now, $v\mapsto W^s_{loc}(\eta(v))$ and $w\mapsto W^u_{loc}(\eta(w))$ are continuous in the $C^1$-topology, that is, for close $v$ we obtain close $W^s_{loc}(\eta(v))$ in the $C^1$-tolopology ($E^s$ is a continuous bundle). Since $\pi$ is smooth, we also obtain that $F^h=\{\pi(W^s_{loc}(\eta(v)))\}_{v\in D}$, the horizontal partition of $D$, and $F^v=\{W^u_{loc}(\eta(w))\}_{w\in D}$, the vertical partition of $D$, are transverse foliations continuous in the $C^1$-topology. \par
But $\eta$ is uniformly $C^1$ along $F^h$, since $\eta$ along a leaf $F^h(v_0)=\pi(W^s_{loc}(\eta(v_0)))$ is exactly $W^s_{loc}(\eta(v_0))$. Indeed, $\eta\circ\pi:W^{su}_{loc}(x)\to W^{su}_{loc}(x)$ is the identity map, and $W^s_{loc}(\eta(v_0))$ is a smooth manifold. Analogously, we obtain that $\eta$ is uniformly $C^1$ along $F^v$. Hence, by Theorem \ref{teo.journe} $\eta$ is $C^1$.
 \end{proof}

\subsection{Properties of the lamination $\Gamma(f)$ of non-open accessibility classes} In order to prove the ergodicity Conjecture \ref{conjecture.strong.non.ergodic}, a possible strategy is to accurately describe  the lamination $\Gamma(f)$. More precisely, one would like to see that non-accessibility implies the existence of a compact (toral) accessibility class.\par
However, the state of the art so far is:
\begin{theorem}(\cite[Theorem 1.6]{ParHypDim3})\label{teo.laminacion.accesible}
 Let $f : M^{3}\to M^{3}$ be a conservative partially hyperbolic diffeomorphism that is not accessible. Then one of the following possibilities
holds:
 
\begin{enumerate}
\item there is a compact accessibility class (a torus tangent to $E^{s}\oplus E^{u}$) 
\item  there exists an invariant sublamination $\Lambda\subset\Gamma(f)$ of $M$ that
trivially extends to a (not necessarily invariant) foliation without compact
leaves. Moreover, if $\Lambda\ne M$ the boundary leaves of $\Lambda$ are periodic, have Anosov
dynamics and periodic points are dense in each booundary leaf with the intrinsic topology.
\item $\Gamma(f)$ is a minimal foliation
\end{enumerate} 
\end{theorem}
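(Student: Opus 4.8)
The plan is to treat $\Gamma(f)$ as a dynamical object. By Theorem~\ref{teo.ac.open.or.manifold} and Lemma~\ref{lema.jointly.integrable} it is a nonempty (since $f$ is not accessible), compact, $f$-invariant, codimension-one lamination whose leaves are $C^1$ surfaces tangent to $E^s\oplus E^u$. The whole argument is a case analysis driven by two questions: does $\Gamma(f)$ contain a compact leaf, and, if not, does it fill $M$ as a minimal foliation? I would organize the trichotomy accordingly, reading off at each branch the consequences of partial hyperbolicity (which makes the leafwise dynamics hyperbolic) and of conservativity (which forces recurrence to be periodic).

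First I would dispose of the compact-leaf case. Suppose some leaf $L$ of $\Gamma(f)$ is compact. Then $L$ is a closed surface carrying the two transverse continuous line fields $E^s|_L$ and $E^u|_L$ spanning $TL$; after passing to a finite cover that orients them one obtains a nonvanishing vector field, so $\chi(L)=0$ and $L$ is a $2$-torus (the Klein bottle is excluded by orientability, or \emph{a posteriori} because an iterate of $f$ restricts to $L$ as an Anosov diffeomorphism, which only the torus admits). This $L$ is a compact accessibility class tangent to $E^s\oplus E^u$, placing us in alternative (1).

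From now on assume $\Gamma(f)$ has no compact leaf. The crucial use of conservativity is the periodicity of complementary regions: the connected components (gaps) of the complement $M\setminus\Lambda$ of any invariant sublamination $\Lambda$ are pairwise disjoint open sets permuted homeomorphically by $f$, hence of equal positive volume; since $m$ is finite, no gap can have an infinite orbit, so every gap is periodic and therefore so is every boundary leaf of $\Lambda$. A periodic boundary leaf $L$ is tangent to $E^s\oplus E^u$ and preserved by some $f^k$, which thus restricts to $L$ as a map with the uniformly hyperbolic splitting $TL=E^s|_L\oplus E^u|_L$ — exactly the \emph{Anosov dynamics} asserted in (2). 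To produce the sublamination, I would invoke the structure theory of codimension-one laminations: a minimal invariant sublamination $\Lambda\subseteq\Gamma(f)$ exists by Zorn's lemma, and its complementary regions admit a Dippolito-type product decomposition. Either $\Lambda=M$, in which case $\Gamma(f)=M$ is a minimal foliation and we are in alternative (3), or $\Lambda$ is proper, in which case its complementary $I$-bundle regions can be filled by product foliations — a \emph{trivial extension} to a foliation that inherits the absence of compact leaves — landing us in alternative (2). These possibilities are exhaustive.

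I expect two steps to carry the real weight. The first is showing that the gaps of the chosen $\Lambda$ are genuinely products, so that the trivial extension exists and introduces no compact leaf; controlling the topology of complementary regions is where the $C^1$-tangency to $E^s\oplus E^u$ and the Dippolito decomposition must be used carefully. The second, and I think the hardest, is the density of periodic points in a noncompact Anosov boundary leaf $L$: here the hyperbolic splitting on $L$ yields local product structure, and one should combine an Anosov closing/shadowing argument with the recurrence of $L$ inside $\Lambda$ to manufacture periodic orbits arbitrarily close, in the intrinsic topology of $L$, to any prescribed point. The remaining bookkeeping — invariance of $\Lambda$, the inheritance of ``no compact leaves'' by the extension, and the precise count of boundary leaves of each periodic gap — is then routine.
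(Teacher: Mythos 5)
This survey does not actually prove Theorem \ref{teo.laminacion.accesible}; it is quoted from \cite[Theorem 1.6]{ParHypDim3}, and the text around it only records the ingredients (Theorem \ref{teo.ac.open.or.manifold}, the $I$-bundle structure of complementary regions, and the assertion that boundary leaves are periodic with dense periodic points). Measured against those ingredients, your skeleton is the right one: the trichotomy via compact leaf / minimal sublamination equal to $M$ / proper minimal sublamination is exhaustive, the Euler-characteristic argument for the compact case is fine, and the finite-volume argument for periodicity of gaps (hence of boundary leaves, once one knows each gap has finitely many boundary leaves, e.g.\ from the $I$-bundle structure) is correct and is indeed where conservativity enters.

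The concrete gap is in the step you yourself flag as hardest: density of periodic points in a noncompact boundary leaf $L$. You propose to ``combine an Anosov closing/shadowing argument with the recurrence of $L$ inside $\Lambda$,'' but no recurrence of points of $L$ is available for free: $L$ has volume zero, so Poincar\'e recurrence says nothing about orbits in $L$, and minimality of $\Lambda$ only gives that $L$ accumulates on itself in the ambient topology, with returns a priori landing on \emph{other} local plaques of $\Lambda$ rather than on $L$ in its intrinsic topology. The leafwise Anosov closing lemma needs an intrinsic return $f^{nk}(x)$ close to $x$ \emph{inside} $L$. Producing such returns is exactly where the real work lies, and it uses the specific geometry you have set aside as bookkeeping: $L$ bounds a periodic gap of positive volume, the gap is an $I$-bundle of controlled thickness, and ambient returns approaching $L$ from the gap side are forced back onto $L$ itself; alternatively one pushes Poincar\'e recurrence of the gap's interior down to its boundary leaf. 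Without an argument of this kind the ``Anosov dynamics with dense periodic points'' clause of alternative (2) is not established. A second, smaller, debt is the $I$-bundle decomposition of the complementary regions (needed both for the trivial extension and for the finiteness of boundary leaves per gap): the survey states this as a separate theorem of \cite{ParHypDim3} under orientability and no-compact-leaves hypotheses, so you should either cite it or supply the Dippolito-type analysis rather than treat it as routine.
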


 With respect to item (2), a leaf $L$ of a lamination $\Lambda$ is a {\em boundary leaf} if there is a transverse segment to $L$ containing a subsegment $\alpha$ with an endpoint in $L$ and such that $ \alpha\cap \Lambda=\emptyset$. In \cite{ParHypDim3} it is proven that boundary leaves are periodic in the conservative setting, and, moreover, that periodic points are dense in each boundary leaf with the intrinsic topology.\label{boundary.leaf}
 \par
If Case (1) holds, then Conjecture \ref{conjecture.strong.non.ergodic} is true. We conjecture that Case (2) is not possible, more precisely, we conjecture that each boundary leaf should be a torus. Answering the following question in the affirmative would rule out Case (2):

\begin{question}
Let $L$ be a complete immersed surface in a 3-manifold, such that there is an Anosov dynamics on $L$ where
\begin{enumerate}
 \item each stable and unstable manifold is complete, and angles between stable and unstable manifolds are bounded; 
 \item periodic points are dense with the intrinsic topology; and 
 \item the stable and unstable manifold of each periodic point are dense in $L$ with the intrinsic topology.
\end{enumerate}
Is $L$ the 2-torus?
\end{question}
Case (3) of Theorem \ref{teo.laminacion.accesible} means that each leaf of $\Gamma(f)$ is dense. We conjecture that in this case, in fact, $f$ is {\em essentially accessible}, this means that each set which is union of accessibility classes has either measure one or zero. Essential accessibility in dimension 3 implies ergodicity \cite{burnswilkinson}, \cite{HHU2008inv}. If this could be established, then Conjecture \ref{conjecture.strong.non.ergodic} would be proven true. Finding a counterexample, however, would be very interesting:
\begin{question} Is there an example of a partially hyperbolic diffeomorphism in a 3-manifold such that the accessibility classes of $f$ form a minimal foliation, but $f$ is not essentially accessible?
\end{question}

Since in Case (1) of Theorem \ref{teo.laminacion.accesible}, Conjecture \ref{conjecture.strong.non.ergodic} follows trivially, we would like to better describe what happens in Cases (2) and (3). The following describes the accessibility classes in these cases:

\begin{theorem}\label{thm.313} If $f$ has no compact accessibility class, then the $\pi_{1}$ of each accessibility class injects in $\pi_{1}(M)$.
\end{theorem}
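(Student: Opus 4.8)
The plan is to pass to the universal cover and reduce the injectivity statement to a statement about simple connectedness of lifts. Let $p:\tilde M\to M$ be the universal covering and $\tilde f$ the lift of $f$, which is again partially hyperbolic. Since $su$-paths lift, for an accessibility class $L=AC(x)$ each connected component $\tilde L$ of $p^{-1}(L)$ is an accessibility class of $\tilde f$, and $p|_{\tilde L}:\tilde L\to L$ is a covering map. A loop in $L$ lifts to a loop in $\tilde L$ exactly when its image is null-homotopic in $M$; hence the image of $(p|_{\tilde L})_*$ in $\pi_1(L)$ is precisely $\ker\bigl(i_*:\pi_1(L)\to\pi_1(M)\bigr)$, and therefore $i_*$ is injective if and only if every component of $p^{-1}(L)$ is simply connected. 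So it suffices to show that the lifts of accessibility classes are simply connected, and this reduction works uniformly for open classes and for surface classes.

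Next I would bring in the hypothesis through Theorem \ref{teo.laminacion.accesible}. Since $f$ has no compact accessibility class, we are in Case (2) or (3): either $\Gamma(f)$ is a minimal foliation, or an invariant sublamination $\Lambda\subset\Gamma(f)$ extends to a foliation $\mathcal F$ without compact leaves. By Lemma \ref{lema.jointly.integrable} the laminae are $C^1$ surfaces tangent to $E^s\oplus E^u$, and by Theorem \ref{teo.ac.open.or.manifold} the leaves of $\Gamma(f)$ are exactly the non-open accessibility classes. The key point is that the relevant foliation has no compact leaves: in Case (3) minimality forbids them, in Case (2) it is part of the conclusion of Theorem \ref{teo.laminacion.accesible}, and in any case a compact leaf of $\Gamma(f)$ would be a compact accessibility class, excluded by hypothesis. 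A codimension-one foliation without compact leaves is \emph{Reebless}, since every Reeb component carries a compact torus boundary leaf.

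With Reeblessness in hand, I would invoke Novikov's theorem—in the version valid for continuous foliations with $C^1$ leaves, which is the regularity produced by Lemma \ref{lema.jointly.integrable}—to conclude that every leaf of a Reebless codimension-one foliation of a closed $3$-manifold is $\pi_1$-injective; equivalently, in the universal cover each leaf lifts to a properly embedded plane (Palmeira), so $\tilde L$ is simply connected. This settles every accessibility class that is a leaf of the foliation, and in Case (3) that is all of them. For the open accessibility classes, which occur only in Case (2) and are the connected components of $M\setminus\Gamma(f)$, I would use that the extension of $\Lambda$ to $\mathcal F$ is trivial on complementary regions: such a region is a product $L\times I$ over a boundary leaf $L$ and deformation retracts onto it, so the inclusion factors through the incompressible leaf $L$ and again induces an injection on $\pi_1$.

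The main obstacle is twofold and lies in matching the dynamical objects to the hypotheses of the foliation theorems. First, the regularity: the accessibility lamination is only a continuous lamination with $C^1$ leaves, so one must appeal to the low-regularity form of Novikov's theorem rather than the classical statement, and verify its hypotheses (a continuous tangent plane field with $C^1$ integral leaves) hold here. Second, and more delicate, is the bookkeeping in Case (2): one must check that \emph{every} accessibility class—including those in $\Gamma(f)\setminus\Lambda$ and the open complementary regions—is correctly identified either with a leaf of $\mathcal F$ or with a product region retracting onto an incompressible leaf, so that the incompressibility conclusion actually propagates to all classes. Ruling out compact leaves once compact accessibility classes are excluded is the clean step; transferring the Novikov conclusion from the extended foliation back to each individual accessibility class is where the real work sits.
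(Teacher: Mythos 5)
Your overall strategy is the paper's: extend the lamination of non-open accessibility classes to a codimension-one foliation without Reeb components and apply Novikov's theorem. Two remarks before the main issue. First, the universal-cover reduction in your opening paragraph is a detour you never need: Novikov's theorem (Theorem \ref{teo.novikov}) already delivers $\pi_1$-injectivity of each leaf directly, so there is no reason to pass through simple connectedness of lifts. Second, your explicit treatment of the \emph{open} accessibility classes (each connected component of $M\setminus\Gamma(f)$ is a single open class, it is a product over a boundary leaf, and the inclusion factors up to homotopy through that incompressible leaf) is a genuine completion of a point the paper leaves implicit, and it is correct.

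The genuine gap is the one you flag yourself but do not close: by routing the argument through Theorem \ref{teo.laminacion.accesible} you only obtain, in Case (2), an invariant \emph{sublamination} $\Lambda\subset\Gamma(f)$ that extends to a Reebless foliation, and Novikov then says nothing about the accessibility classes lying in $\Gamma(f)\setminus\Lambda$. Calling this ``bookkeeping'' understates it; as written, the proof simply does not cover those classes. The fix is to bypass Theorem \ref{teo.laminacion.accesible} entirely and apply the extension result (Theorem 4.1 of \cite{ParHypDim3}, quoted in the paper) with the sublamination taken to be all of $\Gamma(f)$: after passing to a finite cover to arrange orientability and transverse orientability (harmless, since a finite cover injects on $\pi_1$), $\Gamma(f)$ is itself an $f$-invariant lamination, and the hypothesis that $f$ has no compact accessibility class is exactly the statement that $\Gamma(f)$ has no compact leaves. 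Hence either $\Gamma(f)=M$ and Novikov applies as is, or the closed complementary regions are $I$-bundles and $\Gamma(f)$ extends trivially to a foliation without compact leaves, hence without Reeb components. Then \emph{every} non-open accessibility class is a leaf of the extended foliation, Novikov gives injectivity for all of them at once, and your product-region argument disposes of the open classes. With that substitution your argument matches the paper's proof.
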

\begin{proof}
The result follows almost directly from the following Theorem by Novikov:

\begin{theorem}[Novikov] \label{teo.novikov} Let $M$ be a compact orientable 3-manifold and $\mathcal{F}$ a
transversely orientable codimension-one foliation without {\em Reeb components}. Then, for each leaf $L$ in $\mathcal{F}$, $\pi_{1}(L)$ injects in $\pi_{1}(M)$
\end{theorem} 

A {\em Reeb component} of a foliation is a solid torus subfoliated by planes, as in Figure \ref{figure.reeb.comp}. \par
 
\begin{figure}
\begin{center}
 \includegraphics[width=.5\textwidth]{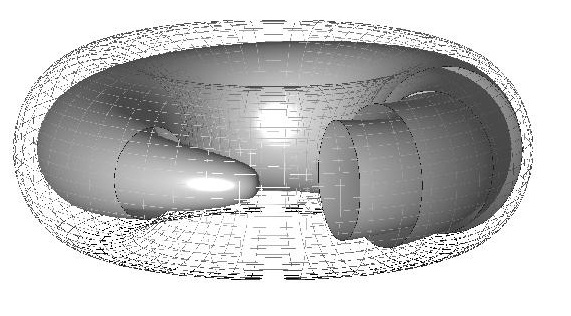}
 \caption{\label{figure.reeb.comp} Reeb component}
\end{center}
\end{figure}

\begin{question}
 Does Theorem \ref{thm.313} hold without assuming there are no compact accessibility classes?
\end{question}

If $\Gamma(f)=M$, then we are already in the hypothesis of Theorem \ref{teo.novikov}, since the fact that $\Gamma(f)$ has no compact leaves precludes the existence of Reeb components. 
The rest of the theorem follows by proving that if $\Gamma(f)\ne M$ has no compact leaves, then it can be extended to a foliation without Reeb components. This follows almost immediately from Theorem 4.1 of \cite{ParHypDim3}:
 
\begin{theorem}[Hertz-Hertz-Ures] If $\Lambda\subset\Gamma(f)$ is an orientable and transversely orientable $f$-invariant sub-lamination without compact leaves such that $\Lambda\ne M$, then all closed complementary regions of $\Lambda$ are $I$-bundles.
\end{theorem}

By taking finite coverings we may assume that $\Gamma(f)$ itself is orientable and transversely orientable. $\Gamma(f)\ne M$ has no compact leaves, therefore its complementary regions are $I$-bundles. This allows us to extend $\Gamma(f)$ to a foliation in a trivial way, by ``copying'' the boundary leaves. This means, each complementary region is of the form $L\times [0,1]$, where $L$ is a boundary leaf of $\Gamma(f)$, we foliate each complementary region, by considering leaves of the form $L\times\{t\}$, with $t\in[0,1]$. 
\end{proof}

\section{Dynamical Coherence}\label{section.dc}

It turns out that ergodicity in our setting is tightly related to integrability of the invariant bundles. As we explained before the bundles $E^s,E^u$ are always integrable. The integrability of the center bundle $E^c$, on the other hand, cannot be  always guaranteed even in our setting. This was a long standing problem and was recently solved in \cite{HHUnonDCexample}, see Theorem \ref{teo.non.dc}, rewritten below.

\espc

Let us recall {\em dynamical coherence}, which was stated in Definition \ref{definition.dc}. A partially hyperbolic diffeomorphism is dynamically coherent if there is an $f$-invariant foliation tangent to $E^{s}\oplus E^{c}$ (the {\em center-stable foliation}), and an $f$-invariant foliation tangent to $E^{c}\oplus E^{u}$ (the {\em center-unstable foliation}). 
Note that in this case the center bundle is also integrable: an $f$-invariant foliation tangent to $E^{c}$  is obtained by simply intersecting the center stable and center unstable leaves, and taking connected components. This is called the {\em center foliation}.\par

\espc

\begin{proposition}
If $f:M^{3}\rightarrow M^{3}$\ is a partially hyperbolic diffeomorphism whose center bundle is $C^1$, then $f$ is dynamically coherent.
\end{proposition}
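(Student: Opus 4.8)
The plan is to reduce dynamical coherence to the integrability of $E^{cs}=E^{s}\oplus E^{c}$; the case of $E^{cu}=E^{c}\oplus E^{u}$ is entirely symmetric, obtained by running the same argument for $f^{-1}$, which interchanges the roles of $E^{s}$ and $E^{u}$ while fixing $E^{c}$. The center foliation is then recovered by intersecting the two, so it suffices to produce an $f$-invariant foliation tangent to $E^{cs}$.

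First I would integrate the center. Since $\dim M=3$ and the splitting is nondegenerate, $E^{c}$ is one-dimensional, and by hypothesis it is $C^{1}$, so locally it is spanned by a $C^{1}$ unit vector field $X$. By the theory of ODEs its flow $\phi_{t}$ is $C^{1}$ jointly in $(t,x)$; hence $E^{c}$ is uniquely integrable, yielding a $C^{1}$ center foliation $W^{c}$ whose holonomies between transversals are $C^{1}$. Unique integrability together with $Df\,E^{c}=E^{c}$ makes $W^{c}$ invariant. For each $x$ I then form the local surface $S_{x}=\bigcup_{t}\phi_{t}(W^{s}_{loc}(x))$, the center-saturation of the local stable leaf. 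This surface carries two transverse families of curves lying in it: the center orbits through the points of $W^{s}_{loc}(x)$, and the translates $\sigma_{t}=\phi_{t}(W^{s}_{loc}(x))$. Writing $S_{x}$ as the graph of a map $\eta$ over a transversal disc, $\eta$ is uniformly $C^{1}$ along the center orbits (which are $C^{2}$ integral curves of the $C^{1}$ field $X$) and along $\{\sigma_{t}\}$ (each $\sigma_{t}$ being the image of the smooth leaf $W^{s}_{loc}(x)$ under the $C^{1}$ map $\phi_{t}$). This is precisely where the $C^{1}$ hypothesis on $E^{c}$ enters decisively: it is what renders the second foliation smooth. By Journé's theorem (Theorem \ref{teo.journe}), $\eta$ is $C^{1}$, so $S_{x}$ is a $C^{1}$ surface and $TS_{x}$ is a continuous plane field containing $E^{c}$ and transverse to $E^{u}$.

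The main obstacle is to upgrade this to tangency $TS_{x}=E^{cs}$ at \emph{every} point, not merely at the generating stable leaf: a priori $TS_{x}$ contains $E^{c}$ only because center orbits lie in $S_{x}$, and the transverse direction (the variation of $\sigma_{t}$) equals $E^{s}$ only along the base leaf, where one sees it directly. I would propagate this tangency dynamically. Because stable leaves are invariant and $f$ carries center orbits to center orbits, one has $f(S_{x})\subset S_{f(x)}$, so on the full orbit the continuous plane field $\mathcal{P}:=TS$ is $Df$-invariant, contains $E^{c}$, and is transverse to $E^{u}$. Passing to the quotient bundle $TM/E^{c}\cong E^{s}\oplus E^{u}$, the line field $\mathcal{P}/E^{c}$ is a continuous $Df$-invariant line field transverse to the image of $E^{u}$. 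The domination $E^{s}\prec E^{u}$ in \eqref{partial.hyperbolicity2} forces the only continuous invariant line fields in $E^{s}\oplus E^{u}$ to be $E^{s}$ and $E^{u}$; since $\mathcal{P}/E^{c}$ equals $E^{s}$ along the generating stable leaf and $S_{x}$ is connected, it must equal $E^{s}$ throughout. Hence $\mathcal{P}=E^{s}\oplus E^{c}=E^{cs}$ on all of $S_{x}$.

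Finally, coherence of the surfaces follows for free. Each $S_{x}$ is now a $C^{1}$ integral surface of $E^{cs}$; being tangent to $E^{s}$ it contains the full stable leaf through each of its points, and being center-saturated it then contains $S_{z}$ for each of its points $z$. Thus two such surfaces sharing a point agree near it, so the family $\{S_{x}\}$ assembles into an $f$-invariant foliation tangent to $E^{cs}$. Applying the whole construction to $f^{-1}$ gives the foliation tangent to $E^{cu}$, and $f$ is dynamically coherent. I expect Step four (the tangency) to be the crux: the surface and its $C^{1}$ regularity are essentially mechanical once the $C^{1}$ center flow is in hand, whereas establishing tangency to the \emph{full} distribution $E^{cs}$ is exactly where the dominated splitting must be invoked, and where a careless argument would silently assume the joint integrability one is trying to prove.
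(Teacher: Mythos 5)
Your overall architecture (integrate $E^c$ by unique integrability of a $C^1$ line field, build local $cs$-plaques, check they assemble into an invariant foliation, argue symmetrically for $f^{-1}$ and $E^{cu}$) has the right shape, and you correctly flag the crux; but the crux is exactly where there is a genuine gap. The claim that $TS_x=E^{cs}$ on all of $S_x=\bigcup_t\phi_t(W^s_{loc}(x))$ does not follow from the invariance-plus-domination argument you give. First, ``the only continuous $Df$-invariant line fields in $E^s\oplus E^u$ are $E^s$ and $E^u$'' is a statement about line fields defined on the whole compact manifold, where one has a \emph{uniform} bound on the slope over $E^s$ and iterates $\mathrm{slope}(Df(L))\ge\lambda\,\mathrm{slope}(L)$ to force the slope to vanish; your line field $\mathcal{P}/E^c$ lives only on the forward orbit $\bigcup_n f^n(S_x)$ of one plaque, which is neither compact nor uniformly controlled, and the appeal to connectedness of $S_x$ proves nothing (the locus where a continuous line field equals $E^s$ is closed, not open). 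Second, and more seriously, the uniform transversality to $E^u$ that the slope argument needs is precisely what iteration destroys: writing $f^n\circ\phi_t=\phi_{\tau_n}\circ f^n$ along center orbits, one finds that, modulo $E^c$, $Df^n(T_zS_x)$ is $D\phi_{\tau_n}\bigl(E^s_{f^n(y)}\bigr)$, and the center times $\tau_n$ grow like $\sup\|Df^n|E^c\|$, which is exponential whenever the center is somewhere expanded. The only a priori bound on how far $D\phi_{\tau}$ tilts $E^s$ is of order $e^{C|\tau_n|}$ --- an exponential of an exponential --- which overwhelms the domination gap $\lambda^n$, so the inequality $\mathrm{slope}(L_z)\le\lambda^{-n}\mathrm{slope}(Df^nL_z)$ yields nothing. (If the center were nowhere expanded the argument would close, but that is not the hypothesis.)

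This is not a slip of wording: producing an invariant plaque family tangent to $E^{cs}$ through each center leaf is the genuinely hard content, and the paper outsources it to Theorems 6.1 and 7.6 of \cite{HPS}. There, the $C^1$ invariant center foliation is shown to be normally hyperbolic, and a graph transform --- which iterates plaques of \emph{uniform} size, re-cutting at each step and using the domination to obtain $C^1$ convergence --- constructs immersed manifolds $W^{cs}(L)$, $W^{cu}(L)$ tangent to $E^{cs}$, $E^{cu}$ and saturated by the strong foliations. Your remaining steps (unique integrability of $E^c$ and $E^s$ to show the local pieces are coherent and pairwise disjoint, hence form invariant foliations) do match the paper's. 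A smaller remark: the Journ\'e step is superfluous, since the flow of a $C^1$ vector field is jointly $C^1$, so $S_x$ is $C^1$ outright; the regularity of the plaque was never the obstacle --- its tangency to $E^{cs}$ is.
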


\begin{proof}
Observe first that $\Fc$ is $f$-invariant: if $c:[0,1]\rightarrow \Wc{x}$\ is a differentiable curve with $c(0)=x$, then $f\circ c: [0,1]\rightarrow M$ is a differentiable curve tangent to $E^c$, and hence by uniquenes of solutions of differential equations, $f\circ c([0,1])\subset \Wc{f\circ c(0)}=\Wc{f(x)}$.

Theorem 6.1 and Theorem 7.6 in \cite{HPS} imply that through each leaf $L$ of  $\Fc$ there exist immersed submanifolds $\Ws{L},\Wu{L}$  tangent to $E^{cs},E^{cu}$ respectively, saturated by the corresponding strong foliations. Again using uniquenes of solutions of differential equations, one proves that the families $\Fcs=\{\Ws{L}\}_{L\in\Fc}$, $\Fcu=\{\Wu{L}\}_{L\in\Fc}$ are pairwise disjoint, and since their 
tangent spaces vary continuously, they form foliations. Invariance follows since $\Fc,\Fs,\Fu$ are invariant.
\end{proof}

More details about this can be found in \cite{burnswilkinson2008}. When the center bundle is not differentiable, we still have curves tangent to it as a consequence of Peano's Theorem. This family of curves, however, is not assembled as a foliation, but it still can contain relevant information. See \cite{HHUcenterunstable} and \cite{WeakFol}.\newline\par
\begin{problem}
Find an example  of a dynamically coherent partially hyperbolic diffeomorphism that is not leafwise conjugate to a $C^{1}$ dynamically coherent one. Can \cite{BPP} examples be adjusted to get one?
\end{problem}

Other condition that guarantees dynamical coherence in $\mathbb{T}^{3}$ is {\em absolute partial hyperbolicity}, a notion stronger than partial hyperbolicity, which was described in Equation (\ref{eq.abs.ph}):

\begin{theorem}[Brin-Burago-Ivanov -\cite{brinburagoivanov2009}] \label{teo.BBI}
If $f:\mathbb{T}^3\rightarrow \mathbb{T}^3$\ is absolutely partially hyperbolic, then $f$\ is dynamically coherent.
\end{theorem}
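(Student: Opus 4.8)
The plan is to work in the universal cover and exploit the rigid large-scale geometry that the torus imposes. I would lift $f$ to $\tilde f:\R^3\to\R^3$, let $A\in GL(3,\Z)$ be the automorphism induced by $f$ on $\pi_1(\T^3)=\Z^3$, and extend $A$ linearly to $\R^3$. The elementary but decisive observation is that $\tilde f-A$ is $\Z^3$-periodic, hence bounded: there is $C>0$ with $\|\tilde f(x)-Ax\|\le C$ for all $x$, so orbits of $\tilde f$ shadow those of $A$ in a controlled way. Lifting the splitting $E^s\oplus E^c\oplus E^u$ together with its uniform cone fields (this is where \emph{absolute}, rather than pointwise, partial hyperbolicity enters, since the rates $\lambda<1<\mu$ in (\ref{eq.abs.ph}) are global), I would first show that $A$ itself is linearly partially hyperbolic, with three real eigenvalues of distinct modulus $|\sigma^s|<|\sigma^c|<|\sigma^u|$ and $|\sigma^s|<1<|\sigma^u|$. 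Indeed, a $Df$-invariant uniform cone field contracted and expanded at absolute rates forces, through the bounded-distance comparison with $A$, the corresponding spectral gaps for $A$; a complex eigenpair or a coincidence of moduli would violate the strict inequalities in (\ref{eq.abs.ph}). Thus $A$ carries genuine affine foliations by the planes $E_A^{cs}$ and $E_A^{cu}$, which serve as a coordinate model.

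The geometric heart is the construction, due to Burago and Ivanov, of \emph{branching foliations} tangent to $E^{cu}$ and to $E^{cs}$. I would produce these by a limiting procedure: approximate $E^{cu}$ by a nearby smooth \emph{integrable} plane field, integrate it to a surface through a given point, and control the forward iterates of this surface using the cone condition. The uniform transversality together with the domination of $E^c$ by $E^u$ guarantees that a subsequence converges to a complete $C^1$ surface everywhere tangent to $E^{cu}$. Carrying this out coherently over all base points yields an $f$-invariant family of complete surfaces $\{\mathcal S^{cu}\}$ such that through every point passes at least one surface, and such that any two surfaces, lifted to $\R^3$, never cross transversally; a priori, however, two surfaces through distinct points may \emph{merge} along a strong unstable leaf, and this possible merging is precisely the branching. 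The symmetric construction gives the branching family $\{\mathcal S^{cs}\}$.

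The main step is to show that absolute partial hyperbolicity kills the branching. Lifting a $cu$-surface to $\R^3$, I would show it is a properly embedded plane lying within bounded Hausdorff distance of an affine translate of $E_A^{cu}$: inside such a surface the strong unstable leaves expand at the absolute rate $\mu$ while the center direction is dominated, so the surface is quasi-isometrically embedded and, by the comparison $\|\tilde f-A\|\le C$, shadows the corresponding affine $cu$-plane. In particular each lifted surface separates $\R^3$ into two half-spaces, and the whole family inherits the global product structure of the affine model. If two distinct lifted surfaces branched, their merging locus would be an invariant strong-unstable leaf along which the two sheets coincide on one side and separate on the other; pushing forward by $\tilde f^n$ and combining the absolute expansion with the linear product structure of $A$ would force points whose surfaces must simultaneously coincide and remain a definite distance apart, a contradiction. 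Hence $\{\mathcal S^{cu}\}$ is an honest foliation $\mathcal F^{cu}$ tangent to $E^{cu}$, and symmetrically one obtains $\mathcal F^{cs}$ tangent to $E^{cs}$.

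Having produced genuine $f$-invariant foliations tangent to $E^{s}\oplus E^{c}$ and to $E^{c}\oplus E^{u}$, the map $f$ is dynamically coherent by Definition \ref{definition.dc}, and the center foliation is recovered by intersecting leaves of $\mathcal F^{cs}$ and $\mathcal F^{cu}$ and taking connected components, exactly as in the proof of the preceding Proposition. I expect the decisive difficulty to be the no-branching step: the pointwise inequalities (\ref{partial.hyperbolicity1})--(\ref{partial.hyperbolicity2}) do not suffice here, since the example of Theorem \ref{teo.non.dc} is pointwise partially hyperbolic yet has a branching, hence non-integrable, center-unstable distribution. The argument must therefore use the uniform rates (\ref{eq.abs.ph}) in an essential way, precisely through the quasi-isometric shadowing of the affine model of $A$, which is exactly the ingredient that fails in the absence of absoluteness.
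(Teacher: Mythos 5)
The paper never proves this statement: Theorem \ref{teo.BBI} is quoted from \cite{brinburagoivanov2009}, and the closest the survey comes to an argument is its later sketch of the Burago--Ivanov/Potrie machinery (branching foliations, the Key Lemma, quasi-isometry of $\widetilde{\mathcal{F}^u}$, leaves trapped near an $A$-invariant plane, global product structure). Your outline follows that route, rather than the original Brin--Burago--Ivanov one, which avoids branching foliations: they first prove the lifted one-dimensional strong foliations are quasi-isometric in $\mathbb{R}^3$ and then apply Brin's criterion that absolute partial hyperbolicity plus quasi-isometric strong foliations forces unique integrability of $E^{cs}$ and $E^{cu}$. Either architecture can be made to work, so the choice of route is not the issue.

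The issue is that the step you yourself call decisive --- killing the branching --- is not carried out, and the two assertions you lean on there are exactly the ones requiring proof. First, quasi-isometry of $\widetilde{\mathcal{F}^u}$ does not follow from ``unstable leaves expand at the absolute rate $\mu$'': growth of arclength says nothing a priori about ambient displacement of endpoints. The real argument supposes a long unstable arc returns close to itself, applies $\tilde f^{-n}$, and plays the uniform contraction $\mu^{-n}$ of the arc against the fact that $A^{-n}$ (up to the bounded error $\|\tilde f-A\|\le C$) cannot contract the endpoint displacement that fast in every direction; this is precisely where absoluteness enters, and it is the part you assert rather than derive. Second, your proposed contradiction is not the right mechanism: the merging set of two leaves of $\mathcal{B}^{cu}$ is a $u$-saturated set, not ``an invariant strong-unstable leaf'' (individual leaves of the branching foliation need not be invariant), and ``coincide yet remain a definite distance apart'' is not what goes wrong. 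The correct unbranching argument pairs $\mathcal{B}^{cs}$ with the \emph{transverse} foliation $\mathcal{F}^u$: quasi-isometry of $\widetilde{\mathcal{F}^u}$ together with the fact that every leaf of the lifted branching foliation stays within bounded distance of a translate of an $A$-invariant plane yields global product structure, and two distinct $cs$-leaves through one point would then force some unstable leaf to meet a single $cs$-leaf twice, which the product structure forbids. A smaller gap: that $A$ has real spectrum with the required moduli is itself a theorem of Burago--Ivanov, and your one-line justification is not an argument (note also that in the skew-product case $|\lambda_2|=1$, so ``distinct moduli'' must be handled with care). Your closing paragraph correctly locates where absoluteness is indispensable, but the theorem's proof is exactly the content of the assertions left unproved.
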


However, this is not the general case, as we explain in the next subsection.

\subsection{A non-dynamical coherent example.}\label{subsection.non.dc.example}

\setcounter{section}{2}
\setcounter{theorem}{12}
\begin{theorem}\cite{HHUnonDCexample} There exists a partially hyperbolic diffeomorphism $f:\T^{3}\to\T^{3}$ such that 
\begin{enumerate}
\item there is no invariant foliation tangent to the distribution $E^{c}\oplus E^{u}$; and 
\item there is an invariant 2-dimensional torus $T$  tangent to the distribution $E^{c}\oplus E^{u}$.
\end{enumerate}
 Moreover, there is a $C^{1}$-open neighborhood ${\mathcal U}$ of $f$ in $\diff^{1}(M)$ such that all $g$ in ${\mathcal U}$ satisfy conditions (1) and (2).
\end{theorem}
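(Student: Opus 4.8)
The plan is to construct $f$ by hand so that it has a normally attracting invariant $2$-torus $T$ tangent to $E^{c}\oplus E^{u}$ on which the restricted dynamics is Anosov, to prove that $E^{c}\oplus E^{u}$ is uniquely integrable away from $T$, and then to combine these facts with Theorem~\ref{teo.toro.cu} (an invariant foliation tangent to $E^{c}\oplus E^{u}$ admits no compact leaf) to rule out any invariant $cu$-foliation. The robustness statement will follow because every ingredient used—the persistent Anosov torus and the branching of the $cu$-surfaces along it—is $C^{1}$-open.

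First I would set up the local model. Writing $\T^{3}=\T^{2}\times\mathbb S^{1}$ with $T=\T^{2}\times\{0\}$, I would put on $T$ a linear Anosov diffeomorphism of $\T^{2}$ with eigenvalues $\lambda^{-1}<1<\lambda$, letting its contracting direction play the role of $E^{c}|_{T}$ and its expanding direction that of $E^{u}|_{T}$, and I would arrange a strong normal contraction toward $T$ of rate $<\lambda^{-1}$. With these rates the splitting $E^{s}\oplus E^{c}\oplus E^{u}$ (normal $\oplus$ weak-tangent $\oplus$ strong-tangent) satisfies (\ref{partial.hyperbolicity1})--(\ref{partial.hyperbolicity2}), so $f$ is partially hyperbolic; the center is contracting but strictly weaker than the stable. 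Crucially, such a torus cannot come from a linear automorphism of $\T^{3}$ (a rational invariant $cu$-plane would force the transverse eigenvalue to be an integer, hence $\pm1$, which is incompatible with a strictly contracting $E^{s}$), so the global map must be produced by a deformation of a partially hyperbolic model, supported near $T$, which ``tilts'' the center direction as one crosses $T$ and thereby creates a branching of the $2$-plane field $E^{c}\oplus E^{u}$ exactly along $T$. Verifying that the deformation preserves the cone conditions, and hence partial hyperbolicity, is routine once the above rates are respected.

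Next I would exploit normal hyperbolicity. Since the normal contraction dominates the weakest tangential rate $\lambda^{-1}$, the torus $T$ is a normally attracting, $1$-normally hyperbolic invariant manifold, so by the Hirsch--Pugh--Shub theory \cite{HPS} it persists: every $g$ in a $C^{1}$-neighborhood of $f$ has an invariant torus $T_{g}$, $C^{1}$-close to $T$, with $g|_{T_{g}}$ Anosov. Because $T_{g}$ is normally attracting, all of the expanding and weakly contracting directions are tangent to it while the strongest contraction is normal, so uniqueness of the dominated splitting forces $TT_{g}=E^{c}_{g}\oplus E^{u}_{g}$. This already yields property (2) together with its $C^{1}$-robustness.

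The technical heart is integrability. Using the explicit form of the deformation I would show that $E^{c}\oplus E^{u}$ is uniquely integrable at every point of $\T^{3}\setminus T$: off $T$ the center is genuinely integrable to unique center curves, and saturating these by the always-integrable strong unstable foliation $\Fu$ produces unique local $cu$-surfaces; along $T$, by contrast, the tilting yields two families of integral surfaces accumulating on $T$ from the two sides, so integrability fails there. Now suppose, for a contradiction, that $f$ admits an invariant foliation $\Fcu$ tangent to $E^{c}\oplus E^{u}$. By Theorem~\ref{teo.toro.cu} the torus $T$ is not a leaf of $\Fcu$. However, near $T$ the leaves of $\Fcu$ must coincide with the unique integral surfaces off $T$, and since these accumulate on $T$ while $T$ is $f$-invariant and attracting, the images $f^{n}(L)$ of a nearby leaf $L$ are again leaves and converge to $T$; as $\Fcu$ is a closed invariant foliation, its limit $T$ would itself be a leaf—a contradiction. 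Hence no invariant $cu$-foliation exists, which is property (1); the identical argument applies to each $g$ near $f$ since it relies only on the robust Anosov torus $T_{g}$ and the robust branching of $cu$-surfaces along it, giving the $C^{1}$-openness of (1). The hard part will be exactly this integrability analysis: because $E^{c}$ is only H\"older, the Frobenius theorem does not apply to the $2$-plane field $E^{c}\oplus E^{u}$, so unique integrability off $T$ and the precise branching along $T$ must be extracted from the dynamics of the specific example, and controlling how the $cu$-surfaces wrap onto the Anosov torus—robustly—is the crux of the proof.
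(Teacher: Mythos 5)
Your proposal follows essentially the same route as the paper's construction: a normally hyperbolic Anosov $cu$-torus obtained by deforming a product-type model so that the center direction tilts and branches along the torus, unique integrability of $E^{c}\oplus E^{u}$ off the torus forcing any invariant $cu$-foliation to contain the torus as a compact leaf, a contradiction with Theorem~\ref{teo.toro.cu}, and robustness via persistence of the normally hyperbolic torus and of the unique center foliation away from it. The ``hard part'' you defer is exactly what the paper carries out explicitly with the skew product $F(x,\theta)=(Ax,f(\theta))+(\phi(\theta)e_{s},0)$ and the cohomological equation $b\circ f=\lambda b+\phi$, whose solutions $\eta,\zeta$ have derivatives blowing up at the invariant tori with opposite signs on the two sides, which is what produces the branching you postulate.
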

\setcounter{section}{4}
\setcounter{theorem}{2}

\espc

\begin{proof}[Sketch]
Let $A:\mathbb{T}^2\rightarrow \mathbb{T}^2$\ be a hyperbolic linear map with eigenvalues $\lambda<1<1/\lambda$. Take $u$ a unit eigenvector corresponding to the eigenvalue $\lambda$.  Consider also a north pole-south pole function $f: \mathbb{T}\rightarrow \mathbb{T}$\ such that 
$$
f(0)=0,f(1/2)=1/2
$$
$$
f'(1/2)=\sigma<\lambda<1<\nu=f'(0)<1/\lambda
$$
and a differentiable function $\phi: \mathbb{T}\rightarrow \Real$.

Now construct a perturbation $F$\ of the Axiom-A map $A\times f$\ by ``pushing'' in the stable direction of  $A$, namely
$$
F(x,\theta)=(Ax,f(\theta))+ (\phi(\theta)e_{s},0), \quad \phi(1/2)=0.
$$
where $e_{s}$ is a unit vector in the $E^{s}$ direction of $A$.\par
Note the strong unstable direction of $A\times f$\ is unaltered by this perturbation, and in particular the strong stable manifold of the perturbation exists and coincides with the strong stable manifold of the unperturbed map. Observe that the unperturbed map is not partially hyperbolic. Now we study the other invariant directions.

\espc

We are seeking invariant directions of the derivative of $F$:
$$
dF_{(x,\theta)}(v,t)=(Av,f'(\theta)t)+(\phi'(\theta)te_{s},0)
$$
An invariant direction (inside the $e_{s}\times \mathbb{T}$\ cylinder) will be generated by a vector field of the form $(a(\theta)e_{s},1)$\ for some function $a$, and hence we need to solve

\begin{equation}\label{coho1}
a(f(\theta))f'(\theta)=\lam a(\theta)+\phi'(\theta).
\end{equation}

We are thus led to find a solution of the cohomological equation 
$$
b\circ f=\lambda b+\phi
$$
(the solution of \eqref{coho1} is just $a=b'$). One then checks that the following two functions are solutions,
\begin{equation}\label{serie1}
\eta(\theta)=\frac{1}{\lambda}\sum_{1}^{\oo}\lam^n\phi(f^{-n}\theta)
\end{equation}
\begin{equation}\label{serie2}
\zeta(\theta)=-\frac{1}{\lambda}\sum_{0}^{\oo}\lam^{-n}\phi(f^{n}\theta)
\end{equation}
and that the previous assumptions imply that $\eta\in C^1(T\setminus\{1/2\}),\zeta\in\ C^1(T\setminus\{0\})$.

\espc
Let us define 
$$
 E^{c}(\theta)=\text{span} (\eta'(\theta)e_{s},1)\qquad \text{for}\quad \theta\ne\tfrac12
$$
and 
$$
 E^{s}(\theta)=\text{span}(\zeta'(\theta)e_{s},1)\qquad\text{for} \quad\theta\ne 0
$$

Back to the invariant directions, note that, for generic $\phi$, $\eta'(\theta)$\ gets bigger as $\theta$\ approaches $1/2$, and thus if we can choose  $\phi$\ so that 
\begin{equation}\label{nocoh1}
\lim_{\theta\rightarrow 1/2}\eta'(\theta)=\oo
\end{equation}
we will get continuity for $E^c$\ by defining
$$
E^c(\theta=\tfrac12)=\text{span}\{(e_{s},0)\}=E^s_{A}\times 0.
$$

Arguing similarly, we define
$$
E^s(\theta=0)=E^s_{A}\times 0,
$$
and we will get a continuous bundle provided that we prove 
\begin{equation}\label{nocoh2}
\lim_{\theta\rightarrow 0}\zeta'(\theta)=\oo.
\end{equation}

\espc

Assume for now that we have proved that these bundles are continuous. Now we want to show that $T\mathbb{T}^3=E^s\oplus E^c \oplus E^u$, or what is equivalent, that the angle between $E^s$\ and $E^c$\ is not zero. What we need to show is that $\eta'\neq \zeta'$\ for  $\theta\neq 0,1/2$. Note that for $\theta=0,1/2$, the angle is not zero, and hence it is not zero in a neighbourhood of these points. But by the cohomological equations,
$$
(\eta'-\zeta')\circ f=\lambda (\eta'-\zeta')
$$

\noindent and using the form of the dynamics of $f$, we conclude that the sign of $\eta'- \zeta'$\ is constant in $(0,1/2)$\ and $(1/2,1)$, and clearly non zero. The following lemma is proven in detail in \cite{HHUnonDCexample}.

\begin{lemma}
There exists $\phi:\T\to\R$\ such that: 
\begin{enumerate}
\item the limits in \eqref{nocoh1} and  \eqref{nocoh2} hold.
\item $\eta'$\ has opposite sign in $(0,1/2)$\ and $(1/2,1)$.
\end{enumerate}
\end{lemma}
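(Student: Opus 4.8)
The plan is to select $\phi$ by prescribing its shape on the two arcs $(0,\tfrac12)$ and $(\tfrac12,1)$ cut out by the fixed points of $f$, and then to extract both conclusions directly from the series \eqref{serie1} and \eqref{serie2}. Concretely I would take $\phi$ smooth with $\phi(\tfrac12)=0$, strictly increasing on $(0,\tfrac12)$ and strictly decreasing on $(\tfrac12,1)$, having nondegenerate critical points at the two fixed points; thus $\tfrac12$ is a nondegenerate maximum (of value $0$) and $0$ a nondegenerate minimum (of value $\phi(0)<0$), so that $\phi'(0)=\phi'(\tfrac12)=0$ while $\phi''(\tfrac12)<0<\phi''(0)$. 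A concrete instance is $\phi(\theta)=-1-\cos(2\pi\theta)$.

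The sign statement (2) is then the easy part. Differentiating \eqref{serie1} termwise, which is legitimate away from $\tfrac12$ exactly as in the convergence discussion preceding the lemma, gives
$$\eta'(\theta)=\frac{1}{\lam}\sum_{n=1}^{\oo}\lam^n (f^{-n})'(\theta)\,\phi'(f^{-n}\theta).$$
Since $f$ is an orientation preserving circle diffeomorphism, $(f^{-n})'>0$; and since each arc is $f$-invariant, for $\theta\in(0,\tfrac12)$ every iterate $f^{-n}\theta$ lies in $(0,\tfrac12)$, where $\phi'>0$. Hence $\eta'>0$ on all of $(0,\tfrac12)$, and symmetrically $\eta'<0$ on all of $(\tfrac12,1)$, which is the required opposite-sign property.

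The substantial part is the blow-up \eqref{nocoh1}, which I would obtain by isolating the initial block of indices during which the backward orbit $f^{-n}\theta$ stays in a fixed small neighbourhood of the attracting fixed point $\tfrac12$. Writing $\delta=|\theta-\tfrac12|$, linearization at $\tfrac12$ yields $f^{-n}\theta-\tfrac12\approx\sigma^{-n}(\theta-\tfrac12)$ and $(f^{-n})'(\theta)\approx\sigma^{-n}$ for $n\le N$, where $N\approx\log(1/\delta)/\log(1/\sigma)$ is the escape time at which $\sigma^{-n}\delta\approx 1$. Since $\phi'(f^{-n}\theta)\approx\phi''(\tfrac12)\sigma^{-n}(\theta-\tfrac12)$, the $n$-th term is of order $\tfrac1\lam\phi''(\tfrac12)(\theta-\tfrac12)(\lam/\sigma^2)^n$, and the block therefore sums to order $(\theta-\tfrac12)(\lam/\sigma^2)^N\asymp \mathrm{sign}(\theta-\tfrac12)\,\delta^{\,1-\beta}$, with $\beta=\log(\lam/\sigma^2)/\log(1/\sigma)$. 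The standing hypothesis $\sigma<\lam$ is precisely what forces $\beta>1$ (equivalently $\lam/\sigma>1$), so $\delta^{1-\beta}\to\infty$ and \eqref{nocoh1} holds; the factor $\mathrm{sign}(\theta-\tfrac12)$ reconfirms (2) near $\tfrac12$. The companion limit \eqref{nocoh2} comes from the identical estimate applied to \eqref{serie2} at the repelling fixed point $0$: there $(f^{n})'(\theta)\approx\nu^n$ while the forward orbit stays near $0$, and the governing ratio is $\nu^2/\lam>1$, so $\zeta'(\theta)\to\infty$ as $\theta\to 0$.

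The main obstacle is making the block estimate rigorous rather than merely asymptotic. I must control the bounded distortion of $f^{-n}$ along the orbit so as to replace the linear models of $f^{-n}\theta-\tfrac12$ and of $(f^{-n})'$ by genuine two-sided bounds uniform in $\theta$, and then show that the tail $n>N$ is negligible against the divergent block. For $\eta'$ this tail is controlled because, once the orbit has left the neighbourhood of $\tfrac12$ and approaches the repelling point $0$, one has $(f^{-n})'\approx\nu^{-n}$ so that $\lam^n(f^{-n})'\approx(\lam/\nu)^n$ is summable (here $\lam/\nu<1$); symmetrically the tail for $\zeta'$ is summable because $\sigma/\lam<1$. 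Once these uniform distortion and tail bounds are in place the leading asymptotics above are justified and both parts of the lemma follow; the detailed bookkeeping is the content of the computation carried out in \cite{HHUnonDCexample}.
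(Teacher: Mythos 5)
The paper never actually proves this lemma---it is stated with a pointer to \cite{HHUnonDCexample}---so there is no in-text argument to compare yours against; judged on its own, your sketch is essentially correct and fits the computation surrounding the statement. The choice $\phi(\theta)=-1-\cos(2\pi\theta)$ (monotone on each invariant arc, with $\phi'=0$ and nondegenerate second derivative at the two fixed points) gives item (2) immediately from term-by-term positivity of $\eta'(\theta)=\tfrac{1}{\lam}\sum_{n\geq 1}\lam^n(f^{-n})'(\theta)\,\phi'(f^{-n}\theta)$ on each arc, and your exponent arithmetic for the blow-up is right: the block up to the escape time $N\approx \log(1/\delta)/\log(1/\sigma)$ contributes on the order of $\delta^{1-\beta}$ with $\beta=\log(\lam/\sigma^2)/\log(1/\sigma)>1$ precisely because $\sigma<\lam$, while the analogous estimate at $0$ requires $\nu^2/\lam>\nu$, i.e.\ $\nu>\lam$, which is automatic. (The limits \eqref{nocoh1}--\eqref{nocoh2} must of course be read as $|\eta'|\to\oo$, $|\zeta'|\to\oo$, since by item (2) the signed limit cannot exist; that is all the continuity of $E^c$ and $E^s$ needs.)

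One point to fix: the tail $n>N$ is \emph{not} negligible against the block, and your estimate $\lam^n(f^{-n})'(\theta)\approx(\lam/\nu)^n$ for it is wrong as written, because it drops the expansion $(f^{-N})'(\theta)\asymp\sigma^{-N}\asymp\delta^{-1}$ already accumulated while the backward orbit was near $1/2$; carried through correctly, the tail also sums to order $\delta^{1-\beta}$, the same order as the block. This does not damage the proof: for $\theta\in(0,1/2)$ every term of the series for $\eta'$ is nonnegative, so the block (indeed the single term $n=N$, after a bounded-distortion estimate near the fixed point) already furnishes a divergent lower bound, and no upper bound on the tail is needed. But the justification should be ``all terms have the same sign, so there is no cancellation,'' not ``the tail is negligible.''
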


In particular $F$\ is partially hyperbolic (but NOT absolutely partially hyperbolic). Finally we prove that it is not dynamically coherent. Observe that since the bundles only depend on $\theta$\ we obtain the stable, unstable and center manifolds (provided that this last one exists) by translation.

Consider the function $h:T^3\rightarrow T^2$\ given by
$$
h(x,\theta)=x-\eta(\theta)e_{s}.
$$

Then $F\circ h=h\circ A_T$\ and $h$\ is clearly surjective, hence it is a semiconjugacy. Note that we have a parametrization $l_{x}(\theta)$ of $h^{-1}(h(x,0))$ given by
$$
l_{x}(\theta)=(x,0)+(\eta(\theta)e_{s},\theta),
$$
and hence, the family of curves $\{l_{x}(\theta)\}$\ is tangent to $E^c$\ if $\theta\neq 1/2$. For $\theta\neq 1/2$\ the bundle $E^c$\ is uniquely integrable and hence its invariant curves are precisely the $l_{x}(\theta)$. But for $\theta=1/2$\ $E^c=E^s_A\times \{0\}$, hence its tangent curves have to be horizontal. Now we use that $\eta'$\ have different signs on the intervals $(0,1/2)$\ and $(1/2,1)$\ to conclude that this family is not a foliation near $\theta=1/2$, hence the bundle $E^c$\ is not integrable. See Figure \ref{nondinco}.

\begin{figure}
  \includegraphics[width=14cm]{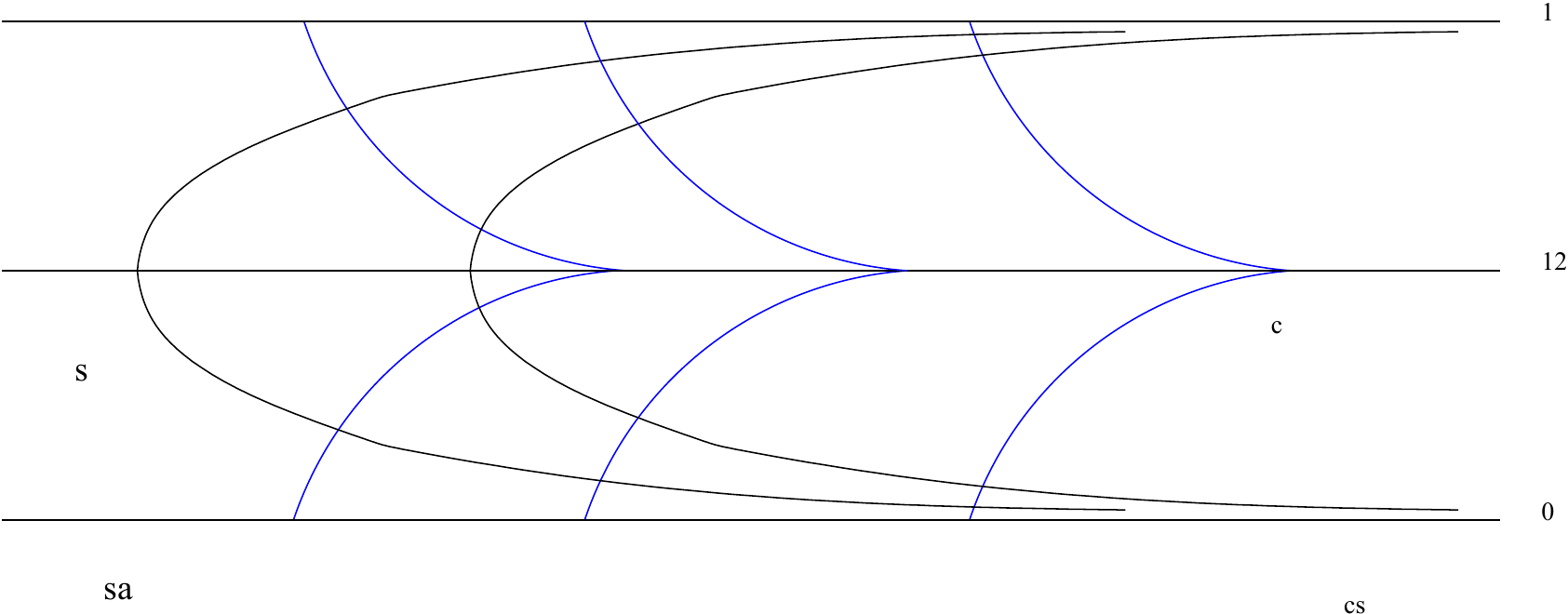}\\
  \caption{The figure above shows a center-stable leaf. The center leaves are unique for $\theta\ne 1/2$, and get tangent to the center leaf at $\theta=1/2$, making a peak. This precludes the existence of a center foliation.}\label{nondinco}
\end{figure}

\end{proof}

\espc

The example previously constructed in fact is robust, meaning that in a neighborhood of it there are no dynamically coherent partially hyperbolic diffeomorphisms, a surprising fact. This is a consequence of the fact that the invariant torus corresponding to $\theta=1/2$ is a $cu$-torus, and in particular is a normally hyperbolic torus. The results of \cite{HPS} imply that this torus persists under perturbations, meaning that any small perturbation of $f$ has a $cu$ torus. On the other hand, outside a neighborhood of this $cu$-torus, there is a unique center foliation, which is persistent under perturbations, due also to \cite{HPS}. This foliation is extended by invariance to all the complement of the $cu$-torus. Hence, if there were a foliation, it would contain a $cu$-torus. This is not possible, due to the following result:

\espc

\begin{theorem}[Hertz-Hertz-Ures] \label{teo.toro.cu} \cite{HHUnonDCexample}
Let $f:M^{3}\rightarrow M^{3}$ be a dynamically coherent partially hyperbolic diffeomorphism. Then the center unstable foliation does not have any closed leaf.
\end{theorem}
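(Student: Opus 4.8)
The plan is to argue by contradiction: assume $\mathcal{W}^{cu}$ has a closed (compact) leaf $L$, and extract a contradiction from the interplay between the uniform expansion along $E^u$ and the compactness of $L$. The first step is purely topological. Since $E^u$ is a continuous, nowhere-vanishing line field contained in $TL$, the Poincaré--Hopf theorem forces $\chi(L)=0$, so $L$ is a torus or a Klein bottle; passing to the orientation double cover (to which the argument lifts) I may assume $L\cong\mathbb{T}^2$. Because $E^u\subset TL$ and $L$ is closed, every unstable leaf through a point of $L$ is entirely contained in $L$. By the Stable Manifold Theorem the leaves of $\mathcal{W}^u$ are injectively immersed lines, never circles, so the restriction $\mathcal{W}^u|_L$ is a $1$-dimensional foliation of $\mathbb{T}^2$ \emph{with no compact leaf}.

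Next I would record two structural facts. First, $L$ is normally attracting, since its normal bundle $E^s$ is uniformly contracted; by the normal hyperbolicity theory of \cite{HPS}, together with compactness of $M$ and the fact that distinct leaves of $\mathcal{W}^{cu}$ are disjoint, this should force $L$ to be $f$-periodic (nearby leaves are attracted to the orbit of $L$, so the compact leaves accumulating on its orbit are finite in number). Replacing $f$ by a power $g=f^p$, I may assume $g(L)=L$, so that $g|_L\colon\mathbb{T}^2\to\mathbb{T}^2$ preserves the uniformly expanded foliation $\mathcal{W}^u|_L$ and, crucially using \emph{dynamical coherence}, the transverse center foliation $\mathcal{W}^c|_L$. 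Second, by the Poincaré--Bendixson theory for foliations of the torus, $\mathcal{W}^u|_L$ (having no compact leaf) admits a closed transversal meeting every leaf and has transverse holonomy conjugate to a rotation; the same dichotomy applies to $\mathcal{W}^c|_L$, which either carries a compact leaf or is of the same irrational, rotation type.

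The core of the argument, and the step I expect to be the main obstacle, is turning $\|Dg|_{E^u}\|\ge\kappa>1$ into a contradiction, and it \emph{must} invoke dynamical coherence. Indeed, Theorem \ref{teo.non.dc} exhibits a compact $cu$-torus precisely when the center foliation is absent, and intrinsically a torus carrying a uniformly expanded line foliation without compact leaves is perfectly consistent (an Anosov automorphism of $\mathbb{T}^2$ is exactly of this type). Hence no purely intrinsic obstruction exists, and the contradiction has to come from the \emph{global} center foliation sub-foliating $\mathcal{W}^{cu}$. The strategy is to analyze $\mathcal{W}^c|_L$: if it has a compact center leaf $c$, then tracking an unstable leaf $W^u(x)\subset L$ as it recurs near $c$ produces a nontrivial \emph{expanding} holonomy of $\mathcal{W}^u|_L$ along a closed path, incompatible with the rotation-type transverse structure found above; if instead $\mathcal{W}^c|_L$ has no compact leaf, then $g|_L$ preserves two transverse irrational foliations while uniformly expanding one, and I would use the global coherent structure (the center leaves of $L$ extend, via Reeb-type stability of the compact leaf, to the neighbouring $cu$-leaves) to contradict the domination $\|Dg|_{E^s}\|<\|Dg|_{E^c}\|$ once $E^c$ is pinned down transverse to the expanded $E^u$ inside $M^3$.

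The delicate points are exactly the recurrence-to-holonomy construction and the extension of the center sub-foliation across nearby $cu$-leaves; the cleanest resolution would replace the case analysis by a single argument measuring how the transverse invariant measure of $\mathcal{W}^u|_L$ transforms under $g$ and playing the resulting transverse rigidity against the center foliation forced by coherence. This is where the real work lies, and it is the genuine reason the statement fails without dynamical coherence.
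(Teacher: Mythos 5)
Your preliminary reductions are sound and match the machinery the paper actually uses elsewhere: $\chi(L)=0$ via the line field $E^u\subset TL$, periodicity of $L$ from the trapping neighborhood $W^s_{loc}(L)$ (this is Lemma \ref{sc.cu.torus}), and the fact that $f^p|_L$ acts hyperbolically on $H_1(L)$ via the Poincar\'e--Bendixson argument (Proposition \ref{exAnTor}). The gap is in the core step, and I think the difficulty is that you are looking for the contradiction in the wrong place. You insist the argument ``must invoke dynamical coherence'' through the center sub-foliation of $L$; but the paper itself records (in the paragraph preceding Theorem \ref{teo.non.dc}) that the result of \cite{HHUcenterunstable} needs only an invariant foliation tangent to $E^c\oplus E^u$ --- no center foliation, no $cs$-foliation. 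Moreover your own observation defeats your case analysis: a hyperbolic automorphism of $\mathbb{T}^2$ carries a uniformly expanded foliation with no compact leaves \emph{together with} a transverse invariant foliation (its stable one), which is exactly the intrinsic configuration $\bigl(\mathcal{W}^u|_L,\mathcal{W}^c|_L\bigr)$ you propose to contradict. So no argument that uses only the dynamics inside $L$ plus the two tangent sub-foliations can close; the ``recurrence-to-holonomy'' contradiction you sketch in the compact-center-leaf case would apply verbatim to the toral Anosov map, where there is nothing to contradict. The second case (``contradict the domination $\|Dg|_{E^s}\|<\|Dg|_{E^c}\|$'') is not an argument at all as written.

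The actual obstruction is \emph{transverse} to $L$: what is incompatible with uniform contraction of $E^s$ is the germ of the ambient foliation $\mathcal{W}^{cu}$ along the compact leaf, i.e.\ its holonomy on a stable transversal $W^s_\epsilon(p)$ at a periodic point $p\in L$, which is $f$-equivariantly conjugated (by the normal contraction $f|_{W^s(p)}$) to the holonomy along the $A$-image loop, with $A$ hyperbolic on $\pi_1(L)$. One then plays Reeb stability and the uniform transversality of $E^s$ to $E^{cu}$ against the backward expansion of stable arcs joining nearby leaves to $L$. This is precisely what the non-coherent example of Theorem \ref{teo.non.dc} illustrates: the intrinsic dynamics on the invariant $cu$-torus there is harmless, but the nearby $cu$-surfaces accumulate on it in a way that forbids the torus from being a \emph{leaf} of an honest invariant foliation (``a $cu$-torus could coexist with a $cu$-foliation, but cannot be part of it''). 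Until your argument engages the foliation in a neighborhood of $L$ rather than the foliations of $L$, it cannot produce the contradiction.
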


\textbf{Question:} Does the same result hold in any dimension?

\subsection{Non-dynamical coherence conjecture - state of the art}
It has been proven by A. Hammerlindl and R. Potrie \cite{hammerlindlpotrie} that the dynamical coherence Conjecture \ref{non.dc.conjecture.strong} is true in 
tori and solv- and nil-manifolds and their finite covers:

\begin{theorem}[Hammerlindl-Potrie \cite{hammerlindlpotrie} ]\label{HammerPot}
If $f$ is a non-dynamical coherent diffeomorphism in a 3-manifold with (virtually) solvable fundamental group, then there exists a 2-torus, tangent either to $E^{s}\oplus E^{c}$ or $E^{c}\oplus E^{u}$.
In particular, any partially hyperbolic diffeomorphism with $\Omega(f)=M$ in these manifolds is dynamically coherent.
\end{theorem}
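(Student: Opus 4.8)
The plan is to reduce dynamical coherence to a question about \emph{branching foliations} and then to extract a torus from any genuine branching, using the coarse geometry of the universal cover forced by solvability. First I would pass to a finite cover and a power of $f$ so that the bundles $E^{s},E^{c},E^{u}$ are orientable and $Df$ preserves their orientations; this is harmless, since both dynamical coherence and the presence of a $cs$- or $cu$-torus are detected on finite covers, and it puts us in position to invoke the Burago--Ivanov construction of branching foliations. That construction yields $f$-invariant branching foliations $\Fcs$ and $\Fcu$ tangent to $E^{s}\oplus E^{c}$ and $E^{c}\oplus E^{u}$: families of complete surfaces tangent to the corresponding distribution, one through each point, whose leaves never cross topologically transversally but may merge. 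The crucial reformulation is that $f$ is dynamically coherent precisely when neither family branches, so it suffices to show that branching forces a compact leaf tangent to $E^{cs}$ or $E^{cu}$.

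Next I would lift everything to the universal cover $\tilde M$. Because $\pi_{1}(M)$ is (virtually) solvable, $M$ is finitely covered by a torus, nilmanifold, or solvmanifold, so $\tilde M$ is diffeomorphic to $\R^{3}$ and carries a model left-invariant metric whose eigenvalue data can be compared with the splitting. The key geometric input --- and the place where solvability is really used --- is to prove that in $\tilde M$ the strong leaves $\Ws{\cdot}$ and $\Wu{\cdot}$ are \emph{quasi-isometrically} embedded. This is a coarse-geometry statement about solvable Lie groups: the contraction and expansion of $Df$ along the strong bundles, together with the algebraic growth of the group, force strong leaves to be quasigeodesics. From quasi-isometry I would deduce a \emph{global product structure}: each lifted leaf of $\widetilde{\Fcu}$ is a properly embedded plane separating $\tilde M$, so the leaf space $\mathcal{L}$ of $\widetilde{\Fcu}$ is a simply connected (possibly non-Hausdorff) $1$-manifold on which the deck group and $\tilde f$ act. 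Reeblessness --- no Reeb components, guaranteed because the leaves are planes --- is what legitimizes this picture.

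With this structure in hand I would analyze branching. Branching of $\Fcu$ corresponds exactly to non-Hausdorff points of $\mathcal{L}$, and the set $\mathcal{B}$ of such points is closed and invariant under both the deck action and $\tilde f$. The heart of the argument is to show that if $\mathcal{B}\ne\emptyset$ then, after projecting to $M$ and using $f$-invariance to gain periodicity, the merged leaves assemble into a closed surface $T\subset M$ tangent to $E^{cu}$; in the solv/nil/torus setting the only closed incompressible invariant leaf that can occur is the $2$-torus, so $T$ is a $cu$-torus (symmetrically, branching of $\Fcs$ yields a $cs$-torus). I expect this last passage --- turning ``branching occurs somewhere'' into ``a genuinely \emph{compact} torus exists'' --- to be the main obstacle: one must pin down the global position of the branch locus using the action of the solvable deck group on $\mathcal{L}$ and rule out $\mathcal{B}$ projecting to a noncompact or wilder invariant object, which is exactly where the classification of the possible ambient manifolds and the algebraic normal form of $f_{*}$ on homology are brought to bear.

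Finally, for the ``in particular'' clause I would argue by contradiction: if $f$ with $\Omega(f)=M$ failed to be coherent, the first part would furnish a $cs$- or $cu$-torus $T$, which is normally hyperbolic --- a $cu$-torus is normally attracting, since its normal bundle $E^{s}$ is contracted, and a $cs$-torus is normally repelling, since its normal bundle $E^{u}$ is expanded. A normally attracting torus has, on its stable side, points arbitrarily close to $T$ but not on it whose forward orbits converge to $T$; such points are wandering, so $\Omega(f)\ne M$, and the repelling case is identical in backward time. This contradiction forces $f$ to be dynamically coherent, completing the argument.
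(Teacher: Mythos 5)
Your toolbox is the right one---Burago--Ivanov branching foliations, quasi-isometry of the lifted strong leaves, global product structure, and the reduction of dynamical coherence to unbranchedness---but the step that actually produces the torus is both missing and aimed at the wrong mechanism. You propose to establish global product structure for $\widetilde{\Fcu}$ unconditionally from quasi-isometry and then to extract the torus from the branch locus, identified with the non-Hausdorff points of the leaf space. This has the logic inverted: global product structure already forces the branching foliation to be unbranched (that is precisely Potrie's Proposition 8.4 as quoted in Section \ref{section.dc}), so if you could prove it unconditionally there would be nothing left to branch and no torus to find. The actual argument is a dichotomy, and the torus does not come from the branch locus at all. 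One first approximates $\mathcal{B}^{cs}$ by a true foliation $\mathcal{T}^{cs}_{\epsilon}$ transverse to $E^{u}$ (Burago--Ivanov's Key Lemma); since $\mathcal{T}^{cs}_{\epsilon}$ is Reebless, a variant of Proposition 3.7 of \cite{BBI} shows every lifted leaf lies within uniform distance $R$ of a translate of a single $f_{*}$-invariant plane $P$. If $P$ projects to a $2$-torus, the foliation has a compact torus leaf, which transfers to a compact leaf of $\mathcal{B}^{cs}$ tangent to $E^{s}\oplus E^{c}$: that is where the torus comes from. Only if $P$ projects densely does one combine the plane-shadowing with quasi-isometry of $\widetilde{\Fu}$ to get global product structure, hence unbranchedness, hence coherence. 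Your proposed alternative---that branching forces the non-Hausdorff locus of the leaf space to ``assemble into'' a compact $cu$-torus---is not established anywhere, conflates branching of a branching foliation with non-Hausdorffness of the leaf space of a true foliation (distinct phenomena), and you yourself flag it as the main obstacle without supplying an argument. That is the genuine gap.

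Two smaller points. First, the dichotomy on the direction of $P$, driven by the partially hyperbolic action $f_{*}$ on $H_{1}$ and the case split $|\lambda_{2}|=1$ versus $|\lambda_{2}|\neq 1$, is where the solvable/nilpotent structure enters; your appeal to quasigeodesics in a left-invariant metric gestures at this but does not replace it. Second, your argument for the ``in particular'' clause is essentially the paper's (an invariant $cu$-torus is attracting, an invariant $sc$-torus is repelling, and either produces wandering points), but it needs the torus to be \emph{periodic}; the compact leaf produced by the first part must first be promoted to a periodic one, as in Lemma \ref{sc.cu.torus}, before normal hyperbolicity can be invoked.
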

\espc
This is the sharpest result concerning Conjecture \ref{non.dc.conjecture.strong} so far. 
Let us give a brief sketch of the ideas used to establish this theorem in the simpler case where $M=\mathbb{T}^{3}$. This result has been proven by R. Potrie in his thesis (see \cite{potrie}), and we shall follow his arguments. Consider  $f:\mathbb{T}^3\rightarrow \mathbb{T}^3$ partially hyperbolic, and by passing to a finite covering, it is no loss of generality to assume that the bundles $E^{\sigma}_f$ are oriented and, furthermore, that  $f$ preserves their orientation.

We will rely heavily in the  seminal papers of Brin, Burago and Ivanov (\cite{BI,BBI,brinburagoivanov2009}). The starting point is that the action in homology $f_{\ast}:H_1(\mathbb{T}^3)\rightarrow H_1(\mathbb{T}^3)$ is partially hyperbolic \cite[Theorem 1.2.]{BI}. Namely, if $A=f_{\ast}\in SL(3,\mathbb{R})$ has eigenvalues $\lambda_1,\lambda_2,\lambda_3$  then $|\lambda_1|<1<|\lambda_3|$. We have two cases:

\begin{enumerate}
\item $|\lambda_2|=1$; this is the skew-product case.
\item $|\lambda_2|\neq 1$; in this case $f$ is a DA.
\end{enumerate}

It suffices to show the existence of an $f$-invariant foliation tangent to $E^{cs}$ (the other case being analogous). It turns out that there is a  natural candidate for $\mathcal{F}^{cs}$.

\begin{theorem}[\cite{BI}, Thm. 4.1]
There exists a family $\mathcal{B}^{cs}=\{B^{cs}(x)\}_{x\in M}$ such that 
\begin{enumerate}
\item Each $B^{cs}(x)$ is an immersed boundary-less surface of class $\mathcal{C}^{1}$ tangent to $E^{cs}$
\item For every $x,x'\in M$, $x\ne x'$ the surfaces $B^{cs}(x),B^{cs}(x')$ do not cross topologically; that is, $B^{cs}(x')$ cannot intersect two different connected components of $B\setminus B^{cs}(x)$, for any neighborhood $B$ of $x$.
\item $\mathcal{B}^{cs}$ is $f$-invariant.
\end{enumerate}
\end{theorem}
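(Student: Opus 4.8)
The plan is to produce the surfaces $B^{cs}(x)$ dynamically, as (pseudo-)center-stable manifolds obtained from a graph transform, to control their geometry by the domination of $E^{cs}$ by $E^{u}$, and then to force the non-crossing condition (2) using the co-orientation of $E^{cs}$ together with the unstable expansion. Throughout I would work in the universal cover $\tilde M=\R^{3}$ with a lift $\tilde f$, since properties (1)--(2) are semilocal and since, having passed to a finite cover so that $E^{u}$ is oriented and $\tilde f$ preserves the co-orientation of $E^{cs}$ (as already arranged in the text), the orientation is exactly what makes the ordering argument in the last step available.

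First I would establish (1), the existence through each point $x$ of a complete $C^{1}$ surface tangent to $E^{cs}$. Fix a small smooth disk $D$ through $x$ tangent to $E^{cs}$ at $x$ whose tangent planes lie in a narrow cone $\mathcal{C}$ around $E^{cs}$. Because $E^{u}$ dominates $E^{cs}$, the cone $\mathcal{C}$ is invariant under $D\tilde f^{-1}$ and the $E^{u}$-component of any in-cone vector is contracted, relative to the in-cone directions, under $D\tilde f^{-1}$; hence the graph transform associated with $\tilde f^{-1}$ (representing candidate surfaces as graphs over $E^{cs}$ in the transverse $E^{u}$ direction) is a contraction on such graphs. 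Its fixed surface is a $C^{1}$ plaque tangent to $E^{cs}$, with uniform size and uniform $C^{1}$ bounds coming from the cone and the H\"older continuity of $E^{cs}$; spreading these plaques out along an exhaustion and extracting a limit by Arzel\`a--Ascoli yields a complete, boundary-less immersed surface $B^{cs}(x)$ everywhere tangent to $E^{cs}$. One may equally invoke Theorems~6.1 and~7.6 of \cite{HPS}; the only novelty over the dynamically coherent situation is that different choices of initial disk may produce different surfaces through $x$ -- this is precisely the branching we must tame.

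The hard part will be (2), non-crossing. Orienting $E^{u}$ co-orients every surface tangent to $E^{cs}$, so for two such surfaces passing near a common point there is a well-defined signed $E^{u}$-separation, hence a notion of one lying locally above or below the other. A genuine topological crossing of $B^{cs}(x)$ and $B^{cs}(x')$ would force a point $p$ at which the two surfaces are tangent (both tangent planes equal $E^{cs}_{p}$) yet the sign of their separation changes. The idea is to rule this out dynamically: iterating forward by $\tilde f^{n}$ maps the configuration to surfaces through $\tilde f^{n}(x),\tilde f^{n}(x')$ that are again tangent to $E^{cs}$ inside $\mathcal{C}$, while the transverse $E^{u}$-separation is amplified at the unstable rate $\mu^{n}$ and the intrinsic geometry of each surface grows only at the slower $E^{cs}$-rate by domination; tracking the signed separation one finds it cannot change sign without points of one surface being pushed transversally out of every cone neighborhood of the other, contradicting forward invariance of $\mathcal{C}$. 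To upgrade this pairwise mechanism into a single coherent family $\{B^{cs}(x)\}_{x}$ with one surface per point, I would make a Perron-type selection, choosing through each $x$ the surface that is maximal in the $E^{u}$-order among those produced in Step~1; maximality is closed under the $C^{1}$ limits taken there, and non-crossing of the maximal surfaces then follows from the sign argument.

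Finally, invariance (3) is the routine part: since $D\tilde f\,E^{cs}=E^{cs}$, the image $\tilde f(B^{cs}(x))$ is again a complete $C^{1}$ surface tangent to $E^{cs}$ through $\tilde f(x)$, and because $\tilde f$ preserves the co-orientation it preserves the maximality selection, so $\tilde f(B^{cs}(x))=B^{cs}(\tilde f(x))$; projecting to $M$ gives the $f$-invariant family $\mathcal{B}^{cs}$. I expect Step~3 to absorb essentially all of the difficulty: existence and invariance are standard invariant-manifold theory, whereas excluding topological crossings -- equivalently, showing the branching surfaces can only merge tangentially and never truly cross -- is the delicate point where the domination $E^{cs}\prec E^{u}$ and the global co-orientation must be used in tandem, and is exactly the content of \cite{BI}.
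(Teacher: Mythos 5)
First, a point of reference: the paper does not prove this statement at all --- it is quoted from Burago--Ivanov \cite{BI} as an external tool --- so your proposal has to be measured against the proof in \cite{BI}. Your overall architecture is actually close to theirs: produce integral surfaces of $E^{cs}$ by pulling back disks lying in a cone $\mathcal{C}$ around $E^{cs}$ under $\tilde f^{-1}$ and extracting limits (the cones $D\tilde f^{-n}(\mathcal{C})$ collapse onto $E^{cs}$ by domination, so any limit surface is everywhere tangent to $E^{cs}$, and the non-uniqueness of the limits is precisely the branching), then use the orientation of $E^{u}$ to put a partial order on integral surfaces and select distinguished ones, and get invariance because $f$ preserves that order. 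Up to the usual caveats (the graph transform here is not a contraction, so one only gets subsequential limits, which is fine), Steps 1 and 3 are sound in outline.

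The genuine gap is your non-crossing mechanism. The claim that a topological crossing of two surfaces tangent to $E^{cs}$ is incompatible with forward iteration does not hold up: after applying $\tilde f^{n}$ both surfaces are still \emph{exactly} tangent to $E^{cs}$, so neither is ever pushed out of any cone neighborhood of the other; the signed $E^{u}$-separation is amplified, but it still vanishes to first order at the image of the tangency point and still changes sign --- no contradiction materializes. Moreover, the phenomenon you are trying to exclude genuinely can occur for a merely continuous plane field: in the one-dimensional model $y'=3|y|^{2/3}$ the integral curves $y=0$ and $y=x^{3}$ cross topologically while being mutually tangent at the crossing, and since $E^{cs}$ is in general only H\"older, two of the surfaces produced in your Step 1 can a priori cross in exactly this way. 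Non-crossing is a property of the \emph{selected} family, not of all integral surfaces, and the mechanism that makes the selection work is order-theoretic rather than dynamical: because any two integral surfaces have the same tangent plane $E^{cs}_{p}$ wherever they meet, the pointwise maximum of two of them, viewed as graphs in the oriented $E^{u}$-direction, is again a $C^{1}$ integral surface (the codimension-one analogue of the fact that the maximum of two solutions of a scalar ODE with continuous right-hand side is a solution). A crossing between two maximal surfaces would then contradict maximality of one of them, and invariance of the selection follows since $\tilde f$ preserves the order. So you should delete the amplification argument, state and prove this envelope lemma, and be aware that the bulk of the actual work in \cite{BI} then lies in coherence and completeness: assembling the maximal local plaques into complete immersed surfaces, one through each point, that remain pairwise non-crossing globally and form a genuinely $f$-invariant family.
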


The family $\mathcal{B}^{cs}$ is what is called a \emph{branched foliation}, and its elements are called leaves. To prove that $\mathcal{B}^{cs}$ is a genuine foliation it suffices to show that it is \emph{unbranched}: namely, that any $x\in M$ is contained in exactly one leaf of $\mathcal{B}^{cs}$. This fact is a direct consequence of Proposition 1.6 of \cite{Tranph} and the remark afterwards.

Another important fact is that $E^{cs}$ is {\em almost integrable}, that is, there exist a foliation (not necessarily invariant) that is transverse to $E^{u}$. This concept of almost integrability has been coined by R. Potrie and has proven useful in this context. Almost integrability of $E^{cs}$ for all 3D partially hyperbolic diffeomorphisms with orientable bundles had been established by Burago-Ivanov: 
\begin{theorem}[\cite{BI}, Key Lemma]\label{truefol3}
For every $\epsilon>0$ sufficiently small there exists  a true (not necessarily invariant) foliation $\mathcal{T}_{\epsilon}^{cs}$ 
such that the angle $\angle (T\mathcal{T}_{\epsilon}^{cs},E^{cs})$ is less than $\epsilon$. Moreover, there exists a continuous surjective map $h_{\epsilon}^{cs}:M\rightarrow M$ that is  $\epsilon$-close to the identity and sends the leaves of $\mathcal{T}_{\epsilon}^{cs}$ into leaves of $\mathcal{B}^{cs}$.
\end{theorem}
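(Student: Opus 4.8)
The plan is to resolve the branching of $\mathcal{B}^{cs}$ by a regularization along a transverse direction, trading the (possibly) multivalued ``leaf through a point'' assignment of the branching foliation for a genuine, single-valued one whose tangent planes have moved by less than $\epsilon$. First I would pass to a finite cover on which $E^u$ is orientable, so that $E^{cs}$ acquires a continuous coorientation, and fix a smooth unit vector field $Y$ that is $C^0$-close to a continuous section of $E^u$. Since every leaf of $\mathcal{B}^{cs}$ is tangent to $E^{cs}$ and $Y$ is uniformly transverse to $E^{cs}$, the field $Y$ is uniformly transverse to all leaves of $\mathcal{B}^{cs}$; let $\psi^t$ denote its flow. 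It is convenient to carry out the construction in the universal cover $\widetilde{M}$, where (by the Reeb--Novikov-type arguments available for foliations tangent to a distribution close to the linear $E^{cs}$) the lifted leaves are properly embedded planes. The non-crossing property (property (2) above) then says exactly that these planes are \emph{linearly ordered along each orbit of} $\widetilde{\psi}^t$, and branching occurs precisely where two distinct leaves meet a given $Y$-orbit at the same point.

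Next I would regularize along $Y$. Fix a smooth, positive, even kernel $\rho$ supported in $[-\delta,\delta]$ with total mass one. In a flow box for $\psi^t$ each leaf of $\mathcal{B}^{cs}$ is a $C^1$ graph in the $Y$-direction over a fixed transversal disc, and through each point one builds a new surface by averaging, against $\rho$, the branching leaves passing through nearby heights along the $Y$-orbit. Two features make this work. The averaged surfaces are $C^1$ and their tangent planes are convex combinations of tangent planes of branching leaves, each of which lies within a fixed small angle of $E^{cs}$; hence for $\delta$ small (using uniform continuity of $E^{cs}$) every tangent plane lies within $\epsilon$ of $E^{cs}$, giving the angle bound. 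More importantly, the ordering along $Y$-orbits is upgraded by convolution with a strictly positive kernel into a \emph{strict} ordering: the smoothed surfaces become pairwise disjoint and sweep out a neighborhood, so they assemble into a genuine foliation $\mathcal{T}^{cs}_{\epsilon}$. This is the step where branching is dissolved, since the convolution spreads the coincident leaves at a branch point into a strictly increasing family.

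Finally I would produce $h^{cs}_\epsilon$ as the collapse inverse to this regularization. Along each $Y$-orbit the convolution determines a monotone, surjective correspondence from the strictly ordered leaves of $\mathcal{T}^{cs}_\epsilon$ back to the (weakly ordered) leaves of $\mathcal{B}^{cs}$; defining $h^{cs}_\epsilon$ to slide each point along its $Y$-orbit according to this monotone map yields a continuous surjection that carries each leaf of $\mathcal{T}^{cs}_\epsilon$ into a leaf of $\mathcal{B}^{cs}$ and that displaces points only within flow segments of length $O(\delta)$, hence is $\epsilon$-close to the identity. Performing everything $\widetilde{\psi}^t$- and deck-equivariantly guarantees that $\mathcal{T}^{cs}_\epsilon$ and $h^{cs}_\epsilon$ descend to $M$. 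The hard part will be making the averaging globally coherent rather than merely local: one must check that the graph-averages computed in overlapping flow boxes agree, that the resulting leaves are complete and uniformly $C^1$, and, above all, that no \emph{new} crossings are introduced, which is precisely where the linear order of the planar leaves in $\widetilde{M}$ (a consequence of non-crossing) is indispensable. Controlling the uniform transverse regularity of the convolved family, so that $\mathcal{T}^{cs}_\epsilon$ is a bona fide foliation and not just a partition into disjoint surfaces, is the most delicate point.
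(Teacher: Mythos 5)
Your statement is quoted in the paper from Burago--Ivanov's work (it is their Key Lemma); the survey offers no proof of it, so the comparison here is with the argument in \cite{BI}. Your overall framing --- work in flow boxes of a vector field transverse to $E^{cs}$, view the leaves of $\mathcal{B}^{cs}$ as a weakly ordered family of graphs, separate them, and define $h^{cs}_\epsilon$ as the monotone collapse back --- is the right picture, and your angle estimate for averaged graphs is fine. But the central mechanism you propose does not work, and it fails exactly at the point the whole lemma is about.

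The gap is the claim that convolution with a strictly positive kernel upgrades the weak ordering into a strict one, i.e.\ that the averaged surfaces are pairwise disjoint. Fix a flow box with base orbit through $x$, let $F(s,\cdot)$ denote (say) the minimal leaf through $\psi^s(x)$ written as a graph over the transversal disc, and set $G(t,\cdot)=\int \rho(s-t)F(s,\cdot)\,ds$. At the base point you indeed get $G(t,0)=t$, so the averaged surfaces are distinct. But at another point $y$ of the disc, $s\mapsto F(s,y)$ is only non-decreasing, and convolution of a non-decreasing function with a positive kernel is non-decreasing, \emph{not} strictly increasing: if $F(\cdot,y)\equiv c$ on an interval of length greater than the support of $\rho$, then $G(\cdot,y)\equiv c$ on a subinterval, so an interval's worth of averaged surfaces still pass through the single point at height $c$ over $y$. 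Such ``thick'' branching --- an interval of leaves (in the transverse parametrization along one orbit) all containing a common point or curve --- is not excluded by the axioms of a branching foliation and is precisely the picture in the motivating non--dynamically coherent example, where a fan of leaves meets tangentially along the invariant torus. So after averaging you still have a branching foliation (now smoother), not a foliation, and the lemma has not been proved. A secondary, also unresolved, issue is global coherence: the averaging depends on the choice of flow box (the parametrization by height along a particular $Y$-orbit), and you give no mechanism for making the locally averaged families agree on overlaps; you correctly flag this as the hard part, but it is not optional. In \cite{BI} the separation is achieved by a genuinely different, combinatorial device: working over a fine cover adapted to $\mathcal{B}^{cs}$, one selects a suitable discrete family of leaves and explicitly \emph{inserts} product ($I$-bundle) foliations to blow the touching loci apart, the collapsing map $h^{cs}_\epsilon$ being built into the insertion; the consistency of these insertions across charts, governed by the global order structure of the leaf space, is where the real work lies.
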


Potrie then shows that  for sufficiently small $\epsilon$, the lifted foliations $\widetilde{\mathcal{F}^u},\widetilde{\mathcal{T}_{\epsilon}^{cs}}$ to $\mathbb{R}^3$ have global product structure (that is, any two leaves $F\in \widetilde{\mathcal{F}^u}$ and $T\in \widetilde{\mathcal{T}_{\epsilon}^{cs}}$) intersect exactly in one point). This has the following consequence

\begin{theorem}[\cite{potrie} Prop. 8.4]
 $\mathcal{B}^{cs}$ is unbranched.
\end{theorem}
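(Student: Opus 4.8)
The plan is to work in the universal cover $\mathbb{R}^3$ and to promote the global product structure of $\widetilde{\mathcal{F}^u}$ and $\widetilde{\mathcal{T}_\epsilon^{cs}}$ to the branched family. Fix a lift $\tilde f$ and the lift $\tilde h$ of $h_\epsilon^{cs}$ that is $\epsilon$-close to the identity, and let $\widetilde{\mathcal{B}^{cs}}$ be the lifted branched foliation. The product structure furnishes a continuous projection $\pi\colon\mathbb{R}^3\to P$ along the leaves of $\widetilde{\mathcal{F}^u}$, where $P\cong\mathbb{R}^2$ is the leaf space of $\widetilde{\mathcal{F}^u}$; by construction every leaf of $\widetilde{\mathcal{T}_\epsilon^{cs}}$ is a global section of $\pi$, i.e.\ a graph meeting each unstable leaf in exactly one point.

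First I would show that each leaf $L$ of $\widetilde{\mathcal{B}^{cs}}$ is itself a complete graph over $P$. Since $\tilde h$ carries $\mathcal{T}_\epsilon^{cs}$-leaves onto $\mathcal{B}^{cs}$-leaves, we may write $L=\tilde h(T)$ for a leaf $T$ of $\widetilde{\mathcal{T}_\epsilon^{cs}}$, and because $\tilde h$ is $\epsilon$-close to the identity the leaf $L$ stays within bounded unstable-distance of the global graph $T$. As $E^{cs}$ is uniformly transverse to $E^u$ (so $L$ is everywhere transverse to the fibers of $\pi$), the restriction $\pi|_L$ is a local homeomorphism; the $\epsilon$-control makes it proper, hence a covering of the simply connected $P$, hence a homeomorphism onto $P$. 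Thus every leaf of $\widetilde{\mathcal{B}^{cs}}$ meets every unstable leaf in exactly one point: the branched family has global product structure with $\widetilde{\mathcal{F}^u}$.

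It remains to rule out tangential branching, and this is where the dynamics enters. Suppose two leaves $L_1\neq L_2$ share a point $\tilde x$; by the previous step both are graphs $\psi_1,\psi_2\colon P\to\mathbb{R}$ with $\psi_1(\pi\tilde x)=\psi_2(\pi\tilde x)$, and by the non-crossing property of the branched family (property (2) of $\mathcal{B}^{cs}$) we may assume $\psi_1\ge\psi_2$. If they do not coincide there is a base point whose unstable fiber meets $L_1,L_2$ at distinct points $p_1,p_2$ with positive unstable-separation $\delta_0$. Iterating by $\tilde f^n$ keeps $\tilde f^n(p_1),\tilde f^n(p_2)$ on a common unstable leaf while expanding their unstable-separation like $\mu^n\delta_0$, and keeps $\tilde f^n(L_1),\tilde f^n(L_2)$ a pair of leaves of $\widetilde{\mathcal{B}^{cs}}$ still sharing the point $\tilde f^n(\tilde x)$. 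The plan is to contradict this unbounded growth: each leaf being $\epsilon$-close to a $\mathcal{T}_\epsilon^{cs}$-leaf, two branching leaves are shadowed by two leaves of the genuine foliation $\mathcal{T}_\epsilon^{cs}$ passing within $2\epsilon$ of one another, and one wants this to force a uniform bound on the unstable-separation of branching leaves, incompatible with $\mu^n\delta_0\to\infty$. Hence $\psi_1\equiv\psi_2$, $L_1=L_2$, and $\mathcal{B}^{cs}$ is unbranched.

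The main obstacle is precisely this last step: converting ``expansion amplifies any tangential gap'' into an actual contradiction requires uniform control on how far apart two leaves of $\widetilde{\mathcal{B}^{cs}}$ (equivalently, two leaves of $\widetilde{\mathcal{T}_\epsilon^{cs}}$) can drift once they are known to come $2\epsilon$-close at a point. This is not a formal matter, since distinct leaves of a codimension-one foliation may a priori separate arbitrarily; one must exploit the coarse geometry of the lifted foliations---e.g.\ that the unstable leaves are uniform quasigeodesics and that the Burago--Ivanov estimates bound the relevant distances in $\mathbb{R}^3$---rather than soft transversality alone. Once such a separation bound is in place the expansion argument closes, yielding Proposition 8.4.
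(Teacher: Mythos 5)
Your skeleton coincides with the paper's: lift to $\mathbb{R}^3$, use the global product structure to realize each leaf of $\widetilde{\mathcal{B}^{cs}}$ as a global graph over the unstable leaf space, and kill tangential branching by letting forward iteration amplify any unstable gap between two leaves through a common point. But the step you yourself flag as ``the main obstacle'' is a genuine gap, and the device you gesture at --- that the two branching leaves are shadowed by two leaves of $\mathcal{T}_\epsilon^{cs}$ passing within $2\epsilon$ of one another --- cannot close it: two leaves of a codimension-one foliation that come $2\epsilon$-close at one point may still diverge arbitrarily, so no uniform separation bound follows from proximity at a single point. The paper names precisely the two inputs you are missing. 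The first is its Fact 2: applying the Novikov/Burago--Ivanov analysis of codimension-one foliations of $\T^3$ to $\mathcal{T}_\epsilon^{cs}$ (which has no Reeb component because it is transverse to $\mathcal{F}^u$) produces an $A$-invariant plane $P$ and a uniform $R>0$ such that \emph{every} leaf of $\widetilde{\mathcal{B}^{cs}}$ lies in the $R$-neighborhood of a translate of $P$. If $L_1$ and $L_2$ share a point, their reference planes are within $2R$ of each other, so $L_1$ and $L_2$ stay within roughly $4R$ of each other in the direction transverse to $P$; and since this bound is uniform over all leaves and $\widetilde{\mathcal{B}^{cs}}$ is $\tilde f$-invariant, it persists for every forward iterate $\tilde f^n(L_1),\tilde f^n(L_2)$. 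That is the a priori bound your expansion argument needs.

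The second missing input is the paper's Fact 1, the quasi-isometry of $\widetilde{\mathcal{F}^u}$. Knowing that the intrinsic length of the unstable arc from $\tilde f^n(p_1)$ to $\tilde f^n(p_2)$ grows like $\mu^n\delta_0$ says nothing by itself about the Euclidean distance between its endpoints: a long unstable leaf could fold back on itself. Quasi-isometry converts exponential growth of arc length into (at least linear) growth of $|\tilde f^n(p_1)-\tilde f^n(p_2)|$, and since $E^u$ is uniformly transverse to $P$ this forces the separation transverse to $P$ to go to infinity, contradicting the uniform $4R$ bound above. (The same quasi-isometry is also what really makes your projection $\pi|_L$ proper in the first step.) So the approach is the right one, but the contradiction only closes after importing the almost-planarity of the $cs$-leaves and the quasi-isometry of the unstable foliation; neither is a soft transversality fact, and together they are exactly the content the paper isolates as Facts 1 and 2.
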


\espc

The proof of this theorem relies on two geometric facts:

\textbf{Fact 1:} $\widetilde{\mathcal{F}^u}$ is quasi-isometric: that is, there exist $a,b>0$ such that for every $x,y\in \mathbb{R}^3, y\in \widetilde{\mathcal{F}^u}(x)$ the length $l(x,y)$ of the interval with endpoints $x,y$ contained in  $\widetilde{\mathcal{F}^u}(x)$ satisfies
$$
l(x,y)\leq a|x-y|+b.
$$

\espc

\textbf{Fact 2:} There exists an $A$-invariant plane $P$ and $R>0$ such that every leaf of $\mathcal{B}^{cs}$ is contained in an ${R}$-neighborhood of a plane parallel to $P$, and either 
\begin{enumerate}
\item the projection of $P$ in $\mathbb{T}^3$ is dense, and the $R$-neighborhood of every leaf of $\mathcal{B}^{cs}$ contains a plane parallel to $P$, or
\item the projection of $P$ in $\mathbb{T}^3$ is a 2-torus: in this case there exists a 2-torus tangent to $E^{s}\oplus E^{c}$
\end{enumerate}

Note that if $f$ is a DA only (1) above can hold. Let us see how the proof goes:

\begin{proof}
A modification of Proposition 3.7 in \cite{BBI} gives that a codimension one foliation in $\mathbb{T}^3$ either has a Reeb component, or there exists $R>0$ and an $A$-invariant plane $P\subset \mathbb{R}^3$ such that every leaf of the lifted foliation is in a $R$-neighborhood of $P$. The projection of a plane in $\mathbb{T}^3$ is either dense or a 2-torus: in the former case every leaf of the lifted foliation is parallel to a fixed translate of the plane, while in the later case there is a leaf of the foliation in $\mathbb{T}^3$ which is a torus (see Theorem 5.3 and Proposition 5.6 in \cite{potrie}).

As  $\mathcal{T}_{\epsilon}^{cs}$ is transverse to $\Fu$, it cannot have a Reeb component and thus we can consider the plane $P$ as above. Note that by \ref{truefol3} the leaves of $\widetilde{\mathcal{B}^{cs}}$ are $\epsilon$-close to the leaves of $\widetilde{\mathcal{T}_{\epsilon}^{cs}}$, hence the first part of the claim follows.

Now, it suffices to observe that if $\mathcal{T}_{\epsilon}^{cs}$ has a torus leaf; by Theorem \ref{truefol3} $\mathcal{B}^{cs}$ also has a torus leaf, therefore, there is a 2-torus tangent to $E^{s}\oplus E^{c}$. 
\end{proof}

Once the above facts are established, the proof of the theorem is carried by standard arguments. To finish the proof of the conjecture, one has to analyze the skew-product and the DA case separately. In both cases, with the machinery developed, it is not too hard to check that the absence of tori tangent to $E^{s}\oplus E^{c}$ implies the global product structure referred above, thus implying dynamical coherence.

\espc

When the manifold $M$ is a solv-manifold, but not a nilmanifold, the proof of Theorem \ref{HammerPot} becomes technically harder, although some of the guidelines presented are still valid. It relies on more background on foliation theory (codimension-one foliations of compact solv-manifolds are reasonably well understood). Solv-manifolds of this type are covered by torus bundles over the circle (a reasonable geometric object), and then there is a canonical model (isotopic to the identity) to compare the dynamics. For more details an a complete proof, we refer the reader to \cite{hammerlindlpotrie}.\par
We also recommend the excellent survey by Hammerlindl and Potrie \cite{hammerlindl.potrie.survey}.

\espc

\section{Classification}\label{classification}
For many years, the only known examples of partially hyperbolic diffeomorphisms in 3-manifolds were the ones listed in Subsection \ref{subsection.examples}, namely: time-one maps of Anosov flows, skew products, DA-diffeomorphisms, and their perturbations. As it was stated in Subsection \ref{subsection.classification}, this led E. Pujals, in 2001 to conjecture that for transitive ones, this was the complete list of partially hyperbolic diffeomorphisms:

\espc

\setcounter{section}{2}
\setcounter{theorem}{19}
\begin{conjecture}[Pujals (2001)] Any transitive partially hyperbolic diffeomorphism in a 3-manifold is finitely covered by a map which is conjugated either to
\begin{enumerate}
 \item A perturbation of the time-one map of an Anosov flow.
 \item A perturbation of an Skew Product.
 \item A DA.
\end{enumerate}
\end{conjecture}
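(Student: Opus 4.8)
The plan is to recover each transitive diffeomorphism from the invariant foliations it carries together with the linear data of its action on $\pi_1(M)$, following the program of Brin--Burago--Ivanov and Hammerlindl--Potrie. First I would establish dynamical coherence. The branched foliation machinery produces branched surfaces tangent to $E^{cs}$ and $E^{cu}$, and transitivity should forbid compact leaves: a $cu$-torus would, by Theorem~\ref{teo.toro.cu}, obstruct a genuine center-unstable foliation and force an invariant hyperbolic sub-dynamics of the kind described in Theorem~\ref{teo.laminacion.accesible}, incompatible with a dense orbit in the relevant cases. Once coherence holds, the center foliation $\Fc$ is obtained by intersecting $\Fcs$ and $\Fcu$, and the target models of cases (1)--(3) are distinguished by the topology of the center leaves (circles versus lines) and by whether $E^s\oplus E^u$ is integrable.

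Next I would stratify by the algebraic type of $M$ through $\pi_1(M)$. When $\pi_1(M)$ is (virtually) solvable --- the tori, nil- and solv-manifolds --- the result is already in reach: by Hammerlindl--Potrie the diffeomorphism is leafwise conjugate (in the sense of Definition~\ref{def.leaf.conjugacy}) to its algebraic model, and one checks directly that these models are exactly the skew products and DA maps of (2)--(3) together with the suspension Anosov flows of (1). The engine here is Fact 1, the quasi-isometry of $\widetilde{\mathcal{W}^u}$ in the universal cover, combined with Fact 2, that each center-stable leaf lies in a bounded neighborhood of an invariant plane; together these yield a global product structure and hence the leaf conjugacy.

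For manifolds with non-solvable $\pi_1(M)$ the plan is to match $\Fc$ and its holonomy with the orbit foliation of an Anosov flow. If the center leaves are circles one expects a Seifert structure and a skew-product description; if they are lines one would reconstruct a flow whose time-one map is leafwise conjugate to $f$, realizing case (1). Transitivity should be used to guarantee density of center leaves and to exclude the laminar degeneracies of Theorem~\ref{teo.laminacion.accesible}.

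The hard part is precisely this non-solvable case, and the difficulty is not merely technical. The geometric inputs that drive the solvable argument --- quasi-isometry of the strong foliations and a single invariant plane direction in a Euclidean-like universal cover --- have no analogue once the coarse geometry is governed by a hyperbolic group rather than a lattice in a solvable Lie group: leaves may spiral, the relevant \emph{large scale} geometry is hyperbolic, and the candidate center flow need not close up to any known model. In fact this obstruction turns out to be genuine rather than a gap in the argument: as recorded in Section~\ref{classification}, Bonatti--Gogolev--Parwani--Potrie constructed transitive partially hyperbolic diffeomorphisms that are \emph{not} leafwise conjugate to any of the three models. Thus the step that would force the non-solvable case into (1)--(3) cannot be completed, and the conjecture as stated is false.
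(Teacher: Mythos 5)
This statement is a conjecture, not a theorem, and the paper offers no proof of it; your assessment coincides exactly with the paper's own account: it is established only for $3$-manifolds with (virtually) solvable fundamental group by Hammerlindl--Potrie via the quasi-isometry/invariant-plane argument you describe, and it is false in general because of the transitive anomalous examples of Bonatti--Gogolev--Parwani--Potrie obtained by composing time-one maps of Anosov flows with derivatives of Dehn twists. So your conclusion --- that the non-solvable step cannot be completed and the conjecture as stated fails --- is correct and matches the paper.
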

\setcounter{section}{5}
\setcounter{theorem}{0}

Two different particular cases
of this conjecture were verified in the transitive setting by C. Bonatti and A. Wilkinson \cite{Tranph}.

\espc

\begin{theorem}[Bonatti-Wilkinson]
Let $f:M\rightarrow M$ be a transitive partially hyperbolic diffeomorphism. 
\begin{enumerate}
 \item Assume that there exists some embedded circle $c$ such that\footnote{Interestingly enough, the proof does not extend to the case 
where $c$ is merely periodic.} $f(c)=c$, with the property that for some $\epsilon>o$ the set 
$$
\bigcup_{x\in c}\Wsl{x}\cap\bigcup_{y\in c}\Wul{y}\setminus c 
$$
contains a connected component that is a circle. Then $f$ is dynamically coherent and finitely covered by a map which is conjugated to a circle extension of
an Anosov map (a topological Skew Product).

\item Assume that $f$ is dynamically coherent, and that for some $\epsilon>0$ each end of a center leaf contained in
$$
\bigcup_{x\in c}\Wsl{x}
$$
is periodic. Then, the center foliation is fixed under $f^{n}$ and it supports a continuous flow
conjugate to an expansive transitive flow.

\end{enumerate}
\end{theorem}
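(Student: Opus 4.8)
The plan for (1) is to promote the invariant circle $c$ to a compact center leaf and then to show that compactness of center leaves propagates to all of $M$. First I would argue that $Tc=E^c|_c$: the line field $Tc$ is $Df$-invariant over the compact invariant set $c$, and a dominated splitting $E^s\oplus E^c\oplus E^u$ admits no continuous invariant line field other than $E^s,E^c,E^u$; since $c$ is a fixed embedded circle it is neither uniformly contracted nor expanded, so $Tc=E^c|_c$ and $c$ is a closed integral curve of $E^c$. Because the normal bundle of $c$ is $E^s\oplus E^u$ with $f$ contracting $E^s$ and expanding $E^u$, the circle $c$ is normally hyperbolic, so by Hirsch--Pugh--Shub \cite{HPS} its local stable and unstable manifolds $\bigcup_{x\in c}\Wsl{x}$ and $\bigcup_{y\in c}\Wul{y}$ exist and are $C^1$ surfaces tangent to $E^{cs}$ and $E^{cu}$; their intersection is tangent to $E^{cs}\cap E^{cu}=E^c$, hence is a union of center curves, and the hypothesis provides a second compact one, a center circle $c'\neq c$.

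Next I would set $G$ to be the set of points lying on a compact center circle. Nonemptiness is $c,c'\subset G$; openness follows from persistence of a compact normally hyperbolic center leaf (\cite{HPS}), which makes every nearby center curve close up into a circle; and $f$-invariance is clear, since $f$ is a homeomorphism carrying center curves to center curves. By transitivity, $G$ is then open, invariant and dense. To conclude $G=M$ I would show $G$ is also closed via an Arzel\`a--Ascoli argument, a $C^0$ limit of center circles being again a center circle provided their lengths stay bounded; connectedness of $M$ then forces $G=M$, so every center leaf is a compact circle. I expect the uniform length bound needed for closedness --- transporting the bound that holds near the seed leaf $c$ to all of $M$ before dynamical coherence is even established --- to be the \emph{main obstacle} of part (1), and the place where both the hypothesis producing $c'$ and transitivity must be used in an essential, quantitative way.

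Once all center leaves are circles, Epstein's theorem on foliations of a compact $3$-manifold by circles gives uniformly bounded lengths and a Seifert fibration; in particular the center curves are unique and assemble into a foliation $\Fc$ whose stable and unstable saturates are the $cs$- and $cu$-foliations $\Fcs,\Fcu$, so $f$ is dynamically coherent. Passing to a finite cover where the fibration is an honest circle bundle $\pi:M\to\Sigma$ over a closed surface, the map $f$ permutes fibers and descends to $\bar f:\Sigma\to\Sigma$; the image of the hyperbolic bundle $E^s\oplus E^u$ is a $D\bar f$-invariant hyperbolic splitting of $T\Sigma$, so $\bar f$ is Anosov, and transitivity of $f$ descends to transitivity of $\bar f$. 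Thus $f$ is a circle extension of a transitive Anosov diffeomorphism, i.e. a topological skew product, which proves (1).

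For (2), dynamical coherence already gives the genuine foliations $\Fc,\Fcs,\Fcu$; after a finite cover I orient $E^c$ and let $\psi_t$ be the flow of a continuous unit vector field spanning $E^c$, so that its orbits are exactly the center leaves. The goal is to show, after replacing $f$ by a power $f^n$, that $f^n$ preserves each center leaf and that $\psi_t$, suitably reparametrized, is an expansive transitive flow. Expansiveness is the structural consequence of partial hyperbolicity: if two $\psi_t$-orbits remain $\epsilon$-close for all time up to reparametrization, the companion point lies simultaneously in $\Wcsl{x}$ and $\Wcul{x}$, hence in $\Wcl{x}$, forcing the two points onto a single center leaf, with $\Fs$ and $\Fu$ supplying the contracting and expanding transverse directions of the flow; transitivity of $\psi_t$ descends from transitivity of $f$ because $f$-orbits shadow $\psi_t$-orbits along center leaves. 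The hypothesis that each end of a center leaf inside $\bigcup_{x\in c}\Wsl{x}$ is periodic is what rules out wild, proper, non-recurrent ends: it forces every center leaf to be a periodic circle or a line whose ends spiral onto periodic center leaves, and spreading this behavior around by transitivity and $s/u$-holonomy organizes the periodic center leaves into finitely many families permuted by $f$; on the power $f^n$ fixing each such family, $f^n$ acts along center leaves as a flow map, yielding leafwise invariance of $\Fc$ and exhibiting $\Fc$ as the orbit foliation of the expansive transitive flow. Here the delicate points, and the expected obstacle of part (2), are the reparametrization making $\psi_t$ genuinely expansive and the passage from periodic ends to leafwise $f^n$-invariance.
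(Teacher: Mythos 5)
First, a point of reference: this survey does not actually prove the Bonatti--Wilkinson theorem; it only states it and cites \cite{Tranph}. So your proposal can only be judged on its own merits, and judged that way it has a genuine gap exactly where you yourself flag one. The entire content of part (1) is the step you defer: upgrading the single pair of circles $c,c'$ to an invariant fibration of all of $M$ by center circles of uniformly bounded length. Your open-dense-closed scheme cannot even be set up as written, because before dynamical coherence is established the ``center leaf through a point'' is not well defined (Peano curves tangent to $E^c$ exist but may branch), so the set $G$ of ``points lying on a compact center circle'' and the claim that ``every nearby center curve closes up'' are not yet meaningful; and the uniform length bound needed for closedness is precisely what the Bonatti--Wilkinson argument is devoted to: they manufacture the circle at other points by transporting $c'$ with stable/unstable holonomies and iteration, using the return dynamics to a neighborhood of $c$ to control lengths, and only afterwards obtain coherence and the fibration. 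Identifying this as ``the main obstacle'' does not discharge it; the proof is missing at its central step.

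There are also secondary problems. The surface $\bigcup_{x\in c}W^s_\epsilon(x)$ is tangent to $E^s\oplus T_xc$ only at points of $c$; off $c$ its tangent planes merely contain $E^s$, so the assertion that the intersection with $\bigcup_{y\in c}W^u_\epsilon(y)$ is ``tangent to $E^{cs}\cap E^{cu}=E^c$'' requires a graph-transform/domination argument you do not give. Even granting a fibration by circles, Epstein yields a Seifert fibration over a $2$-\emph{orbifold}, and the induced base map is only a homeomorphism of the leaf space: there is no $D\bar f$, so ``$\bar f$ is Anosov'' must instead be reached by showing the quotient dynamics is expansive with local product structure and invoking Hiraide--Lewowicz to force the base to be $\T^2$ and $\bar f$ conjugate to a hyperbolic automorphism. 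Finally, in part (2), two orbits of the center flow $\psi_t$ staying $\epsilon$-close for all time is a statement about $\psi_t$, not about $f$; concluding that the companion point lies in $W^{cs}_\epsilon(x)\cap W^{cu}_\epsilon(x)$ requires transversal contraction/expansion supplied by the $f$-dynamics together with completeness of the center-stable and center-unstable leaves, and transitivity of the flow (a dense center leaf) does not follow from ``$f$-orbits shadow $\psi_t$-orbits.'' These can likely be repaired, but the length-bound/propagation step cannot be waved away, so the proposal does not constitute a proof.
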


It would be interesting to know if in case (2) one in fact can take an Anosov flow, and thus settle Pujals's conjecture for that case. 
This still remains an open problem.

\espc

More recently, a new type of non-dynamically coherent example was presented, the one described in Subsection \ref{subsection.non.dc.example}. The examples in Subsection \ref{subsection.non.dc.example} suggested another possibility:

\espc
\setcounter{section}{2}\setcounter{theorem}{21}

\begin{conjecture}[{\bf Classification conjecture:} Hertz-Hertz-Ures (2009)]  Let $f$ be a partially hyperbolic diffeomorphism of a 3-manifold. \par
If $f$ is dynamically coherent, then it is (finitely covered by) one of the following:
\begin{enumerate}
\item a perturbation of a time-one map of an Anosov flow, in which case it is leafwise conjugate to an Anosov flow;
\item a skew-product, in which case it is leafwise conjugate to a skew-product with linear base; or
\item a DA, in which case it is leafwise conjugate to an Anosov diffeomorphism of $\T^{3}$.
\end{enumerate}
 If $f$ is not dynamically coherent, then there are a finite number of 2-tori tangent either to $E^{c}\oplus E^{u}$ or to $E^{s}\oplus E^{c}$, and the rest of the dynamics are trivial, as in the non dynamically coherent example \cite{HHUnonDCexample}.
\end{conjecture}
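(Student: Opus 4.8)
The plan is to reduce the classification to two tasks: first, to read off a \emph{linear algebraic model} from coarse invariants of $f$, and second, to upgrade a semiconjugacy with that model into the leaf conjugacy demanded by Definition~\ref{def.leaf.conjugacy}. For the first task I would follow the homological approach used in the solvable case (Section~\ref{section.dc}): after passing to a finite cover so that the bundles are oriented and orientation-preserving, the induced map $f_{\ast}$ on $H_{1}(M)$ is itself partially hyperbolic, and the modulus of its central eigenvalue selects the regime --- modulus one gives the skew-product / Anosov-flow alternative, modulus $\ne 1$ gives the DA alternative. On the manifolds of Theorem~\ref{thm.anosov.tori} this data already identifies both $M$ and a canonical model isotopic to the identity; in general one would instead have to extract the relevant invariant from the center foliation and the structure of $\Gamma(f)$.

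Granting dynamical coherence, I would construct the leaf conjugacy exactly along Potrie's lines. Lifting to the universal cover, one establishes that $\widetilde{\mathcal{W}^{u}}$ is quasi-isometric and that every leaf of $\widetilde{\mathcal{W}^{cs}}$ stays within a bounded neighborhood of a fixed $f_{\ast}$-invariant plane (the two geometric facts of Section~\ref{section.dc}). Since by hypothesis there is no $2$-torus tangent to $E^{s}\oplus E^{c}$ or to $E^{c}\oplus E^{u}$, Theorem~\ref{teo.toro.cu} and its $cs$-analogue rule out closed center-unstable and center-stable leaves, and the bounded-geometry picture then forces a \emph{global product structure} between $\widetilde{\mathcal{W}^{u}}$ and $\widetilde{\mathcal{W}^{cs}}$ (and symmetrically for $\widetilde{\mathcal{W}^{s}}$ and $\widetilde{\mathcal{W}^{cu}}$). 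This product structure is precisely what lets one slide along strong leaves to build a homeomorphism $h$ carrying center leaves of $f$ to center leaves of the model and conjugating the two center dynamics. For the non-coherent alternative I would instead invoke Theorem~\ref{teo.non.dc}: the offending $cu$-torus is normally hyperbolic, hence persistent by \cite{HPS}, while off it the center bundle is uniquely integrable; the claim to prove is that after removing finitely many such persistent tori the remaining dynamics is that of a single center foliation with trivial transverse behavior.

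The hard part --- and the point where this proposal is bound to break --- is the passage from solvable to arbitrary fundamental group. The quasi-isometry estimate and the ``bounded distance to a plane'' property both depend on the large-scale geometry of $\widetilde{M}=\R^{3}$ or of a solvable Lie group; once $\pi_{1}(M)$ is hyperbolic or genuinely Seifert-fibered, a leaf of $\widetilde{\mathcal{W}^{cs}}$ has no reason to remain a bounded distance from any plane, and the global product structure can fail outright. This is not a fixable technicality: as recorded in Subsection~\ref{subsection.classification}, Bonatti--Gogolev--Parwani--Potrie \cite{BPP,BGP} have built a dynamically coherent example and a transitive example that are \emph{not} leafwise conjugate to any of the three models, so the statement as written is false. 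The honest conclusion of this program is therefore a negative one, and what survives is the restricted classification --- valid on tori, nil- and solv-manifolds and their finite covers, where the homological model does control the geometry --- established by Hammerlindl--Potrie.
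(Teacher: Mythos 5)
The statement you were asked to prove is a conjecture, not a theorem: the paper offers no proof of it, and in fact explicitly records (in Subsection 2.3 and Section 5) that Bonatti--Gogolev--Parwani--Potrie have disproved it. Your proposal reaches exactly the paper's own assessment --- the homological and quasi-isometry machinery of Hammerlindl--Potrie establishes the classification only when $\pi_1(M)$ is (virtually) solvable, where leaves of the lifted branching foliations stay a bounded distance from an invariant plane, and the counterexamples of \cite{BPP,BGP} show the general statement is false --- so there is nothing to correct.
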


\espc\setcounter{section}{5}\setcounter{theorem}{1}

Both Conjectures have been proven false very recently by Bonatti, Gogolev, Parwani and Potrie, see \cite{BPP, BGP} and Subsection \ref{anomalous} for a descrption of these examples. However, both conjectures are true in certain manifolds, as it was proven by Hammerlindl and Potrie:

\begin{theorem}[Hammerlindl-Potrie \cite{hammerlindlpotrie}]\label{theorem.hammerlindl.potrie}
Both Conjecture \ref{conjecture.pujals} and Conjecture \ref{hhu.conj.classification} are true on 3-manifolds with (virtually) solvable fundamental group. 
\end{theorem}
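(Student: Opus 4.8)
The plan is to treat the two conjectures together, since both reduce to the same core problem: classifying dynamically coherent partially hyperbolic diffeomorphisms up to leaf conjugacy. First I would dispose of the non-coherent case. If $f$ is not dynamically coherent, Theorem \ref{HammerPot} produces a $2$-torus tangent to $E^{s}\oplus E^{c}$ or $E^{c}\oplus E^{u}$, and Theorem \ref{thm.anosov.tori} then forces the ambient manifold to be $\T^{3}$, the mapping torus of $-\mathrm{id}$, or a mapping torus of a hyperbolic automorphism; this matches exactly the conclusion of the non-coherent clause of Conjecture \ref{hhu.conj.classification}. For Pujals's Conjecture \ref{conjecture.pujals} this case does not even arise, because a transitive $f$ has $\Omega(f)=M$ and is therefore dynamically coherent by the ``in particular'' clause of Theorem \ref{HammerPot}. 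Hence the heart of the argument is: assume $f$ is dynamically coherent on a (virtually) solvable $M^{3}$, and produce a leaf conjugacy to an algebraic model.

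For the second step I would pass to a finite cover on which $E^{s},E^{c},E^{u}$ are oriented and orientation-preserving, and extract the linear model. By Brin--Burago--Ivanov the induced action on $\pi_{1}(M)$ (equivalently on $H_{1}$ for $\T^{3}$, and on the monodromy data for a solvmanifold) is itself partially hyperbolic; writing its eigenvalues with $|\lambda_{1}|<1<|\lambda_{3}|$ splits the analysis into the skew-product case $|\lambda_{2}|=1$ and the DA case $|\lambda_{2}|\neq 1$, while on a genuine solvmanifold the model is the suspension Anosov flow determined by the monodromy. This algebraic datum is the candidate target of the leaf conjugacy, and its three geometric flavors are precisely the Anosov-flow, skew-product, and DA items of the conjectures.

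The technical core is a global product structure argument in the universal cover $\widetilde{M}\cong\R^{3}$. Using quasi-isometry of $\widetilde{\Fu}$ (Fact 1), the true foliation $\mathcal{T}^{cs}_{\epsilon}$ transverse to $E^{u}$ furnished by Theorem \ref{truefol3}, and the coarse geometry of the solvable group, I would show that the lifted center-stable and center-unstable foliations each meet every leaf of the complementary strong foliation in exactly one point. Once this holds, $\Fc$ lifts to a foliation of $\R^{3}$ by properly embedded lines uniformly close to the center direction of the model, and a Franks--Manning-type construction yields a homeomorphism $H\colon\R^{3}\to\R^{3}$ semiconjugating $\tilde f$ to its linearization. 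I would then verify that $H$ carries center leaves to center leaves and descends to $M$; showing that $H$ collapses no center leaf upgrades the semiconjugacy to the leaf conjugacy required by Definition \ref{def.leaf.conjugacy}.

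The main obstacle I anticipate is establishing the global product structure, since it depends entirely on the large-scale geometry of $\pi_{1}(M)$. In the abelian and nilpotent cases this geometry is benign, but for a genuine solvmanifold one works in $\mathrm{Sol}$ geometry, where leaves are exponentially distorted and a naive separation argument fails; here I would invoke the classification of codimension-one foliations on torus bundles over the circle to exclude Reeb components and to control how leaves of $\mathcal{T}^{cs}_{\epsilon}$ sit relative to the invariant planes of Fact 2. The secondary difficulty is dynamical: ruling out the collapse of distinct center leaves under $H$, which amounts to knowing that they stay a definite distance apart at infinity — again a consequence of quasi-isometry, but one that must be checked separately in each of the skew-product, DA, and suspension models.
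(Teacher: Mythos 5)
Your reduction of the non-coherent case and of Pujals's conjecture to the dynamically coherent one is the same as the paper's: Theorem \ref{HammerPot} produces the $cs$- or $cu$-torus, Theorem \ref{thm.anosov.tori} pins down the manifold, and transitivity is incompatible with the attracting/repelling periodic torus that such a torus forces (Lemma \ref{sc.cu.torus}). Two caveats: the non-coherent clause of Conjecture \ref{hhu.conj.classification} asserts more than the existence of one such torus (finiteness of the tori and triviality of the remaining dynamics), so ``matches exactly'' is an overstatement; and your plan for the coherent case on $\T^{3}$ and nilmanifolds --- global product structure for the lifted foliations, quasi-isometry of $\widetilde{\Fu}$, then a Franks--Manning semiconjugacy to the linearization upgraded to a leaf conjugacy by showing no center leaf is collapsed --- is indeed how Hammerlindl and Hammerlindl--Potrie handle those cases.

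The genuine gap is in the case that is actually new in this theorem: solvmanifolds with non-virtually-nilpotent fundamental group. There a finite cover of $M$ is the mapping torus $M_{A}$ of a hyperbolic automorphism $A$ of $\T^{2}$, and a finite iterate of $f$ is \emph{homotopic to the identity}; consequently $f_{*}$ on $\pi_{1}$ carries no hyperbolic linear part and there is no ``linearization'' to which a Franks--Manning construction could semiconjugate $\tilde f$. The correct target, as you note, is the time-one map of the suspension Anosov flow, but that map is not expansive and the semiconjugacy machinery does not produce it. The paper's route is different at exactly this point: one shows that $\Fcs$ and $\Fcu$ are \emph{almost parallel} (finite Hausdorff distance, uniformly) to the model foliations $\A^{cs}$, $\A^{cu}$ on the universal cover, uses the infinite expansivity of the model flow to see that two center leaves at finite Hausdorff distance must lie in a single leaf of $\Fcs$ and a single leaf of $\Fcu$, deduces via a Novikov-type argument and the absence of compact $cs$/$cu$ leaves (Theorem \ref{teo.toro.cu}) that the flow generated by an oriented $E^{c}$ is an expansive flow, and then invokes Brunella's classification of expansive flows on torus bundles to obtain the leaf conjugacy to a suspension Anosov flow. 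Without this (or some substitute for the missing linearization), your argument does not close in the solvable non-nilpotent case.
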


Theorem \ref{theorem.hammerlindl.potrie} was first proved in tori by Hammerlindl in his thesis \cite{hammerlindl}, it was later extended to 3-manifolds with (virtualy) nilpotent groups by Hammerlindl and Potrie in \cite{PointNil}. Finally, it was extended to 3-manifolds with (virtually) solvable groups, by the same authors in \cite{hammerlindlpotrie}, still in press. \par

In \cite{hammerlindlpotrie} it is proven that, for solvmanifolds, as stated in Conjecture \ref{non.dc.conjecture.strong}, the absence of tori tangent to either $E^{s}\oplus E^{c}$ or $E^{c}\oplus E^{u}$ implies dynamical coherence. Observe that the existence of such a torus implies the existence of either a repelling or an attracting periodic torus (see more details in Section \ref{section.anosov.tori}). Transitivity precludes this possibility. Therefore, for solvmanifolds, we can assume there is dynamical coherence in both Conjectures \ref{conjecture.pujals} and \ref{hhu.conj.classification}.

Let us give a flavor of how Theorem \ref{theorem.hammerlindl.potrie} is proved in the case of solvmanifolds with non-virtually nilpotent fundamental group. \par
\begin{theorem}[\cite{hammerlindlpotrie}] \label{teorema} If $f:M\to M$ is a dynamically coherent  partially hyperbolic diffeomorphism on a solvmanifold whose fundamental group is not virtually nilpotent, then a finite cover of a finite iterate of $f$ is leafwise conjugate to the time-one map of a suspension Anosov flow. 
\end{theorem}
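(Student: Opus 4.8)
The plan is to reduce $f$ to a concrete algebraic model and then to identify its center foliation with the orbit foliation of the canonical suspension Anosov flow on that model. Since $\pi_{1}(M)$ is solvable but \emph{not} virtually nilpotent, a finite cover of $M$ is the mapping torus $M_{A}$ of a hyperbolic automorphism $A\in SL(2,\Z)$, i.e. a genuine Sol-manifold, and $M_{A}$ carries a natural suspension Anosov flow $\phi_{t}$ whose orbit foliation is the model center foliation; its time-one map $\phi_{1}$ is the target of the leaf conjugacy. First I would pass to such a finite cover and, if necessary, to a finite iterate of $f$, so that all invariant bundles are orientable, $f$ preserves their orientations, and $f$ respects the fibration $M_{A}\to S^{1}$. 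Dynamical coherence is used from the outset: it furnishes honest foliations \Fc, \Fcs, \Fcu, so none of the branched-foliation \emph{existence} machinery of Burago--Ivanov is needed here, only the geometry of these genuine foliations.

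Second, I would pin down the homotopical model. Using that $f_{\ast}$ acts partially hyperbolically at the level of $\pi_{1}$ (the ambient results of \cite{BI}), together with the fact that the fiber subgroup $\Z^{2}\triangleleft\pi_{1}(M_{A})$ is characteristic, $f$ must preserve the torus-bundle structure and, after replacing it by a suitable iterate, be homotopic to the time-one map $\phi_{1}$ of the suspension flow. This fixes the combinatorial type of $f$ and provides the target dynamics on the center-leaf space against which the conjugacy will be built.

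The geometric heart is then carried out on the universal cover $\tilde M=\R^{3}$ equipped with its Sol metric. I would lift \Fc, \Fcs, \Fcu to $\widetilde{\mathcal{W}^{c}},\widetilde{\mathcal{W}^{cs}},\widetilde{\mathcal{W}^{cu}}$ and establish two facts paralleling Facts 1 and 2 above but adapted to the non-flat geometry: (i) the strong foliations \Fs, \Fu lift to quasi-isometrically embedded foliations, so their leaves are uniform quasi-geodesics aligning coarsely with the two horocyclic directions of Sol; and (ii) $\widetilde{\mathcal{W}^{cs}},\widetilde{\mathcal{W}^{cu}}$ have global product structure with $\widetilde{\mathcal{W}^{u}},\widetilde{\mathcal{W}^{s}}$ respectively. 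From this I would deduce that each center leaf is a bi-infinite quasi-geodesic that is asymptotically neutral, and then invoke the coarse rigidity of Sol: a neutral quasi-geodesic must remain within bounded distance of a flow line of $\tilde\phi$. Thus $\widetilde{\mathcal{W}^{c}}$ is a bounded perturbation of the model orbit foliation. The degenerate alternative in the Sol-analogue of Fact 2 — a leaf escaping into a horocyclic plane and producing a compact torus leaf — is ruled out by dynamical coherence together with Theorem \ref{teo.toro.cu}.

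Finally, the bounded-distance matching of center leaves to flow lines lets me construct the conjugacy directly. I would define $\tilde h$ by sending each center leaf of $\tilde f$ to the flow line of $\tilde\phi$ at bounded distance from it, in a $\pi_{1}(M)$-equivariant way and intertwining $\tilde f$ with $\tilde\phi_{1}$ via the homotopy data of the second step; equivariance guarantees that $\tilde h$ descends to a homeomorphism $h:M_{A}\to M_{A}$ carrying center leaves of $f$ to orbits of $\phi$ with $h\circ f=\phi_{1}\circ h$ at the center-leaf level, which is precisely leaf conjugacy. The main obstacle, I expect, is the third step: transporting the quasi-isometry and global-product-structure arguments — which in the $\T^{3}$ proof depend decisively on $A$-invariant flat planes in Euclidean $\R^{3}$ — into the exponentially distorted Sol geometry, where \Fc is only H\"older and Picard uniqueness fails. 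Controlling the asymptotics of center leaves against the coarse geometry of Sol, and excluding leaves that drift into a horocyclic plane, is the crux, and is where the bulk of the foliation-theoretic work of \cite{hammerlindlpotrie} is concentrated.
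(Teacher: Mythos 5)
Your reduction to the mapping torus $M_{A}$, the passage to a finite cover and iterate, and the use of Theorem \ref{teo.toro.cu} to exclude compact $cs$- and $cu$-leaves all match the paper's strategy. But the second half of your plan contains a genuine gap, in two places. First, the step ``a neutral quasi-geodesic must remain within bounded distance of a flow line of $\tilde\phi$'' is not a valid principle in Sol: Sol is not Gromov hyperbolic, the Morse lemma fails there, and individual quasi-geodesics need not fellow-travel geodesics; moreover, center leaves are a priori only tangent to a H\"older line field and are not known to be quasi-geodesically embedded. The paper avoids this entirely by working at the level of whole leaves of \Fcs\ and \Fcu: it shows these are \emph{almost parallel} (bounded Hausdorff distance, leaf-to-leaf) to the model foliations $\A^{cs},\A^{cu}$ determined by the eigenspaces of $A$, and only then intersects to conclude that each center leaf lies at finite Hausdorff distance from a unique model orbit, using that the model Anosov flow is infinitely expansive so distinct model orbits are at infinite Hausdorff distance.

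Second, and more seriously, your final step --- defining $h$ by sending each center leaf to the nearby flow line --- assumes exactly what must be proved: that this assignment is injective and can be realized by a homeomorphism of $M$. The paper's route here is different and is the crux: it orients $E^{c}$ to obtain a flow $\varphi$ tangent to the center foliation, proves $\varphi$ is \emph{expansive} (if two distinct center leaves stayed $\varepsilon$-close they would lie in a single leaf of \Fcs\ and a single leaf of \Fcu, forcing a strong leaf to cross a center-stable or center-unstable leaf twice, and a Novikov-type argument then produces a compact leaf, contradicting \cite{HHUcenterunstable}), and then invokes Brunella's theorem \cite{brunella} that an expansive flow on a torus bundle is orbit-equivalent to a transitive Anosov suspension. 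That last citation is what actually delivers the leaf conjugacy; without it, or without an equivalent argument producing the homeomorphism $h$ at the level of points rather than leaves, your construction does not close.
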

See Definition \ref{def.leaf.conjugacy} for the definition of leaf conjugacy. \par
To start the proof of Theorem \ref{teorema}, one shows that any such manifold is finitely covered by the mapping torus $M_{A}$ of a hyperbolic automorphism on $\T^{2}$, that is $M_{A}=\T^{2}\times {\mathbb R}/\sim$ such that $(Ax,t)\sim (x,t+1)$, where $A$ is a hyperbolic automorphism of $\T^{2}$. And any diffeomorphism of $M_{A}$ has a finite iterate that is homotopic to the identity. This is not hard to prove.\par
Now, on the universal cover of $M_{A}$, there are model foliations $\A^{cs}$ and $\A^{cu}$:
$(v_{1},t_{1})$ and $(v_{2},t_{2})$ belong to the same leaf of the foliation $\A^{cs}$ if and only if $v_{1}-v_{2}$ is in the stable eigenspace of the automorphism $A$. Similarly, $(v_{1}, t_{1})$ and $(v_{2},t_{2})$ belong to the same leaf of the foliation $\A^{cu}$ if and only if $v_{1}-v_{2}$ is in the unstable eigenspace of the automorphism $A$. In \cite{hammerlindlpotrie} it is seen that the lift to the universal cover of any foliation without compact leaves is {\em almost parallel} to either $\A^{cs}$ or $\A^{cu}$. Two foliations ${\mathcal F}$ and ${\mathcal F}'$ are {\em almost parallel} if there is a uniform bound $R>0$ such that 
\begin{enumerate}
 \item for each leaf $L\in {\mathcal F}$ there is a leaf $L'\in{\mathcal F}'$ such that $d_{H}(L,L')<R$, and
  \item for each leaf $L'\in {\mathcal F}'$ there is a leaf $L\in{\mathcal F}$ such that $d_{H}(L,L')<R$,
\end{enumerate}
 where $d_{H}$ is the Hausdorff distance, that is 
 $$d_{H}(L,L')=\max\left\{\sup_{x\in L}d(x,L'), \sup_{y\in L'}d(y,L)\right\}.$$
 Now, neither ${\mathcal F}^{cs}$ nor ${\mathcal F}^{cu}$ contain compact leaves \cite{HHUcenterunstable}, and therefore each one is almost parallel to either $\A^{cs}$ or $\A^{cu}$. They proceed then to show that if ${\mathcal F}^{cs}$ is almost parallel to $\A^{cs}$, then ${\mathcal F}^{cu}$ is almost parallel to $\A^{cu}$. This step is more delicate. \par
%
Note that the center leaves of the model foliation - that is, the leaves in $\A^{c}$ that are intersection of leaves $\A^{sc}$ and $\A^{cu}$ - correspond to trajectories of an Anosov flow, which is {\em infinitely expansive}. Infinite expansivity means that for any two different points $x$ and $y$ in the universal cover and any $K>0$, there will be a time $t\in \R$ such that $X_{t}(x)$ and $X_{t}(y)$ are $K$-apart. Therefore, any two such leaves $A_{1}^{c}$ and $A_{2}^{c}$ are at infinite Hausdorff distance. This implies that the almost-parallel relation defined above assigns to each center leaf $F^{c}$ in the intersection of $\F^{sc}$ and $\F^{cu}$ a unique center leaf in $\A^{c}$. Less trivially, there is a unique leaf in $\F^{sc}$ at finite Hausdorff distance of each leaf in $\A^{sc}$, and a unique leaf in $\F^{cu}$ at finite distance of each leaf in $\A^{cu}$ (Lemma 5.3 of \cite{hammerlindlpotrie}).
Therefore, any two center leaves $F^{c}_{1}$ and $F^{c}_{2}$ that are at finite Hausdorff distance of each other, must be in the intersection of a single leaf of $\F^{sc}$ and a single leaf of $\F^{cu}$. 
\par 
 Now, let us assume that the center bundle $E_{f}^{c}$ is orientable, for otherwise we can take a finite cover. Then there exists a field $X^{c}$ tangent to $E^{c}$, defining a flow $\varphi$ on $M_{A}$. We claim that $\varphi$ is an expansive flow. \par
Indeed, any two $\varphi$-trajectories that at most $\varepsilon$-apart correspond to two center leaves that are at finite Hausdorff distance; hence, they are in the intersection of a single leaf of $\F^{sc}$ and a single leaf of $\F^{cu}$. This implies either that a stable leaf intersects (at least) twice a center unstable leaf of $\F^{cu}$ or that an unstable leaf intersects (at least) twice a center stable leaf of $\F^{sc}$. A classical argument \`a la Novikov, implies the existence of a compact leaf either in $\F^{cu}$ or in $\F^{sc}$, a situation precluded by \cite{HHUcenterunstable}. \par
Finally, M. Brunella establishes in \cite{brunella} that any expansive flow on a torus bundle is leafwise conjugate to a transitive Anosov suspension, concluding the classification theorem in solvmanifolds. \par
Again, we refer the reader to the survey of Hammerlindl and Potrie, in order to deepen in this topic.

\subsection{Anomalous partially hyperbolic diffeomorphisms}\label{anomalous}

Very recently, C. Bonatti, A. Gogolev,  K. Parwani and R. Potrie  found both a dynamically coherent example and a transitive example that are not leaf-wise conjugate to any of the above models, proving both conjectures wrong \cite{BPP, BGP}. 

The idea behind both constructions is to perform a large ``perturbation" by composing  the time one map of certain Anosov flows in an appropriate neighborhood with a map of the form $(g,Dg)$, where $g$ is a Dehn twist and $Dg$ is the derivative acting on the unitary tangent bundle. This neighborhood has to be large enough in order to obtain that the effect of the derivative of the Dehn twist be negligible. We will explain this with some more detail for the case of the perturbations of the time-one map of the geodesic flow of surfaces of constant negative curvature in Theorem \ref{BGP}.

The first family of (non-transitive) examples is obtained by modifying the Franks-Williams' construction \cite{FW} of a non-transitive Anosov flow. 

\begin{theorem} [Bonatti, Parwani, Potrie, \cite{BPP}] There is a closed orientable 3-manifold $M$ endowed with a non-transitive
Anosov flow $X_{t}$ and a diffeomorphism $f : M\rightarrow M$ such that:
\begin{itemize}
\item $f$ is absolutely partially hyperbolic,
\item $f$ is robustly dynamically coherent,
\item the restriction of $f$ to its chain recurrent set coincides with the time-one map of
the Anosov flow $X_{t}$, and
\item for any $n > 0$, $f^n$ is not isotopic to the identity.
\end{itemize}
\end{theorem}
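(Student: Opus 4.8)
The plan is to start from a non-transitive Anosov flow, leave its time-one map untouched on the chain recurrent set, and modify it only in the wandering region by a Dehn twist that is spread out over a very long flow segment; the point is that spreading makes the derivative of the twist uniformly small (so partial hyperbolicity survives) while the twist itself remains a nontrivial mapping class (so isotopy to the identity is obstructed). Concretely, I would take $X_t$ to be a Franks--Williams flow (\cite{FW}) on a closed orientable $3$-manifold $M$, assembled by gluing two hyperbolic plugs carrying a hyperbolic attractor $\Lambda_a$ and a hyperbolic repeller $\Lambda_r$ along an incompressible cross-section torus $T$; then $R(X_t)=\Lambda_a\sqcup\Lambda_r$ and every non-recurrent orbit crosses $T$ once, travelling from $\Lambda_r$ to $\Lambda_a$. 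Time-one maps of Anosov flows are partially hyperbolic with $E^c$ the flow direction (\S\ref{example1}); passing to a time-$T$ map for $T$ large (equivalently reparametrising) makes the uniform domination \eqref{eq.abs.ph} strict, so the base map $X_1$ is \emph{absolutely} partially hyperbolic. Crucially, $X_1$ lies on the flow, hence is isotopic to the identity --- the feature to be destroyed.

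Next I would fix a product neighborhood $U\cong T\times(-\varepsilon,\varepsilon)$ of the cross-section inside the wandering set $W=M\setminus R(X_t)$, and let $\Psi:M\to M$ be supported in the flow-saturation of $U$ over a long time window $[-L,L]$, equal to the identity outside, and realising a full Dehn twist of $T$ along an essential curve $\gamma\subset T$. Set $f=\Psi\circ X_1$. The main obstacle is that composing with a Dehn twist is a \emph{large} perturbation, so absolute partial hyperbolicity is not automatic. The remedy is that if the twisting profile interpolates from $0$ to $1$ across the length-$2L$ window, then pointwise $D\Psi$ differs from the identity only by a shear of size $O(1/L)$ --- that is, $\Psi$ is $C^0$-far from the identity while $D\Psi-\mathrm{id}$ is uniformly small. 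Since $Df_x=D\Psi_{X_1(x)}\circ DX_{1,x}$, the derivative of $f$ is a uniformly small perturbation of $DX_1$, so for $L$ large the invariant stable/unstable cones of $X_1$ are preserved with essentially the same rates and the strict inequalities \eqref{eq.abs.ph} persist; this is a pointwise cone-field argument and holds $C^1$-openly, giving absolute partial hyperbolicity for $f$ and its neighbors.

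For the remaining three properties: because $\mathrm{supp}(\Psi)\subset W$ is disjoint from $\Lambda_a\cup\Lambda_r$ and consists only of wandering orbits, $\Psi$ cannot create new recurrence, so $R(f)=R(X_1)$ and there $\Psi=\mathrm{id}$, whence $f$ coincides with the time-one map on its chain recurrent set. For robust dynamical coherence I would produce $f$-invariant foliations tangent to $E^{cs}$ and $E^{cu}$ by deforming the weak-stable and weak-unstable foliations of the flow inside $W$ and matching them to the flow foliations near $\Lambda_a,\Lambda_r$; since these distributions are normally hyperbolic, the resulting foliations and their coherence persist under $C^1$-perturbations by \cite{HPS}. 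Finally, for the isotopy obstruction, $X_1\simeq\mathrm{id}$ through the flow gives $f\simeq\Psi$, hence $f^n\simeq\Psi^n$; it then suffices to check that the Dehn twist $[\Psi]$ has infinite order in the mapping class group of $M$, which is detected on $\pi_1(M)$ (equivalently on $H_1$) by the transvection induced by twisting along the incompressible torus $T$. As this action has infinite order, $\Psi^n_\ast\neq\mathrm{id}$ and $f^n$ is not isotopic to the identity for any $n>0$.

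The heart of the argument is the second step: one must quantify how large $L$ must be so that the $O(1/L)$ shear of the spread-out Dehn twist stays inside the invariant cones while $\Psi$ remains a globally defined, smooth diffeomorphism. Once that balance is secured, the unchanged chain recurrent set is a localization remark, robust coherence is a direct application of the persistence results of \cite{HPS}, and the non-isotopy conclusion is a homological/mapping-class computation, together yielding an absolutely partially hyperbolic, robustly coherent $f$ whose every iterate fails to be isotopic to the identity --- contradicting the classification models and hence \cite{BPP}.
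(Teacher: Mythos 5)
Your overall strategy --- a Franks--Williams flow, a Dehn twist spread over a long wandering window so that its derivative is pointwise small, and a homological obstruction to isotopy --- is exactly the strategy the survey attributes to \cite{BPP}, and your treatment of the chain recurrent set (filtration argument) and of non-isotopy (infinite order of the twist on $\pi_1$/$H_1$ along the incompressible torus) is essentially right. The gap is in the step you yourself call the heart of the argument: pointwise smallness of $D\Psi-\mathrm{id}$ does \emph{not} by itself give invariant cone fields for $f=\Psi\circ X_1$. The cone criterion requires $Df_x(C_x)\subset\mathrm{int}\,C_{f(x)}$, and $f(x)=\Psi(X_1(x))$ differs from $X_1(x)$ by a macroscopic displacement inside the torus fibre (you note yourself that $\Psi$ is $C^0$-far from the identity). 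Since the stable and unstable cone fields of the flow are only continuous and genuinely vary along the fibre --- they are dictated by the stable lamination of the attractor and the unstable lamination of the repeller traced on the cross-section $T$ --- the cone at $f(x)$ can be disjoint from the image of the cone at $x$ no matter how small $\tau'=O(1/L)$ is. Equivalently: in the neck the flow derivative is essentially the identity, so the derivative of $f$ accumulated over one crossing of the neck is the derivative of the \emph{full} Dehn twist, a macroscopic shear, regardless of $L$; lengthening the neck spreads this shear out but does not shrink it.

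What is missing is the transversality hypothesis that actually drives the construction in \cite{BPP}: the image under the full Dehn twist of the unstable lamination of the repeller, read on the gluing torus $T$, must remain transverse to the stable lamination of the attractor on $T$ (``strong transversality'' of the gluing map). Only under this condition can one build a cone field interpolating across the neck from repeller-adapted cones to attractor-adapted ones, with the long transit time --- together with an adapted metric, since pointwise in a trivial neck there is no contraction at all and the strict inequalities in \eqref{eq.abs.ph} fail for the naive product metric --- supplying the domination from the hyperbolicity of the two plugs. This condition genuinely constrains which curves and which twists are admissible and must be checked by hand against the Franks--Williams laminations; your argument never sees it, and as written would ``prove'' partial hyperbolicity for a twist along an arbitrary essential curve, which is false. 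The robust dynamical coherence claim is likewise asserted rather than proved (the $cs$ and $cu$ foliations are assembled by hand from the plug foliations and the product structure of the neck, with robustness via plaque expansiveness rather than bare normal hyperbolicity of distributions), but that is a matter of detail; the cone-field step is the one that, as stated, would fail.
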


The transitive examples are built on time-one maps of two different Anosov flows: the Bonatti-Langevin example \cite{BL} and the geodesic flow of surfaces of negative constant curvature (a similar construction works for the Handel-Thurston Anosov flow 	\cite{HT}). After the statement of the theorem we will give a brief outline of the construction for the case of geodesic flows. 
 
\begin{theorem}[Bonatti, Gogolev, Potrie, \cite{BGP}] \label{BGP}There exist a closed orientable 3-manifold $M$ and an absolutely
partially hyperbolic diffeomorphism $f : M \to M$ that satisfy the
following properties
\begin{itemize}
\item $M$ admits an Anosov flow;
\item $f^n$ is not homotopic to the identity map for all $n > 0$;
\item f is volume preserving; and
\item f is robustly transitive and stably ergodic.
\end{itemize}
\end{theorem}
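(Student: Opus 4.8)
The plan is to realize such an $f$ as a ``large'' modification of the time-$T$ map of a geodesic flow, following the scheme sketched above. First I would take $S$ a closed orientable surface of constant negative curvature and set $M=T^{1}S=\Gamma\backslash PSL(2,\R)$; the geodesic flow $\phi_{t}$ is then an Anosov flow on $M$, so the first required property ($M$ admits an Anosov flow) is automatic, and its time-$T$ map $\phi_{T}$ is absolutely partially hyperbolic with center bundle tangent to the flow, exactly as in the homogeneous example of the Introduction (page \pageref{action.subgroup}). The strong bundles $E^{s},E^{u}$ are the horocyclic directions spanned by $U^{-},U^{+}$, and the bracket relation $[U^{+},U^{-}]=2D$ shows they are genuinely non-integrable; this will be the source of accessibility.

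Next I would fix a non-separating simple closed geodesic $c\subset S$ and let $g\colon S\to S$ be an area-preserving Dehn twist supported in an annular neighbourhood $V$ of $c$. Lifting $g$ to the unit tangent bundle through its normalized derivative gives a diffeomorphism $\hat g=(g,Dg)\colon M\to M$ supported in $\pi^{-1}(V)$. The crucial quantitative device is to spread the twist over a long tube: by taking the support neighbourhood geometrically large (elongating $V$, if necessary after passing to a suitable cover) the shear per unit length, and hence $\norm{D\hat g-\mathrm{Id}}$, can be made as small as we wish, even though $\hat g$ remains a full, topologically nontrivial twist. I would then set $f=\phi_{T}\circ\hat g$ and check that for $T$ large and the twist sufficiently spread out the cone fields of $\phi_{T}$ are preserved by $f$: the exponential expansion and contraction $e^{\pm T}$ of $\phi_{T}$ dominate the negligible distortion of $\hat g$, so $f$ is absolutely partially hyperbolic with $E^{c}_{f}$ a small continuous perturbation of the flow direction. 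Being an open condition, partial hyperbolicity is then robust.

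The remaining three properties I would establish as follows. For the homotopy obstruction, note that $\phi_{T}$ is isotopic to the identity through the flow, so $f$ is homotopic to $\hat g$; since $g$ is a Dehn twist about a non-separating curve it has infinite order in the mapping class group of $S$, and via the fibration $S^{1}\to M\to S$ this infinite order is detected on $\pi_{1}(M)$ (equivalently on $H_{1}(M;\Z)$), whence $f^{n}_{\ast}\ne\mathrm{Id}$ and $f^{n}$ is not homotopic to the identity for any $n>0$. For volume preservation, the Liouville measure is $\phi_{t}$-invariant, and because $g$ is area-preserving its canonical lift $\hat g$ can be arranged to preserve Liouville as well; any residual discrepancy is absorbed by a Moser-type correction supported in $\pi^{-1}(V)$, which does not affect the preceding estimates. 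Finally, for stable ergodicity and robust transitivity I would prove that $f$ is robustly accessible: the horocyclic foliations $\Fs,\Fu$ are minimal and their joint non-integrability ($[U^{+},U^{-}]=2D$) already yields a single accessibility class for $\phi_{T}$, and this persists under the small $C^{1}$ effect of the twist and under further $C^{1}$ perturbations. Accessibility together with Theorem \ref{acc.imp.erg} gives ergodicity, and since this holds on a whole $C^{1}$-neighbourhood we obtain stable ergodicity; ergodicity with full support yields transitivity, again robustly.

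The main obstacle is the partial-hyperbolicity step: a Dehn twist is \emph{not} a $C^{1}$-small perturbation of the identity, so one cannot simply invoke the openness of partial hyperbolicity. The delicate point is the quantitative balance between the twisting region and the time $T$---one must show that the twist can be realized with pointwise derivative so close to the identity (by spreading it over a long enough tube, which forces the geometry of $S$, of $c$, and possibly of a finite cover to be chosen compatibly) that the strong hyperbolicity of $\phi_{T}$ still traps an invariant splitting, while the \emph{global} twist remains nontrivial enough to defeat every homotopy class $f^{n}$. Reconciling ``$C^{1}$-small pointwise'' with ``topologically a full twist'' is exactly where the construction is subtle, and it is also what must be controlled to keep accessibility, and hence stable ergodicity and robust transitivity, valid on an open set of diffeomorphisms.
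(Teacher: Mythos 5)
Your overall scheme is the one actually used in \cite{BGP} and sketched in Subsection \ref{anomalous}: set $f=\varphi_T\circ\hat g$ with $\hat g=(g,Dg)$ the derivative lift of a Dehn twist, detect the homotopy classes $f^n$ on first homology, and obtain stable ergodicity from accessibility. The genuine gap is in the step you yourself flag as delicate: how to make $D\hat g$ pointwise close to an isometry. On a \emph{fixed} hyperbolic surface this cannot be achieved by ``elongating $V$'' or by passing to a finite cover. The embedded collar of a simple closed geodesic $\gamma$ of length $\ell$ has width bounded by $\operatorname{arcsinh}(1/\sinh(\ell/2))$, so the shear per unit length of any Dehn twist supported in an embedded annulus around $\gamma$ is bounded below by a positive constant depending only on $\ell$; finite covers only replace $\gamma$ by geodesics of length a multiple of $\ell$ and do not improve this ratio. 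The device that makes the construction work is to \emph{deform the hyperbolic structure}: take metrics $g_n$ of curvature $-1$ with $\mathrm{length}_{g_n}(\gamma)\to 0$, so that by the collar lemma the collar $C_n$ becomes arbitrarily long relative to its circumference and the twist $\rho_n$ supported in it is close to an isometry. One then needs the further observation, made explicitly in the paper, that the partial hyperbolicity constants of the time-one maps $\varphi_n$ are uniform in $n$ because every $g_n$ has the same universal cover $\mathbb{H}^2$; without this the cone-field argument for $h_n=D_{\rho_n}\circ\varphi_n$ has no uniform hyperbolicity to lean on as the metric degenerates.

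Two further points. First, ``take $T$ large'' is not a substitute for making $D\hat g$ close to an isometry: the cone-field criterion is about directions, and a bounded but not small $D\hat g$ can carry the unstable cone into the center-stable bundle, which no amount of additional expansion by $\varphi_T$ repairs. Second, your accessibility argument asserts that the single accessibility class of $\varphi_T$ ``persists under the small $C^1$ effect of the twist,'' but $h_n$ lies in no $C^1$-neighborhood of $\varphi_T$ (it is not even homotopic to it), so accessibility must be established for $h_n$ directly; moreover stable ergodicity via Theorem \ref{acc.imp.erg} yields transitivity only of conservative perturbations, so robust transitivity among all $C^1$ perturbations requires a separate argument, which is why the paper appeals to ``known results and techniques'' at this point rather than to openness of accessibility alone.
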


\noindent{\bf Unit tangent bundle of surfaces of negative curvature:} Here we give the main ideas of the construction of the examples for this case. Let $S$ be a surface and $g$ a Riemannian metric of curvature $-1$. Fix a simple closed geodesic $\gamma$. It is possible to deform the hyperbolic metric in such a way the length of $\gamma$ goes to $0$. Indeed, there is a sequence $g_n$ of metrics of curvature $-1$ such that $\text{length}_{g_n}\gamma\to 0$. Then there are collars of uniform length (for the metric $g_n$)  of $\gamma$; call them $C_n$. These collars become thinner and thinner as $n$ goes to infinity. This implies that Dehn twists $\rho_n$ on these collars are very close to isometries, for $n$ large enough. \par
Now consider $h_n=D_{\rho_n}\circ \varphi_n$ where $\varphi_n$ is the time-one map of the geodesic flow of $g_n$ and $D_{\rho_n}$ is the projectivization of the derivative of $\rho$. Since the  partially hyperbolic structure does not change with the metric  (indeed the partially hyperbolic structure depends on the metric, and for all $n$ the metric on the universal cover is the same) this will imply that  $h_n$ is partially hyperbolic for 
$n$ large enough. By constructing $\rho_n$ with some care, $D_{\rho_n}$ can be made volume preserving. Known results and techniques imply that there is a stably ergodic and robustly transitive perturbation of $h_n$. 

Bonatti, Hammerlindl, Gogolev and Potrie have announced a generalization of the latter construction, see \cite{BGP}. There is a natural homomorphism of the mapping class group of a surface of genus greater than one, $S$, into the mapping class group of its unit tangent bundle given by the projectivization, $I:\text{MCG}(S)\to \text{MCG}(T_1S)$. 

\begin{theorem} Each mapping class of the image of $I$ admits a volume preserving partially hyperbolic representative. 
\end{theorem}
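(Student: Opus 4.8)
The plan is to realize every class in the image of $I$ by the mechanism of Theorem~\ref{BGP}. Given $\phi\in\text{MCG}(S)$, one picks a representative $\rho$ and a hyperbolic metric $g$ and forms $h=D_{\rho}\circ\varphi^{T}_{g}$, where $\varphi^{T}_{g}$ is the time-$T$ map of the geodesic flow on $T_{1}S$ and $D_{\rho}$ is the projectivized derivative. Since the geodesic flow is Anosov, with splitting $E^{s}_{g}\oplus E^{0}_{g}\oplus E^{u}_{g}$ whose centre $E^{0}_{g}$ is the isometric flow direction, taking $T$ large makes the strong expansion and contraction of $\varphi^{T}_{g}$ as strong as we like. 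A cone-field argument then shows that $h$ is partially hyperbolic \emph{provided} $D_{\rho}$ returns the horocyclic bundles $E^{u}_{g},E^{s}_{g}$ close to themselves: $\varphi^{T}_{g}$ crushes the strong cones onto lines arbitrarily near $E^{u}_{g}$ and $E^{s}_{g}$, and these cones stay invariant under the composition exactly when $D_{\rho}(E^{u}_{g})$ and $D_{\rho}(E^{s}_{g})$ are near $E^{u}_{g}$ and $E^{s}_{g}$ (otherwise a systematic rotation of the unstable direction into the stable one can cancel the expansion over two iterates). Thus the problem reduces to representing $\phi$ by a diffeomorphism that is, in this sense, a \emph{near-isometry} of an Anosov structure on $T_{1}S$, and I would organize the cases along the Nielsen--Thurston classification.

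If $\phi$ is of finite order, Nielsen realization gives a hyperbolic metric $g$ and an honest isometry $\rho$ with $[\rho]=\phi$; then $D_{\rho}$ preserves $E^{s}_{g},E^{0}_{g},E^{u}_{g}$ exactly, so $h=D_{\rho}\circ\varphi^{T}_{g}$ is partially hyperbolic with no perturbation needed. If $\phi$ is a product of Dehn twists along a system of \emph{disjoint} simple closed geodesics, I would pinch that system, taking metrics $g_{n}$ whose lengths along these curves tend to $0$; exactly as in Theorem~\ref{BGP} the twists become arbitrarily close to isometries on the thin collars, so $D_{\rho_{n}}$ nearly preserves the horocyclic bundles and $h_{n}$ is partially hyperbolic for large $n$. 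A general reducible class is then handled by cutting along its canonical reducing curves, pinching those curves, and treating the finite-order pieces on the complementary subsurfaces by Nielsen realization.

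The main obstacle is the pseudo-Anosov case (and the pseudo-Anosov pieces of a reducible class): here the near-isometry requirement cannot be met relative to the \emph{geodesic} flow, because a pseudo-Anosov class has positive entropy, preserves a transverse pair of measured foliations, and admits no curve to pinch, so no representative sends the horocyclic bundles $E^{u}_{g},E^{s}_{g}$ back close to themselves. The resolution I would pursue is to trade the geodesic flow for a different Anosov flow on $T_{1}S$ \emph{adapted} to $\phi$ -- one whose weak stable and unstable bundles are nearly invariant under a well-chosen representative of $\phi$ -- manufactured by a Fried/Handel--Thurston type surgery keyed to the invariant foliations of $\phi$, and then to rerun the cone-field/perturbation argument with this flow in place of $\varphi^{T}_{g}$. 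The crux, and where essentially all the work lies, is producing such a flow together with a representative of $\phi$ that is a near-isometry of its strong bundles, and checking that the expansion and contraction rates coming from the flow dominate the bounded distortion introduced by $D_{\rho}$ uniformly across $T_{1}S$.

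Finally, volume preservation is built in throughout. Isometries and geodesic (more generally Anosov) flows preserve the Liouville measure on $T_{1}S$, and in the pinching and surgery steps the representatives $\rho_{n}$ can be corrected to be volume preserving by a Moser-type argument, exactly as the conservative maps $D_{\rho_{n}}$ were obtained in Theorem~\ref{BGP}; performing every interpolation inside the group of volume-preserving diffeomorphisms then yields an $h$ that preserves a smooth volume and lies in the class $I(\phi)$.
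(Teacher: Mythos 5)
First, a caveat about the comparison: the paper does not actually prove this statement. It records it as a result \emph{announced} by Bonatti, Gogolev, Hammerlindl and Potrie, and the only proof-level material it supplies is the sketch preceding Theorem~\ref{BGP} for a single Dehn twist along a simple closed geodesic (pinch the geodesic so the twist becomes a near-isometry of the collar, then compose its projectivized derivative with the time-one map of the geodesic flow). Measured against what a complete proof must contain, your proposal has a genuine gap exactly where you locate it: the pseudo-Anosov case, and the pseudo-Anosov pieces of reducible classes (your reducible branch quietly treats only finite-order pieces on the complementary subsurfaces). For these classes you offer a program rather than an argument --- replace the geodesic flow by an Anosov flow on $T_{1}S$ ``adapted to $\phi$'' via a Fried/Handel--Thurston type surgery keyed to the invariant foliations --- and that program is not only unexecuted but doubtful on its face: those surgeries are performed along periodic orbits or along incompressible tori and in general produce a flow on a \emph{different} $3$-manifold, whereas here the representative must be a diffeomorphism of $T_{1}S$ itself lying in the prescribed class $I(\phi)$. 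Since pseudo-Anosov classes are the generic ones, what you have actually established is the theorem for the special classes covered by Nielsen realization and by multi-twists along disjoint curves.

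The way out, and the one consistent with the paper's description of the general result as an iteration of the Dehn-twist mechanism of Theorem~\ref{BGP}, is to avoid the Nielsen--Thurston trichotomy altogether: since $\text{MCG}(S)$ is generated by Dehn twists and $I$ is a homomorphism, every class in the image of $I$ is realized by a word $D_{\rho_{k}}\circ\varphi^{t_{k}}_{g}\circ\cdots\circ D_{\rho_{1}}\circ\varphi^{t_{1}}_{g}$, where the $\rho_{i}$ are twists and the interspersed flow maps, being isotopic to the identity, do not change the mapping class. The cone-field estimate is then run across the whole word: each $D_{\rho_{i}}$ distorts the invariant cones by an amount controlled once and for all by $\|\rho_{i}\|_{C^{1}}$, while each intervening $\varphi^{t_{i}}_{g}$ contracts them as much as one wishes. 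The one technical point this leaves --- and it is the same point your ``near-isometry'' requirement is meant to address --- is to rule out that some $D_{\rho_{i}}$ rotates the unstable cone into the stable one; this is where the actual work of \cite{BGP} lies, and it is needed in any version of the argument. Your insistence on realizing $\phi$ by a single near-isometric representative is what forces you into the trichotomy and into the pseudo-Anosov dead end; distributing the twists along the flow removes that constraint. The volume-preservation discussion at the end of your proposal is fine and matches the paper's remark that the $D_{\rho_{n}}$ can be arranged to be conservative.
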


There are some open questions about the  examples given by Theorem \ref{BGP}. The most important is if they are dynamically coherent. 

Regarding the classification many new questions arise. Some of them are the following. 

\begin{question} Suppose the fundamental group of $M$ is not (virtually) solvable. If $M$ admits a partially hyperbolic diffeomorphism, does it support an Anosov flow? 
\end{question} 

Hammerlindl, Potrie and Shannon have announced that the answer is positive for Seifert manifolds having fundamental group with exponential growth. 

\begin{question} Given a partially hyperbolic diffeomorphism on $M$, is there some sort of inverse process of the previous construction leading to a partially hyperbolic diffeomorphism isotopic to the identity? To a partially hyperbolic diffeomorphism leaf conjugate (up to finite cover and iterate) to the time one map of an Anosov flow?
\end{question}

\section{A tool to better understand some partially hyperbolic dynamics: Anosov Tori}\label{section.anosov.tori}

Both the ergodicity and the integrability conjectures propose that the existence of a map with some specific dynamical property leads to very rigid restrictions in the topology of the ambient manifold. The reader may wonder why this is case, and how one can attempt to prove such type of results. We discuss these issues in this section.

\espc

The unifying link is, surprisingly, the existence of certain tori embedded in the manifold.

\begin{definition}
We say that the manifold $M$ admits an {\em Anosov torus} if there exist a $C^{1}$-embedded torus $T\subset M$ and a diffeomorphism $\phi:M\rightarrow M$ such that
\begin{enumerate}
\item $\phi(T)=T$.
\item $\phi|T$ is a linear hyperbolic automorphism\footnote{This is equivalent to the existence of $\phi$ such that $\phi|T$ be isotopic to an Anosov diffeomorphism, which holds if and only if the action on the first homology group of the torus $H_1(T)$ is hyperbolic.}.
\end{enumerate}
\end{definition}

As we shall see below, not every manifold admits an Anosov torus. 
\espc
 
Recall that a three manifold is irreducible if every embedded two-sphere bounds a three-ball. We then have the following topological result:
 
\begin{theorem}[Hertz-Hertz-Ures \cite{AnosovTori}]\label{clasifica3}
Assume that $M$\ is a compact, irreducible 3-manifold supporting an Anosov torus. Then $M$ is homeomorphic to either:
\begin{enumerate}
\item a $3$-torus,
\item the mapping torus of $-Id:\mathbb{T}^2\rightarrow\mathbb{T}^2$, or
\item the mapping torus of an hyperbolic automorphism of $\mathbb{T}^2$.
\end{enumerate}
\end{theorem}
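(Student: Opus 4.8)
The plan is to classify $M$ by combining a compressibility dichotomy for the invariant torus with the geometrization and JSJ machinery, using throughout the single algebraic fact that a hyperbolic automorphism of $H_1(T)\cong\Z^2$ fixes no line defined over $\mathbb{Q}$, hence no isotopy class of essential simple closed curve on $T$.

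First I would reduce to the case that $T$ is incompressible. Writing $i:T\hookrightarrow M$ for the inclusion, we have $i\circ(\phi|_T)=\phi\circ i$, so $i_*\circ(\phi|_T)_*=\phi_*\circ i_*$ and therefore $\ker\bigl(i_*:H_1(T;\mathbb{Q})\to H_1(M;\mathbb{Q})\bigr)$ is $(\phi|_T)_*$-invariant. Since $(\phi|_T)_*$ is hyperbolic it admits no proper nonzero invariant subspace over $\mathbb{Q}$, so this kernel is either $0$ or all of $H_1(T;\mathbb{Q})$. If it is $0$, then $i_*$ is injective and $T$ is incompressible, because a compressing disc would produce an essential curve of $T$ that is null-homologous in $M$. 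Suppose instead $T$ were compressible; then, as $M$ is irreducible, the loop theorem together with the solid-torus theorem forces $T$ to bound a solid torus $V$ (possibly on the ``ball'' side). Its meridian slope $\mu\subset T$ is canonical. If $\phi(V)=V$, then $(\phi|_T)_*$ fixes the line $[\mu]$, which is impossible. If $\phi(V)\ne V$, then $\phi$ interchanges the two complementary regions, so $W=\overline{M\setminus V}$ is also a solid torus and $M=V\cup_T W$ is a genus-one Heegaard splitting, hence a lens space; then $\phi$ swaps the two meridians and $(\phi|_T)_*^2$ fixes $[\mu]$, again impossible since $(\phi|_T)_*^2$ is hyperbolic. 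Thus $T$ is incompressible, and (it being two-sided, automatic when $M$ is orientable) $M$ is Haken.

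Next I would run geometrization. Because $M$ contains the essential torus $T$, it is not atoroidal hyperbolic; so $M$ is Seifert fibered, a $\mathrm{Sol}$-manifold, or carries a nontrivial JSJ decomposition along a canonical family of incompressible tori. That family is unique up to isotopy, hence preserved by $\phi$ up to isotopy, so an iterate of $\phi$ respects the decomposition and the induced structures. Our torus $T$ is then isotopic to a JSJ torus or sits essentially in a piece, and in each situation a $\phi$-invariant slope appears, contradicting hyperbolicity: if $T$ is adjacent to (or vertical in) a Seifert piece it carries that piece's fibre slope, which is canonical and therefore $(\phi|_T)_*$-invariant; if $T$ lies between two hyperbolic pieces, Mostow rigidity forces $\phi|_T$ to be isotopic to an isometry, so $(\phi|_T)_*$ has eigenvalues of modulus one; and an atoroidal hyperbolic piece cannot contain an essential torus at all. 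This leaves only $M=\mathrm{Sol}$, or $M$ Seifert fibered with $T$ \emph{horizontal} (the finitely many non-unique-fibration exceptions are themselves torus bundles over $S^1$ and fall under the final step). A horizontal torus forces the closed Seifert manifold to fibre over $S^1$ with torus fibre, so in every surviving case $M$ is a torus bundle over $S^1$, i.e.\ the mapping torus $M_A$ of some $A\in GL(2,\Z)$, with $T$ the fibre $\T^2\times\{0\}$ and the $\mathrm{Sol}$ case corresponding to $A$ Anosov.

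Finally I would pin down the monodromy. In $\pi_1(M_A)=\Z^2\rtimes_A\Z$ the fibre subgroup $\Z^2$ is preserved (up to finite index) by $\phi_*$, which also normalizes $\langle A\rangle$, so $\phi$ induces on the fibre a matrix $B=(\phi|_T)_*$ with $BAB^{-1}=A^{\pm1}$. Requiring $B$ to be hyperbolic, a direct computation of the centralizer of $A$ in $GL(2,\Z)$, and of the twisted centralizer in the $A^{-1}$ case, shows this occurs exactly when $A=\mathrm{Id}$, $A=-\mathrm{Id}$, or $A$ is Anosov; the parabolic case ($\mathrm{Nil}$) and the elliptic cases of order $3,4,6$ admit only $B$ with eigenvalues of modulus one. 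These three cases are precisely $\T^3$, the mapping torus of $-\mathrm{Id}$, and the mapping torus of a hyperbolic automorphism of $\T^2$, and conversely each admits an Anosov torus via $(x,t)\mapsto(Bx,t)$ with $B$ hyperbolic commuting with $A$. I expect the main obstacle to be the Seifert-fibered analysis of the third paragraph: correctly treating the vertical/horizontal alternative, the short list of non-unique Seifert fibrations, and the equivariance of the JSJ decomposition so that an iterate of $\phi$ genuinely respects the fibered structure. The homological reduction of the first two paragraphs and the concluding centralizer computation are, by comparison, routine.
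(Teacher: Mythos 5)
Your overall strategy is viable but genuinely different from the paper's. The paper first establishes that Anosov tori are incompressible (quoted from an earlier paper of the same authors), then \emph{cuts $M$ open along $T$} and analyses the resulting compact manifold $N$ with incompressible torus boundary. There, the atoroidal--acylindrical JSJ alternative is killed by exhibiting a properly embedded essential annulus not parallel to $\partial N$, while for the Seifert alternative the key trick is that $\phi$ transports the Seifert fibration to a \emph{second, non-isotopic} fibration (a hyperbolic $(\phi|_T)_*$ cannot fix the fibre slope), so $N$ lands in the short classified list of manifolds admitting two fibrations that differ on the boundary --- solid torus, twisted $I$-bundle over the Klein bottle, or $\T^{2}\times[0,1]$ --- and the first two are discarded by a half-lives-half-dies computation: $\mathrm{rank}\ker\bigl(H_1(\partial N)\to H_1(N)\bigr)=\tfrac12\mathrm{rank}\,H_1(\partial N)$ forces $(\phi|_T)_*$ to preserve a line, hence to have an eigenvalue $\pm1$, when $\partial N$ is a single torus. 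Thus $N\cong\T^{2}\times[0,1]$ and $M$ is a torus bundle. You instead keep $M$ closed, invoke geometrization and equivariance of the JSJ family, and eliminate each possible location of $T$ by producing a finite nonempty $\phi$-invariant set of slopes, which a hyperbolic automorphism cannot permute. Both routes require the concluding monodromy/centralizer computation, which the survey leaves implicit and which you carry out correctly; your converse construction is a useful bonus. The paper's route is lighter (no geometrization, no Mostow rigidity) and concentrates all the Seifert subtleties into the two-fibration classification, which is precisely where your version is most exposed.

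The one step in your write-up that would actually fail as stated is the passage from ``$M$ is Sol, or Seifert fibred with $T$ horizontal'' to ``$M$ is a torus bundle over $S^{1}$''. A horizontal torus in a closed Seifert manifold is the fibre of a bundle over $S^{1}$ only when it is non-separating; if it separates, $M$ is a union of two twisted $I$-bundles over the Klein bottle glued along $T$ (a semi-bundle), and likewise there are Sol-manifolds of semi-bundle type that are not torus bundles. These cases are not vacuous and must be excluded explicitly: the splitting torus of such a semi-bundle inherits from each side the (at most two) fibre slopes of the twisted $I$-bundle's Seifert fibrations, a nonempty finite collection of slopes preserved by an iterate of $\phi$, which contradicts hyperbolicity of $(\phi|_T)_*$ exactly as in your other cases. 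Relatedly, your parenthetical claim that the non-unique-fibration exceptions ``are themselves torus bundles'' is not literally true (lens spaces and prism manifolds are exceptions too), although those contain no incompressible torus and so never arise here. With these repairs your argument closes.
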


\espc
 We remark that partial hyperbolicity is not required in Theorem \ref{clasifica3}. This theorem just shows that the 3-manifolds admitting Anosov tori are actually very few. 
Now we apply this result to the partially hyperbolic context. We first note that irreducibility comes for free in this setting.

\begin{lemma}
If a 3-manifold $M$ supports a partially hyperbolic diffeomorphism, then $M$ is irreducible.
\end{lemma}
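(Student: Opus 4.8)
The plan is to manufacture a codimension-one foliation on $M$ with no Reeb components and then read off the topology from Novikov's theorem (Theorem \ref{teo.novikov}) and its standard consequences. First I would pass to a finite cover $\hat M\to M$ on which all three bundles $E^{s},E^{c},E^{u}$ are orientable and $\hat M$ itself is orientable, lifting $f$ to a partially hyperbolic $\hat f$ that preserves these orientations (this may require replacing $\hat f$ by an iterate, which changes neither the invariant bundles nor their foliations). This reduction is harmless: a finite covering induces an isomorphism $\pi_{2}(\hat M)\cong\pi_{2}(M)$, so it suffices to prove $\pi_{2}(\hat M)=0$. Indeed, it is standard that a closed $3$-manifold is irreducible if and only if its second homotopy group vanishes (every reducible closed $3$-manifold, including $S^{2}\times S^{1}$ and its twisted analogue, has $\pi_{2}\ne 0$, since an essential sphere is $\pi_2$-nontrivial); hence $\pi_{2}(\hat M)=0$ will give $\pi_{2}(M)=0$ and thus irreducibility of $M$.

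The central input is the work of Burago and Ivanov \cite{BI}. With the bundles oriented, $\hat f$ carries an invariant branching foliation tangent to $E^{cs}=E^{s}\oplus E^{c}$ and, by their Key Lemma, a genuine transversely oriented codimension-one foliation $\mathcal{T}^{cs}$ whose tangent plane field is everywhere close to $E^{cs}$, and in particular transverse to the one-dimensional unstable foliation $\mathcal{W}^{u}$. I would use two features of $\mathcal{T}^{cs}$. No leaf is a sphere: a compact leaf is (nearly) tangent to $E^{cs}$ and carries a nowhere-vanishing line field coming from $E^{c}$, forcing Euler characteristic zero, so compact leaves can only be tori or Klein bottles. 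And $\mathcal{T}^{cs}$ has no Reeb component: a Reeb component would produce a compact torus leaf together with a vanishing cycle, spiralling behaviour that is incompatible with the transverse foliation $\mathcal{W}^{u}$, whose leaves are complete, non-compact lines uniformly expanded by $\hat f$. It is precisely this transversality to a uniformly expanding line foliation with non-compact leaves that makes the Burago--Ivanov foliation taut.

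Granting that $\mathcal{T}^{cs}$ is Reebless and has no sphere leaves, Novikov's theorem (Theorem \ref{teo.novikov}), together with the Novikov--Rosenberg--Palmeira analysis of Reebless foliations of closed $3$-manifolds, shows that the universal cover of $\hat M$ is $\mathbb{R}^{3}$ and that every leaf lifts to a properly embedded plane; in particular $\hat M$ is aspherical, so $\pi_{2}(\hat M)=0$. By the first paragraph this yields $\pi_{2}(M)=0$, and therefore $M$ is irreducible. The hard part is the tautness claimed in the second paragraph: establishing that the Burago--Ivanov foliation is genuinely free of Reeb components is where the \emph{dynamics}, rather than soft foliation theory, must enter, since one has to use the uniform expansion and completeness of the unstable leaves to exclude the closed transverse behaviour forced by a Reeb component. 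For this step I would follow \cite{BI}, where exactly this tautness (equivalently, Reeblessness) is proved; everything after it is formal.
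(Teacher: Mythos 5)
Your proof is correct and rests on the same engine as the paper's: the Burago--Ivanov foliation transverse to $E^{u}$, Reebless and without sphere leaves, is exactly what the paper invokes via \cite{BBI}, and like you it delegates the Reeblessness (the genuinely dynamical step) to that reference. The only real difference is the topological endgame. The paper quotes Rosenberg's theorem \cite{FolPlanes} --- any codimension-one foliation of a \emph{reducible} $3$-manifold must have a Reeb component or a spherical leaf --- which gives irreducibility in one line, with no orientation covers, no $\pi_{2}$, and no asphericity. Your route through $\pi_{2}(\hat M)=0$ and Novikov--Palmeira proves more than is needed, and the bridge ``$\pi_{2}(M)=0$ implies $M$ irreducible'' (equivalently, your parenthetical claim that an essential embedded sphere is nontrivial in $\pi_{2}$) silently uses that a null-homotopic embedded $2$-sphere bounds an actual ball, i.e.\ that a homotopy $3$-ball is a ball --- the Poincar\'e conjecture. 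That is now a theorem, so nothing is broken, but Rosenberg's formulation is the cleaner exit and is presumably why the paper cites it. Two small further points: your identification of ``taut'' with ``Reebless'' is loose (tautness is strictly stronger, though only Reeblessness is used), and the compact-leaf/Euler-characteristic argument for excluding sphere leaves should be phrased via projecting $E^{s}$ (or $E^{c}$) along $E^{u}$ onto the leaf, since the leaves of $\mathcal{T}^{cs}_{\epsilon}$ are only close to, not tangent to, $E^{cs}$.
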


\begin{proof}
A 3-manifold admitting a partially hyperbolic diffeomorphism has a
codimension-one foliation having neither Reeb components nor spherical leaves
\cite{BBI}. This proves the claim, since Rosenberg shows in \cite{FolPlanes} that any codimension-one
foliation in a reducible 3-manifold must have a Reeb component or a spherical
leaf. See also \cite{FeuDimTrois}.
\end{proof} 

\espc

The following proposition describes Anosov tori that arise naturally in partially hyperbolic dynamics, see \cite{HHUcenterunstable} for more details.

\begin{proposition}\label{exAnTor}
Let $f:M\rightarrow M$ be a partially hyperbolic diffeomorphism, and assume that there exists an $f$-invariant embedded torus $T$ tangent to either $E^{c}\oplus E^{u}$, $E^{s}\oplus E^{c}$ or $E^u\oplus E^{s}$. Then $T$ is an Anosov torus.
\end{proposition}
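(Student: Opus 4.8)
The plan is to show that the diffeomorphism $\phi=f$ itself realizes $T$ as an Anosov torus, i.e.\ that the induced action $A:=(f|_T)_*$ on $H_1(T)\cong\mathbb{Z}^2$ is hyperbolic; by the footnote accompanying the definition of Anosov torus, this is exactly what is required. Here $A\in GL(2,\mathbb{Z})$ with $\det A=\pm1$, so its eigenvalues $\mu,\mu'$ satisfy $\mu\mu'=\pm1$, and it suffices to produce one eigenvalue of modulus $\neq1$. I treat the case $T$ tangent to $E^{c}\oplus E^{u}$; the case $E^{s}\oplus E^{c}$ is identical after replacing $f$ by $f^{-1}$ (so that $E^{s}$ becomes the expanded direction), and $E^{u}\oplus E^{s}$ is easiest, as $T$ then carries two transverse invariant foliations, one expanded and one contracted, to either of which the argument applies. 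Since $E^{u}$ integrates uniquely to the unstable foliation $W^{u}$ (Stable Manifold Theorem) and $E^{u}_x\subset T_xT$ for every $x\in T$, each unstable leaf through a point of $T$ remains in $T$; hence $\mathcal{F}:=\{W^{u}(x)\cap T\}_{x\in T}$ is a one-dimensional foliation of $T$, invariant under $g:=f|_T$ and, by partial hyperbolicity together with compactness of $M$ (which yields uniform expansion $\|Df^{n}v\|\ge C\lambda^{n}\|v\|$, $\lambda>1$, for $v\in E^{u}$), with uniformly expanded leaves. Passing if necessary to a finite cover of $T$ and a finite iterate of $g$—operations that do not affect hyperbolicity of the homology action—I may assume the line field tangent to $\mathcal{F}$ is orientable and preserved with its orientation by $g$.

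\textbf{No closed leaves.} I first claim $\mathcal{F}$ has no closed leaf. A nonsingular foliation of $T^{2}$ has no null-homotopic closed leaf (such a leaf would bound a disc and force a singularity by the Poincar\'e--Hopf/Poincar\'e--Bendixson index argument), so any closed leaf $c$ would be homotopically essential, hence of length at least the systole $\kappa>0$ of the metric on $T$. But then $g^{-n}(c)$ is a closed leaf for every $n$ (as $g^{-1}$ carries leaves to leaves), still essential (since $g$ is a homeomorphism), yet of length at most $\lambda^{-n}\,\mathrm{length}(c)\to0$ because $g$ expands leaf-length uniformly. This contradicts $\mathrm{length}(g^{-n}(c))\ge\kappa$.

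\textbf{From the asymptotic direction to hyperbolicity.} By the classical structure theory of nonsingular foliations of the $2$-torus without closed leaves, $\mathcal{F}$ has a well-defined \emph{irrational} asymptotic slope $\alpha$: lifted to $\widetilde{T}=\mathbb{R}^{2}$, every leaf is a properly embedded line asymptotic to the direction $d=(1,\alpha)$. Writing a lift of $g$ as $\tilde{g}(x)=Ax+P(x)$ with $P$ being $\mathbb{Z}^{2}$-periodic, hence bounded, $\tilde{g}$ agrees with $A$ up to a bounded error, so it sends a leaf asymptotic to $d$ to one asymptotic to $Ad/\|Ad\|$; since $\tilde{g}$ preserves $\widetilde{\mathcal{F}}$, this image is again asymptotic to $d$, whence $Ad\parallel d$. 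Thus $d$ is a real eigenvector of $A$ of irrational slope. Its eigenvalue $\mu$ cannot be rational, for otherwise, being an algebraic integer, it would be an integer and $\ker(A-\mu I)$ would have rational slope; hence $\mu$ is a real quadratic irrational, so $\mu\neq\pm1$ and $|\mu|\neq1$. As $\mu\mu'=\pm1$, also $|\mu'|=1/|\mu|\neq1$, so $A$ is hyperbolic and $T$ is an Anosov torus.

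\textbf{Main obstacle.} The only non-formal ingredient is the structure theorem invoked in the last step, namely that a nonsingular foliation of $T^{2}$ with no closed leaf possesses a single well-defined irrational asymptotic direction (a combination of Poincar\'e--Bendixson theory on the torus with Denjoy theory). The dynamical hypothesis is used only in the short systole argument excluding closed leaves; the remainder is bookkeeping in $GL(2,\mathbb{Z})$. I expect the delicate point to be checking that the regularity available—$\mathcal{F}$ is only as smooth as $E^{u}$, a priori merely $C^{0}$ with $C^{1}$ leaves—suffices to apply the classical foliation theory cleanly.
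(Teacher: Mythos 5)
Your overall strategy is the same as the paper's: induce a foliation on $T$ from the uniformly expanded invariant bundle, observe it has no compact leaves, extract an irrational asymptotic direction which must be an eigendirection of $A=(f|_T)_*$, and conclude hyperbolicity from arithmetic in $GL(2,\mathbb{Z})$. But there is a genuine gap at the very last step, and it is exactly the case where the paper does its real work. Your argument reads: the eigenvalue $\mu$ of the irrational-slope eigenvector $d$ cannot be rational, ``for otherwise it would be an integer and $\ker(A-\mu I)$ would have rational slope.'' That inference presupposes that $\ker(A-\mu I)$ is one-dimensional. If $A=\pm I$ (or, after your passage to an iterate, $A^k=I$), then $\mu=\pm1$ and $\ker(A-\mu I)=\mathbb{R}^2$, which contains $d$ and vectors of every rational slope alike; no contradiction results. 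So your proof establishes only the dichotomy ``$A$ is hyperbolic or $A=\pm I$'' and never excludes the second alternative. Note that the dynamical input you have used so far (uniform leaf expansion) entered only in the systole argument ruling out closed leaves — which is in fact automatic here, since leaves of $W^u$ are injectively immersed lines and cannot be circles — so nothing in your proof yet forbids a leaf-expanding map isotopic to the identity.

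The paper closes precisely this case, and it is the heart of its proof: if $g_*=\mathrm{Id}$, a lift $\widehat g=\mathrm{Id}+\alpha$ with $\alpha$ periodic and bounded satisfies $\mathrm{diam}(\widehat g^n(X))\le \mathrm{diam}(X)+nK$, so diameters grow at most linearly, while an arc $\gamma$ inside an unstable leaf has length growing exponentially under $\widehat g^n$. This produces leaf arcs of arbitrarily large length whose endpoints are arbitrarily close, and a Poincar\'e--Bendixson argument then forces a singularity of the nonsingular foliation $W^u|_T$ — a contradiction. You need to add this (or an equivalent) argument; without it the proposition is not proved. The rest of your write-up (reduction to the expanding case, orientability reductions, the asymptotic-direction argument via a bounded perturbation of the linear lift, and the arithmetic showing a quadratic irrational eigenvalue has modulus $\ne1$) is correct and matches the paper.
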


\begin{proof}
Let $g=f|T$. In each of the different cases, $g$ or $g^{-1}$ preserves an expanding foliation by lines, so with no loss of generality we will assume that $\norm{dg|E^u}>1$. It suffices to prove that $g_{\ast}:\pi_1(T)\approx \mathbb{Z}^2 \rightarrow \mathbb{Z}^2$ is hyperbolic.

By taking $g^2$ if necessary we can suppose that $g$ preserves the orientation of $E^u|T$. Since $g$ preserves a foliation without compact leaves, the integral matrix $g_{\ast}$ 
has an eigenspace of irrational slope. This implies that either $g_{\ast}$ is hyperbolic or $g_{\ast}=Id$. In the second case $g$ has a lift  $\widehat{g} : \mathbb{R}^2 \rightarrow \mathbb{R}^2$ such that $\widehat{g}= Id +\alpha$ where $\alpha$ is a periodic, and in particular bounded, function. Hence there exists a constant $K > 0$ such that given any $X\subset \mathbb{R}^2$,
$$ \text{diam}(\widehat{g}^n(X)) \leq \text{diam}(X)+nK.$$ 

Let $\gamma$ be an arc contained in a leaf of $\Wu{x}, x\in T$. Then the length of $\gamma$ grows exponentially under $\hat g^{n}$ and its diameter grows at most linearly. This implies that given a small $\epsilon > 0$ there exists an iterate of $g$ that contains a curve of length arbitrarily large and with end points at distance less than $\epsilon$. Using Poincar\'e-Bendixon we obtain a singularity of the foliation $W^{u}$. This is a
contradiction; thus $g_{\ast}$ is hyperbolic.

\end{proof}

\espc

\begin{definition}
Let $f:M\rightarrow M$ be a partially hyperbolic diffeomorphism. An embedded torus tangent to either $E^{cs}, E^{cu}$ or $ E^{su}$ will be called a $cs,cu$ or $su$-torus respectively. In these cases, we say that $f$ \emph{admits} the corresponding torus.
\end{definition}

\begin{lemma}\label{su.torus}
If $f$ admits an $su$-torus, then $M$ admits an Anosov torus
\end{lemma}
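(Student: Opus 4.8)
The plan is to upgrade the (a priori non-invariant) $su$-torus $T$ to an $su$-torus invariant under some iterate of $f$, and then invoke Proposition \ref{exAnTor}. The first observation is that iterating produces more $su$-tori: since $Df$ preserves $E^s$ and $E^u$ it preserves $E^{su}=E^s\oplus E^u$, so each $f^n(T)$ is again an embedded torus tangent to $E^{su}$.

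Next I would show that two distinct $su$-tori are disjoint. The key structural fact is that any $su$-torus is saturated by local strong stable and strong unstable leaves: if $x\in T$ then $E^s_x,E^u_x\subset T_xT$, and a $C^1$ curve in $T$ tangent to $E^s$ (resp. $E^u$) is tangent to the foliation $W^s$ (resp. $W^u$) at every point, hence lies in a single leaf; thus $W^s_{loc}(x)\subset T$ and $W^u_{loc}(x)\subset T$. Now suppose $T'$ is another $su$-torus with $p\in T\cap T'$. Applying the saturation property to both surfaces gives $W^s_{loc}(p)\subset T\cap T'$, and then $W^u_{loc}(z)\subset T\cap T'$ for every $z\in W^s_{loc}(p)$. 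The local product $\bigcup_{z\in W^s_{loc}(p)}W^u_{loc}(z)$ is a neighborhood of $p$ inside each surface, so $T\cap T'$ is open in $T$; being also closed and $T$ connected, we conclude $T\subset T'$ and hence $T=T'$. Therefore the tori $\{f^n(T)\}_{n\in\Z}$ are pairwise disjoint or equal.

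The main obstacle is to extract from this family an $su$-torus invariant under some iterate $f^k$. Each $f^n(T)$ is incompressible (a torus tangent to a plane field in the partially hyperbolic setting has $\pi_1$ injecting in $\pi_1(M)$; see the Novikov-type arguments and \cite{HHUcenterunstable}), and $M$ is irreducible. If $T$ is not $f$-periodic, the $f^n(T)$ are infinitely many pairwise disjoint incompressible tori, so by Kneser--Haken finiteness infinitely many of them are pairwise parallel and stack inside a product region; the order-preserving (or reversing) action of $f$ on this stack, together with compactness of $M$, forces an accumulation torus $T_0=\lim f^{n_i}(T)$. Since tangency to $E^{su}$ is a closed condition and the tori are disjoint, $T_0$ is again an embedded $su$-torus, and the order structure makes $T_0$ invariant under a suitable iterate $f^k$. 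I expect this limiting/ordering step to be the delicate part, as one must guarantee that the Hausdorff limit is an embedded torus rather than a degenerate sublamination. In the end, either $T$ itself or the limit torus $T_0$ is an $su$-torus invariant under some $f^k$.

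Finally, $f^k$ is partially hyperbolic with the same invariant bundles $E^s,E^c,E^u$, and the torus produced is an $f^k$-invariant torus tangent to $E^{su}$. Proposition \ref{exAnTor} then applies verbatim: its proof only uses that $g=f^k|_{T_0}$ (up to replacing it by $g^2$ for orientation) preserves the expanding foliation $W^u|_{T_0}$ and runs the Poincar\'e--Bendixson argument to rule out $g_\ast=\mathrm{Id}$, concluding that $(f^k)_\ast$ acts hyperbolically on $\pi_1(T_0)\cong\Z^2$. Hence $T_0$ is an Anosov torus with $\phi=f^k$, so $M$ admits an Anosov torus, proving Lemma \ref{su.torus}.
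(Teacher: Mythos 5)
There is a genuine gap at the step where you assert that ``the order structure makes $T_0$ invariant under a suitable iterate $f^k$'': an $su$-torus invariant under some iterate of $f$ need not exist at all, so no amount of care with the Hausdorff limit can rescue this route. Concretely, let $M_A$ be the mapping torus of a hyperbolic automorphism $A$ of $\T^2$ and consider $f([x,t])=[Ax,t+\alpha]$ with $\alpha$ irrational; this is well defined (the gluing commutes with $A$), partially hyperbolic with $E^c$ the suspension direction, and every fiber $\T^2\times\{t\}$ is an $su$-torus. Since $f$ rotates the circle of fibers irrationally, no $su$-torus is periodic: the orbit $\{f^n(T)\}$ accumulates on every fiber but fixes none. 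Note that the mechanism producing periodic tori in Lemma \ref{sc.cu.torus} --- a tubular neighborhood eventually mapped inside itself --- is unavailable here precisely because the direction normal to an $su$-torus is the center direction, which is neither attracting nor repelling. Consequently Proposition \ref{exAnTor} cannot be invoked, since it requires an $f$-invariant torus.

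The paper's proof sidesteps this by exploiting the flexibility in the definition of an Anosov torus: the diffeomorphism $\phi$ with $\phi(T)=T$ may be \emph{any} diffeomorphism of $M$, not an iterate of $f$. One considers the union $\Lambda$ of all $su$-tori, which is a compact lamination, picks a recurrent leaf $T$ so that $d_H(f^n(T),T)<\eps$ for some $n$, and composes $f^n$ with a small ambient diffeotopy $i_1$ carrying $f^n(T)$ back onto $T$; then $\phi=i_1\circ f^n$ fixes $T$ and still acts hyperbolically on $H_1(T)$, by essentially the Poincar\'e--Bendixson argument of Proposition \ref{exAnTor} (the correction $i_1$ is isotopic to the identity, so it does not affect the action on homology). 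Your preliminary observations --- that iterates of $T$ are again $su$-tori and that distinct $su$-tori are disjoint by local $s$- and $u$-saturation --- are correct and consistent with the lamination structure the paper uses, but the argument as written does not close, and the issue you flagged (degeneracy of the limit) is not the one that actually blocks it.
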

\begin{proof}
Assume $f$ admits an $su$-torus, and consider the lamination $\Lambda$ of all $su$-tori of $f$. This is a compact lamination \cite{Hae62}. Therefore, there is a recurrent leaf., that is, there is a torus $T$ and an iterate $n$, such that $d_{H}(f^{n}(T),T)<\eps$ for small $\eps$. There exists a diffeotopy $i_{t}$ on $M$, taking $f^{n}(T)$ into $T$. Then $\phi=f^{n}\circ i_{1}$ fixes $T$ and $\phi|T$ is isotopic to an Anosov diffeomorphism.  
\end{proof}

\begin{lemma}\label{sc.cu.torus}
If $f$ admits an $sc$ or $cu$ torus, then it admits an $f$-periodic  $sc$ or $cu$ torus. 
 Therefore, $M$ admits an Anosov torus (by Proposition \ref{exAnTor})
\end{lemma}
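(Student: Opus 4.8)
The plan is to mimic the proof of Lemma~\ref{su.torus}, but to extract genuine $f$-periodicity from the transverse dynamics rather than merely recurrence plus a diffeotopy. I will treat the $cu$ case; the $cs$ case is symmetric, exchanging the roles of $f$ and $f^{-1}$. First I would observe that $f$ permutes $cu$-tori: since $Df$ preserves the splitting, $Df(E^{cu})=E^{cu}$, so the image of a torus tangent to $E^{cu}$ is again an embedded torus tangent to $E^{cu}$. Next, exactly as in Lemma~\ref{su.torus}, I would collect all $cu$-tori and argue that they assemble into a compact $f$-invariant lamination $\Lambda$: a $C^1$-limit of tori tangent to the continuous bundle $E^{cu}$ is again tangent to $E^{cu}$, distinct $cu$-tori cannot cross topologically (each is saturated by the strong unstable foliation $\Fu$, which is tangent to $E^{u}\subset E^{cu}$ and uniquely integrable, so two such tori can only be tangent, never transverse, and cannot pass from one side of each other to the other along the transverse direction $E^s$), and compactness of the resulting lamination then follows from Haefliger \cite{Hae62}.

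The decisive new ingredient is that the transverse direction to $\Lambda$ is $E^s$: since each leaf is tangent to $E^{cu}$ and $TM=E^{s}\oplus E^{cu}$, the normal line field of $\Lambda$ is $E^s$, which $f$ contracts uniformly, so $f^{-1}$ expands the transverse direction uniformly. Passing to the leaf space $Z$ of $\Lambda$ (a compact metric space, since all leaves are compact tori), $f$ induces a homeomorphism $F\colon Z\to Z$, and the uniform transverse expansion of $f^{-1}$ makes $F^{-1}$ a local expansion, hence \emph{positively expansive}. By the classical theorem that a compact metric space admitting a positively expansive homeomorphism must be finite, $Z$ is finite; that is, $f$ has only finitely many $cu$-tori. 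A bijection of a finite set is periodic, so some $cu$-torus $T_0$ satisfies $f^{k}(T_0)=T_0$. Finally, $f^k$ is again partially hyperbolic with the same bundles, and $T_0$ is an $f^k$-invariant torus tangent to $E^{cu}$, so Proposition~\ref{exAnTor} applied to the pair $(f^k,T_0)$ shows $T_0$ is an Anosov torus and hence $M$ admits one.

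I expect the main obstacle to be the rigorous verification of the lamination step together with the transverse dynamics. Concretely, one must establish that the $cu$-tori are pairwise non-crossing (so that $\Lambda$ is a genuine lamination and not merely a closed union of mutually tangent surfaces), and that their leaf space is a bona fide compact metric space on which $f$ acts as a homeomorphism with a \emph{uniform} expansion constant, so that the positive expansivity of $F^{-1}$ holds with a single constant $c>0$ rather than only locally. Once these points are in place, the finiteness of the family of $cu$-tori---and hence the existence of a periodic one---is immediate, and this transverse-expansivity mechanism is exactly what makes the $cu$ and $cs$ cases genuinely stronger than the $su$ case of Lemma~\ref{su.torus}, where the transverse direction $E^c$ is neither contracted nor expanded and so only recurrence is available.
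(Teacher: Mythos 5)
Your mechanism is the right one --- the uniform contraction of the transverse bundle $E^{s}$ is exactly what distinguishes the $cu$/$sc$ cases from the $su$ case --- but your route through the leaf space of the lamination of \emph{all} $cu$-tori is genuinely different from the paper's, and it has a gap at precisely the point you flag. Two things are missing. First, distinct $cu$-tori need not be disjoint: each is \Fu-saturated and tangent to the same continuous distribution $E^{c}\oplus E^{u}$, which rules out topological crossing, but it does not rule out two tori touching along a common unstable leaf while separating in the center direction (the branching phenomenon of Burago--Ivanov); so the union of all $cu$-tori need not be a lamination with a well-defined leaf space. Second, and more seriously, even granting a lamination, a Hausdorff limit of $cu$-tori need not be a single torus --- the intrinsic areas of the leaves could blow up and the limit could be a sublamination with uncountably many leaves --- so the set of $cu$-tori equipped with $d_{H}$ need not be compact. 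Without compactness, the theorem that a positively expansive homeomorphism of a compact metric space has finite domain does not apply, and the finiteness of the family of $cu$-tori (your key intermediate claim, which is strictly stronger than the lemma) is not established.

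The paper's proof sidesteps all of this by working with the backward orbit of a single torus $T$ and never forming a leaf space. By compactness of the hyperspace of \emph{all} compact subsets of $M$ (which costs nothing), there are $N\gg L$ with $d_{H}(f^{-N}(T),f^{-L}(T))<\eps$. Let $U(T)$ be the local stable saturation of $T$, a tubular neighborhood since $T$ is transverse to the stable foliation. Because $f^{-1}$ expands stable segments, $f^{-N}(U(T))$ is a much thicker stable neighborhood of $f^{-N}(T)$ than $f^{-L}(U(T))$ is of $f^{-L}(T)$, and since these two tori are $\eps$-close one gets $f^{-L}(\overline{U(T)})\subset f^{-N}(U(T))$, that is, $f^{N-L}(\overline{U(T)})\subset U(T)$. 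The intersection $\bigcap_{k\ge 0}f^{k(N-L)}(U(T))$ is then a periodic $cu$-torus --- not necessarily $T$ or any iterate of it. So where you try to prove that every $cu$-torus is periodic, the paper only manufactures one periodic torus out of a trapping region, which is all the lemma needs. If you want to salvage your finiteness statement, you would first have to establish the non-branching and bounded-geometry facts above.
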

\begin{proof}
 Let $T$ be a $cu$-torus, and consider the sequence $f^{-n}(T)$. Since the family of all compact subsets of $M$ considered with the Hausdorff metric $d_{H}$ is compact, there is a subsequence $f^{-n_{k}}(T)$ converging to a compact set $K\subset M$. Therefore, for each $\eps>0$ there are arbitrarily large $N>>L>0$ such that $d_{H}(f^{-N}(T), f^{-L}(T))<\eps$. \par
 Since $T$ is transverse to the stable foliation, the union of all local stable leaves of $T$ forms a small tubular neighborhood of $T$, $U(T)$. Since stable leaves grow exponentially under $f^{-1}$, if $N>>L$ as above are large enough, then $f^{-L}(\overline{U(T)})\subset f^{-N}(U(T))$. This implies that 
 $f^{N-L}(\overline{U(T)})\subset U(T)$. 
\begin{exercise}
 Finish the proof by showing that $\cap_{k=0}^{\infty}f^{k(N-L)}(U(T))$ is a periodic $cu$-torus. 
\end{exercise}
\end{proof}
\espc

\begin{corollary}
Suppose that $f:M\rightarrow M$ is a partially hyperbolic diffeomorphism admitting an $sc,cu$ or $su$ torus. If $M$ is connected, then $M$ is homeomorphic to either
\begin{enumerate}
\item A $3$-torus.
\item The mapping torus of $-Id:\mathbb{T}^2\rightarrow\mathbb{T}^2$.
\item The mapping torus of an hyperbolic automorphism of $\mathbb{T}^2$.
\end{enumerate}
\end{corollary}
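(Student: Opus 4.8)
The plan is to assemble the corollary directly from the structural results established above, since all the substantive work has already been done. The key observation is that, regardless of which type of torus $f$ admits, we can always produce an \emph{Anosov torus} in $M$, after which the purely topological classification of Theorem \ref{clasifica3} applies verbatim.

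First I would record that $M$ is irreducible. This is automatic: since $M$ supports a partially hyperbolic diffeomorphism, the irreducibility lemma above (obtained via the absence of Reeb components and spherical leaves in the associated codimension-one foliation, together with Rosenberg's theorem) gives irreducibility for free. Being the phase space of a diffeomorphism, $M$ is a closed manifold, and the hypothesis that $M$ is connected guarantees that we are dealing with a single irreducible 3-manifold, so that Theorem \ref{clasifica3} may be invoked without modification.

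Next I would split into cases according to the type of torus admitted by $f$. If $f$ admits an $su$-torus, then Lemma \ref{su.torus} immediately provides an Anosov torus in $M$. If instead $f$ admits an $sc$- or $cu$-torus, then Lemma \ref{sc.cu.torus} upgrades it to an $f$-periodic $sc$- or $cu$-torus, and Proposition \ref{exAnTor} shows that this periodic torus is an Anosov torus. Thus in every case $M$ admits an Anosov torus.

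Finally, applying Theorem \ref{clasifica3} to the compact, connected, irreducible 3-manifold $M$ supporting an Anosov torus yields that $M$ is homeomorphic to the $3$-torus, the mapping torus of $-\mathrm{Id}:\mathbb{T}^2\to\mathbb{T}^2$, or the mapping torus of a hyperbolic automorphism of $\mathbb{T}^2$, which is exactly the desired conclusion. There is no genuine obstacle here: the corollary is simply a packaging of Lemmas \ref{su.torus} and \ref{sc.cu.torus}, Proposition \ref{exAnTor}, and Theorem \ref{clasifica3}. The only point requiring care is that one must pass through the \emph{periodic} (rather than merely recurrent) torus produced by Lemma \ref{sc.cu.torus}, since Proposition \ref{exAnTor} needs an honest $f$-invariant torus to conclude that the induced action on $\pi_1(T)\cong\mathbb{Z}^2$ is hyperbolic.
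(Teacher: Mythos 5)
Your proposal is correct and follows exactly the route the paper intends: irreducibility from the lemma on partially hyperbolic diffeomorphisms, an Anosov torus from Lemma \ref{su.torus} (in the $su$ case) or from Lemma \ref{sc.cu.torus} together with Proposition \ref{exAnTor} (in the $sc$/$cu$ cases, applied to the iterate fixing the periodic torus), and then Theorem \ref{clasifica3}. Nothing to add.
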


Observe that an $sc$ or $cu$ torus cannot appear on the conservative setting. Indeed, by Lemma \ref{sc.cu.torus} above it would imply the existence of a {\em periodic} $sc$ or $cu$ torus. This 2-torus is, respectively, repelling or attracting, a situation that cannot occur in a conservative setting.  \par
In the following subsection we sketch the proof of Theorem  \ref{clasifica3}. We refer the reader to \cite{AnosovTori} for the complete proof.


\espc

\subsection{Manifolds admitting Anosov tori}

The first step in the proof of Theorem \ref{clasifica3} is to show that Anosov tori are incompressible. 

\begin{definition}
An embedded orientable surface $S\subset M$ is \emph{incompressible} if the homomorphism
induced by the inclusion map $i_{\ast} : \pi_1(S)\rightarrow \pi_1(M)$ is injective.
\end{definition}

Equivalently, $S$ is incompressible if every embedded disk $D^2\subset M$ such that $D^2\cap S=\partial D^2$, is contractible 
in $S$ (see for instance \cite{Hatcher}, page 10). We have\par

\begin{theorem}[Hertz-Hertz-Ures \cite{ParHypDim3}]
Anosov tori are incompressible.
\end{theorem}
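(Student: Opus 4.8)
The plan is to argue by contradiction and to convert compressibility into the statement that $T$ bounds a solid torus, which the hyperbolic dynamics on $T$ then forbids. Throughout I may assume $M$ is orientable (pass to the orientation double cover, to which $T$ and $\phi$ both lift) so that the torus $T$ is two-sided, and I work in the irreducible setting of Theorem \ref{clasifica3}; recall that every manifold carrying a partially hyperbolic diffeomorphism is irreducible.

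First I would suppose $i_*:\pi_1(T)\to\pi_1(M)$ is not injective. By the Loop Theorem there is an embedded disk $D\subset M$ with $D\cap T=\partial D$ and $\partial D=c$ essential in $T$; note $c$ is non-separating on $T$ and $0\ne[c]\in H_1(T)\cong\mathbb{Z}^2$ is primitive. Compressing $T$ along $D$ (cut along $c$ and cap the resulting annulus with two parallel copies of $D$) produces an embedded $2$-sphere $\Sigma\subset M$, which by irreducibility bounds a ball. Reversing the compression exhibits $T$ as the boundary of a regular neighborhood of $\Sigma$ together with a tube, and in either possible position of the tube one reaches the standard structure result: a compressible torus in an orientable irreducible $3$-manifold bounds a solid torus $W$ on one side (in the "tube outside the ball" case $W$ is the ball-plus-handle; in the "tube inside" case $T$ lies in a ball and bounds a solid torus inside it by Alexander's theorem). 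In particular $T$ separates $M$; let $m\in H_1(T)$, $m\ne 0$, be the meridian class of $W$, i.e. the unique primitive class, up to sign, bounding an embedded disk in $W$.

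Now I would bring in the dynamics. Since $\phi(T)=T$ and $T$ separates $M$ into the two components $\mathrm{int}\,W$ and $\mathrm{int}(M\setminus W)$, the homeomorphism $\phi$ permutes these two sides, so $\phi^2$ fixes each and hence $\phi^2(W)=W$. Restricting, $\phi^2|_W$ is a self-homeomorphism of the solid torus $W$; it carries meridian disks to meridian disks, so on $\partial W=T$ it fixes the meridian class up to sign, i.e. $A^2 m=\pm m$, where $A=(\phi|_T)_*\in GL(2,\mathbb{Z})$. Thus $m$ is a rational eigenvector of $A^2$ with eigenvalue $\pm1$. This is impossible, because $A$ is hyperbolic: its eigenvalues are $\lambda,\lambda^{-1}$ with $|\lambda|\ne1$ (quadratic irrationals), so $A^2$ has eigenvalues $\lambda^{\pm2}$ of modulus $\ne1$ and no rational invariant line. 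This contradiction proves $i_*$ injective, i.e. $T$ is incompressible.

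The only substantive input is the topological structure result of the second paragraph; the point requiring care is that irreducibility is exactly what turns the compression sphere into a ball and thereby forces $T$ to bound a solid torus, and that the meridian of a solid torus is canonical up to isotopy, so that it is genuinely preserved (up to sign) by the return map $\phi^2$. Once these are in place, the hyperbolicity of $A$ finishes the argument immediately, since a hyperbolic integral matrix can fix no rational line and has no root-of-unity eigenvalue.
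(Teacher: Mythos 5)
Your overall strategy --- use irreducibility and a compressing disk to extract a distinguished $\phi^2$-invariant primitive class in $H_1(T)$, then contradict hyperbolicity of $(\phi|_T)_*$ --- is the right one, and the final linear-algebra step is correct. The genuine gap is the topological claim you lean on: a compressible torus in an orientable irreducible $3$-manifold need \emph{not} bound a solid torus. The failure is exactly in the ``tube inside the ball'' case that you try to dispatch with Alexander's theorem. If the surgered sphere bounds a ball $B$ containing the tube, then $T$ lies in a ball, and Alexander's theorem (applied after embedding $B$ in $S^3$) only guarantees that \emph{one} of the two complementary regions of $T$ in $S^3$ is a solid torus; that region may be the one containing $S^3\setminus B$, in which case the region of $T$ interior to $B$ is a nontrivial knot exterior and corresponds to no solid torus in $M$. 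Concretely: embed a trefoil exterior $X$ in a ball $B\subset M$ so that $X$ is the inner region (possible: place the trefoil exterior standardly in $S^3$ and delete a small ball from the complementary solid torus). Then $T=\partial X$ is compressible in $M$ (the meridian disk of that complementary solid torus survives), $M$ is irreducible, yet for $M\neq S^3$ neither side of $T$ is a solid torus. So the meridian class $m$ you invoke need not exist, and the step ``$T$ bounds a solid torus $W$'' is not available.

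The gap is reparable, and the repair is essentially the homological device used in the paper's last lemma (``If $\partial N$ contains an Anosov torus, then it contains more than one''). All you actually need is that $T$ separates $M$, which does follow from irreducibility: the surgered sphere is $\mathbb{Z}/2$-homologous to $T$, and in an irreducible manifold every sphere bounds a ball and hence separates, so $T$ separates as well. Write $M=X\cup_T Y$; then $\phi^2(X)=X$ exactly as you argue. Now apply ``half lives, half dies'' (Poincar\'e--Lefschetz duality; Lemma 3.5 of \cite{Hatcher}): for the compact orientable $3$-manifold $X$ with $\partial X=T$, the kernel of $H_1(T;\mathbb{Q})\to H_1(X;\mathbb{Q})$ has rank exactly one. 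This line is natural under homeomorphisms of the pair $(X,T)$, hence invariant under $(\phi^2|_T)_*$; but a hyperbolic element of $GL(2,\mathbb{Z})$, and therefore its square, has irrational eigenvalues and so preserves no rational line. This yields the contradiction uniformly, with no case analysis on where the ball sits and no appeal to a solid-torus structure; your solid-torus meridian argument is just the special case $X=W$. With this substitution your proof is correct.
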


Now, let us assume, as in the hypotheses of Theorem \ref{clasifica3}, that the irreducible 3-manifold $M$ admits an Anosov torus $T$.  Since $T$ is incompressible, we can ``cut'' $M$ along $T$ and obtain a manifold $N$ having  incompressible $2$-tori as boundary components. Theorem \ref{clasifica3} then follows from the following theorem:

\espc

\begin{theorem}[Hertz-Hertz-Ures \cite{AnosovTori}]\label{reduced}
Let $N$ be a compact orientable irreducible $3$-manifold with nonempty
boundary such that all the boundary components are incompressible $2$-tori. If $N$ admits an Anosov torus, then $N \approx  \mathbb{T}^2\times [0,1]$.
\end{theorem}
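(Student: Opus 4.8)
The plan is to pin down the homeomorphism type of $N$ by locating how the Anosov torus $T$ and the homeomorphism $\phi$ sit inside the canonical decomposition of $N$, and ultimately to reduce everything to a computation of $\pi_1(N)$. First I would record that, since $T$ is incompressible (by the preceding theorem) and $N$ is irreducible with nonempty boundary, $N$ is Haken. The target is $N\approx\T^2\times[0,1]$, so I would reformulate the goal: a compact orientable irreducible $3$-manifold with nonempty incompressible toral boundary and $\pi_1\cong\Z^2$ is necessarily $\T^2\times[0,1]$ (it is aspherical, homotopy equivalent to $\T^2$, and Waldhausen rigidity identifies it with the product). Thus it suffices to show that the injection $\pi_1(T)\cong\Z^2\hookrightarrow\pi_1(N)$ is onto. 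I would also flag that the orientable twisted $I$-bundle over the Klein bottle, the only other small candidate with a single incompressible torus boundary and virtually-$\Z^2$ fundamental group, must be excluded; this follows because the homological action of its self-homeomorphisms on the peripheral torus is of finite order and so cannot be hyperbolic.

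The heart of the argument is to localize the hyperbolic dynamics. Using the rigidity theory of Haken manifolds, namely Johannson's classification of self-homeomorphisms, after isotopy and passing to a power $\phi$ preserves the characteristic submanifold $\Sigma$ and is isotopic to the identity on the simple (atoroidal, anannular) complement; equivalently one may invoke geometrization together with Mostow rigidity on the hyperbolic pieces. Either way, the infinite-order hyperbolic action of $\phi$ cannot be supported on a simple piece: there a suitable power of $\phi$ is isotopic to a finite-order isometry, whose action on any peripheral $\Z^2$ is of finite order, hence never hyperbolic. Consequently, after isotoping $T$ into normal position with respect to the canonical tori, $T$ must lie in a Seifert-fibered component $\Sigma_0$ of $\Sigma$, with $\phi(\Sigma_0)=\Sigma_0$.

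Next I would determine the position of $T$ inside $\Sigma_0$. If $T$ were \emph{vertical}, a union of fibers, then the fiber class would be a $\phi$-invariant \emph{rational} line in $H_1(T)\cong\Z^2$, which is impossible for a hyperbolic matrix, whose eigendirections are irrational. Hence $T$ is \emph{horizontal}, transverse to the fibers. Since $\Sigma_0$ has nonempty torus boundary, a closed horizontal torus is a fiber of a fibration of $\Sigma_0$ over $[0,1]$, forcing $\Sigma_0\approx\T^2\times[0,1]$ with $T$ a level torus; the hyperbolic automorphism $\phi|T$ is then perfectly consistent with this product structure, and this is precisely the model we are aiming for.

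Finally, the remaining and most delicate step is the \emph{global} one: to promote this local product region to all of $N$, that is, to rule out further topology glued along $\partial\Sigma_0=\T^2\times\{0,1\}$. Here I would exploit that both boundary tori of $\Sigma_0$ are incompressible, that $\phi$ is equivariant and (after a power) essentially the identity outside $\Sigma_0$, and irreducibility, to argue that each boundary torus of $\Sigma_0$ is already a boundary component of $N$: any piece attached along it would either propagate the product structure across it, contradicting the minimality of the JSJ decomposition, or be forced to be another collar. This yields $\pi_1(N)\cong\Z^2$ and hence $N\approx\T^2\times[0,1]$. I expect this last exhaustion argument, controlling how the Seifert product piece interfaces with the rest of $N$ and excluding extra toral summands, to be the main obstacle, since it is exactly the point where the local normal-form information about $T$ and $\phi$ must be upgraded to a statement about the global homeomorphism type.
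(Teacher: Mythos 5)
Your overall architecture (localize $T$ in a Seifert piece of the JSJ decomposition, identify that piece as $\T^2\times[0,1]$, then globalize) parallels the paper's, and your exclusion of the simple pieces via Johannson/Mostow is a legitimate, if higher-powered, substitute for the paper's direct construction of an essential properly embedded annulus contradicting acylindricity. But there is a genuine gap at the key step where you place $T$ inside the Seifert piece $\Sigma_0$. Your vertical/horizontal dichotomy fails on both branches. For the vertical branch: the fiber class of a vertical torus is $\phi$-invariant only if the Seifert fibration of $\Sigma_0$ is unique up to isotopy, and the exceptions to uniqueness among bounded orientable Seifert manifolds are precisely the solid torus, the twisted $I$-bundle over the Klein bottle, and $\T^2\times[0,1]$ --- i.e.\ exactly the target manifold. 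In $\T^2\times[0,1]$ a level torus \emph{is} vertical for every product fibration, and a hyperbolic $\phi$ simply carries one fibration to a non-isotopic one; so your argument would ``rule out'' the true model. For the horizontal branch: a closed horizontal surface in a Seifert fibered space is a finite orbifold cover of the base, so it cannot exist when the base (equivalently $\Sigma_0$) has nonempty boundary; the object you invoke to force $\Sigma_0\approx\T^2\times[0,1]$ does not exist, and you are conflating horizontality with respect to a circle fibration with being a fiber of an $I$-bundle structure.

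The correct way to close this gap is the paper's: since $\phi|T$ is hyperbolic it preserves no rational slope, hence $\phi$ carries the Seifert fibration to one that is non-isotopic on $T$; by the classification of orientable manifolds admitting two Seifert fibrations non-isotopic on the boundary (Hatcher), $\Sigma_0$ must be a solid torus, a twisted $I$-bundle over the Klein bottle, or $\T^2\times[0,1]$; the first two are then excluded because they have connected toral boundary, and the half-lives-half-dies computation $\operatorname{rank}\ker\bigl(H_1(\partial N)\to H_1(N)\bigr)=\tfrac12\operatorname{rank}H_1(\partial N)$ produces a one-dimensional subspace of $H_1(T)$ invariant under every self-homeomorphism, forcing eigenvalue $1$ and contradicting hyperbolicity. (This is also a cleaner version of your ad hoc exclusion of the Klein-bottle $I$-bundle.) Your final globalization step is stated too vaguely to check, but the underlying idea --- minimality of the JSJ collection forbids a $\T^2\times[0,1]$ piece glued to further pieces --- is sound once the identification of $\Sigma_0$ has actually been established.
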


In order to prove this, we make use of the Jaco-Shalen-Johannson decomposition, or JSJ-decomposition, which states that any manifold satisfying the hypotheses of Theorem \ref{reduced} can be cut by a (unique) family of incompressible tori, so that the remaining pieces have certain characteristics: they are either {\em Seifert} or else {\em atoroidal} and {\em acylindrical}. We provide these definitions below. See also Theorem \ref{jsj.decomposition}.\par
The proof of Theorem \ref{reduced} consists in showing, on one hand, that any Seifert manifold having incompressible tori as boundary components is $\T^{2}\times[0,1]$, and, on the other hand, that any manifold satisfying the hypotheses of Theorem \ref{reduced} that is atoroidal has an annulus which is properly embedded and is not isotopic to the boundary of the manifold. This last statement contradicts that the manifold is {\em acylindrical}, and shows that every component in the JSJ-decomposition must be Seifert, which proves the theorem.\par 

Any compact 3-manifold, with or without boundary, supporting a foliation by circles is a {\em Seifert manifold} (see \cite{PointPerHomeo}). This was not the original definition, a more descriptive one is the following:

\begin{definition}
A \emph{Seifert manifold} is a 3-manifold that admits a decomposition into disjoint circles,
the fibers, such that each fiber has a neighborhood diffeomorphic, preserving
fibers, to either
\begin{enumerate}
\item A solid torus foliated by horizontal circles.
\item A solid torus foliated by the fibration obtained by the identification $D^2\times [0,1]/x\sim R_{p/q}(x)$, where $R_{p/q}:D^2\rightarrow D^2$\ denotes the rotation of angle
$p/q$, and $p,q$ are coprime.
\end{enumerate}
If the manifold has boundary, its connected components are required to be tori, which are also fibered by circles.  
\end{definition}

The circles of the first type are the \emph{generic fibers}, while the ones of the second type are the \emph{singular fibers}. For an introduction to Seifert spaces see \cite{BrinSeifert}.

\espc

\begin{definition}
Let $N$ be a 3-manifold with boundary.
\begin{enumerate}
\item $N$ is \emph{atoroidal} if every incompressible torus is
$\partial$-\emph{parallel}, that is, isotopic to a subsurface of $\partial N$.
\item $N$ is \emph{acylindrical} if every incompressible annulus $A$ that is properly embedded
(i.e., $\partial A \subset \partial N$) is $\partial$-parallel by an isotopy fixing $\partial A$.
\end{enumerate}
\end{definition}

As we mentioned above, any irreducible orientable 3-manifold having incompressible tori as boundary components admits a natural decomposition into Seifert
pieces on one side, and atoroidal and acylindrical components on the other.

\espc

\begin{theorem}[JSJ-decomposition - \cite{Hatcher}]\label{jsj.decomposition}
If $N$ is an irreducible, orientable
$3$-manifold, having incompressible tori as boundary components, then there exists a finite collection $\mathcal{T}$ of disjoint incompressible tori
 such that for each component $N_i$ of $N\setminus \bigcup \mathcal{T}$, either
\begin{enumerate}
\item $N_i$ is a Seifert manifold, or
\item $N_i$ is both atoroidal and acylindrical.
\end{enumerate}

Any minimal such collection is unique up to isotopy. This means that if $\mathcal{T}$ is a
collection as described above, it contains a minimal subcollection $m(\mathcal{T})$ satisfying
the same claim. All collections $m(\mathcal{T})$ are isotopic.
\end{theorem}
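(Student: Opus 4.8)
The plan is to separate the statement into an \emph{existence} part --- producing some finite family $\mathcal{T}$ realizing the Seifert/atoroidal dichotomy --- and a \emph{uniqueness} part --- showing the minimal such family is canonical up to isotopy --- since these require genuinely different tools.

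For existence, the first thing I would invoke is Kneser--Haken finiteness: in a Haken manifold (which $N$ is, being irreducible with non-empty incompressible boundary) there is an a priori bound on the number of disjoint, pairwise non-parallel, two-sided incompressible surfaces of bounded complexity. In particular one can choose a \emph{maximal} finite collection $\mathcal{T}$ of disjoint, pairwise non-parallel essential (incompressible and not $\partial$-parallel) tori. By maximality, cutting along $\mathcal{T}$ yields pieces $N_i$ in which every incompressible torus is $\partial$-parallel; that is, each $N_i$ is atoroidal. It then remains to sort the atoroidal pieces: if $N_i$ admits no essential (properly embedded, not $\partial$-parallel) annulus it is acylindrical, landing in case (2); whereas if $N_i$ does contain such an annulus, the Annulus Theorem together with the characteristic-pair description of essential annuli forces $N_i$ to be Seifert fibered, landing in case (1). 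This establishes the dichotomy, but note that the maximal collection is \emph{not} canonical: inside a Seifert piece there are infinitely many vertical tori, any of which could have been chosen, so this construction over-cuts and says nothing about minimality.

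To obtain the canonical family, and hence uniqueness, I would pass to the Jaco--Shalen--Johannson \emph{characteristic submanifold} $\Sigma\subset N$: the (unique up to isotopy) maximal submanifold that is a disjoint union of Seifert-fibered spaces and $I$-bundles, whose frontier in $N$ is a union of essential annuli and tori, and which satisfies the \emph{enclosing property} that every essential annulus or torus of $N$ can be isotoped into $\Sigma$. Granting the existence and uniqueness of $\Sigma$, the minimal JSJ family $m(\mathcal{T})$ is recovered as the set of torus components of the frontier of $\Sigma$ that are not $\partial$-parallel; the Seifert pieces of the decomposition are the Seifert components of $\Sigma$, while the complementary regions are exactly the atoroidal, acylindrical (``simple'') pieces. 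Uniqueness of the decomposition up to isotopy is then inherited directly from uniqueness of $\Sigma$.

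The main obstacle is precisely the construction and the \emph{uniqueness} of the characteristic submanifold: this is the technical heart of the theory and is where essentially all the difficulty lies. One must prove the enclosing property --- that arbitrary essential annuli and tori, after isotopy, sit inside $\Sigma$ --- which requires delicate cut-and-paste together with innermost-curve and outermost-arc arguments on intersections, plus Haken-type normalizations to guarantee that the process terminates. A further genuine subtlety is that Seifert fiberings are \emph{not} always unique up to isotopy: the exceptional cases ($\T^2\times[0,1]$, the twisted $I$-bundle over the Klein bottle, and a short list of small Seifert spaces) must be treated by hand, since non-uniqueness of the fibering is exactly what permits adjacent Seifert pieces to be amalgamated and thus dictates whether a candidate torus belongs to the minimal family. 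Controlling these exceptional pieces, and showing that no essential torus interior to $\Sigma$ need be retained, is what upgrades the over-cutting maximal collection of the existence step to the canonical minimal collection $m(\mathcal{T})$. For the complete argument I would follow \cite{Hatcher}.
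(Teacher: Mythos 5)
The paper does not actually prove this statement: it is imported verbatim from \cite{Hatcher} as a classical black box, so there is no internal argument to measure yours against. Judged on its own terms, your outline is the standard one and is essentially sound: Kneser--Haken finiteness gives a maximal finite system of disjoint, pairwise non-parallel essential tori with atoroidal complementary pieces, and the observation that an atoroidal piece of a Haken manifold with incompressible torus boundary which carries an essential annulus must be Seifert fibered (the exceptional cases $\T^2\times[0,1]$ and the twisted $I$-bundle over the Klein bottle being Seifert anyway) sorts the pieces into the two allowed types. You also correctly locate the real difficulty in the enclosing property and uniqueness of the characteristic submanifold, and in the non-uniqueness of Seifert fiberings on the short list of exceptional pieces. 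Two points deserve more care than your sketch gives them. First, the passage from the characteristic submanifold $\Sigma$ to the minimal family $m(\mathcal{T})$ is not simply ``the non-$\partial$-parallel torus components of the frontier of $\Sigma$'': in the annulus-and-torus version the frontier of $\Sigma$ contains annuli, some Seifert components of $\Sigma$ are mere regular neighborhoods of essential annuli or tori (contributing one torus, two tori, or none to $m(\mathcal{T})$ depending on how they sit), so one should either use the torus-only characteristic Seifert piece or spell out this bookkeeping. Second, the theorem asserts that \emph{every} valid collection $\mathcal{T}$ contains a minimal subcollection isotopic to the canonical one; deducing this from uniqueness of $\Sigma$ requires running the enclosing property on an arbitrary competing system (isotope its tori into $\Sigma$, then to vertical position, then onto frontier components), which is exactly the intersection-minimizing argument \cite{Hatcher} carries out directly without invoking the full characteristic submanifold. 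Neither issue is a fatal gap for a result you are explicitly quoting from the literature, but as written the uniqueness half of your argument is a pointer to the hard theorem rather than a proof of it.
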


Any minimal family of incompressible tori as described above is called a \emph{JSJ-decomposition}
of $N$. Note that if $N$ is either atoroidal and acylindrical or Seifert, then $\mathcal{T} =\emptyset$. 

\espc

Let us sketch how Theorem \ref{reduced} is proved for the case of Seifert manifolds; the other case is more delicate, and we refer the reader to \cite{AnosovTori} for the complete proof. \par

Assume $N$ is a Siefert manifold, so it admits a foliation by circles, which is called a Seifert fibration. We lose no generality in assuming that one of the incompressible tori of the boundary of $N$ is an Anosov torus $T$. By the definition of Anosov torus, there exists a diffeomorphism $\phi$ on $N$ such that it is a linear hyperbolic automorphism of $T$. The image of the Seifert fibration by $\phi$ is another Seifert fibration, which is non-isotopic to the original one on $T$. \par
But orientable manifolds admitting two Seifert fibrations that are non-isotopic on its boundary are completely classified: 

\begin{lemma}\cite{Hatcher}
If $N$ admits two Seifert fibrations that are non-isotopic on $\partial N$, then $N$ is homeomorphic to either:
\begin{enumerate}
\item the solid torus,
\item a twisted $I$ bundle over the Klein bottle, or
\item the torus cross the interval.
\end{enumerate}
\end{lemma}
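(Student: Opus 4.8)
The plan is to reduce the statement to the classification of Seifert fibered spaces admitting non-unique Seifert fibrations. The crucial input is the rigidity of Seifert fibrations over hyperbolic base orbifolds: if the base $2$-orbifold $B$ of a Seifert fibration of the orientable irreducible manifold $N$ has negative orbifold Euler characteristic $\chi^{orb}(B)$, then the fibration is unique up to isotopy. First I would establish this. The point is that when $\chi^{orb}(B)<0$ the class of a regular fiber generates the center of $\pi_1(N)$, so the fiber subgroup is algebraically canonical; combined with Waldhausen's uniqueness theorem for Haken manifolds, any two Seifert fibrations inducing the same fiber subgroup are isotopic. Consequently, if $N$ admits two Seifert fibrations that are non-isotopic on $\partial N$, the base orbifold must satisfy $\chi^{orb}(B)\geq 0$.

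Second, I would enumerate the orientable Seifert fibered spaces whose base orbifold has $\chi^{orb}\geq 0$ and whose boundary is a nonempty union of tori. Writing $\chi^{orb}(B)=\chi(B)-\sum_i (1-1/q_i)$ over the cone points of order $q_i\geq 2$, and using that $B$ has nonempty boundary, the inequality $\sum_i(1-1/q_i)\leq \chi(B)$ severely restricts both the surface $B$ and the cone data. The only surviving possibilities are: the disk $D^2$ and the disk with one cone point $D^2(q)$, each yielding the solid torus $S^1\times D^2$; the annulus, yielding $\T^2\times[0,1]$; and the M\"obius band together with the disk with two order-two cone points $D^2(2,2)$, each yielding the (orientable) twisted $I$-bundle over the Klein bottle. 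For instance, two cone points force $1/q_1+1/q_2\geq 1$, whose only solution is $q_1=q_2=2$, and three or more cone points are impossible since $\sum_i(1-1/q_i)\geq 3/2>1=\chi(D^2)$.

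Third, I would verify in each of the three resulting total spaces that genuinely distinct fibrations exist and are non-isotopic on the boundary, so that the hypothesis is actually realized. For the solid torus, the product fibration over $D^2$ and the fibrations over $D^2(q)$ with a single exceptional fiber induce different fiber slopes on $\partial(S^1\times D^2)$. For $\T^2\times[0,1]$ every primitive slope in the $\T^2$ factor determines a fibration, and distinct slopes differ on the boundary. For the twisted $I$-bundle over the Klein bottle, the fibration over the M\"obius band and the one over $D^2(2,2)$ differ on its single torus boundary component. Since these three manifolds exhaust the list with $\chi^{orb}\geq 0$ and toral boundary, while every other orientable Seifert fibered space with boundary has a fibration unique up to isotopy by the first step, the lemma follows.

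The main obstacle I anticipate is the first step: justifying that a regular fiber is canonically determined by $\pi_1(N)$ precisely when the base is hyperbolic, and invoking Waldhausen uniqueness with the correct hypotheses. One must also keep track of the subtlety that being \emph{non-isotopic on the boundary} is a strictly stronger failure of uniqueness than being merely non-isotopic, and handle the borderline small-orbifold cases where the center of $\pi_1(N)$ is larger than the fiber subgroup (as happens for the manifolds covered by $\T^2\times[0,1]$) -- this is exactly where the three exceptional manifolds arise, so treating these cases cleanly is where the argument demands the most care.
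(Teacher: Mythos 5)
The paper offers no proof of this lemma: it is quoted directly from Hatcher's notes on basic 3-manifold topology, so there is no internal argument to compare against. Your outline is a correct and standard route to that theorem, but it is not the route Hatcher takes. Hatcher argues geometrically: in a compact orientable irreducible Seifert fibered space with boundary, essential annuli and tori can be isotoped to be vertical or horizontal, and the only manifolds carrying a horizontal surface of nonnegative Euler characteristic (equivalently, admitting a second, non-isotopic fibration) are the three listed. Your route instead passes through algebraic rigidity of the fiber: for a hyperbolic base orbifold the fiber subgroup is canonical in $\pi_1(N)$, and Waldhausen-type rigidity for Haken manifolds then forces any two fibrations with the same fiber subgroup to be isotopic, so non-uniqueness pushes you to $\chi^{\mathrm{orb}}\geq 0$; your enumeration of base orbifolds with nonempty boundary and $\chi^{\mathrm{orb}}\geq 0$ ($D^2$, $D^2(q)$, $D^2(2,2)$, the annulus, the M\"obius band) is complete and yields exactly the solid torus, $\T^{2}\times[0,1]$, and the orientable twisted $I$-bundle over the Klein bottle. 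Two small points of care: for a nonorientable base the fiber class is central only up to sign, so the correct canonical object is the (unique maximal normal) cyclic fiber subgroup rather than the center --- you hedge correctly here --- and the passage from ``same fiber subgroup'' to ``isotopic fibrations'' requires the theorem that a homeomorphism of a Haken Seifert fibered space preserving the fiber subgroup can be isotoped to be fiber-preserving; this is where the real content lives, and you rightly flag it as the main obstacle. With that input supplied, your argument is complete, and it buys a cleaner conceptual explanation of \emph{why} exactly these three manifolds occur (they are the bounded Seifert spaces with Euclidean base orbifold), at the cost of heavier machinery than Hatcher's surface-isotopy argument.
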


In the first two cases $\partial N$ consists of a single torus, while in last one it consist of two disjoint tori. To finish the proof, it suffices then to discard the first two cases:

\espc

\begin{lemma}
If $\partial N$ contains an Anosov torus, then it contains more than one.
\end{lemma}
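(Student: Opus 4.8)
The plan is to rule out the two single--boundary possibilities in the classification just quoted, namely the solid torus and the twisted $I$-bundle over the Klein bottle, leaving only $\mathbb{T}^2\times[0,1]$, whose boundary has two components. Recall we already have a diffeomorphism $\phi:N\to N$ with $\phi(T)=T$ and $\phi|_T$ a linear hyperbolic automorphism, and that $\phi$ carries the Seifert fibration to one that is non-isotopic to it on $T$; hence the preceding lemma of Hatcher applies and $N$ is one of the three listed manifolds. It therefore suffices to show that neither single--boundary case can carry an Anosov torus on its boundary.

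First I would discard the solid torus immediately: its boundary torus is compressible (the meridian bounds a disk), so $\pi_1(\partial N)\cong\mathbb{Z}^2$ cannot inject into $\pi_1(N)\cong\mathbb{Z}$. This already contradicts the standing hypothesis of Theorem~\ref{reduced} that every boundary component is incompressible, so a solid torus cannot even occur here.

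The real work is to discard the twisted $I$-bundle $K\widetilde{\times}I$ over the Klein bottle $K$. The idea is to exhibit a \emph{characteristic} rational direction on the boundary torus that every self-homeomorphism must preserve, which is incompatible with a hyperbolic action. Concretely, $K\widetilde{\times}I$ deformation retracts onto $K$, so $\pi_1(N)\cong\pi_1(K)=\langle a,b\mid bab^{-1}=a^{-1}\rangle$, and the boundary $T$ is the orientation double cover of $K$, so $\pi_1(T)$ is the index--two subgroup $\langle a,b^2\rangle\cong\mathbb{Z}^2$. The key computation is that the center of $\pi_1(K)$ is exactly $\langle b^2\rangle\cong\mathbb{Z}$: writing a general element as $a^m b^n$ and imposing commutation with $a$ and with $b$ forces $n$ even and $m=0$. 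Being the center, $\langle b^2\rangle$ is characteristic, so (after isotoping $\phi$ to fix a basepoint on $T$) the automorphism $\phi_*$ of $\pi_1(N)$ must send the generator $b^2$ to a generator of the center, i.e.\ $\phi_*(b^2)=b^{\pm2}$. Passing to $H_1(T)\cong\mathbb{Z}^2$ with basis $\{[a],[b^2]\}$, this says $\phi_*|_{H_1(T)}$ fixes the rational direction $[b^2]$ up to sign, so it has $\pm1$ as an eigenvalue and hence no eigenvalue off the unit circle. But a hyperbolic (Anosov) toral automorphism has eigenvalues off the unit circle and no rational eigendirection; so $\phi|_T$ cannot be a hyperbolic automorphism, contradicting that $T$ is an Anosov torus.

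With both single--boundary cases excluded, $N\approx\mathbb{T}^2\times[0,1]$, whose boundary consists of two tori, so $\partial N$ contains more than one component, as claimed. The hard part is precisely the Klein--bottle case, and everything hinges on producing a one-dimensional direction on $T$ that is invariant (up to sign) under all self-homeomorphisms. I expect the center computation above to be the cleanest route; a purely geometric alternative — arguing that the Seifert fiber class on $T$ can be carried only among finitely many isotopy classes, while a hyperbolic automorphism has infinite orbits on primitive classes — should also work but is more delicate to make precise, since $K\widetilde{\times}I$ genuinely carries two non-isotopic Seifert fibrations.
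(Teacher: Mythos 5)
Your proof is correct, but it takes a genuinely different route from the paper's. The paper does not split into cases at all: it invokes the ``half lives, half dies'' lemma (Lemma 3.5 in \cite{Hatcher}), which says that the kernel of the inclusion-induced map $H_1(\partial N)\to H_1(N)$ has rank equal to half the rank of $H_1(\partial N)$. If $\partial N$ were a single torus $T$, that kernel would be a rank-one subgroup of $H_1(T)\cong\mathbb{Z}^2$, and since it is defined naturally from the inclusion it is preserved by $\phi_*$ for \emph{any} diffeomorphism $\phi$ of $N$; hence $\phi_*|_{H_1(T)}$ has an eigenvalue $\pm 1$ and cannot be hyperbolic. That argument is uniform in $N$: it needs neither the Seifert structure nor the classification into the three model manifolds, only that $N$ is a compact orientable $3$-manifold with connected torus boundary. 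Your argument instead leans on the preceding classification lemma and treats the two single-boundary models separately: the solid torus is excluded by incompressibility (correct, and indeed vacuous under the standing hypotheses of Theorem \ref{reduced}), and the twisted $I$-bundle over the Klein bottle is excluded by noting that the center $\langle b^2\rangle$ of $\langle a,b\mid bab^{-1}=a^{-1}\rangle$ is characteristic and lies in the image of $\pi_1(T)$, again producing a $\pm 1$-eigendirection; your center computation and the reduction to $H_1(T)$ are both sound. In the end both proofs manufacture a rational line in $H_1(T)$ invariant under every self-diffeomorphism of $N$ (amusingly, in the Klein-bottle case the paper's line is spanned by $[a]$ and yours by $[b^2]$). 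Yours is more hands-on and elementary at the cost of a case split and dependence on the classification; the paper's gets the stronger, classification-free statement from a single duality fact.
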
 

\begin{proof}
Assume that $\partial N$ is a torus $T$, and consider the inclusion map $i: H_1(\partial N)\rightarrow H_1(N)$. Let $\ker{i}$ be the
kernel of the map. Then by Lemma 3.5 in \cite{Hatcher}, we have 
$$
rank(\ker{i})=\frac1{2}rank(H_1(\partial N))
$$

where $rank$ denotes the number of $\mathbb{Z}$ summands in a direct sum splitting into cyclic groups. If $\partial \tilde M=T$, then $\frac1{2}rank(H_1(T))=1$, and hence $K= 
\ker{i}$ is a one-dimensional subspace of $H_1(T)$. We have then  that $f_{\ast}(K) = K$,
where $f_{\ast}: H_1(T)\rightarrow H_1(T)$ is the isomorphism induced by any diffeomorphism
$f : N\rightarrow N$. This implies that $f_{\ast}$ has $1$ as an eigenvalue. Hence, $f$ cannot
be isotopic to a hyperbolic automorphism of $T$ . This implies that $T$ cannot be
an Anosov torus.
\end{proof}


\bibliographystyle{alpha}
\bibliography{biblio}

\begin{thebibliography}{HHTU11}

\bibitem[ACW14]{acw}
Artur Avila, Sylvain Crovisier, and Amie Wilkinson.
\newblock Diffeomorphisms with positive metric entropy.
\newblock {\em arXiv preprint arXiv:1408.4252}, 2014.

\bibitem[Ano67]{AnosovThesis}
D.~Anosov.
\newblock Geodesic flows on riemann manifolds with negative curvature.
\newblock {\em Proc. Steklov Inst.}, 90, 1967.

\bibitem[AS67]{AS67}
D.~V. Anosov and Ya.~G. Sinai.
\newblock Certain smooth ergodic systems.
\newblock {\em Russian Math. Surveys}, 22:103--167, 1967.

\bibitem[Bar98]{barbot98}
Thierry Barbot.
\newblock Generalizations of theÊbonatti-langevin example of anosov flow and
  their classification up to topological equivalence.
\newblock {\em Comm. Anal. Geom}, 6(4):749--798, 1998.

\bibitem[BBI04]{BBI}
M.~Brin, D.~Burago, and S.~Ivanov.
\newblock On partially hyperbolic diffeomorphisms on $3$-manifolds with
  commutative fundamental group.
\newblock In {\em Advances in {D}ynamical {S}ystems}, pages 307--312. Cambridge
  Univ. Press, 2004.

\bibitem[BBI09]{brinburagoivanov2009}
M.~Brin, D.~Burago, and S.~Ivanov.
\newblock Dynamical coherence of partially hyperbolic diffeomorphisms of the
  3-torus.
\newblock {\em J. Mod. Dyn.}, 3:1--11, 2009.

\bibitem[BDP02]{BDP02}
K.~Burns, D.~Dolgopyat, and Ya. Pesin.
\newblock Partial hyperbolicity, {L}yapunov exponents and stable ergodicity.
\newblock {\em J. Statist. Phys.}, 108:927--942, 2002.
\newblock Dedicated to David Ruelle and Yasha Sinai on the occasion of their
  65th birthdays.

\bibitem[BGP15]{BGP}
Christian Bonatti, Andrey Gogolev, and Rafael Potrie.
\newblock Anomalous partially hyperbolic diffeomorphisms ii: stably ergodic
  examples.
\newblock {\em arXiv:1506.07804}, 2015.

\bibitem[BI08]{BI}
D.~Burago and S.~Ivanov.
\newblock Partially hyperbolic diffeomorphisms of 3-manifolds with abelian
  fundamental groups.
\newblock {\em J. Mod. Dyn.}, 2:541--580, 2008.

\bibitem[BL94]{BL}
Christian Bonatti and R{\'e}mi Langevin.
\newblock Un exemple de flot d'{A}nosov transitif transverse \`a un tore et non
  conjugu\'e \`a une suspension.
\newblock {\em Ergodic Theory Dynam. Systems}, 14(4):633--643, 1994.

\bibitem[BP74]{brinpesin1974}
M.~Brin and Y.~Pesin.
\newblock Partially hyperbolic dynamical systems.
\newblock {\em Izv. Akad. Nauk SSSR Ser. Mat.}, 38:170--212, 1974.

\bibitem[BPP14]{BPP}
C.~Bonatti, K.~Parwani, and R.~Potrie.
\newblock Anomalous partially hyperbolic diffeomorphisms i: dynamically
  coherent examples.
\newblock {\em arXiv:1411.1221v1}, 2014.

\bibitem[Bri93]{BrinSeifert}
M.~Brin.
\newblock {\em Seifert Fibered Spaces}.
\newblock arXiv:0711.1346, 1993.

\bibitem[Bru93]{brunella}
M.~Brunella.
\newblock Expansive flows on seifert manifolds and on torus bundles.
\newblock {\em Bull. Braz. Math. Soc.}, 241:89--104, 1993.

\bibitem[BW05a]{Tranph}
C.~Bonatti and A.~Wilkinson.
\newblock Transitive partially hyperbolic diffeomorphisms on 3-manifolds.
\newblock {\em Topology}, 44(3):475--508, 2005.

\bibitem[BW05b]{bwpreprint}
Keith Burns and Amie Wilkinson.
\newblock Better center bunching.
\newblock {\em Preprint}, 2005.

\bibitem[BW08]{burnswilkinson2008}
Keith Burns and Amie Wilkinson.
\newblock Dynamical coherence and center bunching.
\newblock {\em Discrete Contin. Dyn. Syst.}, 22(1-2):89--100, 2008.

\bibitem[BW10]{burnswilkinson}
K.~Burns and A.~Wilkinson.
\newblock On the ergodicity of partially hyperbolic systems.
\newblock {\em Annals of Mathematics}, 171:451--489, 2010.

\bibitem[Did03]{Di03}
Ph. Didier.
\newblock Stability of accessibility.
\newblock {\em Ergodic Theory Dynam. Systems}, 23(6):1717--1731, 2003.

\bibitem[DW03]{DW03}
D.~Dolgopyat and A.~Wilkinson.
\newblock Stable accessibility is {$C^1$} dense.
\newblock {\em Ast\'erisque}, 287:33--60, 2003.

\bibitem[Eps81]{PointPerHomeo}
D.~B.~A. Epstein.
\newblock Pointwise periodic homeomorphisms.
\newblock {\em Proceedings of the London Mathematical Society},
  s3-42(3):415--460, 1981.

\bibitem[Fen94]{fenley94}
Sergio Fenley.
\newblock Anosov flows in 3-manifolds.
\newblock {\em Annals of Mathematics}, 139(1):79--115, 1994.

\bibitem[Fri83]{fried83}
David Fried.
\newblock Transitive anosov flows and pseudo-anosov maps.
\newblock {\em Topology}, 22(3):299--303, 1983.

\bibitem[FW80a]{FW80}
J.~Franks and R.~Williams.
\newblock Anomalous {A}nosov flows.
\newblock In {\em Global theory of dynamical systems ({P}roc. {I}nternat.
  {C}onf., {N}orthwestern {U}niv., {E}vanston, {I}ll., 1979)}, volume 819 of
  {\em Lecture Notes in Math.}, pages 158--174. Springer, 1980.

\bibitem[FW80b]{FW}
John Franks and Bob Williams.
\newblock Anomalous {A}nosov flows.
\newblock In {\em Global theory of dynamical systems ({P}roc. {I}nternat.
  {C}onf., {N}orthwestern {U}niv., {E}vanston, {I}ll., 1979)}, volume 819 of
  {\em Lecture Notes in Math.}, pages 158--174. Springer, Berlin, 1980.

\bibitem[GPS94]{GPS}
M.~Grayson, C.~Pugh, and M.~Shub.
\newblock Stably ergodic diffeomorphisms.
\newblock {\em Ann. of Math.}, 140:295--329, 1994.

\bibitem[Hae62]{Hae62}
A.~Haefliger.
\newblock Vari\'et\'es feuillet\'ees.
\newblock In {\em Topologia {D}ifferenziale ({C}entro {I}nternaz. {M}at.
  {E}stivo, {$1\deg$} {C}iclo, {U}rbino, 1962), {L}ezione 2}, page~46. Edizioni
  Cremonese, Rome, 1962.

\bibitem[Ham13]{hammerlindl}
A.~Hammerlindl.
\newblock Leaf conjugacies on the torus.
\newblock {\em Ergodic Theory and Dynamical Systems}, 3(33):896--933, 2013.

\bibitem[Hat07]{Hatcher}
A.~Hatcher.
\newblock {\em Notes on Basic 3-Manifold Topology}.
\newblock www.math.cornell.edu/~hatcher/, 2007.

\bibitem[HHTU11]{hhtu}
F.~Rodriguez Hertz, M.~A.~Rodriguez Hertz, A.~Tahzibi, and R.~Ures.
\newblock New criteria for ergodicity and nonuniform hyperbolicity.
\newblock {\em Duke Math. J.}, 160(3):599--629, 2011.

\bibitem[HHU08a]{ParHypDim3}
F.~Rodriguez Hertz, M.~A.~Rodriguez Hertz, and R.~Ures.
\newblock Partial hyperbolicity and ergodicity in dimension three.
\newblock {\em J. Mod. Dyn.}, 2:187--208, 2008.

\bibitem[HHU08b]{HHU2008inv}
F.~Rodriguez Hertz, M.~Rodriguez Hertz, and R.~Ures.
\newblock Accessibility and stable ergodicity for partially hyperbolic
  diffeomorphisms with 1d-center bundle.
\newblock {\em Inv. Math.}, 2(172):353--381, 2008.

\bibitem[HHU08c]{WeakFol}
F.~Rodriguez Hertz, M.~Rodriguez Hertz, and R.~Ures.
\newblock On the existence and uniqueness of weak foliations in dimension 3.
\newblock {\em Geometric and probabilistic structures in dynamics},
  469:303--316, 2008.

\bibitem[HHU11]{AnosovTori}
F.~Rodriguez Hertz, M.~Rodriguez Hertz, and R.~Ures.
\newblock Tori with hyperbolic dynamics in 3-manifolds.
\newblock {\em J. Modern Dyn.}, 5(1):185--202, 2011.

\bibitem[HHU15a]{HHUcenterunstable}
F.~Rodriguez Hertz, M.~Rodriguez Hertz, and R.~Ures.
\newblock Center-unstable foliations do not have compact leaves.
\newblock {\em to appear in Mathematical Research Letters}, 2015.

\bibitem[HHU15b]{HHUnonDCexample}
F.~Rodriguez Hertz, M.~Rodriguez Hertz, and R.~Ures.
\newblock A non-dynamically coherent example on $\text{T}^{3}$.
\newblock {\em to appear in Annales del'Institut Henri Poincar\'e}, 2015.

\bibitem[HP13]{hammerlindlpotrie}
A.~Hammerlindl and R.~Potrie.
\newblock Classification of partially hyperbolic diffeomorphisms in 3-manifolds
  with solvable fundamental group.
\newblock {\em to appear in J. of Topology, arXiv:1307.4631}, 2013.

\bibitem[HP14]{PointNil}
A.~Hammerlindl and R.~Potrie.
\newblock Pointwise partial hyperbolicity in 3-dimensional nilmanifolds.
\newblock {\em Journal of the London Mathematical Society}, 3(89):853--875,
  2014.

\bibitem[HP15]{hammerlindl.potrie.survey}
Andy Hammerlindl and Rafael Potrie.
\newblock Partial hyperbolicity and classification: a survey.
\newblock {\em arXiv preprint arXiv:1511.04471}, 2015.

\bibitem[HPS77]{HPS}
M.~Hirsch, C.~Pugh, and M.~Shub.
\newblock {\em Invariant Manifolds}.
\newblock Springer, 1977.

\bibitem[HT80]{HT}
Michael Handel and William~P. Thurston.
\newblock Anosov flows on new three manifolds.
\newblock {\em Invent. Math.}, 59(2):95--103, 1980.

\bibitem[Jou88]{Jo88}
J.-L. Journ{\'e}.
\newblock A regularity lemma for functions of several variables.
\newblock {\em Rev. Mat. Iberoamericana}, 4:187--193, 1988.

\bibitem[KH95]{Katok-Hasselblatt}
Anatole Katok and Boris Hasselblatt.
\newblock {\em Introduction to the modern theory of dynamical systems},
  volume~54 of {\em Encyclopedia of Mathematics and its Applications}.
\newblock Cambridge University Press, 1995.

\bibitem[Ma{\~{n}}78]{Contributions}
R.~Ma{\~{n}}{\'{e}}.
\newblock Contributions to the stability conjecture.
\newblock {\em Topology}, 17:383--396, 1978.

\bibitem[MS12]{guguzulu}
C.G. Moreira and W.~Silva.
\newblock On the geometry of horseshoes in higher dimensions.
\newblock {\em Preprint, arXiv: 1210.2623}, 2012.

\bibitem[Pot12]{potrie}
R.~Potrie.
\newblock Partial hyperbolicity and foliations in ${\mathbb t}^{3}$.
\newblock {\em Preprint}, 2012.

\bibitem[PS72]{ErgAnAc}
C.~Pugh and M.~Shub.
\newblock Ergodicity of anosov actions.
\newblock {\em Inven. Math.}, 15:1--23, 1972.

\bibitem[PS96]{pughshub95}
C.~Pugh and M.~Shub.
\newblock Stable ergodicity and partial hyperbolicity.
\newblock In S.~Newhouse F.~Ledrappier, J.~Lewowicz, editor, {\em Proceedings
  of the 1st International Conference on Dynamical Systems, Montevideo,
  Uruguay, 1995}, volume 362 of {\em Research Notes Math. Series}, pages
  182--187. Longman, Pitman, Harlow, 1996.

\bibitem[PS97]{stableerg}
C.~Pugh and M.~Shub.
\newblock Stably ergodic dynamical systems and partial hyperbolicity.
\newblock {\em J. Complexity}, 13:125--179, 1997.

\bibitem[PS00]{StableJulienne}
C.~Pugh and M.~Shub.
\newblock Stable ergodicity and julienne quasiconformality.
\newblock {\em JEMS}, 2(1):1--52, 2000.

\bibitem[PSW97]{HolFol}
C.~Pugh, M.~Shub, and A.~Wilkinson.
\newblock Holder foliations.
\newblock {\em Duke Math. J.}, 86(3):517--546, 1997.

\bibitem[Ros68]{FolPlanes}
Harold Rosenberg.
\newblock Foliations by planes.
\newblock {\em Topology}, 7(2):131 -- 138, 1968.

\bibitem[Rou71]{FeuDimTrois}
R.~Roussarie.
\newblock Sur les feuilletages des variétés de dimension trois.
\newblock {\em Ann. Inst. Fourier (Grenoble)}, 21:13--82, 1971.

\bibitem[Sac70]{Sac68}
R.~Sacksteder.
\newblock Strongly mixing transformations.
\newblock In {\em Global {A}nalysis ({P}roc. {S}ympos. {P}ure {M}ath., {V}ol.
  {XIV}, {B}erkeley, {C}alif., 1968)}, pages 245--252. Amer. Math. Soc.,
  Providence, R. I., 1970.

\bibitem[Sma67]{smale1967}
S.~Smale.
\newblock Differentiable dynamical systems.
\newblock {\em Bull. Amer. Math. Soc.}, 73:747--817, 1967.

\bibitem[Wil98]{wilkinson1998}
Amie Wilkinson.
\newblock Stable ergodicity of the time-one map of a geodesic flow.
\newblock {\em Ergodic Theory Dynam. Systems}, 18(6):1545--1587, 1998.

\end{thebibliography}
\end{document}